\definecolor{red}{RGB}{255,0,0}
\definecolor{green}{RGB}{0,150,0}
\definecolor{blue}{RGB}{0,0,255}  
\crefname{equation}{equation}{equations}
\crefname{figure}{Figure}{Figures}
\newtheorem{thm}{Theorem}[section]
\newtheorem{prop}[thm]{Proposition}
\newtheorem{lem}[thm]{Lemma}
\theoremstyle{definition}
\newtheorem{definition}[thm]{Definition}
\theoremstyle{remark}
\newtheorem{remark}[thm]{Remark} 
\numberwithin{equation}{section} 
\DeclareMathOperator{\re}{Re}
\DeclareMathOperator{\im}{Im}
\renewcommand{\Re}{\mathop{\rm Re}}
\renewcommand{\Im}{\mathop{\rm Im}}
\DeclareMathOperator{\supp}{supp} 
\DeclareMathOperator{\const}{const}
\DeclareMathOperator{\res}{Res}
\DeclareMathOperator{\diag}{diag}
\DeclareMathOperator{\interior}{int}
\DeclareMathOperator{\Bal}{Bal} 
\newcommand{\isdef}{\stackrel{\text{\tiny def}}{=}}
\renewcommand{\H}{\mathbb{H}}
\newcommand{\N}{\mathbb{N}}
\newcommand{\C}{\mathbb{C}}
\newcommand{\R}{\mathbb{R}}
\newcommand{\Q}{\mathbb{Q}}
\newcommand{\boh}{\mathit{o}}
\newcommand{\Boh}{\mathcal{O}}
\newcommand{\Ai}{\mathop{\rm Ai}}
\newcommand\restr[2]{{
		\left.\kern-\nulldelimiterspace 
		#1 
		\vphantom{\big|} 
		\right|_{#2} 
}}
\title[Critical measures: asymptotics of MOPs]{Critical measures for vector energy: asymptotics of non-diagonal multiple orthogonal polynomials for a cubic weight}
\author[A. Mart\'{\i}nez-Finkelshtein]{Andrei Mart\'{\i}nez-Finkelshtein}
\address[AMF]{Department of Mathematics, Baylor University, TX, USA, and Departamento de Matem\'aticas, Universidad de Almer\'{\i}a, Spain}
\email[AMF]{A\_Martinez-Finkelshtein@baylor.edu}
\email[AMF]{andrei@ual.es}
\author[G.~Silva]{Guilherme L.~F.~Silva}
\address[GS]{University of Michigan, Ann Arbor - MI, USA}
\email{silvag@umich.edu}
\keywords{Logarithmic potential theory, vector energy, equilibrium on the complex plane, critical measures, $S$-property, quadratic differentials trajectories, multiple orthogonal polynomials, asymptotics, Riemann-Hilbert problems.}
\subjclass[2010]{Primary: 33C45; Secondary: 30C70, 30E10, 30F30, 31A15.}
\begin{document}

\begin{abstract} 

We consider the type I multiple orthogonal polynomials (MOPs) $(A_{n,m}, B_{n,m})$,  $\deg A_{n,m} \leq n-1$, $ \deg B_{n,m}\leq  m-1$, and type II MOPs $P_{n,m}$, $\deg P_{n,m} = n+m$, satisfying non-hermitian orthogonality with respect to the  weight $e^{-z^3}$ on two unbounded contours $\gamma_1$ and $\gamma_2$ on $\C$, with (in the case of type II MOPs) $n$ conditions on $\gamma_1$ and $m$ on $\gamma_2$. Under the assumption that
$$
n,m \to \infty, \quad \frac{n}{n+m}\to \alpha \in (0, 1)
$$
we find the detailed (rescaled) asymptotics of $A_{n,m}$, $B_{n,m}$ and $P_{n,m}$ on $\C$, and describe the phase transitions of this limit behavior as a function of $\alpha$. This description is given in terms of the vector critical measure $\vec{\mu}_\alpha  = (\mu_1 , \mu_2 , \mu_3 )$, the saddle point of an energy functional comprising both attracting and repelling forces. This critical measure is characterized by a cubic equation (spectral curve), and its components $\mu_j$  live on trajectories of a canonical quadratic differential $\varpi$ on the Riemann surface of this equation.  The structure of these trajectories and their deformations as functions of $\alpha$ was object of study in our previous paper  [Adv. Math. \textbf{302} (2016), 1137--1232], and some of the present results strongly rely on the analysis carried out there.

We conclude that the asymptotic zero distribution of the polynomials $A_{n,m}$  and $P_{n,m}$ are given by appropriate combinations of the components $\mu_j$ of the vector critical measure $\vec{\mu}_\alpha$. However, in the case of the zeros of $B_{n,m}$ the behavior is totally different, and can be described in terms of the balayage of the signed measure $\mu_2 - \mu_3$ onto certain curves on the plane. These curves are constructed with the aid of the canonical quadratic differential $\varpi$, and their topology has three very distinct characters, depending on the value of $\alpha$, and are obtained from the critical graph of $\varpi$.

Once the trajectories and vector critical measures are studied, the main asymptotic technical tool is the Deift-Zhou nonlinear steepest descent analysis of a $3\times 3$ matrix-valued Riemann--Hilbert problem characterizing both $(A_{n,m}, B_{n,m})$ and  $P_{n,m}$, which allows us to obtain the limit behavior of both types of MOPs simultaneously.

We illustrate our findings with results of several numerical experiments, explain the computational methodology, and formulate some conjectures and empirical observations based on these experiments. 

\end{abstract}

\maketitle

\tableofcontents

\section{Introduction}

The theory of Pad\'e approximants, which provide a locally best rational approximation to a power series, was developed  in the 19th--20th centuries intertwined with the theory of continued fractions, see \cite{MR2963451} for some historical remarks. In his 1895 paper \cite{Markov1895} Markov (or Markoff) proved that denominators of these approximants to what we now call Cauchy transform of a positive function on the real line are orthogonal with respect to this function, linking convergence analysis of such approximants to the analytic theory of orthogonal polynomials.

Since Pad\'e approximants allow us, at least potentially, to study the global properties of an analytic function given locally by a power series, they became a very popular tool in applications, which stimulated the rapid development of their theory in the 1970s--80s. During these years the extension to the essentially non-real situation began, first in the works of Gonchar and Nuttall, and later, of Rakhmanov and Stahl. They, together with Mhaskar and Saff, unraveled also the power of logarithmic potential theory tools. 

The main challenge in this extension was that the denominators of Pad\'e approximants for holomorphic germs of algebraic (multivalued) functions satisfy non-Hermitian orthogonality conditions: they do not correspond to a scalar product and the contour of integration can be freely deformed within its homotopy class. It was Stahl \cite{stahl_orthogonal_polynomials_complex_weight_function, stahl_orthogonal_polynomials_complex_measures} who first explained how to select the right contour (and in consequence, to establish weak or $n$-th root asymptotics for these orthogonal polynomials), showing that these contours solve a max-min problem for logarithmic energy. He also characterized them by means of a certain symmetry property (now called \textit{$S$-property}), which  turned out to be a very powerful tool, linking the problem to geometric function theory. In this context, finding such contours with the $S$-property then becomes equivalent to finding quadratic differentials with given critical points and desired topology of trajectories.

Stahl's methods were improved and enhanced in the work of Gonchar and Rakh\-ma\-nov \cite{gonchar_rakhmanov_rato_rational_approximation}, allowing for varying measures (those depending on the degree of the polynomial), giving rise to the \textit{Gonchar-Rakhmanov-Stahl} (or \textit{GRS}) \textit{theory}. This is one of a few general results in the modern analytic theory of Pad\'e approximants and non-hermitian orthogonal polynomials. A crucial ingredient of this method is the existence of contours on $\C$  exhibiting a \textit{weighted $S$-property}: assuming the existence of such contours, it is possible to obtain asymptotics for non-hermitian orthogonal polynomials with varying weights. Stahl was able to prove existence in the non-weighted case, but it took more than 20 years for the existence of the weighted $S$-contour to be established even for polynomial external fields \cite{kuijlaars_silva}, following some ideas of Rakhmanov \cite{rakhmanov_orthogonal_s_curves}. The proof in \cite{kuijlaars_silva} relied on the analysis of the so-called \textit{critical measures} (saddle points of the weighted logarithmic energy), introduced in \cite{martinez_rakhmanov}, which  naturally extend the notion of equilibrium measures. Critical measures turned out to be useful also in the study of polynomial solutions of some linear ODE \cite{martinez_rakhmanov, MR2647571,martinez-saff}.

A construction generalizing continued fractions but for several functions simultaneously was put forward by Hermite \cite{Hermite1873} as the crucial step in his proof of transcendence of the number $e$; it is known today as \textit{Hermite--Pad\'e} or \textit{simultaneous approximation} to a system of functions. There are two types of Hermite--Pad\'e approximants, and as in the case of their one-dimensional ancestors, the corresponding polynomials satisfy a set of orthogonality conditions, but now involving more than one measure (or weight function). These are the \textit{Hermite--Pad\'e} (or \textit{multiple}) \textit{orthogonal polynomials} (shortly MOPs), see their definitions below. 

Although born from the needs of approximation theory and number theory, recently MOPs attracted increasing interest due to their appearance in the analysis of several random matrix and non-intersecting random paths models, among others, as well as in  problems of Statistical Mechanics \cite{alvarez_alonso_medina_1, alvarez_alonso_medina_2, MR2963452, bertola_gekhtman_szmigielski_cauchy_two_matrix_model, bleher_delvaux_kuijlaars_external_source, bleher_kuijlaars_normal_matrix_model, kuijlaars_bleher_external_source_multiple_orthogonal, kuijlaars_bleher_external_source_gaussian_I, duits_geudens_kuijlaars, duits_kuijlaars_two_matrix_model, duits_kuijlaars_mo, kuijlaars_lopez_normal_matrix_model}, to mention  a few. All these applications stimulated also the development of the analytic theory of MOPs, starting with the pioneering works of the Russian school (Aptekarev, Gonchar, Kalyagin, Nikishin, Rakhmanov, and Suetin), which were later on joined by L\'opez-Lagomasino, Nuttall, Stahl, Van Assche, among many others. In the initial stage, the study circumscribed to two somewhat extreme cases on the real line, associated with the names of Angelesco and Nikishin, when existence of all polynomials of their maximal degree is guaranteed (the so-called normality of these approximants, see \cite{fidalgo}). 
Gonchar and Rakhmanov showed \cite{gonchar_rakhmanov_1981_pade, MR807734} that the large degree limit of the zeros of MOPs for Angelesco systems of measures is described by a vector equilibrium problem involving mutually repelling and disjointly supported measures on the real line, while for the Nikishin systems we need to consider vector equilibrium where attractive interactions are present. 

However, extending this methodology to the complex case and for multivalued functions without real symmetry turned out to be a formidable task. The asymptotic analysis of the corresponding Hermite-Pad\'e polynomials satisfying non-Hermitian orthogonality conditions requires considering new non-standard vector equilibria (involving additional measures) with new types of constraints and a new notion of $S$-property. In such settings, the mere existence of solution of the corresponding vector max-min problems is not guaranteed. Hence, despite many efforts and recent contributions \cite{MR2475084, aptekarev_kuijlaars_vanassche_hermite_pade_genus_0, aptekarev_vanassche_yatsselev, suetin13, suetin14, kuijlaars_vanassche_wielonsky_hermite_pade, MR2796829, rakhmanov_hermite_pade,leurs_vanassche}, it looks like a multidimensional extension of the GRS theory, valid for MOPs with arbitrary number of weights and no underlying symmetries, is nowhere in sight. This is why the analysis of even simplest non-trivial cases that do not exhibit any particular symmetry can shed a new light onto the problem. 

In an effort to extend the GRS theory to a general context of multiple non-Hermitian orthogonality, we carried out in our previous work 
\cite{martinez_silva_critical_measures} a systematic study of vector critical measures, and found a characterization valid for systems  comprising both  ``Angelesco'' and  ``Nikishin'' type interactions on the plane. We showed that for our choice of interaction matrices, the condition that a vector of measures is critical is equivalent to their Cauchy transforms being the solutions (in a sense specified in \cite{martinez_silva_critical_measures}) of an algebraic equation, also known as the spectral curve. Furthermore, we demonstrated that the critical measures live on trajectories of a canonical quadratic differential on the compact Riemann surface associated to the spectral curve. This way, we can embed the support of the critical measures on this Riemann surface, in such a way that different pairwise interactions, such as Angelesco and Nikishin, can be put on equal foot. For a cubic potential, we illustrated the power of the theory of quadratic differentials in this setup, by explicitly constructing a one-parametric family of vector critical measures through the precise dynamical description of the trajectories of the underlying quadratic differential. 

The interaction matrix considered in \cite{martinez_silva_critical_measures} appears also in the analysis of random matrix models with external source. For the general quartic potential with even symmetry, and about at the same time, for the general even degree potential with even symmetry, Aptekarev, Lysov and Tulyakov \cite{aptekarev_lysov_tulyakov, aptekarev_lysov_tulyakov_2} and Bleher, Delvaux and Kuijlaars \cite{bleher_delvaux_kuijlaars_external_source}, respectively, addressed such matrix model using the MOPs approach. The key object that allowed them to obtain several asymptotic results was a vector equilibrium problem. The vector equilibrium problems considered in \cite{aptekarev_lysov_tulyakov, aptekarev_lysov_tulyakov_2} and \cite{bleher_delvaux_kuijlaars_external_source} are essentially different, but in both situations their derivation relies on the strong symmetry of the problem. Under this perspective, our choice of interaction matrix in \cite{martinez_silva_critical_measures} generalizes the explicit construction of Aptekarev, Lysov and Tulyakov to potentials without symmetries, and (as an ongoing research shows) can be reduced to the constrained vector equilibrium problem considered by Bleher, Delvaux and Kuijlaars.

\begin{figure}[t]
\begin{subfigure}{.5\textwidth}
\centering
\begin{overpic}[scale=.45]{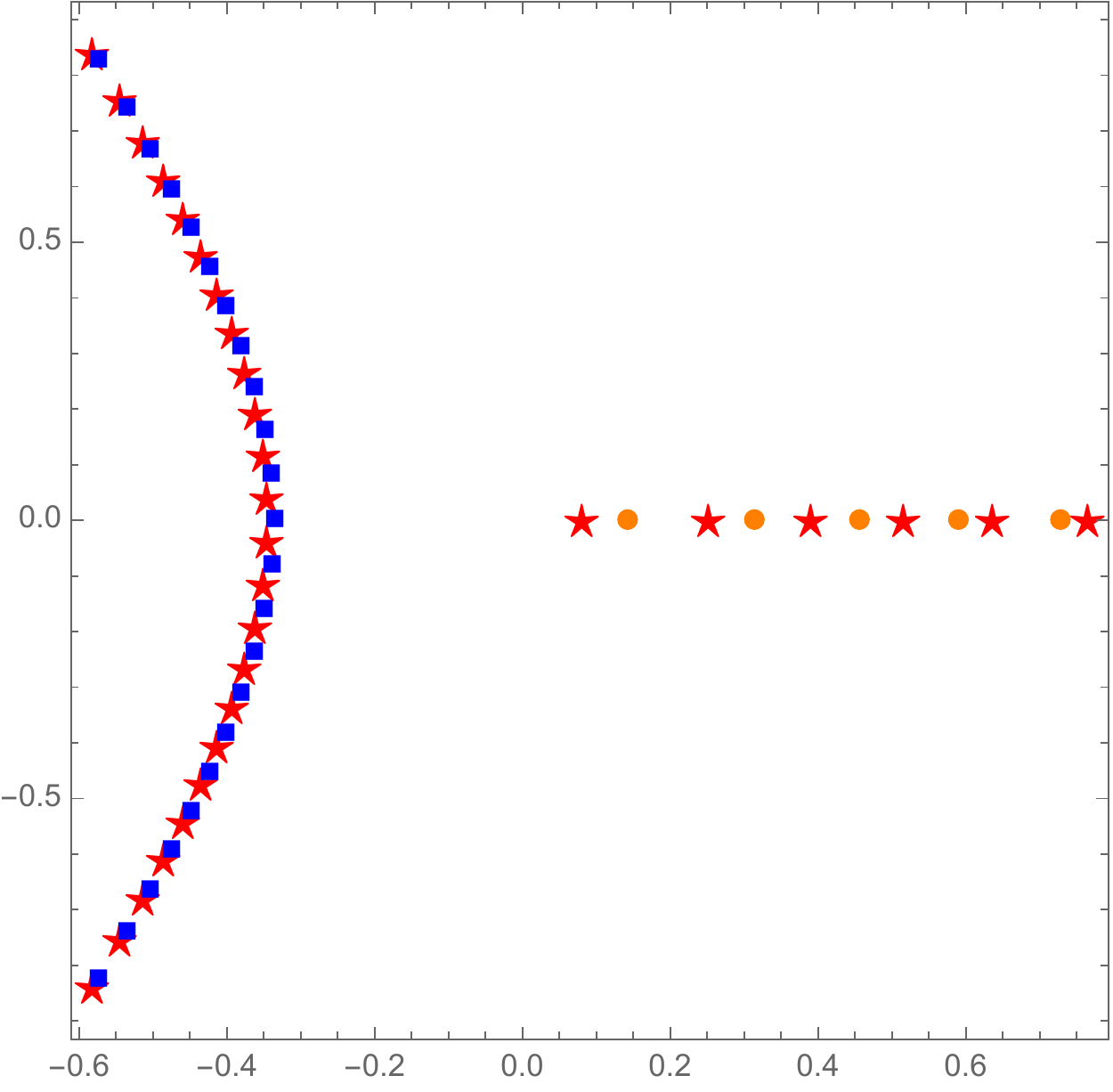}
\end{overpic}
\end{subfigure}%
\begin{subfigure}{.5\textwidth}
\centering
\begin{overpic}[scale=.45]{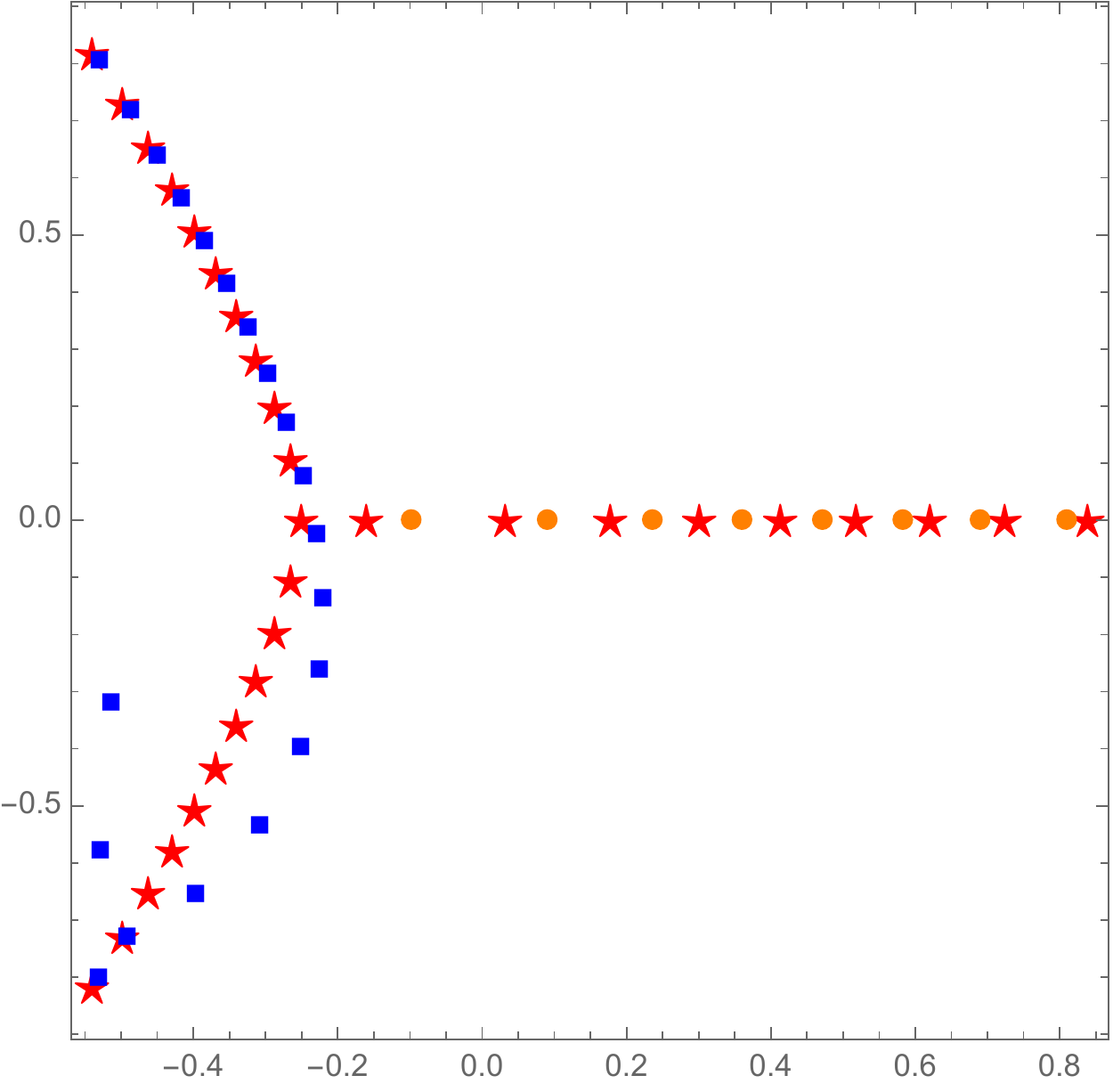}
\end{overpic}
\end{subfigure}\\
\begin{subfigure}{.5\textwidth}
\centering
\begin{overpic}[scale=.45]{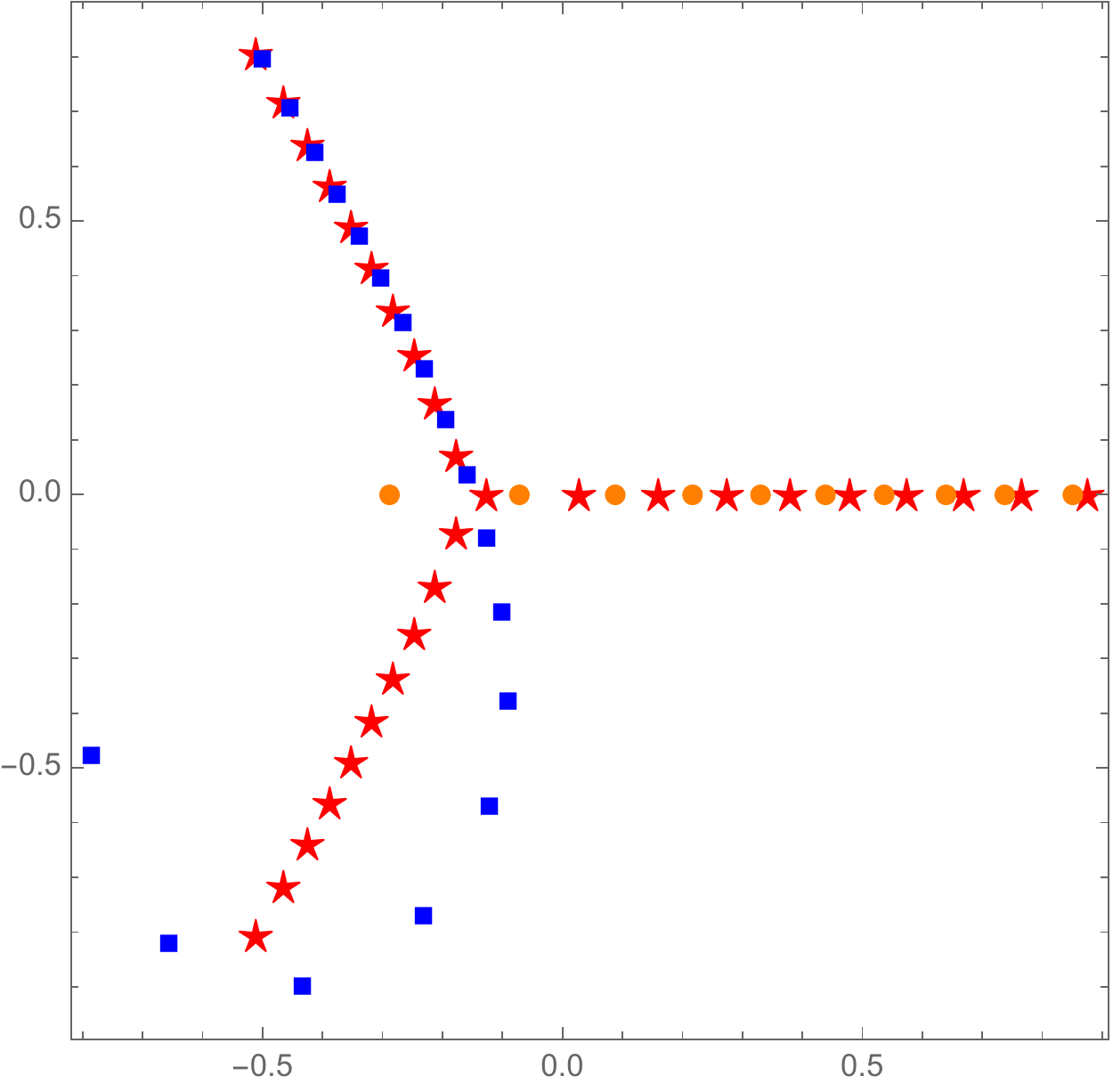}
\end{overpic}
\end{subfigure}%
\caption{Zeros of $P_{n,m}$ (stars), $A_{n,m}$ (dots) and $B_{n,m}$ (squares), for $n+m=30$ fixed and the choices $(n,m)=(6,24)$ (top left), $(n,m)=(9,21)$ (top right) and $(n,m)=(11,19)$ (bottom); see Definitions~\ref{defTypeI} and \ref{defTypeII}.}\label{figure_zeros_N=30}
\end{figure}

The present work is a natural continuation of \cite{martinez_silva_critical_measures}: we analyze a family of non-Hermitian MOPs (both of types I and II) with respect to a cubic weight and find their asymptotic description in terms of the vector critical measures 
constructed in \cite{martinez_silva_critical_measures}. It should be pointed out that the large degree zero distribution of these polynomials is highly non-trivial, exhibiting several phase transitions, as Figures \ref{figure_zeros_N=30} and \ref{figure_zeros_N=50} illustrate. Some previous contributions \cite{MR2475084, aptekarev_vanassche_yatsselev} addressed the large degree asymptotics on the plane in the non-symmetric case and for weights on bounded sets having only finite branch points; see also \cite{suetin13,suetin14} for nice numerical experiments and empirical discussion related to the so-called Nutall's conjecture. To our knowledge, the present work is the first systematic study of zero distribution of MOPs with complex zeros that exhibit non-hermitian orthogonality on unbounded sets (leading to consideration of extremal problems with an external field) with no real symmetry. As we hope it will become clear to the reader, the novelty fact that explains such break of symmetry is the canonical quadratic differential on the spectral curve and its trajectories, that were rigorously described in our previous work \cite{martinez_silva_critical_measures}. Once these trajectories are well understood, the dynamics of the limiting zero distribution can be read off from these trajectories, and the very distinct behaviors portrayed in Figures Figures \ref{figure_zeros_N=30} and \ref{figure_zeros_N=50} are easily understood under the very same framework.

\begin{figure}[t]
\begin{subfigure}{.5\textwidth}
\centering
\begin{overpic}[scale=.45]{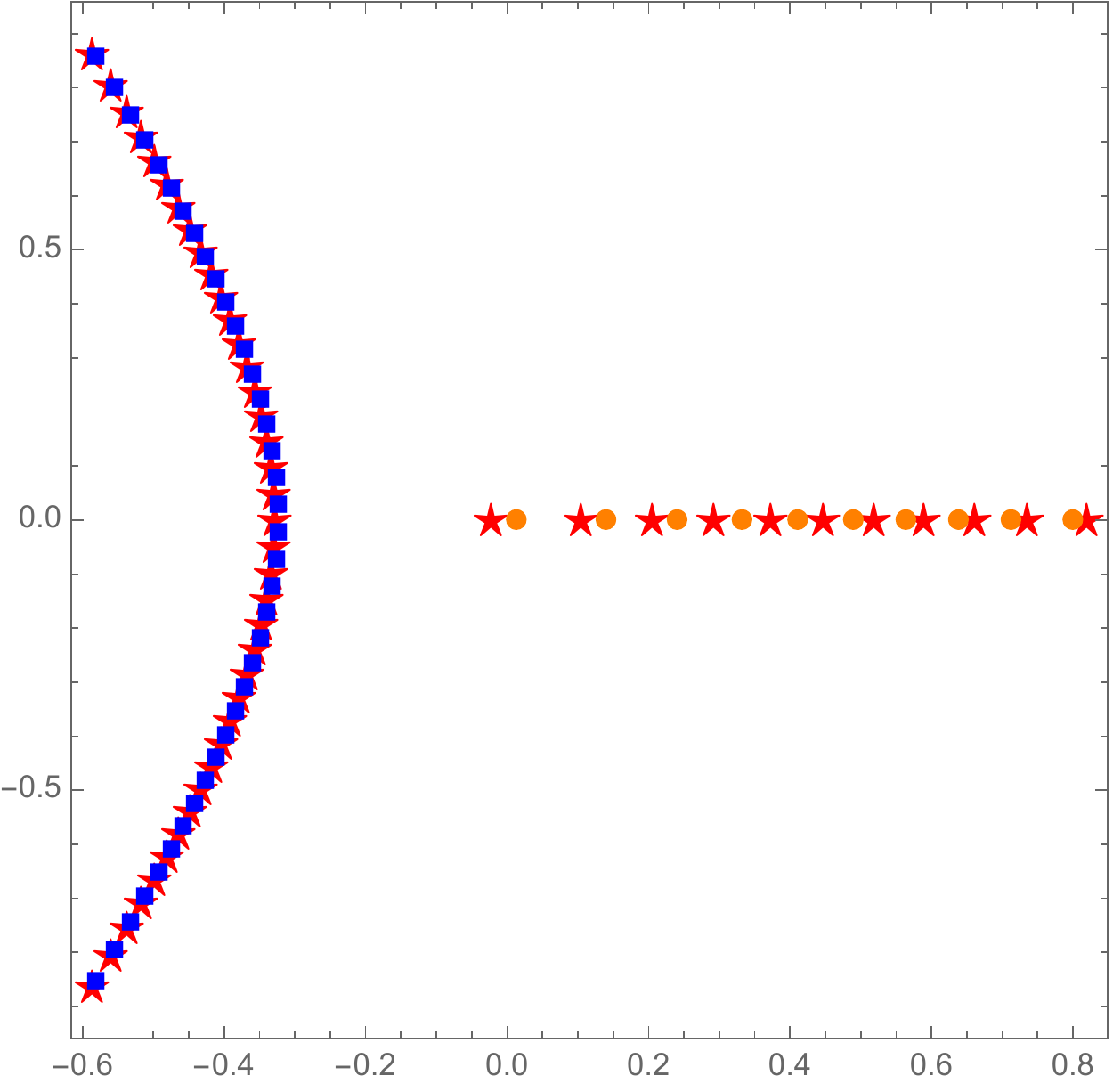}
\end{overpic}
\end{subfigure}%
\begin{subfigure}{.5\textwidth}
\centering
\begin{overpic}[scale=.45]{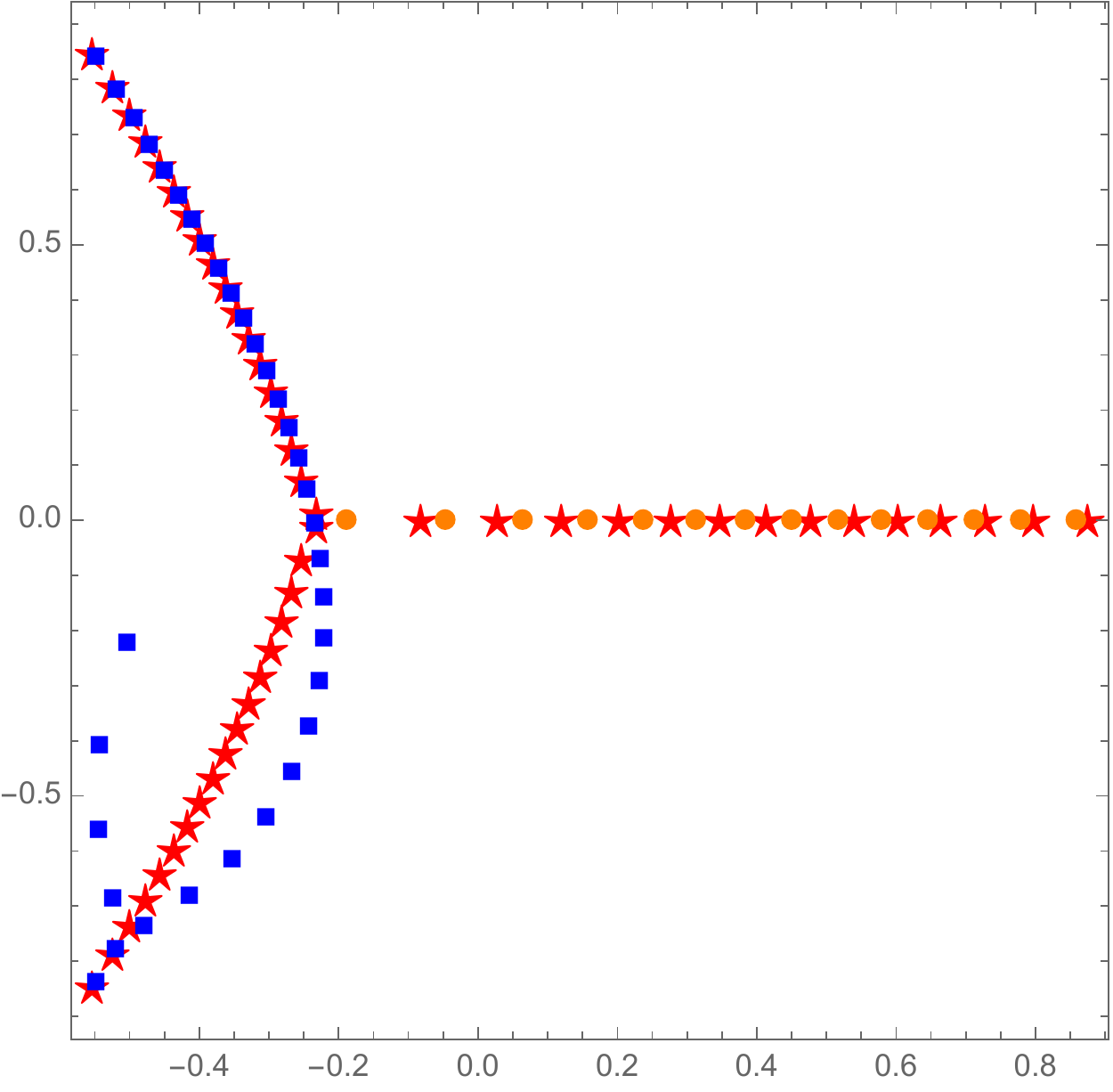}
\end{overpic}
\end{subfigure}\\
\begin{subfigure}{.5\textwidth}
\centering
\begin{overpic}[scale=.45]{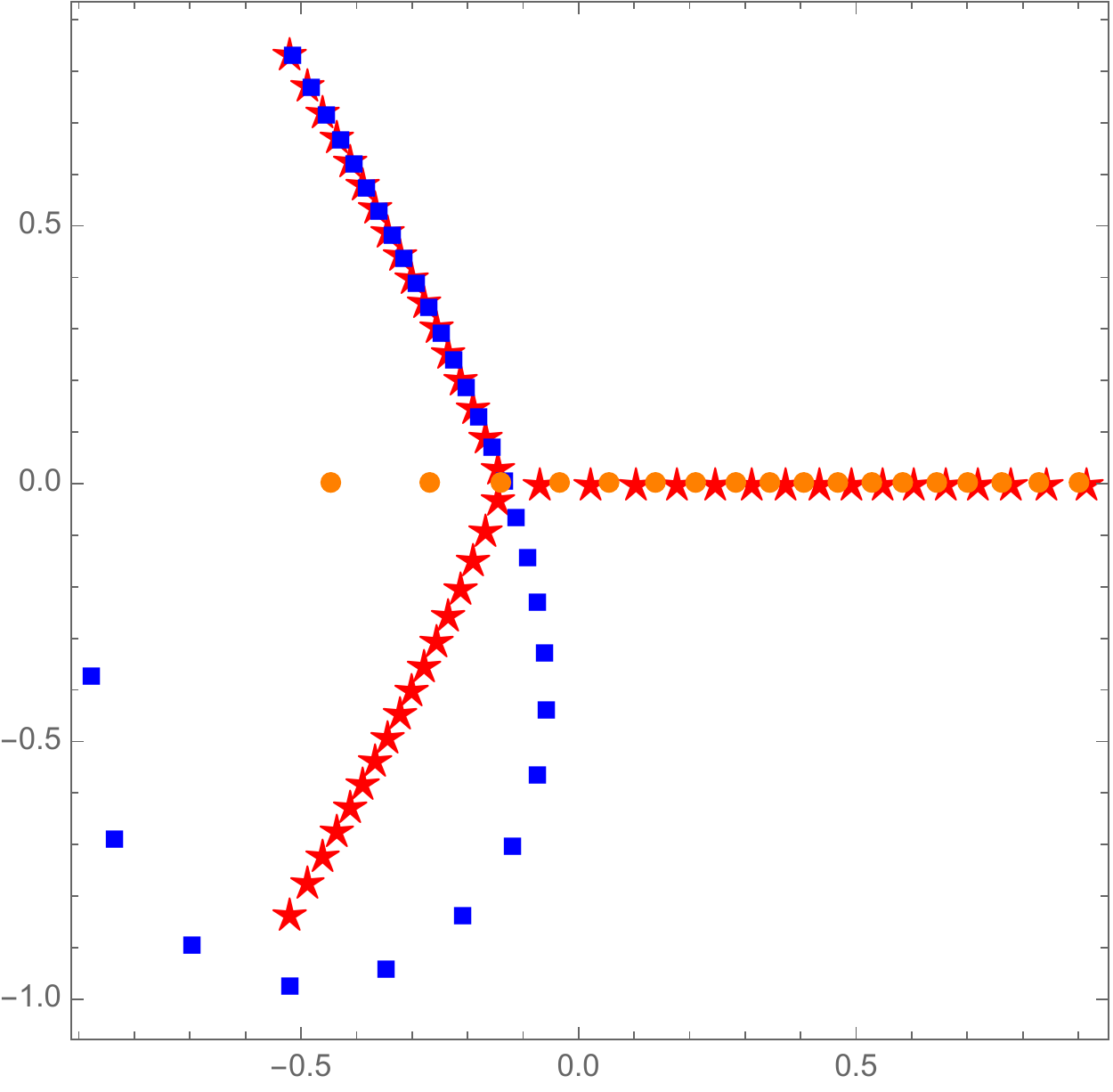}
\end{overpic}
\end{subfigure}%
\caption{Zeros of $P_{n,m}$ (stars), $A_{n,m}$ (dots) and $B_{n,m}$ (squares), for $n+m=50$ fixed and the choices $(n,m)=(11,39)$ (top left), $(n,m)=(15,35)$ (top right) and $(n,m)=(19,31)$ (bottom); see Definitions~\ref{defTypeI} and \ref{defTypeII}.}\label{figure_zeros_N=50}
\end{figure}

Besides the aforementioned quadratic differentials that were previously studied in \cite{martinez_silva_critical_measures}, our main asymptotic tool is the matrix Riemann-Hilbert problem characterization of MOPs \cite{Assche01}  with cubic weights and the development of the Deift-Zhou nonlinear steepest descent method that yields not only the limiting zero distribution but a detailed uniform asymptotics of the polynomial on the whole complex plane. In order to be able to perform the necessary steps of this approach we had to introduce a preliminary transformation that although looked purely technical in the first stage, turned out to be crucially related to the core of the matter.  Moreover, the quadratic differential studied in \cite{martinez_silva_critical_measures} played a fundamental role in the construction of the leading term of the asymptotics, and its critical graph (with all its phase transitions) was an essential guide during the deformations of contours required to complete the steepest descent analysis.
 
 The structure of the paper is as follows. In the next section we state our main findings that explain in particular the numerical outcomes illustrated in Figures \ref{figure_zeros_N=30} and \ref{figure_zeros_N=50}, for which we need to provide a minimum background from \cite{martinez_silva_critical_measures}.  However, the proofs require the use of several technical results from \cite{martinez_silva_critical_measures} that for convenience of the reader we gather in Sections~\ref{sec:spectralcurve} and \ref{sec:trajectories}.  As it was mentioned, our approach to asymptotics is based on the Riemann-Hilbert characterization of the MOPs (Section~ \ref{sec4:RH}) and on the corresponding Deift-Zhou steepest descent analysis. It consists of several transformations, described in Section~\ref{sec:steepestdescent}. We can use the result of this analysis to derive the strong (locally uniform) asymptotics of both type I and type II MOPs, and as a consequence, describe their limit zero behavior (Section~ \ref{sec:asymptotics}), proving in such a way our main assertions made in Section~\ref{sec:statement}. Finally, in Section~\ref{sec:numerics} we explain the computational methodology used for our numerical experiments and formulate some conjectures and empirical observations. 
 
\section{Statement of main results} \label{sec:statement}

Consider two contours $\gamma_1$ and $\gamma_2$ on the complex plane, extending to $\infty$ on their two ends along the directions determined by the angles $-\pi/3$ and $0$, and $-\pi/3$ and $\pi/3$, respectively (see Figure~\ref{figure_contours0}, left). Then expressions
\begin{align*}
\frak f_1(z) &= \int_{\gamma_1} \frac{e^{-t^3}}{z-t}\, dt, \quad z\in \C\setminus \gamma_1,  \\
\frak f_2(z) &= \int_{\gamma_2} \frac{e^{-t^3}}{z-t}\, dt, \quad z\in \C\setminus \gamma_2, 
\end{align*}
define holomorphic functions in the respective domains. Their corresponding asymptotic expansions at infinity are the formal power series (that we again denote by $\frak f_i$),
$$
\frak f_i(z)=\sum_{k=0}^\infty \frac{\frak f_i^{(k)}}{z^{k+1}}, \quad \frak f_i^{(k)}\isdef \int_{\gamma_i} t^k e^{-t^3} \, dt, \quad i=1, 2.
$$
with
\begin{equation} \label{explicitcoeff}
	\begin{split}
\frak f_1^{(k)} & \isdef \int_{\gamma_1} t^k e^{-t^3} \, dt = \frac{1}{3}\, \Gamma \left( \frac{k+1}{3} \right) \left( e^{2\pi i (k+1)/3} -1\right), \\
\frak f_2^{(k)} & \isdef \int_{\gamma_2} t^k e^{-t^3} \, dt = \frac{2i}{3} \, \Gamma \left( \frac{k+1}{3} \right) \sin \left(  2\pi i (k+1)/3  \right).
	\end{split}
\end{equation}

Given integer values $n, m\geq 0$, denote $N\isdef n+m$. \textit{Type I Hermite--Pad\'e} (or \textit{simultaneous}) \textit{approximants} to the pair of functions $(\frak f_1, \frak f_2)$ are constructed by finding polynomials $a_{n,m}$, $b_{n,m}$, with $\deg a_{n,m} \leq n-1$, $ \deg b_{n,m}\leq m-1$, not simultaneously identically zero, such that for some polynomial $d_{n,m}$,
$$
a_{n,m}(z) \frak f_1(z) +  b_{n,m}(z) \frak f_2(z) - d_{n,m}(z) = \mathcal O  \left(\frac{1}{z^N} \right), \quad z\to\infty.
$$
 \textit{Type II Hermite--Pad\'e} (or \textit{simultaneous}) \textit{approximants} to $(\frak f_1, \frak f_2)$ is the pair of rational functions $(q^{(1)}_{n,m}/p_{n,m}, q^{(2)}_{n,m}/p_{n,m})$, where $p_{n,m}\not \equiv 0$ is of degree $\leq N$, and  $q^{(j)}_{n,m}$ are polynomials such that
 \begin{align*}
 	p_{n,m}(z) \frak f_1(z) - q^{(1)}_{n,m}(z) & = \mathcal O  \left(\frac{1}{z^n} \right), \quad z\to\infty, \\
 	p_{n,m}(z) \frak f_2(z) - q^{(2)}_{n,m}(z) & = \mathcal O  \left(\frac{1}{z^m} \right), \quad z\to\infty,
 \end{align*}
 see e.g.~\cite[Chapter 4]{nikishin_sorokin_book}. Standard arguments using Cauchy formula allow to show that $a_{n,m}$, $b_{n,m}$ and $p_{n,m}$ satisfy non-hermitian orthogonality conditions that we formulate next, after appropriate rescaling.
\begin{definition} \label{defTypeI}
	Given $n, m\in \N$ and $N=n+m$, the {\it type I multiple orthogonal polynomials} $A_{n,m}$ and $B_{n,m}$, if they exist, are uniquely defined by the following conditions:
	$$
	\deg A_{n,m} \leq n-1, \quad \deg B_{n,m}\leq  m-1,
	$$
	and
	\begin{equation}\label{mops_conditionsTypeI}
	\begin{aligned}
	\int_{\gamma_1}z^k A_{n,m}(z) e^{-Nz^3}dz + 	\int_{\gamma_2}z^k B_{n,m}(z) e^{-Nz^3}dz =0, & \quad k=0,\hdots, N-2, \\
	\int_{\gamma_1}z^k A_{n,m}(z) e^{-Nz^3}dz + 	\int_{\gamma_2}z^k B_{n,m}(z) e^{-Nz^3}dz =1, & \quad k=N-1.
	\end{aligned}
	\end{equation}
\end{definition}
The dual notion is
\begin{definition} \label{defTypeII}
	Given $n, m\in \N$ and $N=n+m$, we define the {\it type II multiple orthogonal polynomial} $P_{n,m}$, if it exists, to be the unique monic polynomial of degree $N$ ($\deg P_{n,m} = N$) that fulfills the conditions
	\begin{equation}\label{mops_conditions}
	\begin{aligned}
	\int_{\gamma_1}z^k P_{n,m}(z) e^{-Nz^3}dz=0, & \quad k=0,\hdots, n-1, \\
	\int_{\gamma_2}z^k P_{n,m}(z) e^{-Nz^3}dz=0, & \quad k=0,\hdots, m-1.
	\end{aligned}
	\end{equation}
\end{definition}

Both  \eqref{mops_conditionsTypeI}   and \eqref{mops_conditions}   give rise to a non-homogeneous linear system of $n+m=N$ equations on the $N$ (unknown) coefficients of $P_{n,m}$ (in the case of \eqref{mops_conditions}) or of the two polynomials $A_{n,m}, B_{n,m}$ otherwise. Hence if the solution exists, it is unique. However, in contrast to the standard orthogonality on the real line, the existence of $P_{n,m}$ or of $A_{n,m}, B_{n,m}$is a non-trivial matter (and actually not true for general systems of weights).

The main object of our study is the asymptotic analysis of the type I and type II polynomials defined above, in the limit when $N=n+m\to \infty$ in such a way that
$$
\frac{n}{N} \longrightarrow \alpha \in [0,1].
$$
It should be noted that for \eqref{mops_conditions} the extremal cases $n=0$ and $n=m$ (for type II polynomials) have been studied previously by Deaño, Huybrechs and Kuijlaars \cite{deano_kuijlaars_huybrechs_complex_orthogonal_polynomials} and Filipuk, Van Assche and Zhang \cite{filipuk_vanassche_zhang}, respectively; in fact, these studies were the original motivation for \cite{martinez_silva_critical_measures} and the present work. 

We can restrict our analysis to the case
$$
\frac{n}{N} \longrightarrow \alpha \in (0,1/2),
$$
since we can easily derive conclusions for $\alpha\in (1/2,1)$ by mapping $\alpha \mapsto 1-\alpha$, swapping the role of $\mathfrak f_1$ and $\mathfrak f_2$ and rotating the complex plane by $2\pi/3$.   

As it was mentioned, we make use of the Riemann-Hilbert formulation of the multiple orthogonality that characterizes both $P_{n,m}$ and $A_{n,m}, B_{n,m}$, see \cite{Assche01} and Section~\ref{sec4:RH} below. The advantage of this approach is that it gives the uniform asymptotics of these polynomials on every subdomain of the complex plane and that it allows to perform the asymptotic analysis valid simultaneously for both families of polynomials. The limiting behavior is described in terms of the solution of a vector equilibrium problem and of some abelian integrals on the associated Riemann surface, whose construction relies on the analysis carried out in \cite{martinez_silva_critical_measures}. 

For a compactly supported Borel measure $\mu$, its logarithmic potential and Cauchy transform are defined by
$$
U^{\mu}(z)=\int\log\frac{1}{|s-z|}d\mu(s),\quad C^{\mu}(z)=\lim_{\varepsilon\to 0}\int_{|s-z|>\varepsilon}\frac{d\mu(s)}{s-z},\quad z\in \C,
$$
respectively. $U^\mu$ is subharmonic on $\C$ and harmonic on $\C\setminus\supp\mu$, whereas $C^{\mu}$ is well defined and finite a.e. with respect to planar Lebesgue measure, and it is analytic on $\C\setminus \supp\mu$. For
$$
\partial_z=\frac{1}{2}\left( \frac{\partial}{\partial x}-i\frac{\partial}{\partial y} \right),
$$
these functions are related through the equality
$$
2\partial_z U^{\mu}= C^\mu,
$$
which should be understood in the strong sense for $z\in \C\setminus \supp\mu$ and in the distributional sense on $\supp\mu$.

For a given vector of three non-negative measures $\vec\mu=(\mu_1,\mu_2,\mu_3)$, the non-negative definite interaction matrix
\begin{equation*}
A=(a_{jk})=
\begin{pmatrix}
 1 & \frac{1}{2} & \frac{1}{2} \\
 \frac{1}{2} & 1 & -\frac{1}{2} \\
 \frac{1}{2} & -\frac{1}{2} & 1
\end{pmatrix},
\end{equation*}
and the external field $\phi(z)=\re (z^3)$, we consider the energy functional
\begin{equation}\label{vector_energy}
E(\vec\mu)=\sum_{j,k=1}^3 a_{jk}I(\mu_j,\mu_k) +\int \phi(z)d\mu_1(z) + \int \phi(z)d\mu_2(z),
\end{equation}
where
$$
I(\mu,\nu)=\iint \log\frac{1}{|x-y|}d\mu(x)d\nu(y)
$$
is the logarithmic interaction between the two measures $\mu,\nu$.

For a fixed number $\alpha\in [0,1/2)$, we restrict the energy $E(\cdot)$ on the class $\mathcal M_\alpha$ of vectors of measures $\vec \mu$ satisfying the following assumptions.

\begin{itemize}
\item The components $\mu_1$, $\mu_2$ and $\mu_3$ of $\vec\mu \in \mathcal M_\alpha$ are compactly supported non-negative Borel measures on $\C$, their supports have zero planar Lebesgue measure and furthermore the energy $E(\vec\mu)$ is finite. 

\item The total masses of $\mu_1$, $\mu_2$ and $\mu_3$ are related through
\begin{equation}\label{massconstraints}
|\mu_1|+|\mu_2|=1,\quad |\mu_1|+|\mu_3|=\alpha,\quad |\mu_2|-|\mu_3|=1-\alpha.
\end{equation}

\item The set
$$
\bigcup_{1\leq j <k \leq 3} (\supp\mu_j\cap \supp\mu_k)
$$
is finite.
\end{itemize}

In \cite{martinez_silva_critical_measures} we extended the notion of critical measures introduced by the first author and Rakhmanov to the vector energy setting. In the present context, this extension reads as follows. For $t\in \C$ and $h\in C^2(\C)$, denote by $\mu^t$ the pushforward of the scalar measure $\mu$ induced by the transformation $z\mapsto z+th(z)$, $z\in \C$. For $\mu\in \mathcal M_\alpha$, we denote $\vec\mu^t=(\mu_1^t,\mu_2^t,\mu_3^t)$ and say that $\vec \mu$ is {\it critical} if
$$
\lim_{t\to 0} \frac{E(\vec \mu^t)-E(\mu)}{t}=0,
$$ 
for every $h\in C^2(\C)$.

A summary of the results proven in \cite{martinez_silva_critical_measures} that we need here is given by the next theorem.

\begin{thm}[{\cite[Theorems 1.12 and 1.14]{martinez_silva_critical_measures}}]\label{existence_critical_measure_cubic}
For each $\alpha \in (0,1/2)$ there exists a vector critical measure $\vec\mu_\alpha\in \mathcal M_\alpha$ for the energy $E(\cdot)$ defined in \eqref{vector_energy}. Furthermore, the combinations of Cauchy transforms
 \begin{equation}\label{definition_xi_functions}
\begin{aligned}
\xi_1(z) & =2z^2+C^{\mu_1}(z)+C^{\mu_2}(z),\\
\xi_2(z) & =-z^2-C^{\mu_1}(z)-C^{\mu_3}(z),\\
\xi_3(z) & =-z^2-C^{\mu_2}(z)+C^{\mu_3}(z)
\end{aligned}
\end{equation}
satisfy a.e. the algebraic equation (also called spectral curve)
\begin{equation}\label{spectral_curve}
\xi^3-R(z)\xi+D(z)=0
\end{equation}
where
\begin{equation}\label{RD}
R(z)= 3z^4-3z-c,\quad D(z)=-2z^6+3z^3+cz^2-3\alpha(1-\alpha),
\end{equation}
with $c=c(\alpha)$ the real parameter given by
\begin{equation}\label{c}
c=-\left(\frac{243}{64}(1-4\alpha(1-\alpha))^2\right)^{\frac{1}{3}}.
\end{equation}

The support of each of the measures $\mu_1$ and $\mu_3$ is a single interval of the real line, whereas $\supp\mu_2$ is a simple piece-wise analytic arc, symmetric with respect to complex conjugation, that connects two complex conjugate points.

Finally, there exists a critical value $\alpha_c \approx 0.2578357$ for which
\begin{enumerate}[(i)]
\item (subcritical regime) if $\alpha<\alpha_c$, then $\supp\mu_3=\emptyset$ and $\mu_1$ and $\mu_2$ are disjointly supported non-trivial measures;
\item (supercritical regime) if $\alpha>\alpha_c$, then the three measures $\mu_1,\mu_2$ and $\mu_3$ are non-trivial and their supports have one common intersection point, $a_*$.
\end{enumerate} 
\end{thm}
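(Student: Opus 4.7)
The plan is to prove the theorem in a reverse-engineered way: rather than attempting to extremize $E$ directly, which is problematic because of the indefinite off-diagonal entries of $A$, I would first construct a candidate triple $(\xi_1,\xi_2,\xi_3)$ of meromorphic branches on a suitable three-sheeted Riemann surface via an algebraic equation, then read off the measures $\mu_j$ from the jump discontinuities of the $\xi_j$, and finally verify that the resulting $\vec\mu_\alpha$ is critical in the sense of the definition.

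First, I would pin down the spectral curve. The postulated relations \eqref{definition_xi_functions} immediately give
\begin{equation*}
\xi_1+\xi_2+\xi_3 \;=\; 0,
\end{equation*}
and the analytic content of the desired critical/$S$-property is that the boundary values $\xi_{j,\pm}$ across each $\supp\mu_j$ simply permute the three branches. Hence the elementary symmetric functions of $\{\xi_j\}$ extend as single-valued meromorphic functions on $\C$; their behaviour at infinity, dictated by the polynomial parts $2z^2,-z^2,-z^2$ and by the mass constraints \eqref{massconstraints}, forces them to be polynomials of the stated degrees, and matching coefficients pins down $R$ and $D$ up to one free real parameter $c$. This parameter is then fixed by requiring the discriminant $4R^3-27D^2$ to factor so that the three sheets glue along branch cuts compatible with the trajectory structure of the canonical quadratic differential $\varpi=-(\xi_j-\xi_k)^2\,dz^2$; the resulting algebraic condition yields the explicit formula \eqref{c}.

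Second, I would construct the measures by prescribing their densities via the jumps of the $\xi_j$ across the appropriate short critical trajectories of $\varpi$; the mass constraints \eqref{massconstraints} then follow from residue calculations at infinity on the Riemann surface, and the support geometry (two real intervals for $\mu_1,\mu_3$ and a conjugate-symmetric arc for $\mu_2$) follows from the symmetry $z\mapsto \bar z$ of the spectral curve. Criticality is then verified by direct computation: for any $h\in C^2(\C)$, integration by parts reduces $\lim_{t\to 0}(E(\vec\mu^t)-E(\vec\mu))/t$ to an integral of $h$ against a combination of boundary values of the $\xi_j$, which vanishes identically by virtue of \eqref{spectral_curve}. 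The phase transition at $\alpha_c\approx 0.2578357$ is located as the unique $\alpha$ for which a real branch point of the spectral curve emerges, producing the common intersection point $a_*$ of the three supports. The hard part is the detailed trajectory analysis of $\varpi$ as $\alpha$ varies on $(0,1/2)$, together with the proof that the critical graph undergoes exactly one topological bifurcation; this delicate piece of geometric function theory is the principal technical content of \cite{martinez_silva_critical_measures}.
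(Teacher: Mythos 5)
You are attempting to reconstruct a theorem that the present paper does not prove: Theorem~\ref{existence_critical_measure_cubic} is quoted verbatim from Theorems~1.12 and~1.14 of \cite{martinez_silva_critical_measures}, and Section~\ref{sec:trajectories} of this paper only reproduces the pieces of that analysis needed downstream. With that caveat, your overall strategy --- produce the spectral curve from the ansatz $\xi_1+\xi_2+\xi_3=0$, fix the coefficients of $R$ and $D$ from the expansions at infinity and the mass constraints \eqref{massconstraints}, read the densities $\mu_j$ off the jumps $\xi_{j+}-\xi_{j-}$ across short trajectories of $\varpi$, and then verify criticality by a variational computation --- does match the architecture of the cited proof as this paper describes it, and you correctly flag that the genuinely hard part is the global trajectory analysis of $\varpi$ as $\alpha$ varies.

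Two of your specific claims, however, are not right and would mislead a reader. First, the critical graph of $\varpi$ does \emph{not} undergo ``exactly one topological bifurcation'' on $(0,1/2)$. As recalled in Section~\ref{sec:trajectories}, there are four transition values $\tau_0<\tau_1<\tau_c<\tau_2$ in $(0,1/4)$ at which the critical graph changes topology; the point is that $\tau_c$ is the only one of these that alters the \emph{supports} $\Delta_j$ (because it is the transition at which a short trajectory on $\mathcal R_2$ collides with the branch point $a_1^{(2)}$), while $\tau_0,\tau_1,\tau_2$ rearrange the noncritical trajectories and unbounded critical trajectories without touching $\Delta_1,\Delta_2,\Delta_3$. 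Collapsing this to ``one bifurcation'' would make the subsequent steepest-descent analysis (which needs the full critical graph, not just the supports) look simpler than it is. Second, $\alpha_c$ is \emph{not} ``the unique $\alpha$ for which a real branch point of the spectral curve emerges''. The four branch points $a_1,b_1,a_2,b_2$ and the node exist for every $\alpha\in(0,1/2)$, with $a_1,b_1$ always real. What happens at $\alpha_c$ is a change in the branching data at the existing point $a_1$: for $\alpha<\alpha_c$ the cut $\Delta_1=[a_1,b_1]$ glues $\mathcal R_1$ to $\mathcal R_2$ and $\Delta_3=\emptyset$, whereas for $\alpha>\alpha_c$ the same $a_1$ becomes the left endpoint of $\Delta_3=[a_1,a_*]$ gluing $\mathcal R_2$ to $\mathcal R_3$, and $\Delta_1$ shrinks to $[a_*,b_1]$. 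This is a trajectory collision on the fixed Riemann surface, not the appearance of a new ramification point, and getting this right is exactly what the ``delicate piece of geometric function theory'' you defer to must establish.

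Finally, your step from ``the elementary symmetric functions are single-valued'' to ``they are polynomial'' is correct in outcome but should be stated as: single-valuedness plus finiteness of the energy forces them to be entire, and the $O(z^4)$, $O(z^6)$ growth from the expansions \eqref{asymptotics_xi_functions} forces them to be polynomials of those degrees; the remaining free parameter $c$ is then fixed not merely by making the discriminant ``factor nicely'' but by the specific requirement that the curve acquire a node (a double zero of the discriminant) so that $\mathcal R$ has genus~$0$ with exactly four simple branch points, in agreement with Riemann--Hurwitz and with the asymptotic behaviour $\xi_2,\xi_3\sim -z^2$ which must remain unbranched at $\infty$ for $\alpha\ne 1/2$.
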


The measures $\mu_1,\mu_2$ and $\mu_3$ obviously depend on $\alpha$, so whenever necessary we write $\mu_j=\mu_{\alpha,j}$ to emphasize this dependency.

We denote the endpoints of $\supp\mu_2$ by $a_2$ and $b_2$, with the convention that $\im a_2<0$. Furthermore, in the subcritical case, $\supp\mu_1=[a_1,b_1]$ and in the supercritical case, $\supp\mu_1=[a_*,b_1]$ and $\supp\mu_3=[a_1,a_*]$. In either cases, the points $a_1$, $a_2$, $b_1$ and $b_2$ are branch points of the spectral curve \eqref{spectral_curve}, and $a_*$ is also the unique intersection point of $\supp\mu_2$ with $\R$.

The structure of $\Delta_j=\supp\mu_j$, $j=1, 2, 3$ (based on actual numerical evaluation), is displayed in Figure \ref{figure_numerics_supports}.

\begin{figure}
\begin{subfigure}{.5\textwidth}
\centering
\begin{overpic}[scale=1]{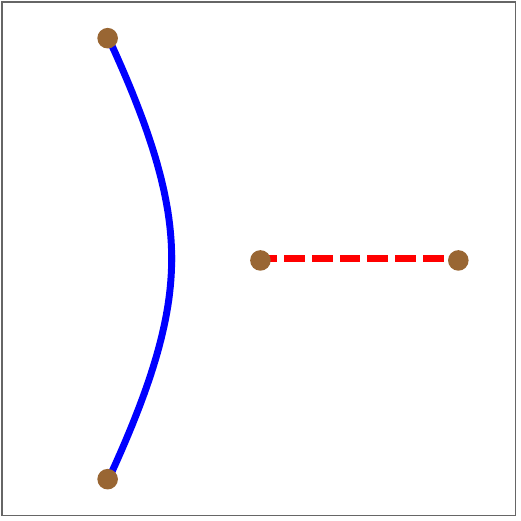}
	\put(100,65){$\Delta_1$}
	\put(45,115){$\Delta_2$}
	\put(72,66){\scriptsize $a_1$}
		\put(131,66){\scriptsize $b_1$}
		\put(29,4){\scriptsize $a_2$}
	\put(29,144){\scriptsize $b_2$}
\end{overpic}
\end{subfigure}%
\begin{subfigure}{.5\textwidth}
\centering
\begin{overpic}[scale=1]{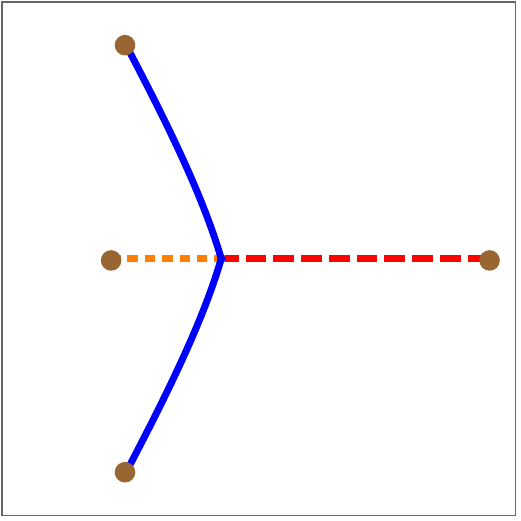}
	\put(100,65){$\Delta_1$}
	\put(52,115){$\Delta_2$}
	\put(40,65){$\Delta_3$}
		\put(27,66){\scriptsize $a_1$}
	\put(140,66){\scriptsize $b_1$}
	\put(31,6){\scriptsize $a_2$}
	\put(31,143){\scriptsize $b_2$}
			\put(65,68){\scriptsize $a_*$}
\end{overpic}
\end{subfigure}
\caption{(Extracted from \cite{martinez_silva_critical_measures}) For $\alpha \approx 0.14786<\alpha_c \approx 0.2578357$ (left panel) and for $\alpha\approx 0.348342>\alpha_c$ (right panel), the output of the numerical evaluation of $\Delta_1\isdef \supp\mu_1$ (long dashed 
line), $\Delta_2\isdef \supp\mu_2$ (continuous line) and $\Delta_3\isdef \supp\mu_3$ (short dashed line - only on the right panel). }\label{figure_numerics_supports}
\end{figure}

The existence of the vector critical measure $\vec\mu$ claimed by Theorem \ref{existence_critical_measure_cubic} is highly non-trivial, and a great portion of the work \cite{martinez_silva_critical_measures} is dedicated to this proof. In \cite{martinez_silva_critical_measures} the measures $\mu_1$, $\mu_2$ and $\mu_3$ are described rather explicitly in terms of critical trajectories of a quadratic differential defined on the Riemann surface associated to the spectral curve \eqref{spectral_curve}. This quadratic differential also plays a fundamental role in the analysis of the $g$-functions carried out in Section \ref{sec:steepestdescent} below.

Denote by
$$
\nu(Q)\isdef \frac{1}{N}\sum_{Q(w)=0} \delta_w,
$$
where $Q$ is one of the polynomials $P_{n,m}$, $A_{n,m}$ and $B_{n,m}$, if it exist. Notice that  each zero is counted in the sum above according to its multiplicity.

As a first result, we relate the weak limit, which we denote by $\stackrel{*}{\to}$, of the  zero counting measures for the type I ($A_{n,m}$) and type II ($P_{n,m}$)  polynomials with the vector critical measure given by Theorem \ref{existence_critical_measure_cubic}: 
\begin{thm}\label{theorem_zero_counting_measure}
Let $\alpha\in (0,1/2)\setminus \{\alpha_c\}$. If $\vec\mu_\alpha=(\mu_1, \mu_2, \mu_3)$ is the corresponding vector critical measure given by Theorem \ref{existence_critical_measure_cubic}, then for all sufficiently large $n$ and $m$ such that
$$
\frac{n}{N} \longrightarrow \alpha 
$$
polynomials $P_{n,m}$ and $A_{n,m}$ exist, and  
$$
\nu(P_{n,m})\stackrel{*}{\to} \mu_1+\mu_2, \quad \nu(A_{n,m})\stackrel{*}{\to} \mu_1+\mu_3.
$$
\end{thm}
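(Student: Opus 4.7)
The plan is to derive the weak convergence of the zero counting measures as a corollary of the strong (locally uniform) asymptotics of $P_{n,m}$ and $A_{n,m}$, which in turn follows from the Deift--Zhou nonlinear steepest descent analysis applied to the Van Assche--Geronimo--Kuijlaars $3\times 3$ Riemann--Hilbert problem for $Y(z)$ that simultaneously encodes both the type I and type II MOPs (this is the problem formulated in Section~\ref{sec4:RH}). In this formulation $P_{n,m}$ is the $(1,1)$ entry of $Y$, and $A_{n,m}, B_{n,m}$ appear in the first row of the inverse transpose (or equivalently in appropriate entries of the adjugate); in particular, the existence of $Y$ for large $N$ is equivalent to the existence of these polynomials.

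The steepest descent itself will proceed in the usual stages. First, I would construct $g$-functions $g_1,g_2,g_3$ such that $g_j'= \xi_j$ off the supports, where $\xi_1,\xi_2,\xi_3$ are the solutions of the spectral curve \eqref{spectral_curve} given by \eqref{definition_xi_functions}; up to constants and the external field $Nz^3$, these $g_j$ are explicit linear combinations of the logarithmic potentials $U^{\mu_j}$ of the components of $\vec\mu_\alpha$. Normalizing $Y \mapsto T$ with these $g$-functions kills the exponential growth at infinity and converts the jumps on $\gamma_1\cup\gamma_2$ into oscillatory jumps supported on $\Delta_1\cup \Delta_2\cup \Delta_3$ together with exponentially small jumps elsewhere. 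Next, opening lenses around these arcs—guided precisely by the critical graph of the canonical quadratic differential $\varpi$ of \cite{martinez_silva_critical_measures}, which dictates the sign of the relevant $\Re(g_i-g_j)$—transforms $T\mapsto S$, whose jumps are exponentially close to constant matrices on the lenses and outside a fixed neighborhood of the branch points. One then builds a global parametrix $N^\infty$ from algebraic functions on the Riemann surface of the spectral curve, and Airy parametrices near each regular branch point $a_1,b_1,a_2,b_2$ (with an extra local model required at $a_*$ in the supercritical regime). Comparing $S$ with these parametrices yields a small-norm RH problem whose solution is $I+O(1/N)$, from which strong asymptotics for all entries of $Y$ follow.

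From the strong asymptotics, standard potential-theoretic arguments close the proof. On every compact $K\subset \C\setminus(\Delta_1\cup\Delta_2)$, unfolding the transformations gives
\[
\frac{1}{N}\log|P_{n,m}(z)| \;=\; -\,U^{\mu_1+\mu_2}(z) + o(1), \qquad N\to\infty,
\]
and analogously $\frac{1}{N}\log|A_{n,m}(z)| = -U^{\mu_1+\mu_3}(z)+o(1)$ off $\Delta_1\cup\Delta_3$. Because $\nu(P_{n,m})$ is a sequence of probability measures on a compact set (the zeros can be localized a~priori to a bounded region using the orthogonality and saddle-point estimates), it has weak-$*$ accumulation points. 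Any such accumulation point $\nu^*$ satisfies $U^{\nu^*}=U^{\mu_1+\mu_2}$ by the lower envelope theorem together with the pointwise convergence above, and by unicity of the logarithmic potential (plus the matching of total masses $|\mu_1|+|\mu_2|=1 = \deg P_{n,m}/N$) we conclude $\nu^*=\mu_1+\mu_2$. The same argument, with mass $|\mu_1|+|\mu_3|=\alpha=\lim n/N$, handles $\nu(A_{n,m})$. Existence of $P_{n,m}$ and $A_{n,m}$ for all sufficiently large $n,m$ is then a byproduct: the small-norm problem is solvable, so $Y$ exists, so the MOPs do.

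The hard part will be the $g$-function construction and the verification of the correct sign structure of $\Re(g_i-g_j)$ along the chosen lenses, uniformly in $\alpha$ away from $\alpha_c$. The non-Hermitian setting, the non-symmetric geometry of the contours, and the topological change of $\Delta_1,\Delta_2,\Delta_3$ across the transition $\alpha\gtrless \alpha_c$ all prevent the standard ``lens orientation by eye''. This is exactly where the detailed description of the canonical quadratic differential $\varpi$ on the Riemann surface of \eqref{spectral_curve}, and of its critical trajectories in both regimes (carried out in \cite{martinez_silva_critical_measures} and recalled in Sections~\ref{sec:spectralcurve}--\ref{sec:trajectories}), becomes indispensable: the trajectories of $\varpi$ furnish the deformation contours on which the required inequalities hold, and supply the algebro-geometric data needed to build the global parametrix $N^\infty$.
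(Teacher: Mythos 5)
Your outline follows the paper's overall scheme: RH characterization of Section~\ref{sec4:RH}, $g$-function normalization, lens opening governed by the critical graph of $\varpi$, global parametrix built from abelian integrals on the spectral curve, Airy parametrices near the branch points, small-norm conclusion, and then a potential-theoretic passage from strong asymptotics to weak zero limits (your lower-envelope argument is equivalent to the paper's appeal to \cite[Theorem 4.1]{saff_totik_book}). However, there is one genuine gap, and a smaller inaccuracy.

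The gap: you go directly from $\bm Y$ to the $g$-function normalized matrix $\bm T$, skipping the preliminary transformation $\bm Y\mapsto \bm X$ of Section~\ref{prelim}, in which $\bm Y$ is multiplied by $\bm I-\bm M_{32}$ inside a region $\mathcal U\supset \Delta_2\cap \H_-$ bounded by a loop $\gamma$ based at $a_*$. As outlined, your plan would stall. After the chosen contour deformation, the jump of $\bm Y$ on $\Gamma_2\cap \H_-$ (which contains $\Delta_2\cap\H_-$) is $\bm I + e^{-Nz^3}(\bm M_{12}+\bm M_{13})$. Conjugating by the diagonal $g$-matrices turns the $(1,2)$ entry on $\Delta_2\cap\H_-$ into $e^{N(g_{2+}-g_{1-})}=e^{-N\Phi_{1-}}$, and $\Re\Phi_1$ does \emph{not} have a sign on $\Delta_2\cap \H_-$ (only $\Re\Phi_{2\pm}=0$ holds there). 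This entry cannot be absorbed into the $\bm \sigma_{32}$ lens factorization around $\Delta_2$, so the jump there never reduces to a constant. The preliminary transformation removes the $\bm M_{12}$ part on $\Gamma_2\cap\mathcal U$, yielding the clean jump $\bm I + e^{-Nz^3}\bm M_{13}$ needed in \eqref{RHPY1}--\eqref{JTnew}, at the price of a new jump $\bm I+e^{N(g_2-g_3)}\bm M_{32}$ on $\gamma$; the decay of the latter is exactly what Proposition~\ref{proposition_signs_Phi} (via the quadratic differential) controls. The paper flags this step as ``crucially related to the core of the matter'': besides fixing the jump structure, the loop $\gamma$ and its interplay with $\Omega_-$ and $E_\alpha$ is where the anomalous behavior of $B_{n,m}$ in Theorem~\ref{theorem_zero_counting_measureB} originates.

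The inaccuracy: you say a local model is needed at $a_*$ in the supercritical regime. The paper builds Airy parametrices only at $a_1,a_2,b_1,b_2$ (see \eqref{Rdef}); at $a_*$ the global parametrix $\bm F$ is bounded and the residual off-diagonal jumps near $a_*$ are exponentially small, so no local model is required. Finally, for ``all sufficiently large $n,m$ with $n/N\to\alpha$'' the paper runs the analysis in Section~\ref{sec:generalparameter} with $\alpha_N$-dependent $g$- and $\Phi$-functions but a fixed ($\alpha_0$-dependent) global parametrix, verifying uniformity; your closing remark about uniformity in $\alpha$ gestures at this, but it is only the $g$-functions, not the parametrices, that vary with $n,m$.
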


For illustration, the support of $\mu_1+\mu_2$ together with the zeros of $P_{n,m}$ for various choices of $n,m$ are shown in Figure \ref{figure_zeros_traj_P}, whereas the zeros of $A_{n,m}$ are shown in Figures \ref{figure_zeros_traj_AB} and \ref{figure_zeros_traj_AB_2}.

\begin{figure}
\begin{subfigure}{.5\textwidth}
\centering
\begin{overpic}[scale=.45]{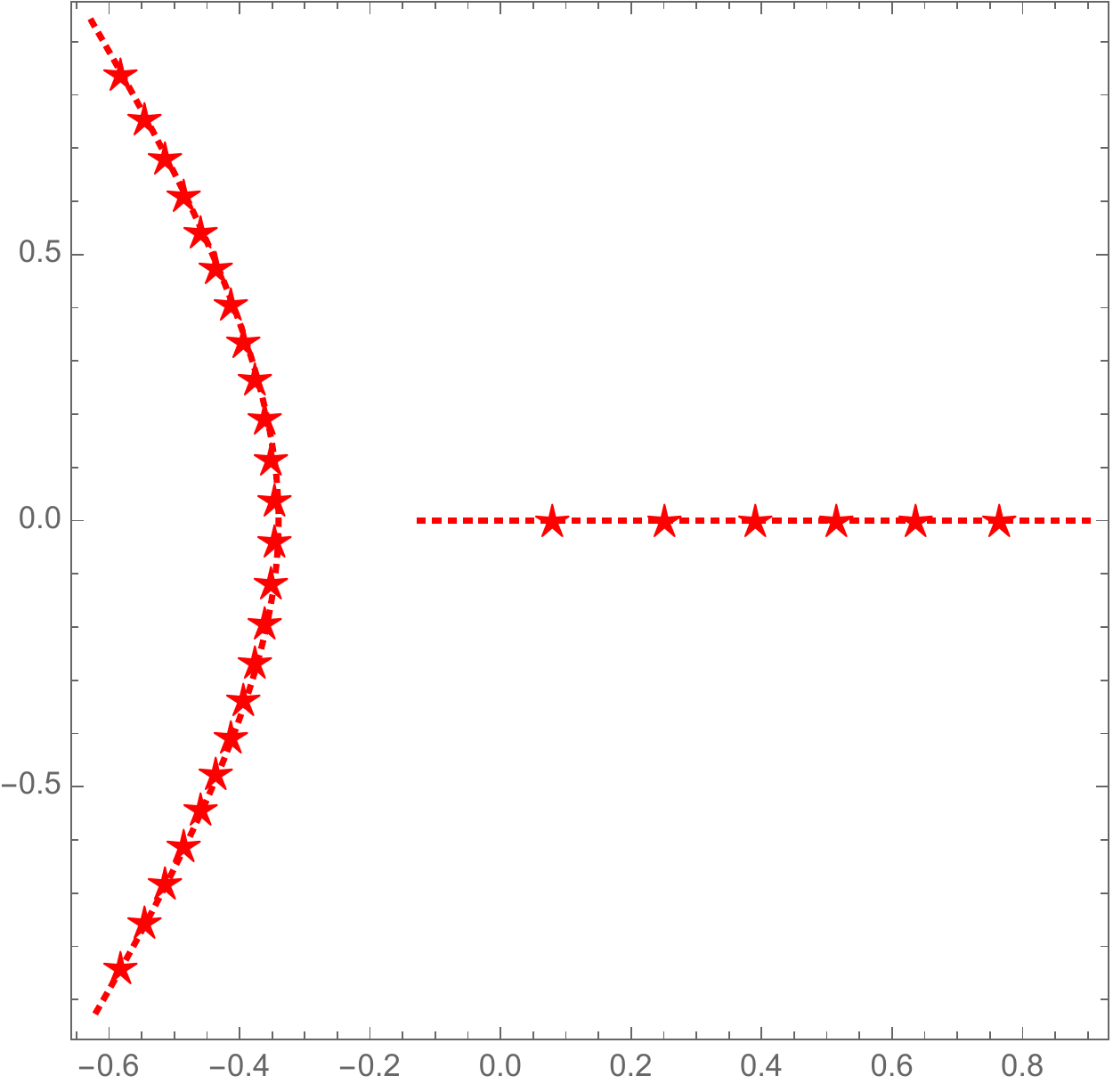}
\end{overpic}
\end{subfigure}%
\begin{subfigure}{.5\textwidth}
\centering
\begin{overpic}[scale=.45]{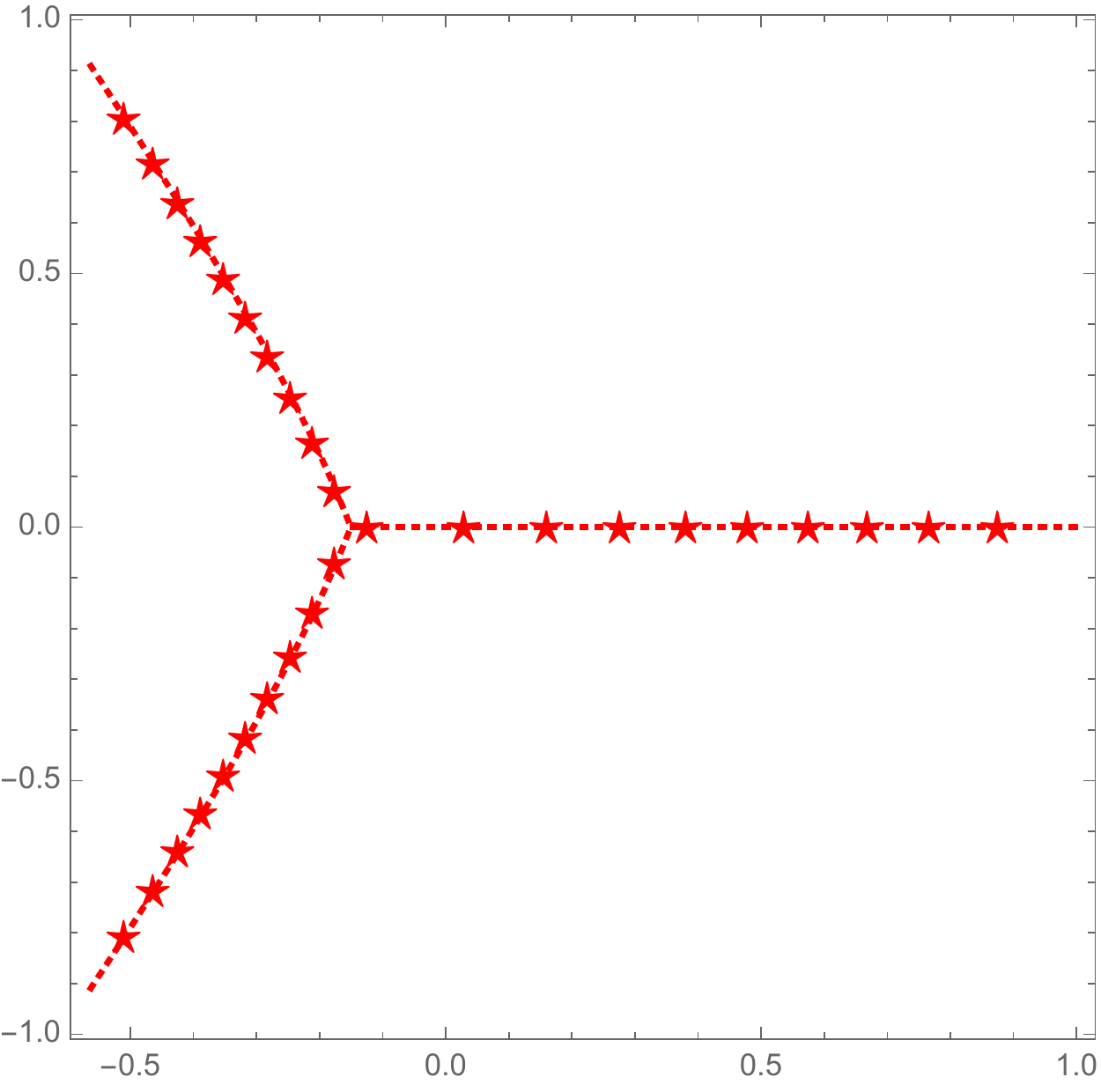}
\end{overpic}
\end{subfigure}
\caption{Zeros of $P_{6,24}$ ($\alpha=0.2<\alpha_c$, left) and $P_{11,19}$ ($\alpha\approx 0.366>\alpha_c$, right), along with the support of $\mu_1+\mu_2$.}\label{figure_zeros_traj_P}
\end{figure}

Now we switch to the description of the weak limit of the zero-counting measure for polynomials $B_{n,m}$.

Recall \cite[Ch. II]{saff_totik_book} that if $\Gamma$ is a closed set of positive capacity on $\C$ and $\mu$ is a positive measure on $\C$, the \textit{balayage} of $\mu$ onto $\Gamma$ gives us a unique measure, denoted by $\nu= \Bal( \mu;\Gamma)$, such that  $\supp \nu\subset \Gamma$,
$$
|\nu|=|\mu|, \qquad U^{\nu}(z)=U^{\mu}(z)+\kappa ,\quad  \text{ quasi-everywhere on  $\Gamma$}, 
$$
for some constant $\kappa $. For a signed measure $\mu=\mu_{+}-\mu_{-}$ we define its balayage by $\Bal( \mu;\Gamma)\isdef \Bal( \mu_+;\Gamma)- \Bal( \mu_-;\Gamma)$. 

The following theorem shows that for $\alpha\in (0,1/2)$, among all connected sets containing $b_2$ and at least one of the two points $a_1$ and $a_2$ there is a unique set $E_\alpha$, made of analytic arcs, such that  $\Bal( \mu_2-\mu_3 ; E_\alpha)$ is a positive measure, which will be precisely the asymptotic zero distribution of $B_{n,m}$ when $n/N\to \alpha$. 

We will denote
\begin{equation}\label{upper_lower_half_plane_def}
\H_+ \isdef \{ z\in \C \; \mid \; \im z>0 \}, \quad \H_- \isdef \{ z\in \C \; \mid \; \im z<0 \}.
\end{equation}

\begin{thm}\label{theorem_zero_counting_measureB}
	Let $\alpha\in (0,1/2)\setminus \{\alpha_c\}$. If $\vec\mu_\alpha=(\mu_1, \mu_2, \mu_3)$ is the corresponding vector critical measure given by Theorem \ref{existence_critical_measure_cubic}, then for all sufficiently large $n$ and $m$ such that
	$$
	\frac{n}{N} \longrightarrow \alpha, 
	$$
	the polynomials $B_{n,m}$ exist and there exists the weak limit
	$$
	\quad \nu(B_{n,m})\stackrel{*}{\to} \mu_B.
	$$
	The measure $\mu_B$ is the balayage of $\mu_2-\mu_3$ onto a set $E_\alpha$, a finite union of analytic arcs with the following properties:
	\begin{enumerate}[(i)]
		\item (subcritical regime) if $0<\alpha<\alpha_c\approx 0.2578357$, then $E_\alpha=\Delta_2=\supp\mu_2$ and $\mu_B=\mu_2$.
		\item (intermediate regime) if $\alpha_c< \alpha< \alpha_2\approx 0.354933$, then $E_\alpha $ is the union of three analytic arcs starting from $a_1$, $a_2$ and $b_2$, and ending at one common point $a_B \in \Delta_2$ on the lower half plane $\H_-$. Furthermore, 
		\begin{equation}\label{UpperHalfPlane}
	E_\alpha \cap \H_+=\Delta_2\cap \H_+.
		\end{equation}
		
		\item (supercritical regime) for $\alpha>\alpha_2$, the set $E_\alpha $ is a single analytic arc from $b_2$ to $a_1$, and \eqref{UpperHalfPlane} still holds.  
	\end{enumerate} 
\end{thm}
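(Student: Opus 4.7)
The plan is to extract the asymptotic zero distribution of $B_{n,m}$ from the strong uniform asymptotics produced by the Deift--Zhou steepest descent analysis of the $3\times 3$ Riemann--Hilbert problem for the type I MOPs (Sections~\ref{sec4:RH}--\ref{sec:steepestdescent}). First I would express $B_{n,m}(z)$ as an entry of the RH solution; the sequence of transformations $Y \mapsto T \mapsto S \mapsto R$, together with the global parametrix built on the Riemann surface of the spectral curve~\eqref{spectral_curve}, produces a representation of the form
\begin{equation*}
B_{n,m}(z) = e^{N G_B(z)} \bigl( F(z) + o(1) \bigr), \qquad N \to \infty,
\end{equation*}
locally uniformly on compact subsets of $\C \setminus E_\alpha$, where $G_B$ is the $g$-type function naturally associated with the $B$-entry and $F$ is a bounded non-vanishing prefactor. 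Taking $\frac{1}{N}\log|\cdot|$ yields existence of the limit $\nu(B_{n,m}) \stackrel{*}{\to} \mu_B$ with $U^{\mu_B}(z) = -\re G_B(z)$, and the existence of $B_{n,m}$ for large $N$ follows (as for $A_{n,m}$ and $P_{n,m}$) from the invertibility of the RH solution once the analysis goes through.

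The identification of the set $E_\alpha$ comes from the critical graph of the canonical quadratic differential $\varpi$ described in \cite{martinez_silva_critical_measures}. In each regime, $E_\alpha$ must be built from critical trajectories of $\varpi$ connecting the branch points $a_1,a_2,b_2$ (and the triple junction $a_*$ or $a_B$, when present). Its role in the steepest descent is structural: to reduce the $B$-component of the RHP to a solvable model, the contour $\gamma_2$ has to be deformed and lenses opened precisely across $E_\alpha$, and the horizontal/vertical trajectory structure of $\varpi$ is exactly what guarantees the exponential decay of the off-diagonal jumps and controls the sign of $\re G_B$ outside $E_\alpha$. The balayage characterization is then read off from the jump structure of $G_B$ inherited from the spectral curve: the function $U^{\mu_B} - U^{\mu_2 - \mu_3}$ is harmonic in $\C \setminus E_\alpha$, tends to $0$ at $\infty$, and is constant quasi-everywhere on $E_\alpha$; combined with the mass balance $|\mu_B| = |\mu_2| - |\mu_3| = 1-\alpha$ enforced by~\eqref{massconstraints}, this uniquely identifies $\mu_B = \Bal(\mu_2 - \mu_3; E_\alpha)$.

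The main obstacle will be the regime-dependent construction of $E_\alpha$ and, most crucially, the proof that the resulting signed balayage is a \emph{positive} measure. The topology of the critical graph of $\varpi$ changes sharply at $\alpha_c$ and at a second threshold $\alpha_2$, and each of the three resulting pictures forces a different lens/contour configuration in the steepest descent. The subcritical case is immediate: $\mu_3 = 0$ and $\mu_2$ is already supported on $\Delta_2$, so $\mu_B = \mu_2$. The intermediate regime is the most delicate, because the new junction point $a_B \in \Delta_2 \cap \H_-$ and the coincidence~\eqref{UpperHalfPlane} have to be extracted from the precise geometry of trajectories emanating from $a_1$, $a_2$ and $b_2$; positivity of $\mu_B$ on this tripod reduces to a sign condition on the real part of an abelian integral of $\varpi^{1/2}$ along $E_\alpha$, which has to be verified separately in each regime using the explicit trajectory dynamics established in~\cite{martinez_silva_critical_measures}.
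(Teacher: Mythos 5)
Your broad outline — extract the zero distribution of $B_{n,m}$ from the strong asymptotics of the $3\times 3$ RH solution, identify $E_\alpha$ from the critical graph of $\varpi$, and characterize $\mu_B$ as a balayage via harmonicity and the mass balance $|\mu_B|=1-\alpha$ — is the right approach and matches the paper's strategy. But the single representation $B_{n,m}=e^{NG_B}(F+o(1))$ with one $g$-type function $G_B$ conceals the essential difficulty: the asymptotics for $B_{n,m}$ are \emph{piecewise}, governed by $g_3$ outside $\Omega_\alpha$ and by $g_2$ inside $\Omega_\alpha$ (Theorem~\ref{thm:main3}), and this is exactly what pushes $\supp\mu_B$ off $\Delta_2\cup\Delta_3$ onto the new set $E_\alpha$. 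The mechanism that produces this gluing is not "opening lenses across $E_\alpha$" (lenses are opened around $\Delta_1,\Delta_2,\Delta_3$ as usual), but the \emph{preliminary transformation} of Section~\ref{prelim}: one multiplies $\bm Y$ by $\bm I-\bm M_{32}$ inside a region $\mathcal U$ bounded by a contour $\gamma$ that can be pushed up against $E_\alpha\cap\H_-$, which is possible precisely because $\re\Phi_3>0$ on $\Omega_+$ (Proposition~\ref{proposition_signs_Phi}). This step alters the $(3,1)$-entry of $\bm Y^{-1}$ inside $\mathcal U$ and swaps the dominant exponential there. Without identifying this transformation — which the paper itself flags as "crucially related to the core of the matter" — the plan cannot actually produce the two-regime formula and hence cannot locate $E_\alpha$.

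On positivity of $\mu_B$, you propose verifying a sign of $\re\int\sqrt\varpi$ along $E_\alpha$ regime by regime; that is plausible but laborious, and it is not what the paper does. The paper instead observes (Proposition~\ref{PropdefmuB}) that near $E_\alpha\cap\H_-$ the function $H$ is the \emph{maximum} of the two harmonic functions $\re(g_2+\tfrac13 z^3-r_1)$ and $\re(g_3+\tfrac13 z^3-r_1)$ — again a consequence of the sign structure in Proposition~\ref{proposition_signs_Phi} — hence is subharmonic, so the Riesz representation automatically yields a \emph{positive} measure with no pointwise sign check along the trajectories. You should replace the proposed abelian-integral sign verification by this max-of-harmonics argument, which is both shorter and regime-independent; the balayage identification itself, via harmonicity of $U^{\mu_B}-U^{\mu_2-\mu_3}$ off $E_\alpha$ and matching total masses, is then correct as you state it.
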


\begin{figure}
\begin{subfigure}{.5\textwidth}
\centering
\begin{overpic}[scale=.45]{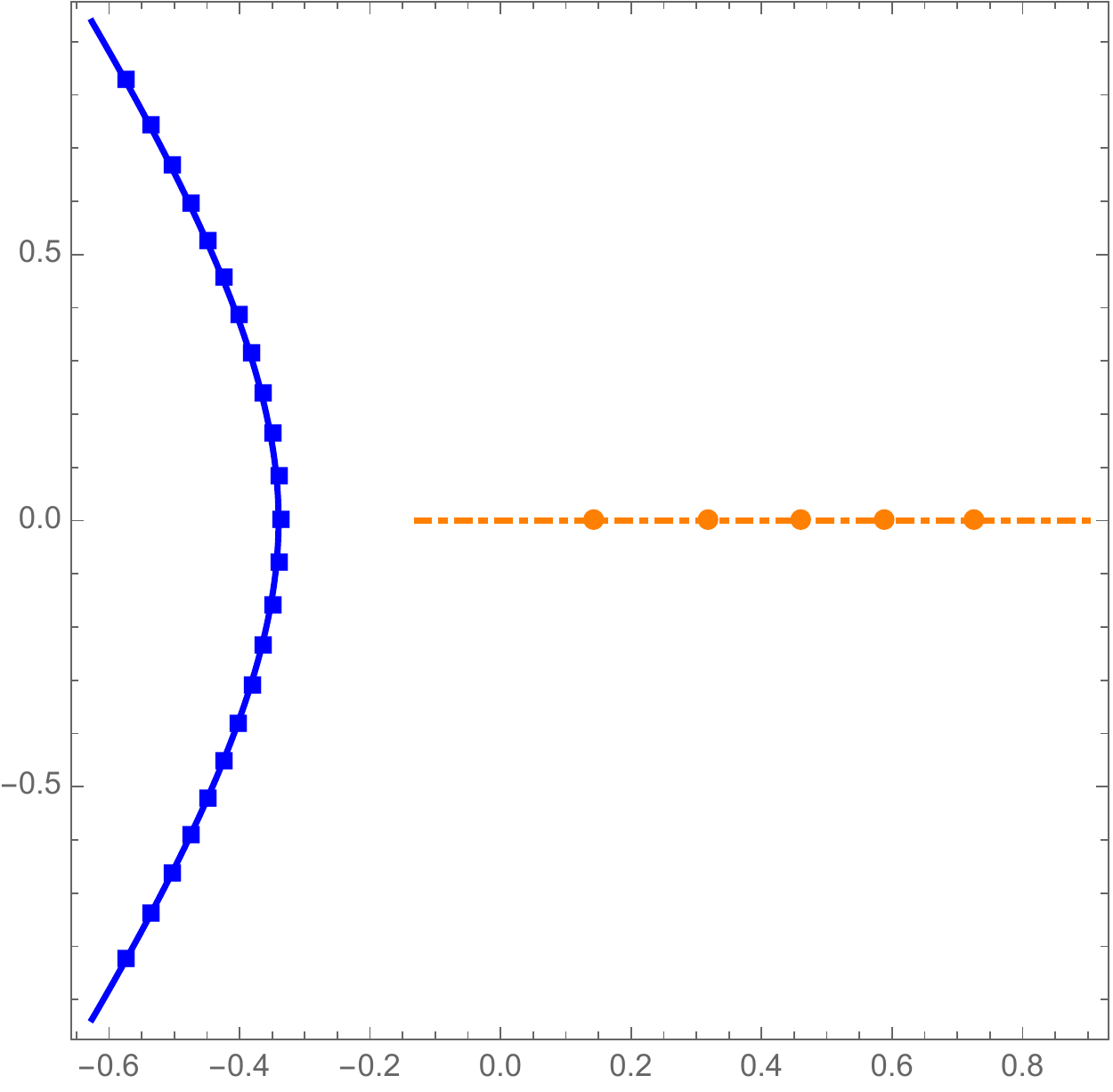}
\end{overpic}
\end{subfigure}%
\begin{subfigure}{.5\textwidth}
\centering
\begin{overpic}[scale=.45]{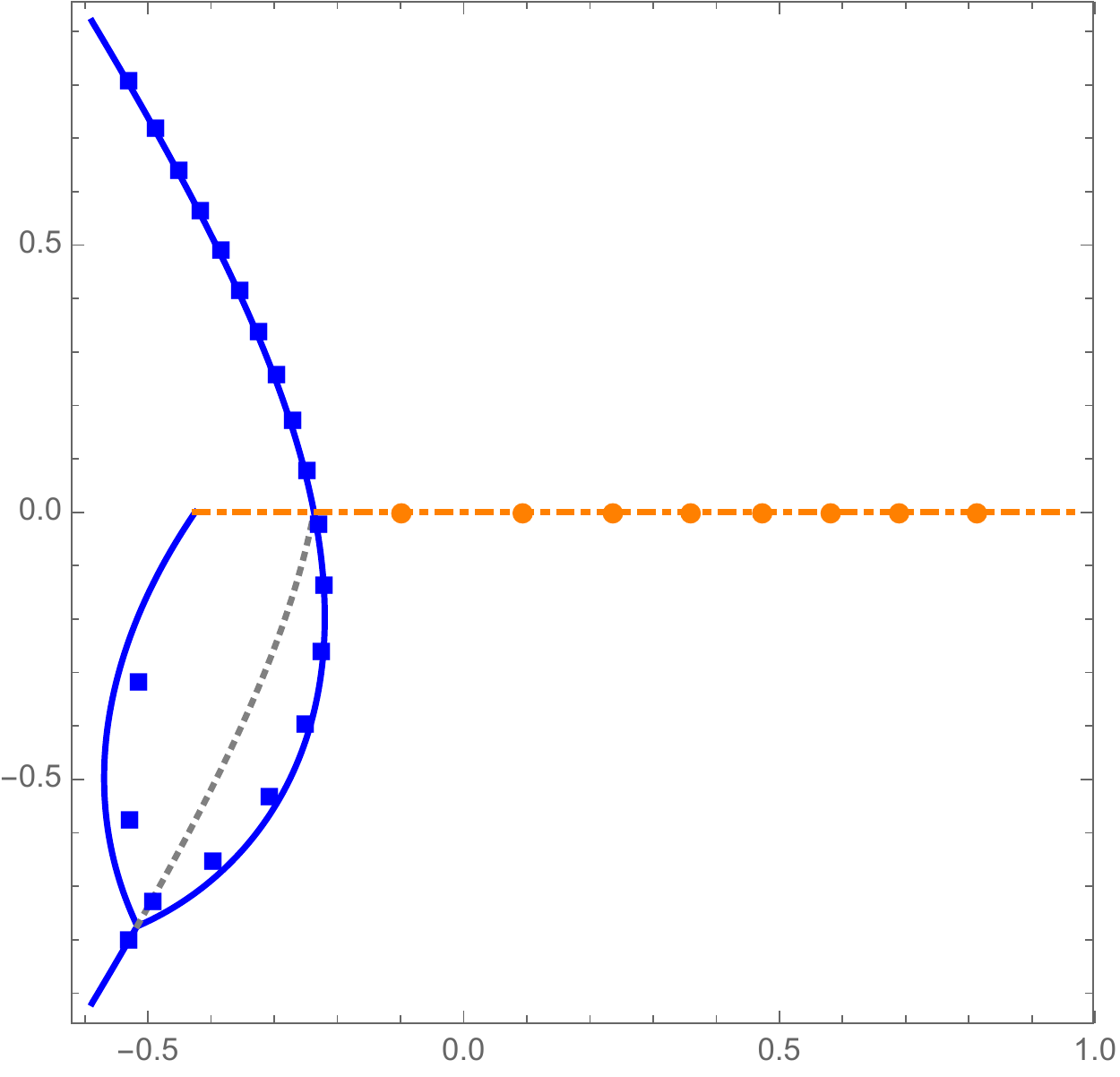}
\end{overpic}
\end{subfigure}\\
\begin{subfigure}{1\textwidth}
\centering
\begin{overpic}[scale=.45]{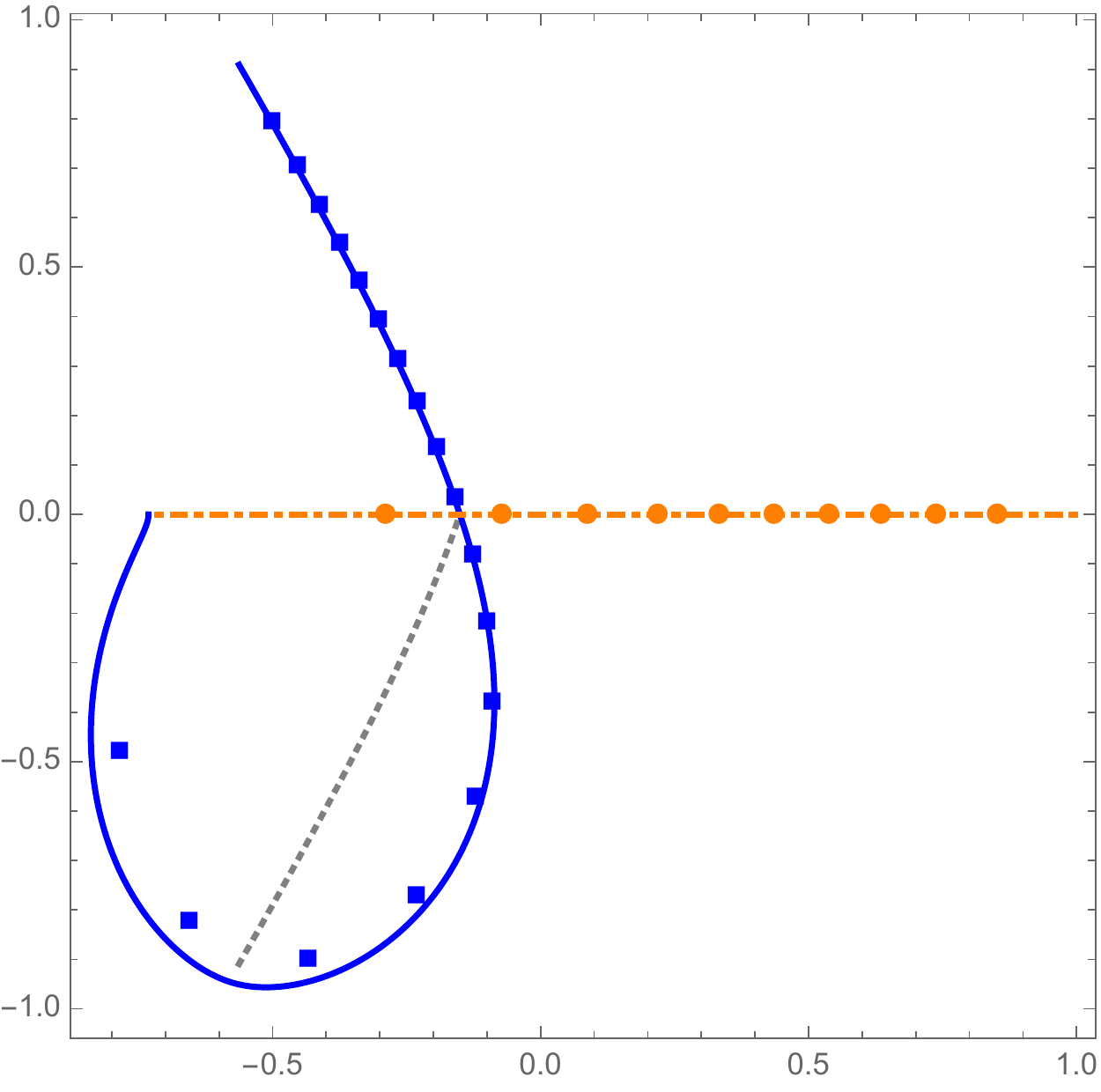}
\end{overpic}
\end{subfigure}
\caption{For $n+m=30$, the zeros of $A_{n,m}$ (dots) and $B_{n,m}$ (squares), along with the supports of $\mu_1+\mu_3$ (dot-dashed lines) and $\mu_B$ (solid lines). They are numerical outcomes for the values $(n,m)=(6,24)$ ($\alpha=0.2<\alpha_c$, top left), $(n,m)=(9,21)$ ($\alpha=0.3\in (\alpha_c,\alpha_2)$, top right) and $(n,m)=(11,19)$ ($\alpha \approx 0.366 >\alpha_2$, bottom). In the latter two, the dashed curve on the lower half plane is the part of $\supp\mu_2$ that does not coincide with $\supp\mu_B$.  }\label{figure_zeros_traj_AB}
\end{figure}

\begin{figure}
\begin{subfigure}{.5\textwidth}
\centering
\begin{overpic}[scale=.45]{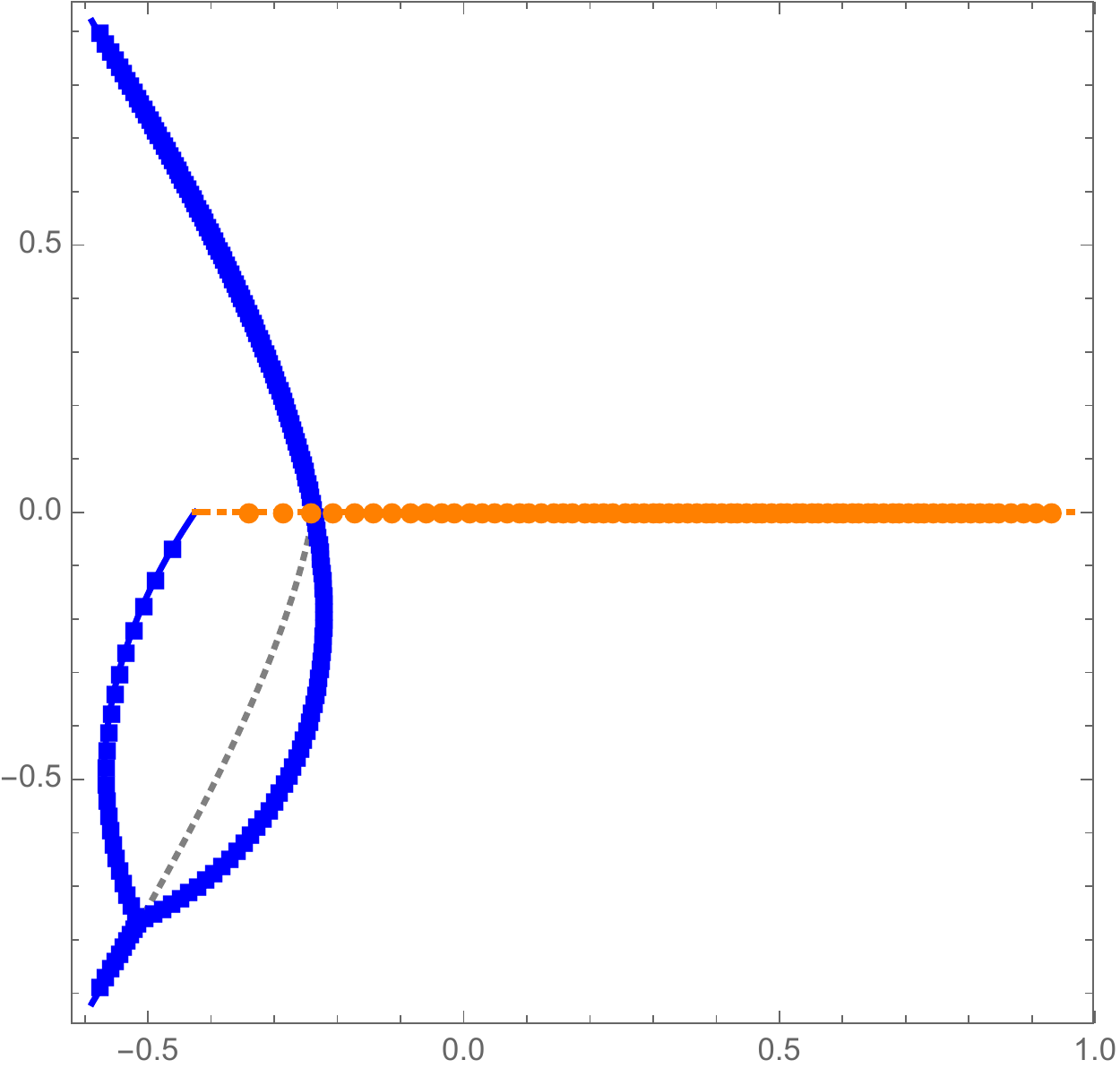}
\end{overpic}
\end{subfigure}%
\begin{subfigure}{.5\textwidth}
\centering
\begin{overpic}[scale=.45]{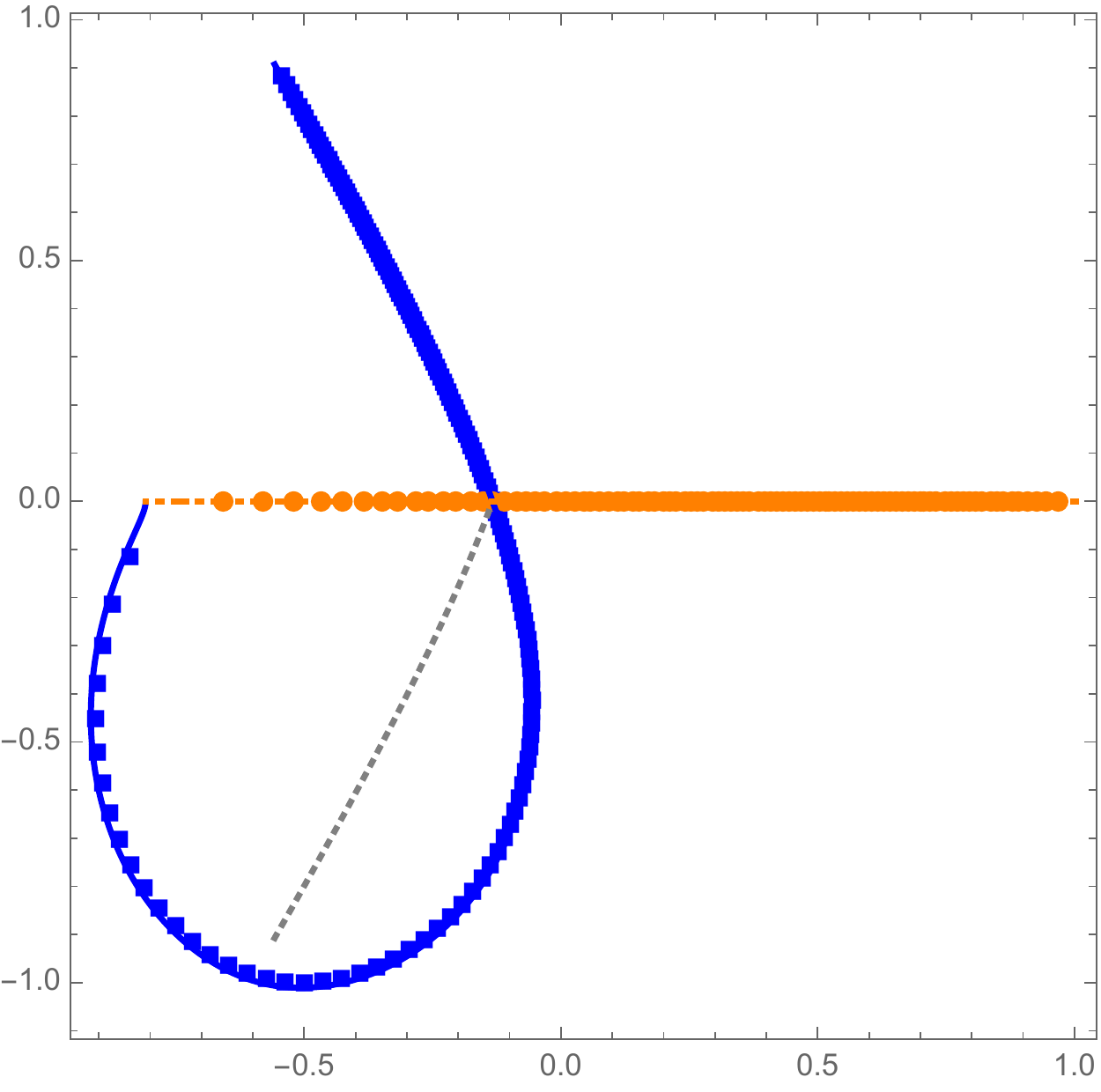}
\end{overpic}
\end{subfigure}
\caption{For $n+m=250$, the zeros of $A_{n,m}$ (dots) and $B_{n,m}$ (squares), along with the supports of $\mu_1+\mu_3$ (dot-dashed lines), $\mu_B$ (solid lines) and the part of $\supp\mu_2$ on the lower half plane that does not coincide with $\supp\mu_B$. These figures are numerical outcomes for the values $(n,m)=(75,175)$ ($\alpha=0.3\in (\alpha_c,\alpha_2)$, left) and $(n,m)=(95,155)$ ($\alpha=0.38>\alpha_2$, right).  }\label{figure_zeros_traj_AB_2}
\end{figure}

For illustration, the support of $\mu_B$ together with the zeros of $B_{n,m}$ for various choices of $\alpha$ are shown in Figures \ref{figure_zeros_traj_AB} and \ref{figure_zeros_traj_AB_2}.

\begin{remark}
The restrictions $\alpha\notin \{ 0, 1/2\}$ in Theorems \ref{theorem_zero_counting_measure}--\ref{theorem_zero_counting_measure} are made only to be consistent with our previous work \cite{martinez_silva_critical_measures}. Nevertheless, the case $n=0$ (corresponding to $\alpha=0$) was previously studied by Dea\~no, Huybrechs and Kuijlaars \cite{deano_kuijlaars_huybrechs_complex_orthogonal_polynomials}. In this case, $P_{0,m}=B_{0,m}$, $A_{0,m}=0$, $\mu_1=\mu_3=0$ and $|\mu_2|=1$. Filipuk, Van Assche and Zhang \cite{filipuk_vanassche_zhang} studied the polynomial $P_{n,m}$ for arbitrary, but finite, $n$ and $m$. When  $n=m$ (which corresponds to $\alpha=1/2$), the polynomial $P_{n,n}$ is invariant under the rotation $z\mapsto e^{2\pi i/3}z$, and using this symmetry they could also obtain asymptotics of $P_{n,n}$ as $n\to\infty$. In our language, their results say that 
\begin{equation}\label{symmetry_measures}
d\mu_2\restr{(z)}{\arg z=\pm\frac{2\pi}{3}}=d\mu_1(e^{\mp 2\pi i/3}z),
\end{equation}
and the convergence $\nu(P_{n,n})\stackrel{*}{\to}\mu_1+\mu_2$ still takes place. However, in this situation $\supp\mu_3=(-\infty,0]$, so the vector $(\mu_1,\mu_2,\mu_3)$ is not critical in the strict sense of the definition used for Theorem \ref{existence_critical_measure_cubic}, where unbounded components of the support were not allowed.  Polynomials of type I $A_{n,n}$ and $B_{n,n}$ were not considered in \cite{filipuk_vanassche_zhang}; however, their limiting zero distribution still exist and satisfy Theorems \ref{theorem_zero_counting_measure} and \ref{theorem_zero_counting_measureB}, with the feature of the symmetry  \eqref{symmetry_measures} and that
$$
\restr{\mu_B}{\H_+}=\restr{\mu_2}{\H_+},
$$
and
$$
\supp\mu_B\cap \H_-=[0,e^{-\pi i/3}\infty),\quad  d\mu_B\restr{(z)}{\H_-}=d\mu_3(e^{-2\pi i/3}z).
$$

We also strongly believe that Theorems \ref{theorem_zero_counting_measure} and \ref{theorem_zero_counting_measureB} still hold true for $\alpha=\alpha_c$. For a rigorous proof in the exceptional cases just described we need to modify the Riemann-Hilbert analysis that we carry out below. Such modifications have no additional difficulty, but are lengthy, which motivated our decision to omit them for the sake of brevity.
\end{remark}

We need to introduce further notation in order to formulate the strong asymptotic results, from which Theorems \ref{theorem_zero_counting_measure} and \ref{theorem_zero_counting_measureB} readily follow.

For $\xi_j$ as in \eqref{definition_xi_functions}, we denote by $g_j$ their primitive functions,
$$
g_j(z)=\int ^z \xi_j(s)ds,\quad j=1,2, 3. 
$$
They have different domains of definition, $g_1$ is defined in $\C\setminus (\supp\mu_1\cup\supp\mu_2)$, $g_2$ in $\C\setminus (\supp\mu_1\cup\supp\mu_3)$, and $g_3$, in $\C\setminus (\supp\mu_2\cup\supp\mu_3)$. Furthermore, $g_1$ and $g_2$ are well defined modulo $2\pi i$, so that functions $\exp(g_1)$ and $\exp(g_2)$ are single-valued in their domain of definition. These $g$-functions do depend on $\alpha$, so when necessary we will emphasize it by $g_j=g_{\alpha,j}$.

Moreover, let $f_1=(f_{11}, f_{12}, f_{13})$ be a vector of holomorphic non-vanishing functions, where $f_{1j}$ has the same domain of definition than $g_j$, $j=1, 2, 3$. These functions will be uniquely characterized by a Riemann-Hilbert (boundary value) problem and constructed explicitly in terms of certain abelian integral on the Riemann surface of the spectral curve \eqref{RD}, see Section~\ref{sec:globalparam} for details. Furthermore, 
$$
f_{11}(z)= 1 + \mathcal O(1/z), \quad f_{12}(z)= \mathcal O(1/z), \quad f_{13}(z)= \mathcal O(1/z), \qquad z\to \infty.
$$

\begin{thm} \label{thm:main}
For $n,m$ large enough such that $\alpha_N\isdef n/N\to \alpha\in (0,1/2)\setminus \{\alpha_c\}$, $N=n+m$, the polynomial $P_{n,m}$ exists, and for an appropriate constant $r_1=r_{\alpha_N,1}$, 
\begin{equation*} 
P_{n,m}(z)=f_{11}(z)e^{-N(g_{\alpha_N,1}(z)-\frac{2z^3}{3}-r_1)}(1+\Boh(N^{-1})),\quad N\to\infty,
\end{equation*}
holds locally uniformly in $\C\setminus (\supp\mu_{\alpha,1}\cup\supp\mu_{\alpha,2})$.
\end{thm}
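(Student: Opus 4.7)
The plan is to extract the strong asymptotics from the Deift--Zhou nonlinear steepest descent analysis of the $3\times 3$ Riemann--Hilbert (RH) problem $Y$ whose first row encodes $P_{n,m}$ (as recalled in Section~\ref{sec4:RH}). The normalization $Y(z) = (I + \Boh(1/z))\,\diag(z^N, z^{-n}, z^{-m})$ at infinity implies that the $(1,1)$-entry of $Y$ is exactly $P_{n,m}$, so it is enough to track this entry through every transformation.

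First I would perform the $g$-function transformation $Y \mapsto T$ using the three $g$-functions $g_{\alpha_N,1}, g_{\alpha_N,2}, g_{\alpha_N,3}$ associated to the vector critical measure $\vec\mu_{\alpha_N}$. The spectral curve \eqref{spectral_curve} together with the $S$-property inherent to $\vec\mu_{\alpha_N}$ (Theorem~\ref{existence_critical_measure_cubic}) guarantees that, after this change of variables, the jumps on $\Delta_1\cup\Delta_2\cup\Delta_3$ become unimodular oscillatory factors, while the external field $Nz^3$ is absorbed into combinations of the type $g_j - \frac{2z^3}{3} - r_j$; the constant $r_1$ in the statement is precisely the Lagrange multiplier produced by the normalization of $g_1$ at $\infty$. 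Outside the supports, $T$ is bounded at infinity and the remaining jumps are exponentially small in $N$. The sign pattern of $\Re(g_j-g_k)$ that controls this decay is exactly the information encoded by the quadratic differential $\varpi$ studied in \cite{martinez_silva_critical_measures}.

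Next I would open lenses around $\Delta_1$, $\Delta_2$ and (in the supercritical regime) $\Delta_3$, factoring every oscillatory jump as a product of two exponentially small off-diagonal jumps on the lips. The fact that the lenses can be opened inside regions where $\Re(g_j-g_k)>0$ is precisely the content of the trajectory analysis collected in Section~\ref{sec:trajectories}: the critical graph of $\varpi$ provides the geometric freedom needed to deform contours without creating new jumps, and in the supercritical regime it dictates how the lens around $\Delta_2$ must pass through $a_*$. This yields a new problem $S$ whose jumps are close to the identity away from $\Delta_1\cup\Delta_2\cup\Delta_3$ and reduce to constant matrices on each $\Delta_j$. The third step is to construct a global parametrix $P^{(\infty)}$ whose first column is $(f_{11},f_{12},f_{13})$, the object described in Section~\ref{sec:globalparam} and built from a meromorphic function on the Riemann surface of \eqref{spectral_curve}. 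Local parametrices must then be glued in small discs around each branch point $a_1,b_1,a_2,b_2$ using the standard $3\times 3$ Airy model, and, in the supercritical regime, also near the triple intersection $a_*$, where the local model arises from a confluence of two Airy parametrices. This last construction is the step I expect to be the main obstacle: for $\alpha$ approaching $\alpha_c$ two endpoints coalesce and the parametrix has to remain uniformly invertible on any compact subset of $(0,1/2)\setminus\{\alpha_c\}$, which is exactly what forces the exclusion of $\alpha_c$ from the statement.

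Finally, setting $R=S\,P^{-1}$, where $P$ denotes the global parametrix off the discs and the local parametrices inside them, one obtains an RH problem whose jumps are $I+\Boh(N^{-1})$ uniformly on a fixed contour; standard small-norm theory then yields $R=I+\Boh(N^{-1})$ locally uniformly off this contour. Unwinding $Y\to T\to S\to R$ and reading off the $(1,1)$-entry on the unbounded component for $z\notin \supp\mu_{\alpha,1}\cup\supp\mu_{\alpha,2}$ produces exactly
\[
P_{n,m}(z)=f_{11}(z)\,e^{-N\left(g_{\alpha_N,1}(z)-\frac{2z^3}{3}-r_1\right)}\bigl(1+\Boh(N^{-1})\bigr),
\]
locally uniformly in the claimed region. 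Existence of $P_{n,m}$ for all sufficiently large $N$ comes for free, since solvability of the RH problem is equivalent to the existence of the polynomial.
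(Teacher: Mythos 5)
Your outline captures the broad scheme (RH characterization of $P_{n,m}$ via the $(1,1)$-entry of $\bm Y$, $g$-function normalization, lens opening driven by the sign structure of $\Re\Phi_j$, a global parametrix $\bm F$ built from abelian integrals, Airy-type local parametrices, small-norm conclusion), and the final unwinding yielding the claimed formula is correct. However, there are two concrete deviations from the paper, one of which is a real gap.

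\textbf{Missing preliminary transformation $\bm Y\mapsto \bm X$.} Your plan goes directly from $\bm Y$ to the $g$-function transformation $\bm T$. The paper inserts a transformation $\bm Y \mapsto \bm X$ (Section~\ref{prelim}) that multiplies $\bm Y$ by $\bm I-\bm M_{32}$ on a domain $\mathcal U\subset\H_-$ containing $\Delta_2\cap\H_-$ and bounded by a contour $\gamma\subset\Omega_+$. This is not optional. After deforming the two orthogonality contours $\gamma_1,\gamma_2$ onto $\Gamma_1\cup\Gamma_2$, the jump on $\Gamma_2\cap\H_-$ is $\bm I+e^{-Nz^3}(\bm M_{12}+\bm M_{13})$, carrying \emph{both} weight matrices. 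After the $g$-function transformation the $\bm M_{12}$ contribution on and near $\Delta_2\cap\H_-$ is governed by $e^{N(g_2-g_3)}=e^{-N\Phi_3}$ up to constants, and Proposition~\ref{proposition_signs_Phi} shows that $\Re\Phi_3$ changes sign in $\H_-$ once $\tau>\tau_c$, so this term is exponentially \emph{growing} on one side of $\Delta_2\cap\H_-$. The preliminary transformation is exactly what removes this term near $\Delta_2\cap\H_-$, trading it for a jump $\bm I+e^{N(g_2-g_3)}\bm M_{32}$ on $\gamma$, which can be placed inside $\Omega_+$ where $\Re\Phi_3>0$ so that it decays. Without this step the opening of lenses around $\Delta_2\cap\H_-$ cannot produce uniformly small jumps.

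\textbf{No local parametrix at $a_*$.} You propose a local model at the triple intersection $a_*$ built from a ``confluence of two Airy parametrices.'' The paper does not construct any parametrix at $a_*$: the point $a_*$ is a double point (node), not a branch point, of the spectral curve \eqref{spectral_curve}, and the differentials $\eta_i$ defining $\bm F$ have poles only at $a_1^{(1)},b_1^{(1)},a_2^{(1)},b_2^{(1)}$ and at infinity. Hence $\bm F$ is bounded and invertible near $a_*$, and only the four discs $B_\delta(q)$, $q\in\{a_1,a_2,b_1,b_2\}$, need local parametrices (see \eqref{Rdef}). Your explanation for excluding $\alpha=\alpha_c$ (uniform invertibility of a putative $a_*$-parametrix) is also not the paper's reason: the exclusion stems from the qualitative change in the critical graph of $\varpi$ at $\tau_c$, which alters the geometry of $\Delta_3$ and of the admissible lens regions; the paper explicitly remarks that the result should extend to $\alpha_c$ after modifying the RH analysis, not that some parametrix degenerates there.
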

Since we take $P_{n,m}$ monic, constant $r_1$ can be deduced from the normalization
$$
\lim_{z\to \infty} \left( g_{\alpha_N,1}(z)-\frac{2z^3}{3}+\log z  \right) =r_1.
$$

\begin{thm} \label{thm:main2}
	For $n,m$ large enough such that $\alpha_N\isdef n/N\to \alpha\in (0,1/2)\setminus \{\alpha_c\}$, $N=n+m$, the polynomial $A_{n,m}$ exists, and for an appropriate constant $r_2=r_{\alpha_N,2}$, 
	\begin{equation*} 
2\pi i 	A_{n,m}(z)= f_{12}(z)e^{N(g_{\alpha_N,2}(z)+\frac{z^3}{3}-r_2)}(1+\Boh(N^{-1})),\quad N\to\infty,
	\end{equation*}
	locally uniformly in  $\C\setminus (\supp\mu_{\alpha,1}\cup\supp\mu_{\alpha,3})$.
\end{thm}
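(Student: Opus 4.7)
The proof of Theorem~\ref{thm:main2} re-uses wholesale the Deift--Zhou analysis developed for Theorem~\ref{thm:main}; what changes is the entry of the final matrix from which the answer is read. The $3 \times 3$ Riemann--Hilbert problem recalled in Section~\ref{sec4:RH} is tailored so that $P_{n,m}$ occupies the $(1,1)$-entry of the solution $Y$, while the type~I polynomials $A_{n,m}$ and $B_{n,m}$ appear (up to a factor of $2\pi i$) as entries of the last column of $Y^{-1}$. Once the chain $Y \mapsto T \mapsto S \mapsto R$ of Section~\ref{sec:steepestdescent} is in place and the small-norm bound $R = I + \Boh(N^{-1})$ has been established, both strong asymptotics follow by extracting the appropriate entry of $Y = (\text{diagonal and triangular factors}) \cdot M \cdot (I + \Boh(N^{-1}))$, where $M$ is the global parametrix built from $(f_{11}, f_{12}, f_{13})$ in Section~\ref{sec:globalparam}.

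Concretely, the plan is: first, track how the diagonal conjugation $Y \mapsto T$ acts on the entry of $Y^{-1}$ that carries $2\pi i A_{n,m}$; this contributes the factor $e^{N(g_{\alpha_N,2}(z) + z^3/3 - r_{\alpha_N,2})}$, the $+z^3/3$ being the footprint of the cubic weight $e^{-Nz^3}$, and $r_{\alpha_N,2}$ the normalization constant
\begin{equation*}
r_{\alpha_N,2} = \lim_{z \to \infty}\bigl(g_{\alpha_N,2}(z) + z^3/3 - \alpha_N \log z\bigr),
\end{equation*}
finite and well defined thanks to the expansion $\xi_2(z) = -z^2 + \alpha_N/z + \Boh(z^{-2})$ that follows from the mass constraint $|\mu_{\alpha_N,1}| + |\mu_{\alpha_N,3}| = \alpha_N$. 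Second, observe that on $\C \setminus (\supp\mu_{\alpha,1} \cup \supp\mu_{\alpha,3})$ the function $g_{\alpha_N,2}$ is holomorphic; in particular the lens-opening step $T \mapsto S$ contributes nothing to this entry, since the relevant lens factors are triangular and affect only entries adjacent to the supports of $\mu_1$ and $\mu_3$. Third, substitute the global parametrix, whose corresponding entry equals $f_{12}(z)$, and collect terms; the small-norm bound on $R$ yields the claimed formula locally uniformly off $\supp\mu_{\alpha,1}\cup \supp \mu_{\alpha,3}$. As a consistency check: $f_{12}(z) = \Boh(1/z)$ and $e^{N(g_{\alpha_N,2}(z) + z^3/3)} = e^{Nr_{\alpha_N,2}}\,z^n\,(1 + \Boh(1/z))$, so the product grows like $z^{n-1}$, matching $\deg A_{n,m} \leq n-1$.

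The main obstacle is the bookkeeping of entries: Theorem~\ref{thm:main2} asks for a specific component of a specific row of $Y^{-1}$, and one has to verify carefully that after all conjugations by $g$-exponentials and the (support-localized) lens factors, the resulting expression has exactly the shape stated, with the combination $g_2 + z^3/3$ in the exponent rather than $g_2$ alone or some other linear combination. This is essentially a linear-algebra exercise on $3 \times 3$ matrices, but it is the step most prone to sign and convention errors. A secondary technical point is the matching with local parametrices near the branch points $a_1, b_1, a_2, b_2$ (and at $a_*$ in the supercritical regime); since the theorem only asserts locally uniform validity off the two supports, the standard Airy-type matching, which gives an error of order $\Boh(N^{-1})$ on shrinking disks around each branch point, is enough to close the argument.
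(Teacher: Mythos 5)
Your structural plan is the same as the paper's: extract the $(2,1)$ entry of $\bm Y^{-1}$, push it through the chain $\bm Y\mapsto\bm X\mapsto\bm T\mapsto\bm S\mapsto\bm R$, and invoke $\bm R=\bm I+\mathcal O(N^{-1})$ together with the global parametrix $\bm F$. However, your bookkeeping of the diagonal conjugation gives the wrong constant in the exponent, and this is a genuine error (not a cosmetic one, because $A_{n,m}$ is not monic). Away from $\mathcal U$ we have
\[
\bm Y^{-1}(z)=\diag\!\left(e^{N(g_1(z)-\frac{2}{3}z^3)},\,e^{N(g_2(z)+\frac{1}{3}z^3)},\,e^{N(g_3(z)+\frac{1}{3}z^3)}\right)\bm T^{-1}(z)\,\diag\!\left(e^{-Nr_1},e^{-Nr_2},e^{-Nr_3}\right),
\]
with the $r_j$ from \eqref{defRforGfunctions}. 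For the $(2,1)$ entry, the row index picks up $e^{N(g_2+\frac{1}{3}z^3)}$ from the \emph{left} diagonal, while the column index picks up $e^{-Nr_1}$ from the \emph{right} diagonal --- not $e^{-Nr_2}$. The correct exponent is therefore $N\bigl(g_{\alpha_N,2}(z)+\tfrac{1}{3}z^3-r_{\alpha_N,1}\bigr)$, with $r_{\alpha_N,1}=\lim_{z\to\infty}\bigl(g_{\alpha_N,1}(z)-\tfrac{2}{3}z^3+\log z\bigr)$, the same constant as in Theorem~\ref{thm:main}; see \eqref{asymptoticsAout}, where the paper's proof itself writes $r_1$. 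Your formula uses instead the constant $r_{\alpha_N,2}$ from the expansion of $g_2$, which differs from the true one by $e^{N(r_2-r_1)}$. Your degree-growth consistency check cannot detect this, since multiplying by a constant does not change the degree; but since $A_{n,m}$ is normalized by the linear system \eqref{mops_conditionsTypeI}, not by monicity, the multiplicative constant in the strong asymptotics is part of the claim and must be right.

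Two smaller points. First, $A_{n,m}$ and $B_{n,m}$ sit in the \emph{first} column of $\bm Y^{-1}$ (as the $(2,1)$ and $(3,1)$ entries, see \eqref{ABfromY}), not the last column as you state. Second, to conclude locally uniform convergence on all of $\C\setminus(\Delta_1\cup\Delta_3)$, including on $\Delta_2\setminus\R$, the paper uses the analyticity of $g_2$ and $f_{12}$ across $\Delta_2$ (you do note this for $g_2$; for $f_{12}$ it is \eqref{conditions_m_2}) together with a Montel/normal-family argument to extend the asymptotic formula from either side of $\Delta_2$ onto $\Delta_2$ itself. Your sketch stops just short of closing that final step.
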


As we can infer from Theorem~\ref{theorem_zero_counting_measureB}, the asymptotic formula for $B_{n,m}$  is a little bit more involved, and requires introducing a sub-domain of the plane, that we denote by $\Omega_\alpha$:
\begin{enumerate}[(i)]
	\item (subcritical regime) if $0<\alpha<\alpha_c$, then $\Omega_\alpha=\emptyset$;
	\item (supercritical regime) for $\alpha>\alpha_c$, then $\Omega_\alpha$ is the bounded connected component of $\H_- \setminus E_\alpha$.
\end{enumerate}

\begin{thm} \label{thm:main3}
	For $n,m$ large enough such that $\alpha_N\isdef n/N\to \alpha\in (0,1/2)\setminus \{\alpha_c\}$, $N=n+m$, the polynomial $B_{n,m}$ exists, and for an appropriate constant $r_3=r_{\alpha_N,3}$, 
	\begin{multline*}\label{asymptotic_formula_mopsII}
	2\pi i  B_{n,m}(z) = \\ \begin{cases}
	f_{13}(z)   e^{ N(g_{\alpha_N,3}(z)+\frac{1}{3}z^3-r_3)}
	\left( 1 + \mathcal O(1/N)  \right) , & \text{loc. uniformly in } \C\setminus (\Omega_\alpha \cup \Delta_2),\\
	- f_{12}(z)   e^{ N(g_{\alpha_N,2}(z)+\frac{1}{3}z^3-r_3)}  
	\left( 1 + \mathcal O(1/N)  \right), & \text{loc. uniformly in } \Omega_\alpha.  
	\end{cases}
	\end{multline*}
\end{thm}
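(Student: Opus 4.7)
The plan is to read off the asymptotics of $B_{n,m}$ from the Deift--Zhou steepest descent analysis of the $3\times 3$ matrix Riemann--Hilbert problem that simultaneously encodes both types of MOPs (Section~\ref{sec4:RH}). In the Van~Assche formulation, $B_{n,m}$ appears (up to the factor $2\pi i$) as one specific entry of the inverse of the RHP matrix $Y$, namely the entry associated to the second row involving $\gamma_2$. Since Theorem~\ref{thm:main} for $P_{n,m}$ and Theorem~\ref{thm:main2} for $A_{n,m}$ are obtained from two other entries of $Y$ (respectively $Y_{11}$ and one of the entries involving $\gamma_1$), the formal derivation for $B_{n,m}$ is structurally identical; the only non-trivial new feature is the case distinction between $\C\setminus(\Omega_\alpha\cup\Delta_2)$ and $\Omega_\alpha$.

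First I would apply the full chain of transformations $Y\mapsto X \mapsto T \mapsto S \mapsto R$ already set up in Section~\ref{sec:steepestdescent}: the preliminary transformation highlighted in the introduction, the $g$-function normalization built from the three functions $g_j(z)=\int^z \xi_j(s)\,ds$ (which produces the exponential factors $e^{\pm N g_j}$), the opening of lenses along the supports $\Delta_1,\Delta_2,\Delta_3$ guided by the sign structure of $\re(g_j-g_k)$ dictated by the trajectories of $\varpi$ from \cite{martinez_silva_critical_measures}, the replacement by the global parametrix $M$ built from the vector $(f_{11},f_{12},f_{13})$ constructed in Section~\ref{sec:globalparam}, and the local Airy-type parametrices at the four (or five, in the supercritical regime) branch points. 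The small-norm conclusion $R=I+\Boh(N^{-1})$ is then standard. Pulling these transformations back into the entry encoding $B_{n,m}$ gives an expression of the form
\begin{equation*}
2\pi i B_{n,m}(z) = f_{13}(z) e^{N(g_{\alpha_N,3}(z)+\tfrac{1}{3}z^3 - r_3)}(1+\Boh(N^{-1})) + f_{12}(z) e^{N(g_{\alpha_N,2}(z)+\tfrac{1}{3}z^3 - r_3)}(1+\Boh(N^{-1})),
\end{equation*}
where the two terms come from the $(3,j)$ and $(2,j)$ entries of $M^{-1}$ combined with the jump matrices picked up when passing from the $S$ variable back to $Y$. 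The common constant $r_3$ is fixed by the behavior of $g_{\alpha_N,3}+\tfrac{1}{3}z^3$ at infinity, ensuring that $R$ is normalized to $I$ at $\infty$.

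The case distinction then arises from comparing $\re(g_2 - g_3)$: across $\Delta_2$, the two functions $g_2$ and $g_3$ are analytic continuations of each other, and the sign of $\re(g_2-g_3)$ on the two sides of $\Delta_2$ is controlled by the critical trajectories of $\varpi$ already analyzed in Section~\ref{sec:trajectories}. In the subcritical regime ($\alpha<\alpha_c$), one has $\Omega_\alpha=\emptyset$ and $\re(g_3-g_2)>0$ everywhere outside $\Delta_2$, so the $g_3$-term dominates and absorbs the second term into the error. In the supercritical regime, the critical graph of $\varpi$ produces the bounded pocket $\Omega_\alpha\subset\H_-$ bounded by $E_\alpha$ where the roles of $g_2$ and $g_3$ invert; this is precisely the same mechanism that determines the shape of $E_\alpha$ in Theorem~\ref{theorem_zero_counting_measureB}, since the zero lines of $\re(g_2-g_3)$ are the trajectories on which the balayage of $\mu_2-\mu_3$ lives. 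Outside $\Omega_\alpha$ the first term dominates; inside $\Omega_\alpha$, the second term (with the negative sign, coming from the signature of the corresponding jump residue collected as lenses are opened across $E_\alpha$) does.

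The main technical obstacle will be the bookkeeping of which lens contour and which jump factor is responsible for each exponential on each side of $E_\alpha$, so that the correct branches of $f_{12},f_{13}$ and the correct sign in front of the $\Omega_\alpha$-expression are obtained. Two subtleties make this delicate: first, the opening of lenses has to be deformed to match the $E_\alpha$ contour rather than $\Delta_2$ in the lower half-plane, since only along $E_\alpha$ are the jump matrices exponentially close to identity; second, the phase transition at $\alpha=\alpha_c$ and the secondary transition at $\alpha_2$ (where $E_\alpha$ changes topology, cf. Theorem~\ref{theorem_zero_counting_measureB}) change the combinatorics of the lens configuration. Nevertheless, in both the intermediate and supercritical regimes the dominant contribution inside $\Omega_\alpha$ is the $g_2$-term, so a unified formula as stated holds. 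Once these choices are made consistently, unwinding $R\to S\to T\to X\to Y$ and reading off the relevant entry yields precisely the asserted formula with the error $\Boh(N^{-1})$ locally uniform away from $\Delta_2$ and the other active cuts.
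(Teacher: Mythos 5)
Your overall strategy—invert the chain of transformations, extract the $(3,1)$ entry of $\bm Y^{-1}$, and decide dominance of the two exponentials by the sign of $\re(g_2-g_3)$ across $\Omega_\alpha$—is the right one and matches the paper's. However, your account of \emph{where} the two-term structure and the minus sign come from is incorrect, and this is not a cosmetic point because it is the heart of the proof.

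You write down a ``universal'' two-term expression
\[
2\pi i B_{n,m}(z)=f_{13}(z)e^{N(g_3+\frac{1}{3}z^3-r_3)}(1+\Boh(N^{-1}))
\;{\color{black}+}\;
f_{12}(z)e^{N(g_2+\frac{1}{3}z^3-r_3)}(1+\Boh(N^{-1}))
\]
with a plus sign, valid everywhere, and then claim the minus sign in $\Omega_\alpha$ appears later from ``the signature of the corresponding jump residue collected as lenses are opened across $E_\alpha$.'' Neither statement is right. Outside the domain $\mathcal U$ bounded by $\gamma$ (the contour introduced in the preliminary transformation of Section~\ref{prelim}), inverting the steepest-descent chain gives a \emph{single}-term formula $2\pi i B_{n,m}= f_{13}e^{N(g_3+\frac{1}{3}z^3-r_1)}(1+\Boh(N^{-1}))$; the $f_{12}$ term is simply not present there. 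It is only inside $\mathcal U$, where the preliminary step $\bm X=\bm Y(\bm I-\bm M_{32})$ forces $\bm Y^{-1}=(\bm I-\bm M_{32})\bm X^{-1}$, that the row vector $\begin{pmatrix}0&0&1\end{pmatrix}$ gets replaced by $\begin{pmatrix}0&-1&1\end{pmatrix}$ and produces the two-term formula \emph{with the minus sign built in from the start}. Thus the sign and the case distinction come entirely from the preliminary $\bm M_{32}$ factor, not from lens residues. Related to this, your statement that ``the opening of lenses has to be deformed to match the $E_\alpha$ contour rather than $\Delta_2$'' also misattributes the mechanism: the lenses are opened around $\Delta_1,\Delta_2,\Delta_3$ as usual; what is deformed towards $E_\alpha\cap\H_-$ is the contour $\gamma=\partial\mathcal U$, along which the jump $\bm I+e^{N(g_2-g_3)}\bm M_{32}$ is exponentially close to $\bm I$ precisely because $\re\Phi_3>0$ on $\Omega_+$.

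Two smaller inaccuracies: $B_{n,m}$ is (up to $2\pi i$) the $(3,1)$ entry of $\bm Y^{-1}$, i.e., the \emph{third} row, not the second; and the secondary transition at $\alpha_2$ does not change the lens combinatorics at all—only the topology of $\Omega_\alpha$ and $E_\alpha$ changes, while the lens configuration is fixed throughout the supercritical regime. Once the two-term formula is obtained only on $\mathcal U$ with the correct sign, your dominance argument (absorb the subdominant exponential into the error, then extend by analyticity across $\Delta_1$, $\Delta_3$, and the harmless parts of $\Delta_2$) does carry through exactly as in the paper.
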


\begin{remark}
In Theorems \ref{thm:main}--\ref{thm:main3}, the functions $f_{1j}$ are defined on the Riemann surface $\mathcal R$ of the spectral curve \eqref{spectral_curve} for the limiting value $\alpha$, so they do not depend on $\alpha_N$. As it was mentioned earlier, they can be constructed  in terms of certain abelian integral on $\mathcal R$. Taking advantage of the fact that $\mathcal R$ is of genus $0$, we could \textit{in theory} use the rational parametrization of $\mathcal R$ to give a more explicit representation of $f_{1j}$'s. However attractive, a practical implementation of this program requires finding this rational parametrization, and in virtue of the relatively high total degree of \eqref{spectral_curve} this turned out to be a formidable task that we were not able to complete. 
\end{remark}

\section{Spectral curve and auxiliary functions} \label{sec:spectralcurve}

In this section, for convenience of the reader, we summarize some properties of the solutions of the spectral curve \eqref{spectral_curve}--\eqref{c} for $\alpha\in [0,1/2]$, or equivalently, for 
$$
\tau=\alpha(1-\alpha)\in [0, 1/4],
$$
and introduce some related auxiliary functions. It was shown in \cite{martinez_silva_critical_measures} that the algebraic equation \eqref{spectral_curve}  with coefficients given by \eqref{RD} and \eqref{c}, has four branch points and a double point. Two of the branch points, $a_1<b_1$, are 
real and the other two form a complex conjugate pair: $b_2=\overline{a_2}$, $\Im a_2<0$. These are the same $a_j$'s and $b_j$'s used in the previous section to denote the end points of the support of the components $\mu_k$ of the vector critical measure $\vec \mu_\alpha$. For $\tau \neq 1/12$ all these points are distinct, while when
$\tau= 1/12$, the double point and one of the real branch points of \eqref{spectral_curve} coalesce.

The three solutions $\xi_1,\xi_2,\xi_3$ of \eqref{spectral_curve} are determined by their asymptotic expansion at infinity,
\begin{equation}\label{asymptotics_xi_functions}
\begin{split}
& \xi_1(z)=2z^2-\frac{1}{z}+\Boh(z^{-2}),\\
& \xi_2(z)=-z^2+\frac{\alpha}{z}+\Boh(z^{-2}),\\
& \xi_3(z)=-z^2+\frac{1-\alpha}{z}+\Boh(z^{-2}),
\end{split}
\end{equation}
and by their domain of definition: $\xi_1$ is defined on $\C\setminus (\Delta_1\cup \Delta_2)$, $\xi_2$ on $\C\setminus (\Delta_1\cup \Delta_3)$, and $\xi_3$ on $\C\setminus  (\Delta_2\cup \Delta_3)$, images of the respective sheets $\mathcal R_j$ onto $\C$ by canonical projection, see Figure~\ref{figure_sheet_structure}. Each $\xi_j$ is a meromorphic function in its domain of definition, with the only pole at infinity.

\begin{figure}[t]
	\begin{center}
		\begin{minipage}[c]{0.5\textwidth}
			\begin{overpic}[scale=0.4,grid=false]{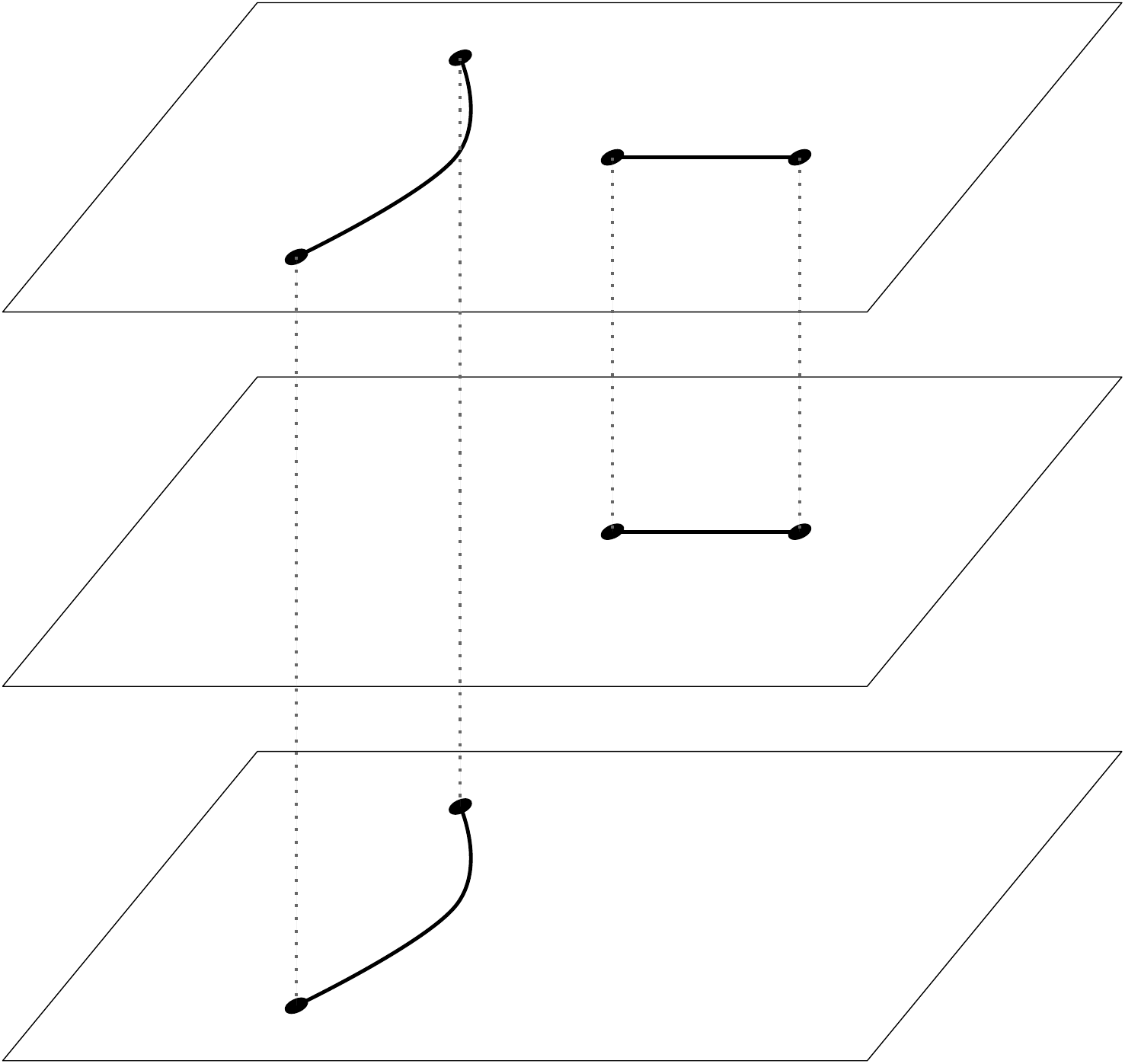}
				\put(170,140){$ \mathcal R_1$}
				\put(170,80){$ \mathcal R_2$}
				\put(170,20){$ \mathcal R_3$}
				\put(103,141){$ \Delta_1$}
				\put(55,119){$ \Delta_2$}
				\put(80,78){$ a_1$}
				\put(126,78){$ b_1$}
				\put(56,38){$ b_2$}
				\put(31,7){$ a_2$}
			\end{overpic}
		\end{minipage}%
		\begin{minipage}[c]{0.5\textwidth}
			\begin{overpic}[scale=0.4]{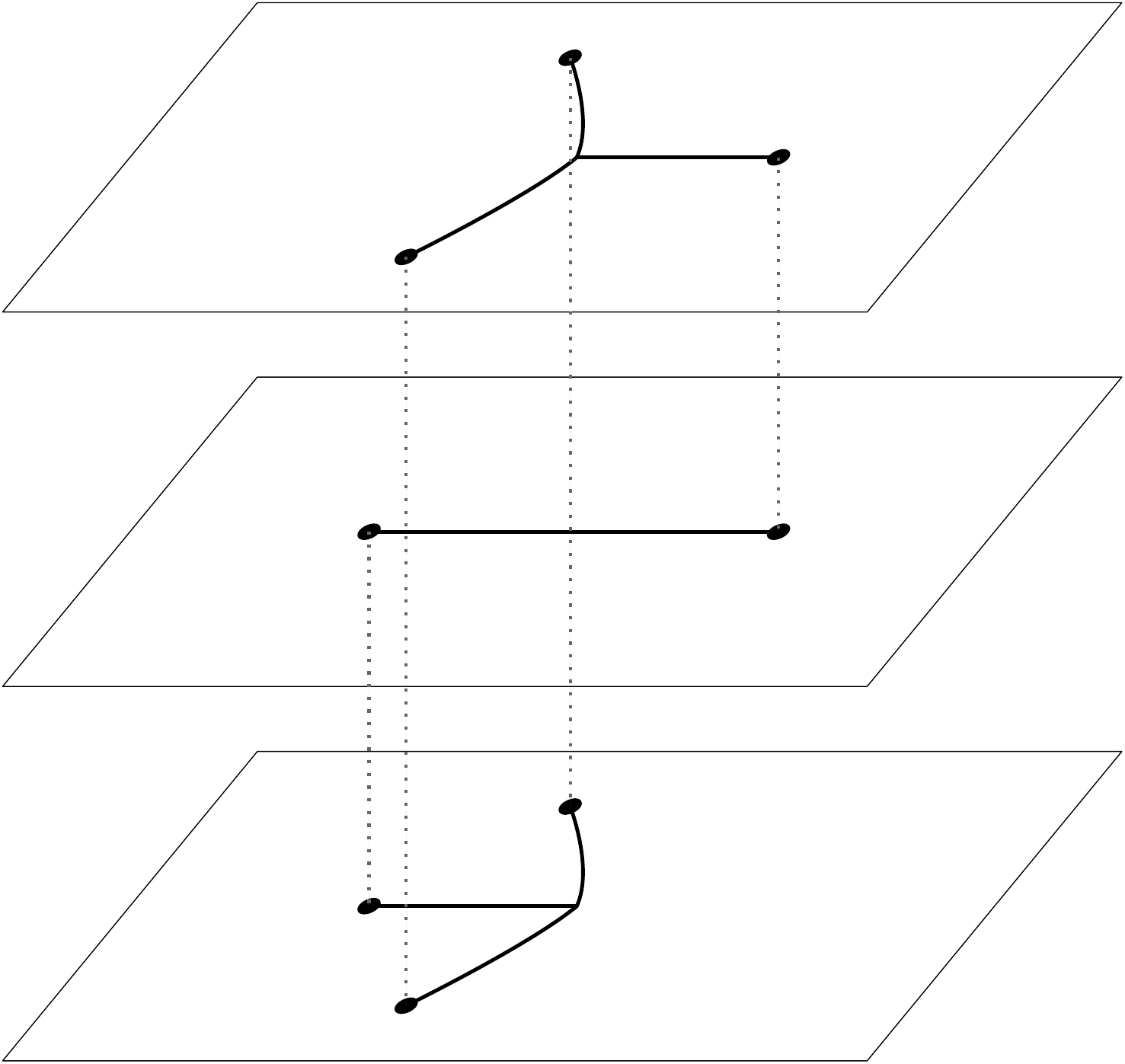}
				\put(100,141){$ \Delta_1$}
				\put(70,119){$ \Delta_2$}
				\put(65,27){$ \Delta_3$}
				\put(44,78){$ a_1$}
				\put(123,78){$ b_1$}
				\put(75,38){$ b_2$}
				\put(49,7){$ a_2$}
				\put(90,21){$ a_*$}
			\end{overpic}
		\end{minipage}%
		\caption{Sheet structure in the \emph{subcritical} $0<\tau<\tau_c$ (left) and \emph{supercritical} $\tau_c<\tau<1/4$ (right) regimes (reproduced from \cite{martinez_silva_critical_measures}).}\label{figure_sheet_structure}
	\end{center}
\end{figure}

Let us discuss some notation that we will use frequently. According to a  standard convention, any orientation of a piece-wise smooth arc on $\C$ induces the left (denoted by the subscript ``$+$'') and right (with the subscript ``$-$'') boundary values of functions defined in a neighborhood of such an arc.

For what comes next, recall the definition of the upper and lower half planes $\H_+$ and $\H_-$ in \eqref{upper_lower_half_plane_def}. In addition, for $\varepsilon>0$ and $a\in \C$, set $B(a,\varepsilon)\isdef \{ z\in \C:\, |z-a|<\varepsilon \}$. If $\mathcal U$ is a domain on $\C$, then $\partial \mathcal U$ stands for its boundary.

Moreover, if $\Gamma$ is a finite union of smooth arcs, then $\overset{\circ}{\Gamma}$ the union of all $a\in \Gamma$ for which there exists a small $\varepsilon>0$ such that $B(a,\varepsilon)\cap \Gamma$ is a smooth Jordan arc. Loosely speaking, $\overset{\circ}{\Gamma}$ is $\Gamma$ with all its end points, points of self-intersection and discontinuities of the derivative removed.

Now we can formulate the result that appears as Proposition 4.3 in \cite{martinez_silva_critical_measures}, which will be the source of the needed identities and properties of $\xi_j$'s: 
\begin{prop}  \label{proposition_distribution_branchpoints} 
	Let $\tau \in (0,1/4)$, $\tau \neq \tau_c\isdef \alpha_c(1-\alpha_c)\approx 0.1913565$. Then
	\begin{enumerate}
		\item[(i)] for $x\in \R\setminus (\Delta_1\cup \Delta_2\cup \Delta_3)$,
		\begin{align}
			& \xi_3(x)<\xi_2(x)<\xi_1(x), && x<\min(a_*,a_1),  \label{inequality_xi_1}\\
				& \xi_1(x)<\xi_2(x)<\xi_3(x), && a_* < x< a_1, \quad \text{if } 0<\tau <\tau_c,  \label{inequality_xi_3} \\
			& \xi_2(x)<\xi_3(x)<\xi_1(x), && x>b_*,  \label{inequality_xi_2} \\
			& \xi_2(x)  <\xi_1(x)<\xi_3(x), && b_1<x<b_*, \quad \text{if} \quad  0<\tau<1/12, \label{inequality_xi_4} \\
			& \xi_3(x)  <\xi_2(x)<\xi_1(x), &&  b_1<x<b_*, \quad \text{if} \quad  1/12<\tau<1/4. \label{inequality_xi_5}
		\end{align}
		Additionally,
		\begin{equation}\label{coinciding_nodes}
			\begin{split}
				\xi_2(b_*)<\xi_3(b_*)& =\xi_1(b_*), \quad \text{for} \quad  0<\tau<1/12,\\
				\xi_2(b_*)=\xi_3(b_*) & <\xi_1(b_*), \quad \text{for} \quad  1/12<\tau<1/4.
			\end{split}
		\end{equation}
		\item[(ii)] on $  \Delta_1\cup \Delta_2\cup \Delta_3$,
		\begin{itemize}
			\item  for  $x \in \overset{\circ}{\Delta}_1:=\Delta_1\setminus \{\max(a_1,a_*), b_1 \}$, 
			\begin{equation}
				\xi_2(x)=\overline{\xi_1(x)}\in \C\setminus\R, \quad \xi_3(x)\in \R,   \label{equality_xi_1}
			\end{equation}
			and
			\begin{equation}
				\xi_{1\pm}(x)  = \xi_{2\mp}(x), \quad  \xi_{3+}(x)  = \xi_{3-}(x).  \label{equality_xi_2}
			\end{equation}
			\item for  $z \in \overset{\circ}{\Delta}_2:=\Delta_2\setminus \{a_2, b_2 ,\max(a_1,a_*)\}$,
			\begin{equation}
				\xi_{1\pm}(z)= \xi_{3\mp}(z),\quad  \xi_{2+}(z) = \xi_{2-}(z).  \label{equality_xi_3}
			\end{equation}
			\item for  $x \in \overset{\circ}{\Delta}_3:=\Delta_3\setminus \{a_1, a_* \}$ (when $\tau >\tau_c$),
			\begin{equation}
				\xi_2(x)=\overline{\xi_3(x)}\in \C\setminus\R, \quad \xi_1(x)\in \R,   \label{equality_xi_4}
			\end{equation}
			and
			\begin{equation}
				\xi_{2\pm}(z)= \xi_{3\mp}(z),\quad  \xi_{1+}(z) = \xi_{1-}(z).  \label{equality_xi_5}
			\end{equation}
			
			Moreover,
			\begin{align}
				\xi_1(a_1)  &= \xi_2( a_1 ),\quad \text{if } \tau < \tau_c, \label{xi_at_branchpoints1} \\
				\xi_3(a_1)  &= \xi_2( a_1 ),\quad \text{if } \tau > \tau_c, \label{xi_at_branchpoints2} \\
				\xi_1(b_1)  &= \xi_2( b_1 ),  \label{xi_at_branchpoints3} \\
				\xi_1(a_2) & = \xi_3(a_2) \quad \text{and} \quad \xi_1(b_2)  =\xi_3(b_2). \label{xi_at_branchpoints4}
			\end{align}
			
		\end{itemize}

	\end{enumerate}
	
\end{prop}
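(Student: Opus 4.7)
The plan is to treat $\xi_1,\xi_2,\xi_3$ globally as the three branches of the algebraic equation \eqref{spectral_curve} on $\C$, with $R,D$ real polynomial coefficients, distinguished by the asymptotic normalizations \eqref{asymptotics_xi_functions}. The central object I would use is the discriminant with respect to $\xi$,
$$
\discr(z)\isdef 4R(z)^3-27D(z)^2,
$$
a real polynomial whose real zeros partition $\R$ into intervals on which either $\discr(x)>0$ (all three roots real) or $\discr(x)<0$ (one real, two complex conjugate). A direct factorization (carried out in \cite{martinez_silva_critical_measures}) identifies those real zeros with $a_1,b_1,b_*$ always, and $a_*$ additionally in the supercritical regime, and matches the sign pattern exactly with the intervals appearing in (i) and with $\Delta_1\cup\Delta_3$ in (ii).

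For part (i), on each interval on which $\discr(x)>0$ the three $\xi_j(x)$ are real, and since they can coincide only where $\discr$ vanishes, their pointwise ordering is constant on each connected component. I would fix the ordering on the two unbounded intervals from \eqref{asymptotics_xi_functions}: as $x\to+\infty$, $\xi_1\sim 2x^2\to+\infty$ while $\xi_2(x)-\xi_3(x)\sim(2\alpha-1)/x<0$, giving \eqref{inequality_xi_2}; similarly \eqref{inequality_xi_1} as $x\to-\infty$. On the bounded intervals $(a_*,a_1)$ and $(b_1,b_*)$ the ordering is then determined by tracking which pair of $\xi_j$'s collides at each endpoint: at a simple branch point of $\mathcal R$ where sheets $\mathcal R_i$, $\mathcal R_j$ are glued (Figure~\ref{figure_sheet_structure}), $\xi_i$ and $\xi_j$ swap ordinal positions as $x$ crosses, while at the real double point $b_*$ two branches coincide without swapping, which yields \eqref{coinciding_nodes}. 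The dichotomy in \eqref{inequality_xi_4}--\eqref{inequality_xi_5} at $\tau=1/12$ reflects a change in which pair of branches the double point fuses, and is verified by continuity together with a single check at a value of $\tau$ on either side of $1/12$.

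For part (ii), on $\overset{\circ}{\Delta}_k$ we have $\discr(x)<0$, so one $\xi_j$ is real and the other two form a complex conjugate pair. Schwarz reflection applied to \eqref{spectral_curve} (real coefficients) gives $\overline{\xi_j(\bar z)}$ again as a branch; combined with \eqref{asymptotics_xi_functions} and the sheet pairings already fixed in (i), this identifies which two branches form the conjugate pair on each $\Delta_k$, producing \eqref{equality_xi_1}, \eqref{equality_xi_3}, \eqref{equality_xi_4}. The boundary-value identities \eqref{equality_xi_2}, \eqref{equality_xi_3}, \eqref{equality_xi_5} then follow, because crossing $\Delta_k$ from $\H_+$ to $\H_-$ is analytic continuation around an endpoint, which by the sheet structure exchanges precisely that same pair. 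The junction identities \eqref{xi_at_branchpoints1}--\eqref{xi_at_branchpoints4} are immediate: at a true branch point the two glued sheets fuse, and (i) has already specified which two. The main obstacle is purely combinatorial—namely, the careful bookkeeping of sheet pairings in the subcritical vs.\ supercritical regimes and at $\tau=1/12$—and is handled cleanly by the explicit factorization of $\discr(z)$ together with the sheet diagrams of Figure~\ref{figure_sheet_structure} established in \cite{martinez_silva_critical_measures}.
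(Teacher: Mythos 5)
The paper does not prove this proposition here: it is quoted verbatim (as Proposition 4.3) from the companion paper \cite{martinez_silva_critical_measures}, so there is no in-paper proof to compare against. Judged on its own, your overall strategy --- compute the discriminant $\discr(z)=4R(z)^3-27D(z)^2$, use its sign to detect real-versus-conjugate triples, pin down orderings as $x\to\pm\infty$ from \eqref{asymptotics_xi_functions}, and propagate them by tracking root collisions at branch points and the double point --- is the natural one and correctly handles the asymptotic anchors \eqref{inequality_xi_1}, \eqref{inequality_xi_2}, the node identity \eqref{coinciding_nodes}, and the $\tau=1/12$ dichotomy.

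There is, however, a concrete gap at $a_*$. The polynomial $\discr$ has degree exactly $6$ (the $z^{12}$, $z^{9}$, and $z^{8}$ coefficients of $4R^3$ and $27D^2$ cancel; the $z^{6}$ coefficient equals $81(1-4\tau)\neq 0$ for $\tau<1/4$), and its six zeros are precisely the four branch points $a_1,b_1,a_2,b_2$ (simple) together with the node $b_*$ (double). The point $a_*$ is therefore never a zero of $\discr$, contrary to your claim that it is ``additionally in the supercritical regime''; in the subcritical case $\discr(a_*)>0$ and in the supercritical case $\discr(a_*)<0$. Consequently the sign pattern of $\discr$ on $\R$ does not by itself reproduce the partition used in part~(i): in the subcritical regime $\discr>0$ on all of $(-\infty,a_1)$, yet the ordering of $(\xi_1,\xi_2,\xi_3)$ is different on $(-\infty,a_*)$ and on $(a_*,a_1)$, as \eqref{inequality_xi_1} versus \eqref{inequality_xi_3} show. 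The mechanism at $a_*$ is not a root collision but the crossing of the branch cut $\Delta_2$ with $\R$, which exchanges the labels $\xi_1\leftrightarrow\xi_3$ (in line with $\xi_{1\pm}=\xi_{3\mp}$ on $\Delta_2$ from \eqref{equality_xi_3} and the sheet structure of Figure~\ref{figure_sheet_structure}) while the unordered triple of roots moves continuously. Your bookkeeping --- ``their pointwise ordering is constant on each connected component'' of $\{\discr>0\}\cap\R$, with swaps recorded only ``at each endpoint'' where a pair of $\xi_j$'s collides --- therefore cannot produce \eqref{inequality_xi_3}. To close the argument you must add $a_*$ as an explicit label-swap point for $\xi_1$ and $\xi_3$, coming from the cut $\Delta_2$ rather than from the discriminant; once that is done the rest of the plan (Schwarz reflection for part~(ii), sheet-gluing at the branch points for \eqref{xi_at_branchpoints1}--\eqref{xi_at_branchpoints4}) goes through.
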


Also, it follows from \eqref{asymptotics_xi_functions} and the structure of cuts $\Delta_j$ that $\xi_j$ have  the following periods:
\begin{equation}\label{periods}
 \ointctrclockwise_{\Delta_1\cup\Delta_2}\xi_1   = -2\pi i  ,\quad \ointctrclockwise_{\Delta_1\cup\Delta_3}\xi_2  = 2\pi i\alpha, \quad  \ointctrclockwise_{\Delta_2\cup\Delta_3}\xi_3  = 2\pi i (1-\alpha).
\end{equation}

We define measures $\mu_1$, $\mu_2$, $\mu_3$ on $\Delta_1$, $\Delta_2$, $\Delta_3$, respectively, through the formulas
\begin{equation}\label{definition_measures_mu}
\begin{split}
d\mu_1(s) &\isdef \frac{1}{2\pi i}(\xi_{1+}(s)-\xi_{2+}(s))ds, \quad s\in \Delta_1,\\
d\mu_2(s) & \isdef \frac{1}{2\pi i}(\xi_{1+}(s)-\xi_{3+}(s))ds, \quad s\in \Delta_2,\\[2mm]
\text{and } & \\
d\mu_3(s) & \isdef \begin{cases}
0, & \text{if } 0<\tau<\tau_c, \\[1mm]
\dfrac{1}{2\pi i}(\xi_{3+}(s)-\xi_{2+}(s))ds, \quad s\in \Delta_3, & \text{if } \tau_c<\tau<1/4, \\
\end{cases}
\end{split}
\end{equation}
where $ds$ denotes the complex line element on the respective arc. As it was shown in \cite[Proposition 3.4]{martinez_silva_critical_measures}, these are bona fide positive measures, whose total masses satisfy the constraints \eqref{massconstraints}.

Furthermore, we define
\begin{equation}\label{defGfunctions}
\begin{split}
& g_1(z)\isdef \int_{b_1}^z \xi_1(s)ds + c_1,\quad z\in \C\setminus ((-\infty,b_1)\cup \Delta_2), \\
& g_2(z) \isdef \int_{b_1}^z \xi_2(s)ds + c_2,\quad z\in \C\setminus (-\infty,b_1), \\
& g_3(z) \isdef  \int_{a_*}^z \xi_3(s)ds + c_3,\quad z\in \C\setminus ((-\infty,a_*)\cup \Delta_2), 
\end{split}
\end{equation}
where for $g_3$ the path of integration emanates from $a_*$ in the direction $+\infty$. 
The constants are 
\begin{equation}\label{g_functions_constants}
c_1=c_2\isdef \int_{a_*}^{a_2}(\xi_{3-}(s)-\xi_{3+}(s))ds,\quad c_3\isdef -\int_{a_*}^{b_1}\xi_{1-}(s)ds.
\end{equation}
Notice that $c_1, c_2\in i\R_-$, which follows from an alternative representation 
$$
c_1=-\pi i |\mu_2|=\int_{b_2}^{a_*}(\xi_{1+}(s)-\xi_{3+}(s))ds, 
$$
where $\mu_2$ is second component of the critical measure $\vec \mu$ (see \eqref{definition_measures_mu}). 

In their domains of definition, $\xi_i(\overline{z})=\overline{\xi_i(z)}$, $i=1, 2, 3$, and thus
\begin{equation}\label{symmetryforG}
g_i(\overline{z})-c_i=\overline{g_i(z)-c_i}, \quad i=1, 2, 3,
\end{equation}
were we again consider $z$ in the appropriate domain of definitions of the function.

Notice that from the asymptotic expansions in \eqref{asymptotics_xi_functions}, we get the existence of constants $r_j\in \C$ such that
\begin{equation}\label{defRforGfunctions}
\begin{split}
& g_1(z)=\frac{2}{3}z^3+r_1-\log z +\Boh(z^{-1}), \\
& g_2(z)=-\frac{1}{3}z^3+r_2+\alpha \log z +\Boh(z^{-1}),\quad z\to\infty. \\
& g_3(z)=-\frac{1}{3}z^3+r_3+(1-\alpha)\log z +\Boh(z^{-1}).
\end{split}
\end{equation}
Because the functions $\xi_1$ and $\xi_2$ are real on the interval $(b_1,\infty)$ and $\xi_3$ is real on $(a_*,\infty)$, we surely have $r_1,r_2,r_3\in \R$.

Finally it will be convenient for the future to introduce additional functions, $\Phi_j$, closely related to $g_j$'s. Namely,  for $z\in \C\setminus ((-\infty,b_1]\cup\Delta_2)$, let
\begin{align}
\Phi_1(z) & \isdef \int_{b_1}^z(\xi_1(s)-\xi_2(s))ds,   
\label{def_phi_1} \\
\Phi_2(z) & \isdef \int_{b_2}^z(\xi_1(s)-\xi_3(s))ds,   
\label{def_phi_2} \\
\Phi_3(z) & \isdef \int^z_{\min(a_1,a_*)}(\xi_3(s)-\xi_2(s))ds .  \label{def_phi_3b}
\end{align}
The path of integration in the definition  of $\Phi_3$ starts in $\H_-$, and if $a_*<a_1$ (i.e., if $0 <\tau <  \tau_c$), in the sector determined by $(a_*,+\infty)$ and $\Delta_2$.
 
Using \eqref{periods} and \eqref{g_functions_constants}, we can see that  for $z\in \C\setminus ((-\infty,b_1]\cup\Delta_2)$,
\begin{align}
\Phi_1(z) & =g_1(z)-g_2(z),  
\label{value_phi_1} \\
\Phi_2(z) & = g_1(z)-g_3(z) + 2\pi i ,   
\label{value_phi_2} \\
\Phi_3(z) & =  g_3(z)-g_2(z) - 2\pi i \alpha+
\begin{cases}
\displaystyle  \int_{a_*}^{a_1} (\xi_{1}-\xi_2)ds  +c_2,  & 0 <\tau < \tau_c, \\
  0,  & \tau_c <\tau < 1/4.
\end{cases}  
\label{value_phi_3}
\end{align}

\section{The canonical quadratic differential} \label{sec:trajectories}

The study of the behavior of functions $\Phi_j$'s, needed in what follows, relies heavily on the results and detailed analysis performed in our previous paper \cite{martinez_silva_critical_measures} of the trajectories of a canonical quadratic differential that we describe next.

As it was established there (see Theorem 1.9), if
\begin{equation*}
Q(z) \isdef
\begin{cases}
\xi_2(z)-\xi_3(z)  , & \text{ on } \mathcal R_1, \\
\xi_1(z)-\xi_3(z) , & \text{ on } \mathcal R_2, \\
\xi_1(z)-\xi_2(z) , & \text{ on } \mathcal R_3,
\end{cases}
\end{equation*}
then $Q^2$ extends to a meromorphic function on the Riemann surface $\mathcal R$ of the algebraic equation \eqref{spectral_curve}, and 
\begin{equation}
\label{defQqd}
\varpi =-Q^2 (z)dz^2
\end{equation}
is a meromorphic quadratic differential on $\mathcal R$ with  poles only at the points at $\infty$. Finally, for $j=1,2,3$, each $\Delta_j=\supp\mu_j$ is an arc 
of trajectory of $\varpi$, satisfying
$$
\Re \int^z \sqrt{-\varpi}=\Re \int_p^z Q(s)ds =\const .
$$
A trajectory $\gamma$ extending to a zero of $Q$ along at least one of its directions is called {\it critical}; in the case when it happens in both directions, we call 
this trajectory {\it bounded} (also {\it finite} or \textit{short}), and {\it unbounded} (or {\it infinite}) otherwise.  The union of finite and infinite critical trajectories is the \textit{critical graph} of $\varpi$.

The parameter $\tau=\alpha(1-\alpha)\in [0, 1/4]$ has transition values $\tau_0=1/12$, $\tau_1$, $\tau_c$ and $\tau_2$, which satisfy
$$
0<\tau_0<\tau_1<\tau_c<\tau_2<\frac{1}{4},
$$
and determine the global structure of trajectories of the  quadratic differential $\varpi$ defined in \eqref{defQqd}. 
According to \cite{martinez_silva_critical_measures}, at $\tau=\tau_0$ the double point (node) of $\mathcal R$ collides with the branch point $b_1$ transitioning from $\mathcal R_2$ to $\mathcal R_1$; at $\tau=\tau_1$,  a critical trajectory connecting $a_2^{(1)}$ and $b_2^{(1)}$ on $\mathcal R_1$ hits the branch point $a_1^{(1)}$, while at $\tau=\tau_c$ the same happens on $\mathcal R_2$, now for $a_2^{(2)}$, $b_2^{(2)}$ and $a_1^{(2)}$. Finally, for $\tau=\tau_2$, the point $a_*$ collides with the critical trajectory connecting $a_2^{(1)}$ and $b_2^{(1)}$ on $\mathcal R_1$; at this moment, there are short trajectories connecting $a_1^{(1)}$ with $a_2^{(1)}$ and $b_2^{(1)}$.

As described in \cite{martinez_silva_critical_measures}, the critical graph of $\varpi$ is topologically the same for $\tau$ in each of the intervals determined by consecutive transition values $0,\tau_0,\tau_1,\tau_c,\tau_2,1/4$, and it undergoes a phase transition when $\tau$ moves from one of these intervals to the next one.

Of particular relevance for us here are the trajectories of $\varpi$ on $\mathcal R_1$, so in Figures~\ref{traj_top_sheet_1}--\ref{traj_top_sheet_2} we reproduce these relevant illustrations as provided in \cite[Figures~10,12,15,19,21]{martinez_silva_critical_measures}

\begin{figure}
\begin{subfigure}{.5\textwidth}
\centering
\begin{overpic}[scale=1]{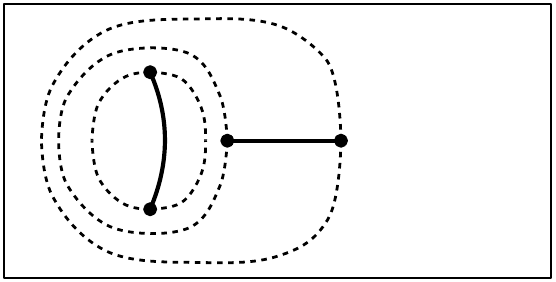}
\end{overpic}
\vspace{0.5cm}
\end{subfigure}%
\begin{subfigure}{.5\textwidth}
\centering
\begin{overpic}[scale=1]{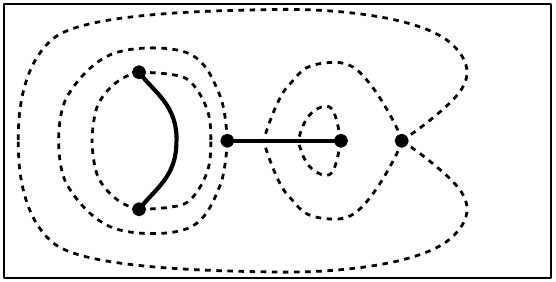}
\end{overpic}
\vspace{0.5cm}
\end{subfigure}
\begin{subfigure}{1\textwidth}
\centering
\begin{overpic}[scale=1]{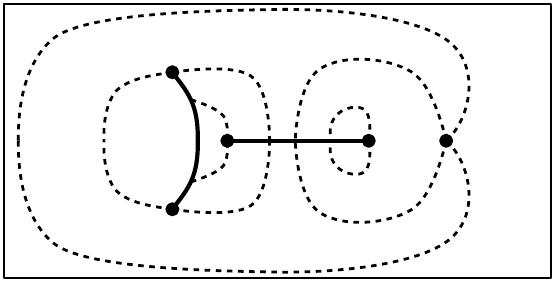}
\end{overpic}
\end{subfigure}
\caption{Critical graph of $\varpi$ on the first sheet $\mathcal R_1$, corresponding to the intervals $0<\tau<\tau_0$ (top left frame), $\tau_0<\tau<\tau_1$ (top right frame) and $\tau_1<\tau<\tau_c$ (bottom frame). The dashed lines represent the critical trajectories, whereas the solid lines are the branch cuts connecting $\mathcal R_1$ to the remaining sheets, which are not displayed.}\label{traj_top_sheet_1}
\end{figure}

\begin{figure}
\begin{subfigure}{.5\textwidth}
\centering
\begin{overpic}[scale=1]{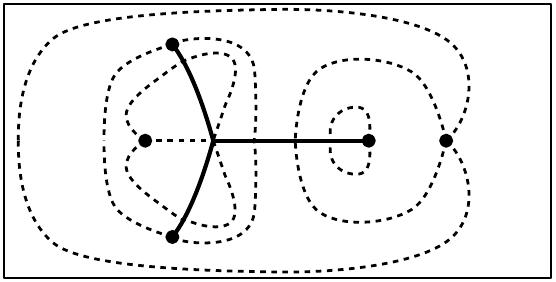}
\end{overpic}
\end{subfigure}%
\begin{subfigure}{.5\textwidth}
\centering
\begin{overpic}[scale=1]{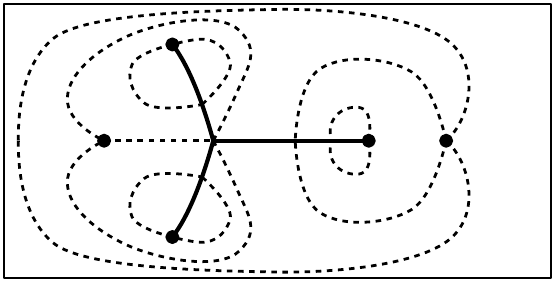}
\end{overpic}
\end{subfigure}
\caption{Critical graph of $\varpi$ on the first sheet $\mathcal R_1$, corresponding to the intervals $\tau_c<\tau<\tau_2$ (left frame) and $\tau_2<\tau<1/4$ (right frame). The dashed lines represent the critical trajectories, whereas the solid lines are the branch cuts connecting $\mathcal R_1$ to the remaining sheets, which are not displayed.}\label{traj_top_sheet_2}
\end{figure}

We need to describe some domains of positivity of functions $\re \Phi_j$. Notice that by \eqref{def_phi_1}--\eqref{def_phi_3b}, the level curves of $\re \Phi_j$ coincide with trajectories of the quadratic differential $\varpi$ on the sheet $\mathcal R_{4-j}$, $j=1, 2, 3$, which establishes the connection with the topology of the critical graph of $\varpi$.

As it follows from the detailed analysis of the global structure of the trajectories performed in  \cite[Section 4.5]{martinez_silva_critical_measures}, for $1/12 <\tau< 1/4$ there exists an arc of trajectory $\gamma_0$ emanating from $b_1^{(1)}$ on the lower half-plane (on the sheet $\mathcal R_1$), and crossing the real line at a point $x_0\in \Delta_1$. Let us define
\begin{equation}\label{defXstar}
x_* \isdef \begin{cases}
b_1, & 0<\tau \leq 1/12, \\
x_0, & 1/12 < \tau < 1/4.
\end{cases}
\end{equation}

The next lemma is complementary to Proposition ~\ref{proposition_distribution_branchpoints}.

\begin{lem} \label{lemma_tech}
If $\tau<1/12$, then
\begin{equation}\label{inequalityXi0}
\re\xi_{2\pm}(x)<\xi_3(x),\quad x>a_*.
\end{equation}

If $\tau>1/12$, then there exists a value $\zeta_*\in (x_*,b_1)$ such that
\begin{equation}\label{inequalityXi1}
\re\xi_{2\pm}(x)<\xi_3(x), \quad  x\in (a_*,\zeta_*)\cup (b_*,+\infty) 
\end{equation}
and
\begin{equation}\label{inequalityXi2}
\re\xi_{2\pm}(x)>\xi_3(x), \quad  x\in (\zeta_*,b_*).
\end{equation}

\end{lem}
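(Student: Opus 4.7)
The plan is to reduce the claim to a sign analysis of $\xi_3$ along $\Delta_1$ and then read off the number of sign changes from the critical graph of $\varpi$ that was described in \cite{martinez_silva_critical_measures}. First, I would split $(a_*,+\infty)$ into the pieces $(a_*,a_1)$ (only in the subcritical regime $\tau<\tau_c$), $\Delta_1$, $(b_1,b_*)$, and $(b_*,+\infty)$. Since the cuts of $\xi_2$ are $\Delta_1\cup\Delta_3$ with $\Delta_3\subseteq[a_1,a_*]$, the function $\xi_2$ is real on all these pieces except $\Delta_1$, and Proposition~\ref{proposition_distribution_branchpoints} gives the sign of $\xi_2-\xi_3$ on each directly: negative on $(a_*,a_1)$ by \eqref{inequality_xi_3} and on $(b_*,+\infty)$ by \eqref{inequality_xi_2}, negative on $(b_1,b_*)$ for $\tau<1/12$ by \eqref{inequality_xi_4}, and positive on $(b_1,b_*)$ for $\tau>1/12$ by \eqref{inequality_xi_5}. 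On $\Delta_1$, the identity $\xi_2=\overline{\xi_1}$ of \eqref{equality_xi_1} combined with $\xi_1+\xi_2+\xi_3=0$ (Vieta applied to \eqref{spectral_curve}, which has no quadratic term in $\xi$) gives $\re\xi_{2\pm}=-\xi_3/2$, so
\begin{equation*}
\re\xi_{2\pm}(x)-\xi_3(x) = -\tfrac{3}{2}\,\xi_3(x),\qquad x\in \Delta_1,
\end{equation*}
and the whole statement reduces to determining the sign of the real-valued continuous function $\xi_3$ on $\Delta_1$.

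The endpoint values of $\xi_3$ on $\Delta_1$ follow from Vieta together with the coalescence identities: at $b_1$, \eqref{xi_at_branchpoints3} gives $\xi_3(b_1)=-2\xi_1(b_1)$, and passing to the limit in \eqref{inequality_xi_4}--\eqref{inequality_xi_5} forces $\xi_1(b_1)<0$ when $\tau<1/12$ and $\xi_1(b_1)>0$ when $\tau>1/12$; at the left endpoint, for $\tau<\tau_c$ the identity \eqref{xi_at_branchpoints1} combined with \eqref{inequality_xi_3} analogously yields $\xi_3(a_1)>0$, while for $\tau>\tau_c$ the left endpoint is the triple point $a_*$ with $\xi_1(a_*)=\xi_2(a_*)=\xi_3(a_*)=0$, and a local expansion of the cubic near $a_*$ (leading balance $\xi^3\sim -D'(a_*)(z-a_*)$) shows that the real branch, which must be $\xi_3$ in view of the sheet labeling on $(a_*,b_1)\subset\Delta_1$, is positive on a right-neighborhood of $a_*$. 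To count zeros of $\xi_3$ inside $\Delta_1$, I would differentiate \eqref{def_phi_3b} along $\Delta_1$ and use the Vieta relation above to obtain
\begin{equation*}
\tfrac{d}{dx}\re\Phi_3(x)=\tfrac{3}{2}\,\xi_3(x),\qquad x\in \overset{\circ}{\Delta}_1,
\end{equation*}
so that zeros of $\xi_3$ in the interior of $\Delta_1$ correspond bijectively to tangencies of trajectories of $\varpi$ on the sheet $\mathcal R_1$ (the level curves of $\re\Phi_3$) with the real axis.

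The proof then concludes by appealing to the exhaustive description of the critical graph of $\varpi$ on $\mathcal R_1$ in \cite{martinez_silva_critical_measures}, summarized in Figures~\ref{traj_top_sheet_1}--\ref{traj_top_sheet_2}. For $\tau<1/12$ no trajectory on $\mathcal R_1$ is tangent to $\R$ inside $\Delta_1$, so $\xi_3$ has no interior zeros and the endpoint signs force $\xi_3>0$ throughout $\Delta_1$; combined with the real-interval analysis of the first paragraph this gives \eqref{inequalityXi0}. For $\tau>1/12$ the migration of the double point from $\mathcal R_2$ onto $\mathcal R_1$ at $\tau=\tau_0=1/12$ creates the new critical trajectory $\gamma_0$ leaving $b_1$ downward (crossing $\R$ at $x_*$), together with exactly one accompanying tangency point $\zeta_*\in(x_*,b_1)\subset\Delta_1$ at which $\xi_3$ has a simple zero and changes sign from positive to negative; combining with the endpoint signs and the first paragraph yields \eqref{inequalityXi1}--\eqref{inequalityXi2}. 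The main obstacle will be rigorously establishing ``exactly one'' tangency inside $\Delta_1$; a cleaner algebraic backup is that a zero of $\xi_3$ on $\Delta_1$ corresponds to a real root of $D(z)$ with $R(z)\leq 0$, and one can track such roots as functions of $\tau$ to confirm that exactly one enters $\Delta_1$ at the transition $\tau_0=1/12$, matching the topological change of the critical graph.
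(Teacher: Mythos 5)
Your reduction to $\Delta_1$, the Vieta identity $\re\xi_{2\pm}(x)-\xi_3(x)=-\tfrac{3}{2}\xi_3(x)$, and the reformulation of interior sign changes as zeros of $D$ on $\Delta_1$ are exactly the core of the paper's proof; the zero count is not something you need to re-derive, since \cite[Proposition~4.2]{martinez_silva_critical_measures} already records that $D$ has no zeros on $\Delta_1$ for $\tau<1/12$ and exactly one for $\tau>1/12$, so the ``algebraic backup'' you tentatively flag is in fact the intended route. Two parts of your argument do not hold up, however. First, the claim $\xi_1(a_*)=\xi_2(a_*)=\xi_3(a_*)=0$ for $\tau>\tau_c$ is false: a triple root at $a_*$ would force $R(a_*)=D(a_*)=0$, which occurs only at the single transitional value $\tau=\tau_c$ where $a_*$ and $a_1$ coalesce; for $\tau>\tau_c$ the three branches of \eqref{spectral_curve} are distinct at $a_*$, which is a regular point of the curve. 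The paper gets the left-endpoint sign from \eqref{inequality_xi_3} and \eqref{xi_at_branchpoints1} for $\tau<\tau_c$ and then extends to $\tau>\tau_c$ by continuity of the $\xi_j$'s in both $z$ and $\tau$.

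Second, and more substantively, you assert $\zeta_*\in(x_*,b_1)$ without proof, and the mechanism you propose does not deliver it: $\zeta_*$ is not a zero of $\varpi$ on $\mathcal R_1$ (there $Q=\xi_2-\xi_3$, which is nonzero at $\zeta_*$ because $\xi_2(\zeta_*)\neq 0$), so the tangency of the level set of $\re\Phi_3$ at $\zeta_*$ is a generic rather than critical feature of $\varpi$ and is not visible in the critical graph. The paper pins down $\zeta_*$ with an integral identity: since $\gamma_0$ is an arc of trajectory of $\varpi$ on $\mathcal R_1$ joining the critical point $b_1$ to $x_*\in\R$,
$$
0=\re \int_{\gamma_0} \sqrt{-\varpi} = \int_{b_1}^{x_*}\bigl(\re\xi_{2-}(x)-\xi_3(x)\bigr)\,dx,
$$
and by your own identity the integrand equals $-\tfrac{3}{2}\xi_3(x)$; vanishing of the integral forces $\xi_3$ to change sign on $(x_*,b_1)$, and uniqueness of the sign change then places $\zeta_*$ in that interval. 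This step is absent from your proposal and is required to establish the stated location of $\zeta_*$.
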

\begin{proof}
	By \eqref{inequality_xi_3}--\eqref{inequality_xi_5}, we only need to establish the inequalities on $\Delta_1$.
	
	Recall that $\xi_1,\xi_2$ and $\xi_3$ are the solutions of the cubic equation $\xi^3-R(z)\xi+D(z)=0$ in \eqref{spectral_curve} 
	and that $\xi_{2\pm}=\xi_{1\mp}$ on $\Delta_1$, whereas $\xi_3$ is analytic across $\Delta_1$. Consequently, by \eqref{equality_xi_1}--\eqref{equality_xi_2}, if $\xi_3(\zeta_*)=\re \xi_{2\pm}(\zeta_*)$ for some value $\zeta_*\in \Delta_1$, then
	$$
	0=\xi_{1+}(\zeta_*)+\xi_{2+}(\zeta_*)+\xi_{3}(\zeta_*)=3\xi_{3}(\zeta_*),
	$$
	and consequently $D(\zeta_*)=\xi_{1+}(\zeta_*)\xi_{2+}(\zeta_*)\xi_3(\zeta_*)=0$. From \cite[Proposition~4.2]{martinez_silva_critical_measures} we know that $D$ has no zeros on $\Delta_1$ if $\tau < 1/12$, and has exactly one zero  on $\Delta_1$ otherwise, which thus has to be $\zeta_*$. In other words, $\xi_{2\pm}-\xi_3$ does not change sign on $\Delta_1$ (if $\tau < 1/12$) or otherwise,  it can change sign at most once, and it has to be at the zero $\zeta_*$ of $D$.
	
	Case $\tau < 1/12$ then trivially follows from \eqref{inequality_xi_3} and \eqref{xi_at_branchpoints1}; so, let us consider $1/12< \tau <\tau_c $. 
	
	By the definition of the trajectory $\gamma_0$ emanating from $b_1$ on the lower half plane in the first sheet,  
	$$
	0=\re \int_{\gamma_0} \sqrt{-\varpi} = \int_{b_1}^{x_*}(\re\xi_{2-}(x)-\xi_3(x))dx.
	$$
	Hence, $\re\xi_{2-}(x)-\xi_3(x)$ has to change sign on $(x_*,b_1)$, thus $\zeta_*>x_*$, so consequently $\re\xi_{2-}(x)-\xi_3(x)$ has constant sign on $\Delta_1\setminus [x_*,b_1]$.
	
	It remains to use \eqref{inequality_xi_3} and \eqref{xi_at_branchpoints1}: when combined these equations give us that for $\tau<\tau_c$ we have $\xi_2<\xi_3$ immediately to the right of $a_1$, so that by continuity of the $\xi_j$'s in both $z$ and $\tau$ we conclude the proof.
\end{proof}

As a preparatory step for the study of positivity of $\re \Phi_3$, let us examine  the boundary values of $\Phi_3$ on subsets of $\R$ from the lower half plane. 

Assume first $\tau < \tau_c$. Combining \eqref{equality_xi_3}, \eqref{g_functions_constants} and \eqref{def_phi_3b},  for $x<a_*$, 
\begin{align*}
\Phi_{3-}(x) &=\int^{a_2}_{a_*}(\xi_{3-}(s)-\xi_{3+}(s))ds +  \int^x_{a_*}(\xi_3(s)-\xi_2(s))ds \\
& = c_2 +  \int^x_{a_*}(\xi_3(s)-\xi_2(s))ds ,
\end{align*}
so that
$$
\Re \Phi_{3-}(x) = \int^x_{a_*}(\xi_3(s)-\xi_2(s))ds >0, \quad x<a_*,
$$
where we have used \eqref{inequality_xi_1}. 

On the other hand, by \eqref{inequalityXi0}--\eqref{inequalityXi1},  
\begin{equation}\label{ineq3}
\Re \Phi_{3-}(x) = \int^x_{a_*}(\xi_3(s)-\xi_{2-}(s))ds >0, \quad a_*< x\leq \zeta_*,
\end{equation}
where as we have seen, $a_1<x_*< \zeta_*\leq b_1$. 

Let now $\tau > \tau_c$, so that
$$
\Phi_3(z) = \int^z_{ a_1}(\xi_3(s)-\xi_2(s))ds , 
$$
with the path of integration starting in $\H_-$. In particular, using \eqref{inequality_xi_1}
$$
\Phi_{3-}(x)= \int^x_{ a_1}(\xi_3(s)-\xi_{2}(s))ds >0 , \quad x< a_1.
$$
Furthermore, for $x\in (a_1, a_*)=\overset{\circ}{\Delta}_3$, we now use \eqref{inequality_xi_5} and get
$$
\Phi_{3-}(x)= \int^x_{ a_1}(\xi_{3-}(s)-\xi_{2-}(s))ds = \int^x_{ a_1}(\xi_{3-}(s)-\overline{\xi_{3-}(s)})ds  ,
$$
so that $\Re \Phi_{3-}(x)=0$ for $x\in [a_1, a_*]$.

On the other hand, for $x\in (a_*, \zeta_*]$,
\begin{align}\label{valuePhi3}
\Phi_{3-}(x) &
\!\begin{multlined}[t]
= \int^{a_*}_{ a_1}(\xi_{3-}(s)-\overline{\xi_{3-}(s)})ds  + \int^{a_2}_{a_*}(\xi_{3+}(s)-\xi_{3-}(s))ds \\ +  \int^x_{a_*}(\xi_{3}(s)-\xi_{2-}(s))ds 
\end{multlined}\nonumber
\\
& = \int^{a_*}_{ a_1}(\xi_{3-}(s)-\overline{\xi_{3-}(s)})ds  - c_2 +  \int^x_{a_*}(\xi_3(s)-\xi_{2-}(s))ds ,
\end{align}
so that \eqref{ineq3} holds also in this case. 

Our findings are summarized as
\begin{equation}\label{setofinequalities}
\re\Phi_{3-}(x) \begin{cases}
> 0, & x< \min(a_1, a_*) \text{ or } a_* < x \leq \zeta_*, \\
=0, & a_1 < x < a_* \text{ and } \tau > \tau_c.
\end{cases}
\end{equation}

Now we are ready to prove a statement that will be important for our asymptotic analysis. We will be interested in what happens in the lower half plane $\H_-$, so we introduce
\begin{equation*} 
	\Omega_-  \isdef \{ z \in \H_- \setminus \Delta_2:\, \re \Phi_3(z)<0\}, \quad 
	\Omega_+   \isdef \{ z \in \H_- \setminus \Delta_2:\, \re \Phi_3(z)>0 \}.
\end{equation*}

\begin{prop}\label{proposition_signs_Phi}
The structure of $\Omega_-$ is as follows:
	\begin{enumerate}
		\item[(i)] For $0<\tau<\tau_c$, $\Omega_-=\emptyset$.
		\item[(ii)] For $\tau_c<\tau<\tau_2$, $\Omega_-$ is an open set with two connected components, whose boundaries intersect at the same subarc $(a_*, z_B)$ of $\Delta_2$, where $z_B\in \Delta_2$, $z_B\neq a_2$. One of these two components contains also the interval $(a_1, a_*)$ on its boundary, and the boundary of the other component intersects the real axis only at $a_*$.
		\item[(iii)] For $\tau_2<\tau<1/4$, $\Omega_-$ is an open connected set whose boundary contains both the arc $\Delta_2 \cap \H_-$ and the interval $(a_1, a_*)$.
	\end{enumerate}
Furthermore, there exists an unbounded domain $\mathcal V \subset \H_-$, with $a_2$ on its boundary and extending to $\infty$ along the direction determined by the angle $-2\pi/3$, such that 
\begin{equation} \label{valuesPhi}
\Re \Phi_1(z)> 0, \quad \Re \Phi_2(z) > 0 , \quad z \in  \mathcal V.
\end{equation}
\end{prop}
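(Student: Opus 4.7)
\medskip

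The plan is to exploit the fact that $\re\Phi_3$ is harmonic on $\H_-\setminus(\Delta_2\cup\R)$ (since $\Phi_3$ is analytic there) and that its level curves $\{\re\Phi_3=\const\}$ coincide with the trajectories of the quadratic differential $\varpi$ on the sheet $\mathcal R_1$, as already noted after \eqref{def_phi_3b}. The critical graph of $\varpi$ on $\mathcal R_1$ has been rigorously described in \cite{martinez_silva_critical_measures} and is summarized in Figures~\ref{traj_top_sheet_1}--\ref{traj_top_sheet_2}; this will be the backbone of the argument, supplying the skeleton of the zero set of $\re\Phi_3$ in $\H_-$. The proof will then amount to tracking the boundary values of $\re\Phi_3$ on $\R$ and on $\Delta_2$ and invoking the maximum/minimum principle on each connected component cut out by these critical trajectories.

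First I will collect the asymptotic and boundary information. From \eqref{value_phi_3} together with the expansions \eqref{defRforGfunctions} one has
\[
\re\Phi_3(z) \;=\; (1-2\alpha)\log|z| + \Boh(1),\qquad z\to\infty,
\]
so $\re\Phi_3\to +\infty$ at infinity (recall $\alpha\in(0,1/2)$). On $\R$, the sign pattern of $\re\Phi_{3-}$ is given in \eqref{setofinequalities}, and on $\Delta_2\cap \H_-$ the function $\re\Phi_3$ is \emph{constant}, because $\Delta_2$ is itself a trajectory of $\varpi$ on $\mathcal R_1$. The actual value of this constant on $\Delta_2\cap\H_-$ can be computed using the base point $b_2$ (or continuity at $a_*$ in the supercritical regime, where \eqref{valuePhi3} yields $\re\Phi_{3-}(a_*)=0$); in the subcritical regime this constant turns out to be strictly positive.

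For part (i), $0<\tau<\tau_c$: by \eqref{setofinequalities} $\re\Phi_{3-}\geq 0$ on all of $\R$, and by the previous paragraph $\re\Phi_3>0$ on $\Delta_2\cap\H_-$. Since $\re\Phi_3\to+\infty$ at infinity and is harmonic in $\H_-\setminus\Delta_2$, the minimum principle forces $\re\Phi_3\geq 0$ throughout $\H_-\setminus\Delta_2$, with strict positivity except on level curves. Hence $\Omega_-=\emptyset$. For parts (ii) and (iii) the argument is the same in spirit, but now the critical graph of $\varpi$ on $\mathcal R_1$ (Figure~\ref{traj_top_sheet_2}) contains additional short critical trajectories emanating from $a_1^{(1)}$ and from $b_1^{(1)}$, which after projection split $\H_-\setminus\Delta_2$ into several subregions; on the boundary arcs of these subregions $\re\Phi_3$ vanishes (they are level curves at level $0$, anchored either at $a_*$ where $\re\Phi_{3-}(a_*)=0$ or at the endpoints of the arc $(a_1,a_*)$ where $\re\Phi_{3-}\equiv 0$ by \eqref{setofinequalities}). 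In each such subregion, the sign of $\re\Phi_3$ is constant (again by the minimum/maximum principle), and it suffices to test the sign at a single convenient point or at the boundary. The two transitions $\tau=\tau_c$ and $\tau=\tau_2$ correspond precisely to the qualitative changes of the critical graph described in \cite{martinez_silva_critical_measures}: at $\tau=\tau_c$ the critical trajectory emerging from $a_1^{(1)}$ appears, producing the bounded component near $(a_1,a_*)$ and its reflection partner separated by the common boundary arc $(a_*,z_B)\subset\Delta_2$; at $\tau=\tau_2$ the point $a_*$ merges these two components into a single one, reaching $\Delta_2\cap\H_-$ entirely. The point $z_B$ is identified as the projection of the saddle connection from the critical graph.

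Finally, for the existence of $\mathcal V$, I will use the expansions \eqref{defRforGfunctions}, which give
\[
\re \Phi_1(z)=\re z^3 + \Boh(\log|z|),\qquad \re\Phi_2(z)=\re z^3 + \Boh(\log|z|),\qquad z\to\infty,
\]
so both are positive and tend to $+\infty$ in the sector $\arg z\in(-5\pi/6,-\pi/2)$. Since $a_2\in\H_-$ and lies outside the cut structure of $\Phi_1$ and $\Phi_2$ (both are analytic near $a_2$ away from $\Delta_2$), one can take $\mathcal V$ to be the unbounded connected component of $\{\re\Phi_1>0\}\cap\{\re\Phi_2>0\}\cap\H_-$ extending to infinity in the direction $-2\pi/3$; that $a_2$ is on its boundary follows once more from the critical graph of $\varpi$ (the critical trajectory from $a_2^{(1)}$ is, by construction, a boundary arc of this domain).

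The main technical obstacle will be (ii) and the identification of the point $z_B$: one must translate the abstract topological information from the critical graph of $\varpi$ on $\mathcal R_1$ into an explicit statement about which arc of $\Delta_2$ bounds the components of $\Omega_-$. This requires a careful bookkeeping of the orientation of trajectories and the signs of $\re\Phi_3$ across them, but all the needed ingredients are already in \cite{martinez_silva_critical_measures} and in Lemma~\ref{lemma_tech} above.
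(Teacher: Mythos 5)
Your strategy — identify the level curves of $\re\Phi_3$ with trajectories of $\varpi$ on $\mathcal R_1$, read off the topology from the critical graphs, and then use the minimum principle together with the boundary sign pattern on $\R$ — is exactly the paper's plan. However, the argument as written has two concrete gaps.

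The first, and most serious, is the assertion that \eqref{setofinequalities} gives $\re\Phi_{3-}\geq 0$ on all of $\R$. That equation only controls $\re\Phi_{3-}$ for $x<\min(a_1,a_*)$, for $x\in(a_1,a_*)$ when $\tau>\tau_c$, and for $a_*<x\leq\zeta_*$; it says nothing about $(\zeta_*,+\infty)$. Lemma~\ref{lemma_tech} in fact shows that for $\tau>1/12$ the derivative $\xi_3-\re\xi_{2-}$ turns negative on $(\zeta_*,b_*)$, so $\re\Phi_{3-}$ is strictly decreasing there, and one cannot conclude nonnegativity on that interval from \eqref{setofinequalities} alone. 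This is precisely the step the paper devotes the most work to: assuming a rightmost boundary point $b\in\partial\Omega_-\cap(a_*,\infty)$, it uses the sign-change locations $\zeta_*,b_*$ of Lemma~\ref{lemma_tech} to force $b>b_*$, then tracks the arc of $\partial\Omega_-$ (a trajectory of $\varpi$) emanating from $b$ through the known critical graph and lands it at a point of $(-\infty,\min\{a_1,a_*\})$ — a contradiction with step~2. Your proposal omits this argument entirely, and without it the minimum-principle conclusion in part (i) is not justified.

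The second gap is the claim that $\re\Phi_3$ is constant, and in the subcritical regime strictly positive, on $\Delta_2\cap\H_-$. Since $\Phi_3(a_*)=0$ by definition of the basepoint (subcritical case), if $\re\Phi_{3-}$ were constant along $\Delta_2\cap\H_-$ that constant would have to be $0$, not positive. Moreover the constancy itself is not automatic: $\Delta_2$ lifts to two arcs on $\mathcal R$, and $Q^2|_{\mathcal R_1}=(\xi_2-\xi_3)^2$ takes different boundary values on the two sides of $\Delta_2$, so the trajectory condition need not hold on both; hence the boundary value of $\re\Phi_3$ from the side of $\mathcal U$ need not be constant. Treating $\Delta_2\cap\H_-$ as an entire level curve leads to an apparent contradiction with parts (ii) and (iii), where $\Omega_-\neq\emptyset$. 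The paper avoids this by never invoking a value of $\re\Phi_3$ on $\Delta_2$ at all: after showing $\partial\Omega_-\cap\R\subset[\min\{a_1,a_*\},a_*]$, it identifies $\Omega_-$ directly with the (gray) regions in Figure~\ref{omega1}, i.e.~with those regions enclosed by critical trajectories that touch $\R$ only inside $[\min\{a_1,a_*\},a_*]$, and then reads off the sign from \eqref{setofinequalities}. Your sketch of (ii)--(iii) and the construction of $\mathcal V$ are in the right spirit (the paper realizes $\mathcal V$ as the intersection of the projections of half-plane canonical domains on $\mathcal R_2$ and $\mathcal R_3$), but the two gaps above are genuine and must be filled.
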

\begin{proof}
We start with some basic considerations. Because $\re\Phi_3$ is harmonic on $\H_-\setminus \Delta_2$, the boundary of any connected component of $\Omega_{\pm}$ can only emanate from $\R$, $\Delta_2$ or $\infty$. Furthermore, any level line of $\re \Phi_3$ is an arc of trajectory of the quadratic differential \eqref{defQqd} on $\mathcal R_1$. A common feature of such trajectories is that they are closed arcs near $\infty$ (see Figures \ref{traj_top_sheet_1}--\ref{traj_top_sheet_2}), so combined with the inequality \eqref{setofinequalities} as $x\to -\infty$, we conclude that a neighborhood of $\infty$ belongs to $\Omega_+$ or, in other words, $\Omega_-$ is bounded.

Also, observe that the boundary of $\Omega_-$ does not contain points on the interval $(-\infty,\min\{a_1,a_*\})$. This is true because if $\tilde b$ is the smallest such point, then $\Re\Phi_{3}$ is harmonic immediately below to $\tilde b$, so we must have $\tilde b\in \partial \Omega_-\cap \partial \Omega_+$ and thus $\Re\Phi_{3-}(\tilde b)=0$, which is in contradiction with \eqref{setofinequalities}.

We claim that the boundary of $\Omega_-$ does not contain points on $(a_*,\infty)$ either. Indeed, to the contrary, suppose that $b$ is such a point. We can assume $b$ to be the right-most of such points. Consequently $\Phi_{3-}(b)=0$ and any $x>b$ belongs to $\overline \Omega_+$. Take the smallest possible $a\geq a_*$ such that $[a,b]\subset \partial \Omega_-$. If $a=a_*$ then $\re \Phi_{3-}(a)=0$ (the value understood when we approach $a_*$ from the sector determined by $\Delta_2\cap \H_-$ and $(a_*,+\infty)$), whereas if $a>a_*$ then again $\re \Phi_{3-}(a_*)=0$, but now because $\Re\Phi_3$ is harmonic in a neighborhood immediately below $a$. In either case, we can certainly write
$$
0=\re\Phi_{3-}(b)-\re \Phi_{3-}(a)=\re\int_a^b (\xi_{3-}(s)-\xi_{2-}(s))ds,
$$ 
so $\xi_{3-}-\xi_{2-}$ must change sign on $(a,b)$. In virtue of \eqref{inequalityXi0}--\eqref{inequalityXi2}, we must have at least one of the points $b_*$ and $\zeta_*$ in $(a,b)$, so certainly $b\geq \zeta_*$. Now, by construction, $\overline{\Omega}_+$ is to the right-hand side of $b$, so $\xi_{3-}-\xi_{2-}$ must be positive for values slightly larger than $b$. Again using \eqref{inequalityXi1}--\eqref{inequalityXi2}, we thus get that actually $b>b_*$.

Let $\gamma$ be the arc of $\partial \Omega_-$ that emanates from $b$, oriented outwards of $b$. With such orientation, $\Omega_\pm$ is on the $\pm$-side of $\gamma$. We follow the large behavior of $\gamma$, having in mind that $\gamma$ must be an arc of trajectory of $\varpi$ emanating from $(b_*,+\infty)$. A quick inspection of the critical graphs in Figures \ref{traj_top_sheet_1}--\ref{traj_top_sheet_2} then shows that $\gamma$ must end up at a point $\tilde b<\min\{a_*,a_1\}$. Thus, keeping track of the orientation of $\gamma$, we then see that $\tilde b\in \partial \Omega_- \cap (-\infty,\min\{a_*,a_1\})$, but we already know that this last intersection is empty.

As a conclusion of the observations above, the only possible real points on $\partial \Omega_-$ are on the interval $[\min\{a_1,a_*\},a_*]$. 

To verify (i)--(iii), we again use the structure of trajectories shown in Figures \ref{traj_top_sheet_1}--\ref{traj_top_sheet_2}. 

For (i), notice first that $[\min\{a_1,a_*\},a_*]=\{a_*\}$. Observing the structure displayed in Figure \ref{traj_top_sheet_1}, we get that $\re \Phi_3(z)$ does not change sign on the immediate vicinity of $a_*$, so $a_*\notin \partial \Omega_-$, and also that any possible boundary component of $\Omega_-$ in $\H_-$ must intersect the real axis, which we showed that cannot occur away from $a_*$.

For (ii) and (iii), we observe that $[\min\{a_1,a_*\},a_*]$ and the only regions on $\H_-$ that contain trajectories which do not intersect $\R\setminus [\min\{a_1,a_*\},a_*]$ are the ones marked in gray in Figure~\ref{omega1}, so $\Omega_-$ must be contained in these regions. A simple analysis of the sign of $\Re\Phi_3$, making use of \eqref{setofinequalities}, then shows that $\Omega_-$ actually coincides with these marked sectors. This concludes the proof of (ii) and (iii).

Finally, for every value of $\tau$ there is a half-plane canonical domain on $\mathcal R_2$ and a half-plane canonical domain on $\mathcal R_3$, both extending to $\infty$ in the asymptotic direction $e^{-2\pi /3}$, containing $a_2$ on its boundary. Take as $\mathcal V$ the (non-empty) intersection of the projection of these domains onto $\C$. Functions $\Re \Phi_2$ and $\Re \Phi_1$ do not change sign in $\mathcal V$, and by \eqref{defRforGfunctions}, \eqref{value_phi_1} and \eqref{value_phi_2},
$$
\Phi_1(z)= z^3 +\Boh(1), \quad \Phi_2(z)= z^3+ \Boh(1),  \qquad z \in \mathcal V, \; z\to \infty, 
$$
from where \eqref{valuesPhi} follows.
\end{proof}

\begin{figure}
\begin{subfigure}{.5\textwidth}
\centering
\begin{overpic}[scale=1]{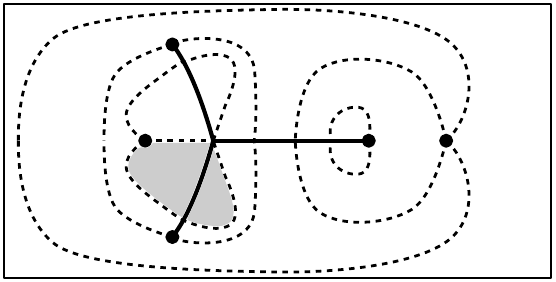}
\end{overpic}
\end{subfigure}%
\begin{subfigure}{.5\textwidth}
\centering
\begin{overpic}[scale=1]{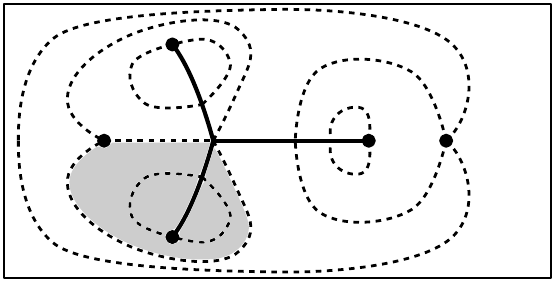}
\end{overpic}
\end{subfigure}
\caption{The set $\Omega_-$ is displayed in gray, together with the trajectories of $\varpi$ on the first sheet $\mathcal R_1$. The left frame corresponds to $\tau_c<\tau<\tau_2$ and the right frame corresponds to $\tau_2<\tau<1/2$.}\label{omega1}
\end{figure}

For $\tau \in (\tau_c,\tau_2)$, the boundary of $\Omega_-$ consists of the interval $[a_1,a_*]$, an arc of trajectory $\gamma_L$ from $a_1$ to a point $a_B\in \Delta_2$, a second arc of trajectory $\gamma_R$ from $a_B$ to $a_*$ and a third arc $(a_B,a_*)\subset \Delta_2$ connecting $a_B$ to $a_*$. It is also a consequence of the analysis in \cite{martinez_silva_critical_measures} that the arc of trajectory $\gamma_R$ is the analytic extension of $\Delta_2\cap \H_+$ to the lower half plane. These quantities are displayed in Figure \ref{omega_boundary}, left frame.

When $\tau>\tau_c$, the boundary of $\Omega_-$ consists of $\Delta_2\cap \H_-$, the interval $[a_1,a_*]$ and an arc $\gamma_R$ joining $a_1$ and $a_*$, the latter being the analytic extension of $\Delta_2\cap \H_+$ to the lower half plane. These are displayed in Figure \ref{omega_boundary}, right frame.

\begin{figure}
\begin{subfigure}{.5\textwidth}
\centering
\begin{overpic}[scale=1]{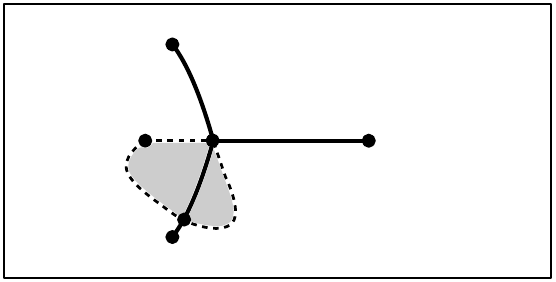}
\put(50,72){\scriptsize $b_2$}
\put(40,45){\scriptsize $a_1$}
\put(56,19){\scriptsize $a_B$}
\put(70,20){\scriptsize $\gamma_R$}
\put(31,25){\scriptsize $\gamma_L$}
\put(50,5){\scriptsize $a_2$}
\put(63,43){\scriptsize $a_*$}
\end{overpic}
\end{subfigure}%
\begin{subfigure}{.5\textwidth}
\centering
\begin{overpic}[scale=1]{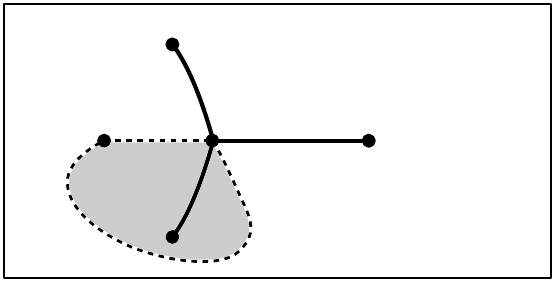}
\put(50,72){\scriptsize $b_2$}
\put(25,45){\scriptsize $a_1$}
\put(53,10){\scriptsize $a_2$}
\put(72,20){\scriptsize $\gamma_R$}
\put(53,10){\scriptsize $a_2$}
\put(63,43){\scriptsize $a_*$}
\end{overpic}
\end{subfigure}
\caption{For the cases $\tau_c<\tau<\tau_2$ (left frame) and $\tau_2<\tau<1/4$ (right frame) the set $\Omega_-$ is displayed in gray, together with the newly introduced arcs $\gamma_L$ and $\gamma_B$ and the point $a_B$.}\label{omega_boundary}
\end{figure}

Using the notation just introduced, we define the open set $\Omega_\alpha \subset \H_-$ and the continuum $E_\alpha   \subset \C$  that will play an important  role in the asymptotic analysis of $B_{n,m}$:
\begin{definition} \label{defE}
\begin{enumerate}[(i)]
	\item If $\tau \in (0, \tau_c)$, then $\Omega_\alpha \isdef \emptyset$ and $E_\alpha \isdef \Delta_2  $. Notice that on $E_\alpha$,
	\begin{equation} \label{bdvaluesPhi}
		\Re \left(\Phi_{3+} -\Phi_{3-} \right) (z)=0 .
	\end{equation}

	\item If $\tau \in (\tau_c,\tau_2)$, then 
	$$
	E_\alpha \isdef \gamma_L \cup \gamma_R \cup (a_2, a_B) \cup (\Delta_2 \cap \H_+),
	$$ 
	where $(a_2, a_B)$ denotes the arc of $\Delta_2$ connecting $a_2$ with $a_B$.  
On $(a_2, a_B)$, identity \eqref{bdvaluesPhi} holds, while on $ \gamma_L \cup \gamma_R$, $\Re \Phi_3(z)=0$. Notice also that $\overline{\gamma_R\cup (\Delta_2 \cap \H_+)}$   is an analytic  arc joining $a_B$ and $b_2$.
 
		Furthermore, $\Omega_\alpha $ is the bounded component of  $\C \setminus (\R \cup \gamma_L \cup \gamma_R) $.
	
	\item For $\tau\in (\tau_2, 1/4)$, the set $E_\alpha\isdef  \gamma_L \cup (\Delta_2 \cap \H_+)$   is a single analytic arc from $b_2$ to $a_1$, passing through $a_*$, and   $\Omega_\alpha $ is the connected domain bounded by $\gamma_L  \cup (a_1, a_*) \cup (\Delta_2 \cap \H_-) $, that is, $\Omega_\alpha=\Omega_-\cup (a_b,a_*)$.
\end{enumerate}
\end{definition}

In any of the cases (i)--(iii) above, note also that
\begin{equation}\label{boundary_omegaplus}
E_\alpha\cap \H_-=\partial \Omega_+\cap \H_-.
\end{equation}

As the last step, we define the positive measure $\mu_B$ on the set $E_\alpha$ that will ultimately describe the zero distribution of $B_{n,m}$ as in Theorem \ref{theorem_zero_counting_measureB}.
\begin{prop} \label{PropdefmuB}
Let
\begin{equation}\label{defH}
	H(z) \isdef  \begin{cases}
	\Re \left( g_2(z)+\frac{1}{3}z^3- r_1 \right)  , &  z\in \Omega_\alpha,  
	\\
	\Re \left( g_3(z)+\frac{1}{3}z^3- r_1 \right)  , & z \in  \C\setminus (\Omega_\alpha \cup E_\alpha).
	\end{cases}
\end{equation} 
Then $H$ is harmonic in $\C \setminus E_\alpha$ and extends to a subharmonic function in $\C$. Moreover, there exists a positive measure $\mu_B$, $\supp \mu_B=E_\alpha$, such that
$$
H(z) = -U^{\mu_B}(z) +\Re \left( r_3- r_1 \right), \quad z\in \C,
$$
with constants $r_1$, $r_3$ defined in \eqref{defRforGfunctions}. Measure $\mu_B$ satisfies $|\mu_B|=1-\alpha$, is absolutely continuous with respect to the arc-length measure, and
$$
\mu_B'(s) =  \begin{cases}
\mu_2'(s), & \text{for } s\in  (\Delta_2 \cap \H_+ )\cup  (\Delta_2 \setminus  \overline{\Omega_\alpha }),  \\[1mm]
\dfrac{1}{2\pi i} \left( \xi_2(s) - \xi_3(s)\right), & \text{for } s\in \gamma_L\cup \gamma_R.
	\end{cases}
$$ 
Finally,
\begin{equation}\label{balayage}
\mu_B=\Bal( \mu_2-\mu_3 ; E_\alpha).
\end{equation}
\end{prop}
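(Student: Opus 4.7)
The strategy is to first verify $H$ is harmonic in $\C\setminus E_\alpha$ with the right logarithmic growth at infinity coming from \eqref{defRforGfunctions}, so that by the Riesz representation theorem there exists a charge $\mu_B$ of total mass $1-\alpha$, supported on $E_\alpha$, satisfying $H+U^{\mu_B}\equiv\Re(r_3-r_1)$ on $\C$; then compute the density of $\mu_B$ on each smooth arc of $E_\alpha$ from the jump of the outward normal derivative of $H$ and verify it is nonnegative (whence $H$ is subharmonic); and finally identify $\mu_B$ with $\Bal(\mu_2-\mu_3;E_\alpha)$ by matching logarithmic potentials on $E_\alpha$.

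For harmonicity of $H$ in $\C\setminus E_\alpha$, the first observation is that whenever $g_2$ or $g_3$ has a cut inside $\C\setminus E_\alpha$, the associated jump of $\xi_j$ supplied by \eqref{equality_xi_1}--\eqref{equality_xi_5} combined with \eqref{definition_measures_mu} is $\pm 2\pi i$ times the density of a real measure, so $\Re g_2$ and $\Re g_3$ extend harmonically across these cuts; in particular this dispatches the portion of $\Delta_3$ lying outside $\Omega_\alpha\cup E_\alpha$. The delicate matching between the two formulas in \eqref{defH} occurs along $(a_1,a_*)$ in both supercritical regimes and, additionally, along $\Delta_2\cap\H_-$ when $\tau>\tau_2$. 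For the first I combine the identity $\Phi_3=g_3-g_2-2\pi i\alpha$ from \eqref{value_phi_3} (valid for $\tau>\tau_c$) with $\Re\Phi_{3-}=0$ on $(a_1,a_*)$ from \eqref{setofinequalities} to obtain $\Re g_{3-}=\Re g_{2-}$, which together with the continuity of $\Re g_2$ and $\Re g_3$ across their own cuts yields a common harmonic extension of $H$ through $(a_1,a_*)$. The arc $\Delta_2\cap\H_-$ is handled in the same spirit, using that $\Re\Phi_3=0$ along it, encoded in the trajectory structure of $\varpi$ described in Section~\ref{sec:trajectories}.

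Once $\mu_B$ is produced by Riesz, its density on each smooth arc of $E_\alpha$ is $\tfrac{1}{2\pi}$ times the jump of the outward normal derivative of $H$, which I rewrite as $\tfrac{1}{2\pi i}$ times the difference of the relevant boundary values of the holomorphic function $(g_j+z^3/3)'$. On $\Delta_2\cap\H_+$ both sides of $H$ are $\Re g_3$ and \eqref{equality_xi_3} together with \eqref{definition_measures_mu} gives $\mu_B'=\mu_2'$; on $\gamma_L\cup\gamma_R$ the two sides are $\Re g_2$ and $\Re g_3$ respectively, and the density becomes $\tfrac{1}{2\pi i}(\xi_2-\xi_3)$. \emph{Positivity of this last density is the main obstacle}: the arcs $\gamma_L,\gamma_R$ are trajectories of $\varpi$ on $\mathcal R_1$, along which $(\xi_2-\xi_3)^2\,dz^2=-\varpi$ is real and negative, so $(\xi_2-\xi_3)\,dz$ is purely real with sign fixed by orientation; orienting $\gamma_L,\gamma_R$ coherently with $\Delta_2\cap\H_+$ yields $\mu_B'\geq 0$ throughout, and it is precisely here that the detailed critical-graph description from \cite{martinez_silva_critical_measures} recalled in Section~\ref{sec:trajectories} is indispensable. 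Subharmonicity of $H$ on $\C$ then follows automatically from $\mu_B\geq 0$.

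For \eqref{balayage}, integrating the third equation of \eqref{definition_xi_functions} gives $(g_3+z^3/3)'=-(C^{\mu_2}-C^{\mu_3})=-2\partial_z(U^{\mu_2}-U^{\mu_3})$, so $\Re(g_3+z^3/3)=-U^{\mu_2-\mu_3}+\mathrm{const}$ on the domain of $g_3$. Both sides extend continuously to $E_\alpha$ (again because $\Re g_3$ has only purely imaginary internal jumps), so $H=-U^{\mu_2-\mu_3}+\mathrm{const}$ there. Comparing with $H=-U^{\mu_B}+\Re(r_3-r_1)$ yields $U^{\mu_B}=U^{\mu_2-\mu_3}+\kappa$ on $E_\alpha$ for some $\kappa\in\R$; since both $\mu_B$ and $\mu_2-\mu_3$ have total signed mass $1-\alpha$ by \eqref{massconstraints} and $\supp\mu_B\subseteq E_\alpha$, uniqueness of the balayage delivers $\mu_B=\Bal(\mu_2-\mu_3;E_\alpha)$.
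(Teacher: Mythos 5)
Your strategy is close in spirit to the paper's but you reverse the logical order in the one place that matters. The paper first establishes that $H$ is subharmonic on all of $\C$, and it does so by a very clean device: near $E_\alpha \cap \H_-$ it observes that
\[
H(z) = \max\Bigl\{\Re\bigl(g_2(z)+\tfrac{1}{3}z^3-r_1\bigr),\ \Re\bigl(g_3(z)+\tfrac{1}{3}z^3-r_1\bigr)\Bigr\},
\]
which holds precisely because $\Omega_\alpha$ is the region $\Re\Phi_3<0$ (Proposition~\ref{proposition_signs_Phi}), and a maximum of two harmonic functions is subharmonic; near $E_\alpha\cap\H_+$ it writes $H=-U^{\mu_2}+\text{harmonic}$. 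With subharmonicity in hand, \cite[Theorem~II.3.3]{saff_totik_book} hands over a \emph{positive} measure $\mu_B$ at once, and the density formula and balayage identity follow by differentiating the Cauchy transform. You instead want to manufacture a signed measure first from the jump of the normal derivative and \emph{then} verify positivity of its density, deducing subharmonicity only a posteriori. This is a legitimate alternative route, but as written your positivity step has a genuine gap.

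The gap is in the sentence about $\gamma_L\cup\gamma_R$. That these arcs are trajectories of $\varpi$ on $\mathcal R_1$ only tells you $(\xi_2-\xi_3)\,dz$ is purely \emph{imaginary} (you wrote ``purely real'', but $-\varpi=(\xi_2-\xi_3)^2dz^2<0$ forces the imaginary conclusion), hence $\tfrac{1}{2\pi i}(\xi_2-\xi_3)\,ds$ is real of constant sign along each arc. That is exactly as far as the quadratic-differential structure takes you: it determines realness and single-signedness, not the sign itself, and the orientation of $\gamma_L,\gamma_R$ is not free for you to choose---it is pinned by the $\pm$ boundary-value convention relative to $\Omega_\alpha$. ``Orienting coherently with $\Delta_2\cap\H_+$'' is a statement about where you'd like the sign to land, not an argument that it lands there. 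To close the gap you either need the max representation above (which does it in one line, for free), or you need to pin the sign at one point, e.g.\ by matching continuously to $\mu_2'>0$ at the junction of $\gamma_R$ with $\Delta_2\cap\H_+$ and invoking the absence of interior zeros of $Q$ along the arcs. Without one of these, the ``positivity'' step is not established, and since the subharmonicity of $H$ in your scheme is \emph{derived} from positivity of the density, the whole chain is blocked.

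Two smaller remarks. Your handling of harmonicity across $\Delta_2\cap\H_-$ in the regime $\tau>\tau_2$ via ``$\Re\Phi_3=0$ along it'' is not the right mechanism: in that regime $\Delta_2\cap\H_-$ lies in the interior of $\Omega_\alpha$, both sides of $H$ use $g_2$, and the matching is simply that $g_2$ is holomorphic across $\Delta_2$ (no jump of $\xi_2$ there, by \eqref{equality_xi_3}); this is what the paper invokes. Your balayage argument at the end is fine and essentially identical to the paper's; matching Cauchy transforms on $E_\alpha$ and the mass count from \eqref{massconstraints} is exactly how \eqref{balayage} is obtained.
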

\begin{proof}

The function $g_3$ is holomorphic on $\C\setminus ((-\infty,a_*)\cup \Delta_2)$, so we get that $H$ is harmonic on $\C\setminus ((-\infty,a_*)\cup \Delta_2\cup \Omega_\alpha)$. Also, $g_{3+}-g_{3-}$ is purely imaginary along $(-\infty,\min\{ a_1,a_* \})$ (see \eqref{identityG3new} below), so $H$ is harmonic across this interval as well. Furthermore, $g_{2+}=g_{3-}$ across $\Delta_3$ (see \eqref{G2plusminusG3minus}), and this means that $H$ is harmonic across $\Delta_3$ as well. To conclude the harmonicity of $H$ on $\C\setminus E_\alpha$, it only remains to observe that, according to \eqref{defGfunctions}, the function $g_2$ is holomorphic on the lower half plane.

Subharmonicity of $H$ in a neighborhood of $E_\alpha\cap \H_-$ follows from the alternative representation
$$
H(z) = \max \left\{ 	\Re \left( g_2(z)+\frac{1}{3}z^3- r_1 \right) , 	\Re \left( g_3(z)+\frac{1}{3}z^3- r_1 \right)  \right\},
$$
valid in a neighborhood of $E_\alpha\cap \H_-$, as assured by Proposition \ref{proposition_signs_Phi}. Furthermore, using now \eqref{defGfunctions} and \eqref{definition_xi_functions} to write $H=-U^{\mu_2}+ \text{harmonic}$, we conclude that $H$ is subharmonic on a neighborhood of $E\cap \H_+$ as well, and thus on the whole plane $\C$.

As a consequence of \cite[Theorem~II.3.3]{saff_totik_book} there exists a positive measure $\mu_B$ on $E_\alpha$ and a harmonic function $h$ such that 
$$
H(z) = -U^{\mu_B}(z) +h(z), \quad z\in \C.
$$
By \eqref{defH},
$$
-C^{\mu_B}(z)+u'(z)= \begin{cases}
\xi_2(z)+ z^2   , &  z\in \Omega_\alpha,  
\\
\xi_3(z)+ z^2  , & z \in  \C\setminus (\Omega_\alpha \cup E_\alpha),
\end{cases}
$$
where $u$ is an analytic function such that $\Re u=h$. Since by \eqref{asymptotics_xi_functions},
$$
\xi_3(z)+ z^2 =  \frac{1-\alpha}{z}+\Boh(z^{-2}),
$$
we conclude that $|\mu_B|=1-\alpha$ and $h\equiv \const$. 

The expression for $\mu'_B$ is recovered using the Sokhotsky-Plemelj formula. 

Finally, by \eqref{definition_xi_functions},
$$
-C^{\mu_2-\mu_3}(z)=\xi_3(z)+ z^2 =-C^{\mu_B}(z), \quad z \in E_\alpha,
$$
which is equivalent to \eqref{balayage}.
\end{proof}

\section{The Riemann-Hilbert formulation} \label{sec4:RH}

In this section, we characterize the polynomials defined by \eqref{mops_conditionsTypeI} and \eqref{mops_conditions}  in terms of a $3\times 3$ non-commutative boundary value problem. It will be convenient to use the following matrix-related notation: for $a, b, c\in \C$, 
$$
\diag(a, b, c) \isdef \begin{pmatrix}
a & 0 & 0 \\
0 & b & 0 \\
0 & 0 & c
\end{pmatrix};
$$
 $\bm e_1$, $\bm e_2$, $\bm e_3$ are the column vectors of the $3\times 3$ identity  matrix $\bm I=\diag(1,1,1)$, and 
\begin{equation*} 
\bm M_{ij}\isdef \bm e_i \bm e_j^T.
\end{equation*}
Thus,  $\bm M_{ij}$ is the $3\times 3$ matrix whose only non-zero element is $1$ in the position $(i,j)$. A straightforward consequence of this definition is that
$$
\bm M_{ij}\bm M_{ab}=\delta_{ja}\bm M_{ib},
$$
so that if $f$, $g$ are scalars, then for $j\neq a$,
$$
(\bm I+ f \bm M_{ij})(I+ g \bm M_{ab})=\bm I + f\bm M_{ij}+ g\bm M_{ab};
$$
in particular, for $i\neq j$, 
$$
(\bm I+ f \bm M_{ij})^{-1}=I-f\bm M_{ij}.
$$

Let two unbounded contours $\gamma_1$ and $\gamma_2$ extended to $\infty$ on their two ends along the directions determined by the angles $-2\pi/3$ and $0$, and $-2\pi/3$ and $2\pi/3$, respectively (see Figure~\ref{figure_contours0}, left; we assume them oriented as depicted there). 
For $n, m \in \N\cup \{0 \}$, $N=n+m$, consider 
the following non-commutative Riemann-Hilbert problem (RHP): find a matrix-valued function $\bm Y:\, \C\setminus (\gamma_1\cup \gamma_2)\to \C^{3\times 3}$, such that 
\begin{enumerate}
	\item[$\bullet$] $\bm Y:\C\setminus (\gamma_1\cup \gamma_2)\to \C^{3\times 3}$ is analytic;
	
	\item[$\bullet$] $\bm Y$ has continuous boundary values $\bm Y_{\pm}$ on $\gamma_1\cup \gamma_2$, and $\bm Y_+(z)=\bm Y_-(z)\bm J_Y(z)$ for $ z\in \gamma_1\cup \gamma_2$, where  
	$$
	\bm J_Y(z)\isdef 
	\begin{cases}
	I + e^{-Nz^3} M_{12},  
	&  z\in \gamma_1   \\
	I + e^{-Nz^3} M_{13},  
	& z\in \gamma_2.
	\end{cases}
	$$
	
	\item[$\bullet$] 
	$\displaystyle{ 
		\bm Y(z)=(I+\Boh(z^{-1})) \diag \left(z^N,z^{-n},z^{-m}  \right)
	}$, $z\to \infty$.
\end{enumerate}
Obviously, matrix $\bm Y$ depends on $m$ and $n$ through the asymptotic condition at infinity, but we omit the explicit reference to $(m,n)$ from the notation whenever it cannot lead us into confusion. 

\begin{figure}[t]
	\centering
	\begin{overpic}[scale=0.8]
		{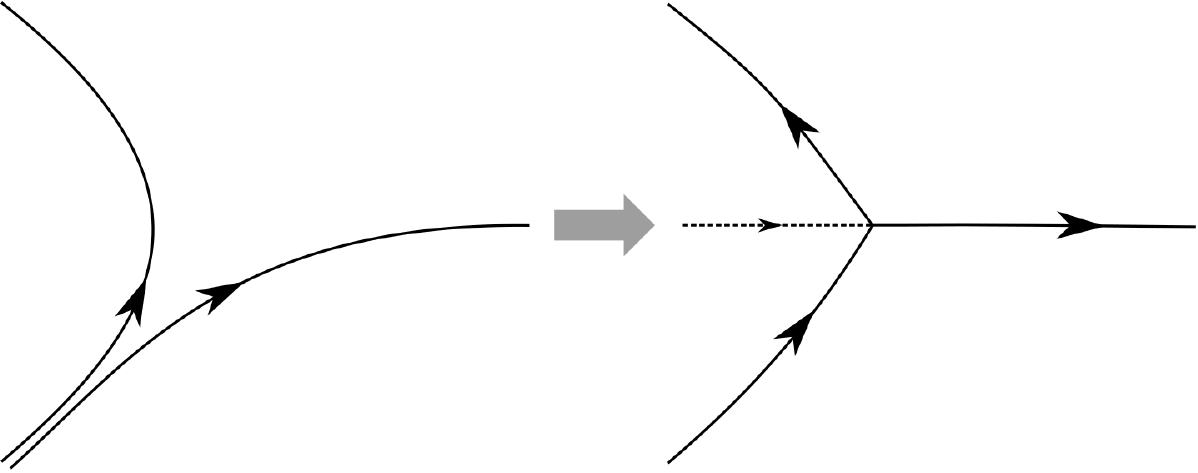}
		\put(15,100){$\gamma_2$}
 \put(100,47){$\gamma_1$}
\put(145,100){$\Gamma_2$}
 	\put(230,45){\small $\Gamma_1=[a_*,+\infty)$}
 		\put(200,47){$a_*$}
\put(165,45){\small $\Gamma_3$}
	\end{overpic}
	\caption{Orthogonality contours $\gamma_1$, $\gamma_2$ (left), and their deformation into a connected set $\Gamma_1\cup \Gamma_2$.}\label{figure_contours0}
\end{figure}

Recall that for $\alpha\in [0,1/2]$, the typical structure of the support of the components of the critical vector measure $\vec \mu_\alpha=(\mu_1, \mu_2, \mu_3)$, described above, is depicted on Figure~\ref{figure_numerics_supports}. More precisely, we consider three oriented sets, $\Delta_j=\supp \mu_j$, $j=1, 2, 3$, (which depend on $\alpha\in [0,1/2]$). Then 
$\Delta_2$ is a piece-wise analytic arc joining the complex-conjugate branch points $a_2,b_2$, $\Im a_2 < 0$, and oriented from $a_2$ to $b_2$. We denote $a_*=\Delta_2 \cap \R$.

In the subcritical regime ($0<\tau <\tau_c$), $\Delta_1=[a_1, b_1]\subset \R$ oriented from $a_1$ to $b_1$, and set $\Delta_3=\emptyset$. In the supercritical regime ($\tau_c<\tau<1/4$), $\Delta_1=[a_*,b_1]\subset \R$ and $\Delta_3=[a_1,a_*]\subset \R$, both with the natural orientation.

Furthermore, let 
$\Gamma_1=[a_*,+\infty)\supset \Delta_1$,  $\Gamma_3=(-\infty,a_*]\supset \Delta_3$, and $\Gamma_2$ a piece-wise analytic curve extended to $\infty$ on its both ends along the directions determined by the angles $-2\pi/3$ and $2\pi/3$, and containing $\Delta_2$. We choose the orientation of $\Gamma_j$'s consistent with those of $\Delta_j$; observe that $\Gamma_1 \cup \Gamma_2\cup\Gamma_3$ is a connected set, see  Figures~\ref{figure_geometry_support_precritical} and \ref{figure_geometry_support_supercritical}.

\begin{figure}[t]
	\centering
	\begin{overpic}[scale=0.6]
		{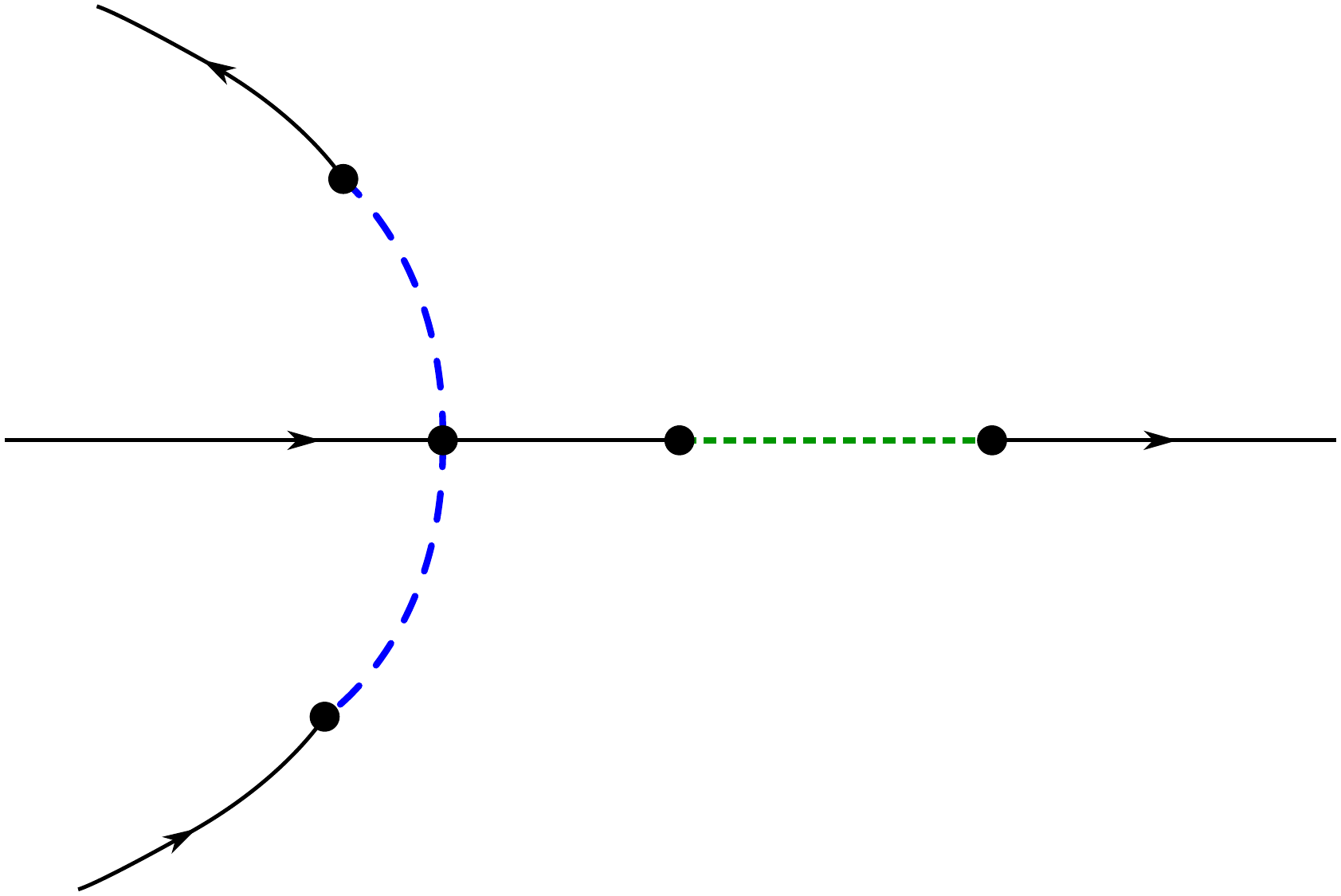}
		\put(150,104){$a_1$}
		\put(85,104){$a_*$}
		\put(10,104){$\Gamma_3=(-\infty,a_*]$}
		\put(220,104){$b_1$}
		\put(240,104){ $\Gamma_1=[a_*,+\infty)$}
		\put(72,30){$a_2$}
		\put(80,155){$b_2$}
		\put(98,125){\textcolor{blue}{$\Delta_2$}}
		\put(180,104){\textcolor{green}{$\Delta_1$}}
		\put(61,171){$\Gamma_2$}
	\end{overpic}
	\caption{Pictorial representation of the sets $\Delta_1$, $\Delta_2$, $\Gamma_1$, $\Gamma_2$ and $\Gamma_3$ in the subcritical regime.}\label{figure_geometry_support_precritical}
\end{figure}

\begin{figure}[t]
	\centering
	\begin{overpic}[scale=0.6]
		{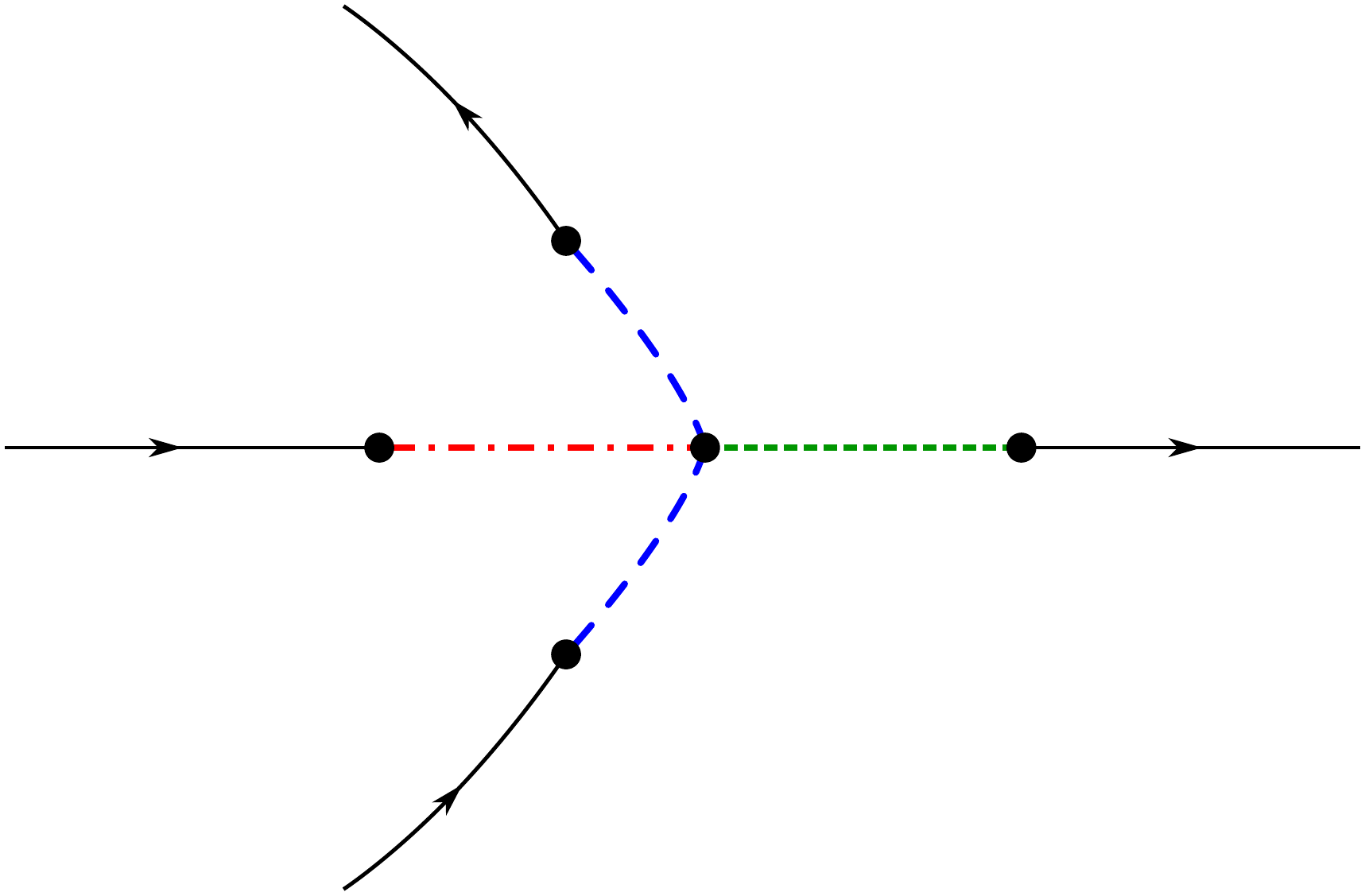}
		\put(160,104){$a_*$}
		\put(85,104){$a_1$}
		\put(10,104){$\Gamma_3=(-\infty,a_*]$}
		\put(225,104){$b_1$}
		\put(240,104){ $\Gamma_1=[a_*,+\infty)$}
		\put(130,150){$b_2$}
		\put(130,45){$a_2$}
		\put(145,125){\textcolor{blue}{$\Delta_2$}}
		\put(192.5,104){\textcolor{green}{$\Delta_1$}}
		\put(122.5,104){\textcolor{red}{$\Delta_3$}}
		\put(110,170){$\Gamma_2$}
	\end{overpic}
	\caption{Pictorial representation of the sets $\Delta_1$, $\Delta_2$, $\Delta_3$, $\Gamma_1$, $\Gamma_2$ and $\Gamma_3$ in the supercritical regime.}\label{figure_geometry_support_supercritical}
\end{figure}

Since the orthogonality conditions in \eqref{mops_conditions} are non-hermitian, we can deform the above mentioned contours $\gamma_1$ and $\gamma_3$ freely, preserving their asymptotic directions. 
In particular, we can make $\gamma_2$ coincide with $\Gamma_2$, while $\gamma_1$ will follow $\Gamma_2\cap \H_-$ and $\Gamma_1$ (see Figure~\ref{figure_contours0}, right). Thus, $\bm Y$ can be alternatively characterized by the following RHP:
\begin{enumerate}
	\item[$\bullet$] $\bm Y:\C\setminus \Gamma_Y\to \C^{3\times 3}$ is analytic, where $\Gamma_Y=\Gamma_1\cup \Gamma_2$;
	
	\item[$\bullet$] $\bm Y$ has continuous boundary values $\bm Y_{\pm}$ on $\Gamma_Y$, and $\bm Y_+(z)=\bm Y_-(z)\bm J_Y(z)$ for $z\in \Gamma_Y$, with\footnote{ Here and in what follows we will allow a slight abuse of notation for the sake of simplicity: whenever we speak about boundary values of a function on a set $\Gamma$, we refer to its values on $\overset{\circ}{\Gamma}$, where they are well defined, although frequently dropping the superscript $\circ$.}
	\begin{equation}\label{RHPY1}
	\bm J_Y(z)\isdef 
	\begin{cases}
	\bm I + e^{-Nz^3} \bm M_{12},   
	&  z\in \Gamma_1 \\
\bm 	I + e^{-Nz^3} \left( \bm M_{12}+  \bm M_{13}\right),   
	& z\in \Gamma_2\cap \mathbb H_-, \\
\bm 	I + e^{-Nz^3} \bm M_{13},   
	& z\in \Gamma_2\cap \mathbb H_+.
	\end{cases}
	\end{equation}

	\item[$\bullet$] 
	$\displaystyle{ 
	\bm 	Y(z)=(\bm I+\Boh(z^{-1})) 
		\diag \left(z^N,z^{-n},z^{-m}  \right)
	}$, $z\to \infty$.
\item[$\bullet$] $\bm Y(z)$ is bounded as $z\to a_*$.
\end{enumerate}

Standard arguments imply that $\det \bm Y\equiv 1$, so that $\bm Y$ is invertible everywhere on the plane. As shown in \cite{Assche01}, the polynomial $P_{n,m}$ coincides with the $(1,1)$ entry of the matrix $\bm Y$, while polynomials $A_{n,m}$ and $B_{n,m}$ are (up to a factor $2\pi i$) the $(2,1)$ and $(3,1)$ entries of the inverse matrix $\bm Y^{-1}$, respectively.

\section{Steepest Descent Analysis} \label{sec:steepestdescent}

We now use the Riemann-Hilbert problem characterization, discussed in the previous section, as the starting point for our steepest descent analysis whose goal is to establish the detailed asymptotics of the solution $\bm Y$ above under the regime
$$
N=n+m\to \infty,\quad \frac{n}{N}\to \alpha\in (0,1/2)\setminus\{\alpha_c\},\quad \frac{m}{N}\to 1-\alpha.
$$

For the moment we actually assume and additional constraint,
\begin{equation}\label{assumption_rational}
\frac{n}{N}=\alpha,\quad \frac{m}{N}=1-\alpha ,
\end{equation}
so that, in particular, we are restricted to $\alpha \in \Q$. This restriction is only for convenience, and in Section \ref{sec:generalparameter} we extend our analysis to the general situation.

Recall that the  Deift-Zhou nonlinear steepest descent method  consists in a number of transformations of  our Riemann-Hilbert characterization above in order to bring it to an equivalent ``close-to-identity'' problem. We will perform this analysis for the subcritical and supercritical cases in parallel.

\subsection{Preliminary transformation}\label{prelim}

Our first goal is to modify slightly the structure of the jump matrix $\bm J_Y$ in \eqref{RHPY1} in a neighborhood of the curve $\Delta_2$ in the lower half plane\footnote{ At this stage, it might appear a technical step, but it will turn out to be linked to the essence of the problem.}.  

To this end, we consider a closed Jordan contour $\gamma\subset \overline{\mathbb H}_-$, oriented clockwise, starting and ending at $a_*$ and encircling $\Delta_2\cap \mathbb H_-$. By Proposition~\ref{proposition_signs_Phi}, we can always take $\gamma\cap \H_-$ lying in the domain $\Omega_+$, where $\Re \Phi_3(z)>0$, and choose  $\gamma$ arbitrarily close to the set $E_\alpha\cap \H_-$, see Definition~\ref{defE}. We assume also that for some $\varepsilon>0$,  
\begin{equation}\label{initial_assumptions_gamma}
\gamma\cap \R = 
\begin{cases}
\{a_* \}, & \tau <\tau_c, \\
[a_1-\varepsilon,a_*+\varepsilon], & \tau>\tau_c, 
\end{cases}
\end{equation} 
see Figures \ref{figure_first_transformation_precritical} and \ref{figure_first_transformation_supercritical}.  
We denote by $\mathcal U \supset (\Delta_2 \cap  \mathbb H_-)$ the bounded domain in $\H_-$ encircled by $\gamma$.

\begin{figure}[t]
	\centering
	\begin{overpic}[scale=0.6]
		{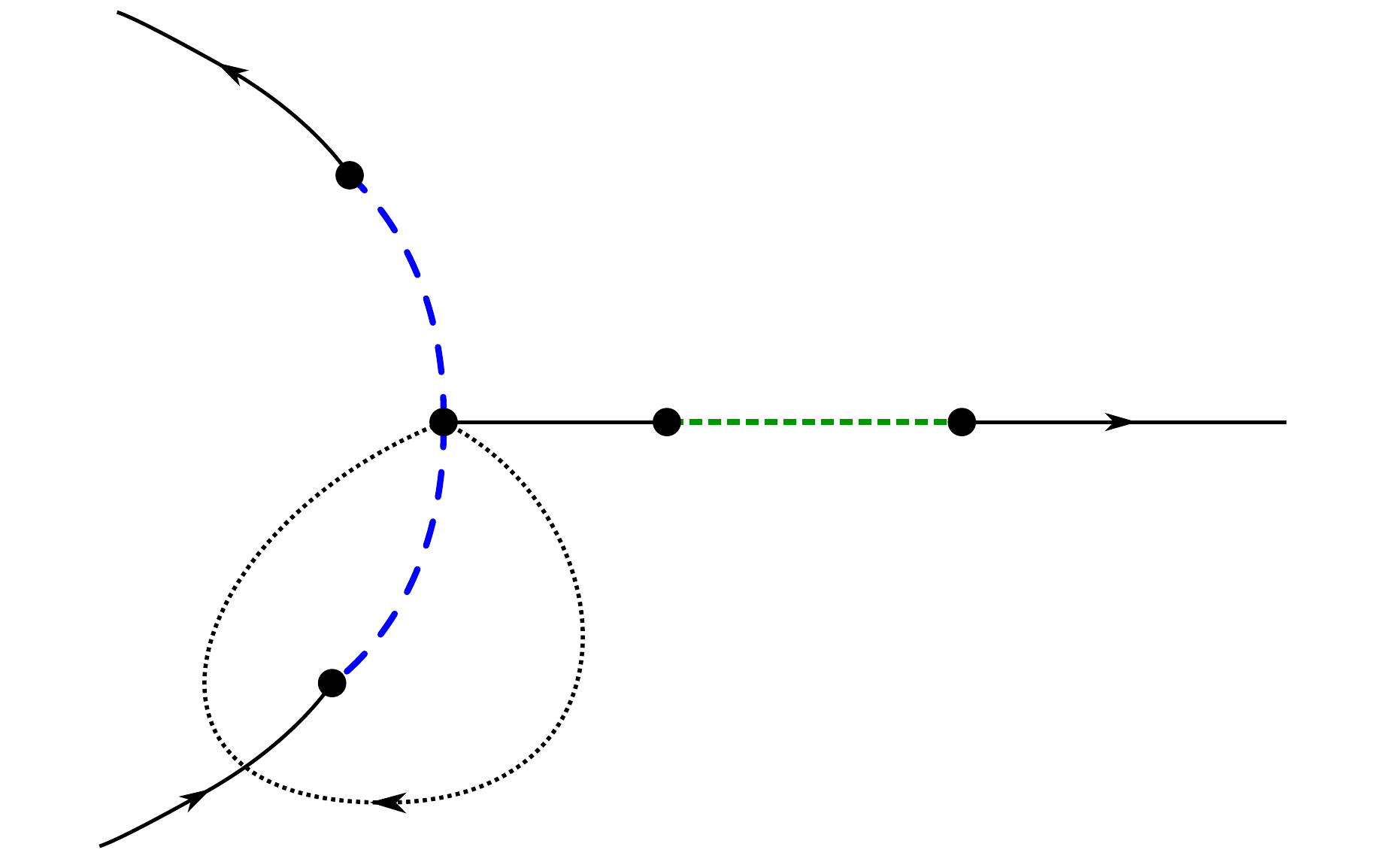}
		\put(141,107){$a_1$}
		\put(88,102){$a_*$}
		\put(118,75){$\gamma$}
		\put(110,50){$\mathcal U$}
		\put(210,107){$b_1$}
		\put(240,107){ $\Gamma_1=[a_*,+\infty)$}
		\put(62,39){$a_2$}
		\put(68,155){$b_2$}
		\put(85,125){\textcolor{blue}{$\Delta_2$}}
		\put(180,105){\textcolor{green}{$\Delta_1$}}
		\put(39,179){$\Gamma_2$}
	\end{overpic}
	\caption{Pictorial representation of the contour $\Gamma_X=\Gamma_1\cup\Gamma_2\cup \gamma$ and the region $\mathcal U$ in the subcritical  regime.}\label{figure_first_transformation_precritical}
\end{figure}

\begin{figure}[t]
	\centering
	\begin{overpic}[scale=0.6]
		{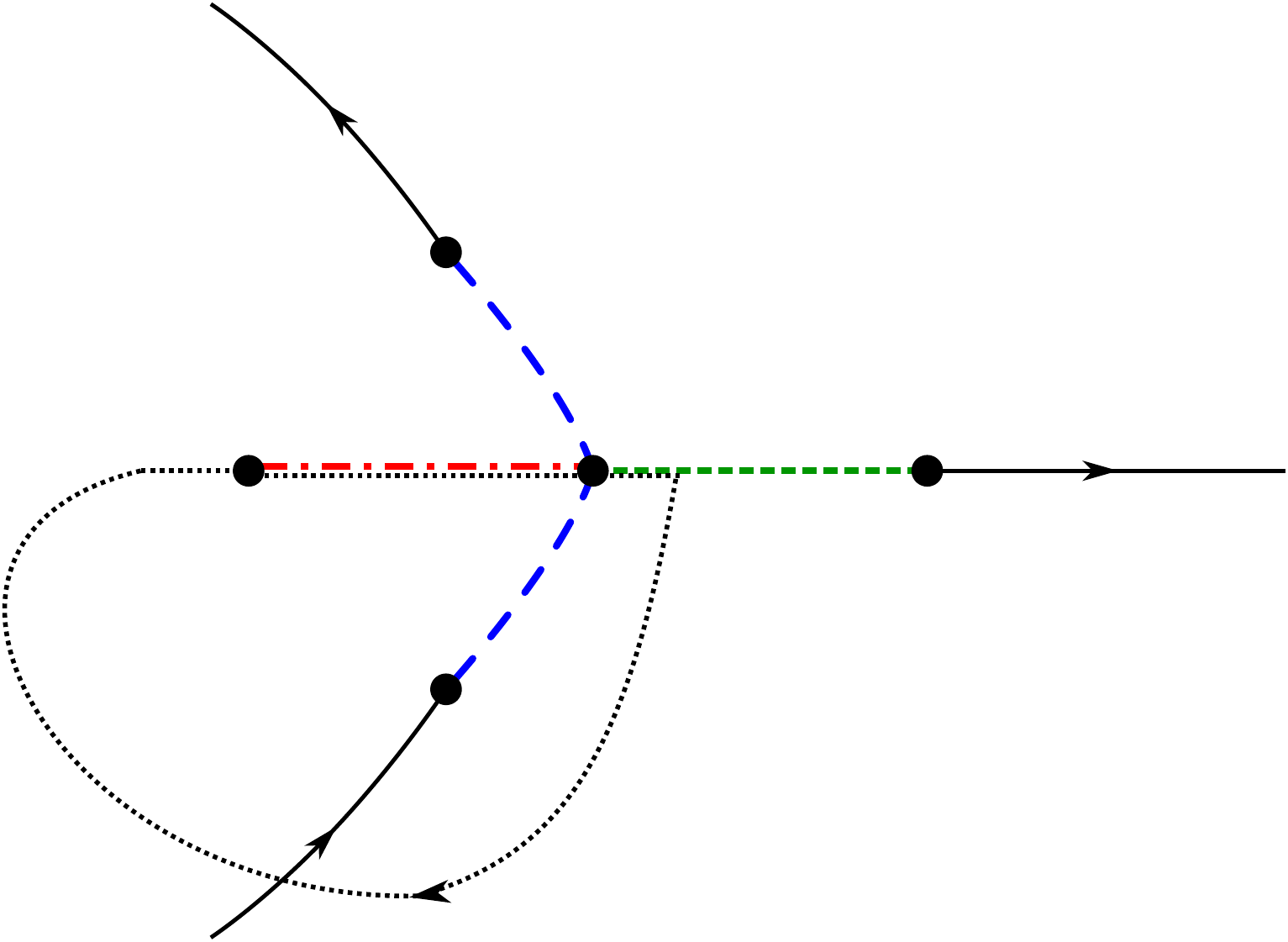}
		\put(50,104){$a_1$}
		\put(125,102){$a_*$}
		\put(112,15){$\gamma$}
		\put(50,60){$\mathcal U$}
		\put(195,102){$b_1$}
		\put(220,104){ $\Gamma_1=[a_*,+\infty)$}
		\put(95,45){$a_2$}
		\put(98,140){$b_2$}
		\put(110,125){\textcolor{blue}{$\Delta_2$}}
		\put(155,102){\textcolor{green}{$\Delta_1$}}
		\put(80,102){\textcolor{red}{$\Delta_3$}}
		\put(73,170){$\Gamma_2$}
	\end{overpic}
	\caption{Pictorial representation of the contour $\Gamma_X=\Gamma_1\cup\Gamma_2\cup \gamma$ and the region $\mathcal U$ in the supercritical 
		regime.}\label{figure_first_transformation_supercritical}
\end{figure}

We start with the  transformation
\begin{equation}
\bm X(z)\isdef
\begin{cases}
\bm Y(z)(\bm I-\bm M_{32}),& z\in \mathcal U, \\
\bm Y(z),& \mbox{otherwise }, 
\end{cases}
\end{equation}\label{def:transformationX}
so that  $\bm X$ satisfies the following RHP:
\begin{enumerate}
	\item[$\bullet$] $\bm X:\C\setminus \Gamma_X\to \C^{3\times 3}$ is analytic, where $\Gamma_X=\Gamma_Y\cup\gamma$;
	
	\item[$\bullet$] $\bm X$ has continuous boundary values $\bm X_{\pm}$ on $\Gamma_X$, and  $\bm X_+(z)=\bm X_-(z)\bm J_X(z)$ for $z\in \Gamma_X$, with
	$$
	\bm J_X(z)\isdef 
	\begin{cases}
   \bm  I+e^{-Nz^3}\bm M_{12},
	& z\in \Gamma_1\setminus\gamma \\
\bm 	I+e^{Nz^3}\bm M_{12} +\bm M_{32}, 
	& z\in \Delta_1 \cap \gamma   \text{ (if } \tau>\tau_c \text{)}, \\
\bm 	I+e^{-Nz^3}\bm M_{13}, 
	&  
	z\in \Gamma_2\cap (\mathcal U\cup   \mathbb H_+), \\
\bm I+e^{-Nz^3} \left( \bm M_{12}+ \bm M_{13}\right), 
	& z\in (\Gamma_2\cap \mathbb H_-)\setminus \mathcal U, \\
\bm 	I+\bm M_{32}, 
	& z\in \gamma\setminus \Delta_1, \\
	\end{cases}
	$$
	
	\item[$\bullet$] 
	$\displaystyle{ 
	\bm 	X(z)=(\bm I+\Boh(z^{-1})) 
	\diag \left(z^N,z^{-n},z^{-m}  \right),
		\quad \mbox{ as } z\to\infty.
	}
	$
\item[$\bullet$] $\bm X(z)$ is bounded at all boundary points of the jump contours.
\end{enumerate}

\subsection{First transformation} \label{sec:firstransf}

Now we normalize the behavior of the RHP at infinity, using functions $g_j$'s and constants $r_j$'s, defined in~\eqref{defGfunctions} and \eqref{defRforGfunctions}, respectively. 
Let $\Gamma_T=\Gamma_X$, and set
\begin{multline}\label{def:transformationT}
\bm T(z)\isdef  \diag \left(e^{-Nr_1}, e^{-Nr_2}, e^{-Nr_3}  \right)
\bm X(z) 
\\
\times \diag \left(e^{N(g_1(z)-\frac{2}{3}z^3)}, e^{N(g_2(z)+\frac{1}{3}z^3)}, e^{N(g_3(z)+\frac{1}{3}z^3)} \right),
\quad z\in \C\setminus \Gamma_T.
\end{multline}
A priori, $\bm T$ is analytic on $\C\setminus (\Gamma_T \cup (-\infty, a_*] )$, and by the assumption \eqref{assumption_rational},  
$$
\bm T(z)=\bm I+\Boh(z^{-1}),\quad \mbox{ as } z\to\infty,
$$
which was the primary goal of this transformation.

We analyze the jumps of $\bm T$ on the curves comprising $\Gamma_T \cup (-\infty, a_*)$; it is convenient to use here the notation
$$
\diag e^{N\Delta g}=\diag (e^{N\Delta g_1},e^{N\Delta g_2},e^{N\Delta g_3}), \quad \Delta g_j = g_{j+}-g_{j-}.
$$

Using the value of the periods \eqref{periods}, we see that on $ \R\setminus\Gamma_1 =  (-\infty, a_*)$,
\begin{equation} \label{identityG1anew}
 g_{1+}-g_{1-}=\ointctrclockwise_{\Delta_1\cup\Delta_2}\xi_1 = -2\pi i  .  
\end{equation}
In the same vein, on $ \R\setminus(\Gamma_1 \cup \Delta_3)$,
\begin{equation}
\begin{split}
& g_{2+}-g_{2-}=\ointctrclockwise_{\Delta_1\cup\Delta_3}\xi_2 = 2\pi i \alpha,   \\	
& g_{3+}-g_{3-}=\ointctrclockwise_{\Delta_2\cup\Delta_3}\xi_3 = 2\pi i (1-\alpha),
\end{split} 
\label{identityG3new}
\end{equation}
and again, due to the crucial assumption \eqref{assumption_rational}, in \eqref{identityG1anew}--\eqref{identityG3new}, all expressions of the form $N  \Delta g_{i}$ are integer multiples of $2\pi i$. In particular, this shows that $\bm T$ is actually analytic on $\C\setminus \Gamma_T $.
From the RHP for $\bm X$ we get that $\bm T_+(z)=\bm T_-(z)\bm J_T(z)$, $ z\in \Gamma_T$, with
\begin{multline*}
\bm J_T= \\
\begin{cases}
\diag e^{N\Delta g} + e^{N(g_{2+}-g_{1-})}M_{12}, & z\in \Gamma_1\setminus \gamma, \\
\diag e^{N\Delta g} + e^{N(g_{2+}-g_{1-})}M_{12} + e^{N(g_{2+}-g_{3-})}M_{32}, & z\in \gamma \cap \Delta_1, \text{ (if $\tau>\tau_c$)}, \\
\diag e^{N\Delta g} + e^{N(g_{3+}-g_{1-})}M_{13}, & z\in \Gamma_2\cap (\mathcal U\cup   \mathbb H_+), \\
\diag e^{N\Delta g} + e^{N(g_{2+}-g_{1-})}M_{12}+ e^{N(g_{3+}-g_{1-})}M_{13}, & z\in (\Gamma_2\cap \mathbb H_-)\setminus \mathcal U, \\
\diag e^{N\Delta g} + e^{N(g_{2+ }-g_{3 -})}M_{32}, & z\in \gamma\setminus \Delta_1, 
\end{cases}
\end{multline*}
where we have used the structure of cuts in the definition in \eqref{defGfunctions}.

The expression for some entries of the jump matrix $\bm J_T$ can be simplified, at least on subsets of the jump contours:
\begin{enumerate}[\itshape (i)]
	\item On $\Gamma_1$, by \eqref{g_functions_constants},
	\begin{equation*}
	 g_{2+}(z)-g_{1-}(z)=\int_{b_1}^z(\xi_{2+}-\xi_{1-})ds +c_2-c_1=\int_{b_1}^z(\xi_{2+}-\xi_{1-})ds.
	\end{equation*}
	Since $\xi_{2+}=\xi_{1-}$ on $\overset{\circ}{\Delta}_1:=\Delta_1\setminus \{\max(a_1,a_*), b_1 \}$ (see \eqref{equality_xi_2}), we  conclude
	in particular that $g_{2+}(z)-g_{1-}(z)=0$, $z\in \Delta_1$.

		\item On $\Delta_2\cap \H_-$, by \eqref{equality_xi_3} and \eqref{g_functions_constants},
		\begin{equation} \label{G3pusminusG1minus}
		\begin{split}
		g_{3+}(z)-g_{1-}(z) & =\int_{a_2}^z(\xi_{3+}-\xi_{1-})ds+\int_{a_*}^{a_2}(\xi_{3-} -\xi_{1-})ds\\ &  -\int_{b_1}^{a_*}\xi_{1-}ds +c_3-c_1 
	  = \int_{a_2}^{z}(\xi_{3+}-\xi_{1-})ds =0.
		\end{split}
		\end{equation}
		On the other hand, by \eqref{symmetryforG} we have on $\Delta_2 \cap \mathbb H_+$:
		$$
		g_{3+}(z)-g_{1-}(z)=  c_3- \overline{ c_3}-c_1+\overline{ c_1}.
		$$
		Using \eqref{periods} and \eqref{g_functions_constants} we get that
		\begin{equation}\label{identity_periods1}
		g_{3+}(z)-g_{1-}(z) =  \ointclockwise_{\Delta_1\cup\Delta_2}\xi_1 = 
		2\pi i   \quad \text{on $\Delta_2 \cap \mathbb H_+$}.
		\end{equation}
	
\item On $ \Delta_3 $   (for $\tau>\tau_c$), by \eqref{equality_xi_2}, \eqref{equality_xi_5} and \eqref{g_functions_constants},
\begin{equation}\label{G2plusminusG3minus}
 g_{2+}(z)-g_{3-}(z)=\int_{a_*}^z \left(\xi_{2+}-\xi_{3-} \right)+ \int_{b_1}^{a_*} \left(\xi_{2+}-\xi_{1-} \right)=0,
\end{equation}
and analogously,
\begin{equation}\label{G3plusminusG2minus}
g_{3+}(z)-g_{2-}(z)= \varointclockwise_{\Delta_1 \cup \Delta_2}\xi_1 = 2 \pi i.
\end{equation}
	\item On $(a_*,a_1)$, for $\tau<\tau_c$,
\begin{equation} \label{identityG1new}
\begin{split}
& g_{1+}-g_{1-}=\ointctrclockwise_{\Delta_1}\xi_1=\varointclockwise_{\Delta_1}\xi_2 = -2\pi i \alpha ,  \\
& g_{2+}-g_{2-}=\ointctrclockwise_{\Delta_1}\xi_2 = 2\pi i \alpha .  
\end{split}
\end{equation}

\end{enumerate}

Gathering these identities, we conclude that $\bm T$ satisfies the following RHP (see Figures~\ref{figure_first_transformation_precritical} and \ref{figure_first_transformation_supercritical}):
\begin{enumerate}
	\item[$\bullet$] $\bm T:\C\setminus \Gamma_T\to \C^{3\times 3}$ is analytic;
	
	\item[$\bullet$] $\bm T$ has continuous boundary values $\bm T_{\pm}$ on $\Gamma_T$, and  $\bm T_+(z)=\bm T_-(z)\bm J_T(z)$, $ z\in \Gamma_T$, with
\begin{equation}\label{JTnew}
\begin{split}
\bm J_T=
\begin{cases}
\diag\left(e^{N(g_{1+}-g_{1-})}, e^{N(g_{2+}-g_{2-})}, 1 \right)+\bm M_{12},
&
z\in \Delta_1\setminus\gamma,
\\
%
\begin{multlined}[b]
\diag\left(e^{N(g_{1+}-g_{1-})}, e^{N(g_{2+}-g_{2-})}, 1 \right) \\[-8pt] +\bm M_{12} + e^{N(g_{2+}-g_{3-})} \bm M_{32},
\end{multlined}
&
z\in \Delta_1\cap\gamma \; (\text{if } \tau >\tau_c),
\\
\bm I+ e^{N(g_{2+}-g_{1-})}\bm M_{12}, & z\in \Gamma_1\setminus \Delta_1, \\
\diag\left(e^{N(g_{1+}-g_{1-})},  1 , e^{N(g_{3+}-g_{3-})} \right)+\bm M_{13} ,
&
z \in \Delta_2,
\\
\bm I+ e^{N(g_{3}-g_{1})}\bm M_{13}, & z\in \Gamma_2\cap (\mathcal U \cup \H_+) \setminus \Delta_2, \\
\bm I+ e^{N(g_{2}-g_{1})}\bm M_{12}+e^{N(g_{3}-g_{1})}\bm M_{13}, & z\in (\Gamma_2\cap \H_-)\setminus \mathcal U, \\
\diag\left(1, e^{N(g_{2+}-g_{2-})},    e^{N(g_{3+}-g_{3-})} \right)+\bm M_{32} ,
&
z \in \Delta_3  \; (\text{if } \tau >\tau_c),
\\
\bm I+ e^{N(g_{2}-g_{3})}\bm M_{32}, & z\in \gamma\setminus \R.
\end{cases}
\end{split}
\end{equation}

\item[$\bullet$] 
$\bm 	T(z)=\bm I+\Boh(z^{-1})$  as $ z\to\infty$.

\item[$\bullet$] $\bm T(z)$ is bounded at all end points of the analytic arcs comprising $\Gamma_T$.
\end{enumerate}

\begin{remark}
Notice that the jump on $(\gamma\cap \Gamma_3)\setminus \Delta_3$ has the form $\bm I+ e^{N(g_{2+}-g_{3-})}\bm M_{32}$. However, due to \eqref{identityG3new} we can rewrite it as $\bm I+ e^{N(g_{2-}-g_{3-})}\bm M_{32}$, which justifies stating that $\bm J_T=\bm I+ e^{N(g_{2}-g_{3})}\bm M_{32}$ on $\gamma\setminus (\Delta_1\cup\Delta_3)$, understanding that on the portion of $\gamma$ along $\R$ we use the ``$-$'' boundary values of the entries. 
\end{remark}

An important fact about $\bm J_T$ is that not all its components are asymptotically relevant, since some of the off-diagonal entries of $\bm J_T$ exhibit an exponential decay. Indeed,
\begin{itemize}
\item On $\Gamma_1\setminus \Delta_1$, as we have seen,
\begin{equation} \label{ineqG2G1}
g_{2+}-g_{1-}=\int_{b_1}^z(\xi_{2+}-\xi_{1-})ds
\begin{cases}
<0, & z>b_1, \\
<0, & a_*<z<a_1 \; (\mbox{when } \tau<\tau_c),
\end{cases}
\end{equation}
where the inequalities are a direct consequence of \eqref{inequality_xi_3}--\eqref{inequality_xi_5}.

\item On $\gamma\setminus \R$, by \eqref{value_phi_3},
$$
g_2(z)-g_3(z)    =  -\Phi_3(z) - 2\pi i \alpha+
\begin{cases}
\displaystyle  \int_{a_*}^{a_1} (\xi_{1}-\xi_2)ds  +c_2,  & 0 <\tau < \tau_c, \\
0,  & \tau_c <\tau < 1/4,
\end{cases}  
$$
with $\Phi_3$ defined in \eqref{def_phi_3b}. It remains to use that $c_2\in i\R$, Proposition~\ref{proposition_signs_Phi}, and the fact that by \eqref{inequality_xi_3},  $\xi_1<\xi_2<\xi_3$ on $[a_*,a_1)$ for $0<\tau <\tau_c$, in order to conclude that  $\re(g_2-g_3)<0$ on $\gamma\setminus \R$ (but uniformly up to  $a_*$ for $\tau<\tau_c$).  

\item Finally, by \eqref{value_phi_1}, \eqref{value_phi_2} and Proposition~\ref{proposition_signs_Phi}, we can always take $\Gamma_2$ in such a way that on $\Gamma_2\cap \H_{-}\setminus\Delta_2 $,  
\begin{align*}
\Re \left( g_{2}-g_{1} \right)(z)  =- \Re \Phi _1(z)  
		&  <0, \\
\Re \left( g_{3}-g_{1} \right)(z)  =- \Re \Phi _2(z)  
&  <0.
\end{align*}
\end{itemize}

In summary, on some contours of $\Gamma_T$, the jump matrix $\bm J_T$ is exponentially close to $\bm I$, while on others, it has entries with oscillatory behavior. 
The (now) standard technique to deal with these oscillations is to split the jump contour there in several new contours using  the classical factorization
\begin{equation}\label{factorizationLCR}
\begin{pmatrix}
e^{-A} & 1 \\
0     & e^{A}
\end{pmatrix}
=
\begin{pmatrix}
1 & 0 \\
e^{A} & 1
\end{pmatrix}
\begin{pmatrix}
0 & 1 \\
-1 & 0
\end{pmatrix}
\begin{pmatrix}
1 & 0 \\
e^{-A} & 1
\end{pmatrix}
=\bm L\bm C\bm R,
\end{equation}
with the obvious choices for the notation $\bm L,\bm C,\bm R$. 
This is the next step, in which it is convenient to treat the subcritical and supercritical cases separately.

Motivated by \eqref{factorizationLCR}, we will  make use of a special notation for the following $3 \times 3$ matrices:
\begin{equation} \label{defSigmas}
\bm \sigma_{3j}\isdef \begin{cases}
\bm M_{12}-\bm M_{21}+\bm M_{33}, & \text{if } j=1, \\
\bm M_{13}-\bm M_{31}+\bm M_{22}, & \text{if } j=2, \\
\bm M_{32}-\bm M_{23}+\bm M_{11}, & \text{if } j=3. \\
\end{cases}
\end{equation}
For instance, 
$$
\bm \sigma_{31}= \begin{pmatrix}
0			& 1 				& 0 \\
-1 		 	& 0				& 0 \\
0			& 0				& 1
\end{pmatrix},
$$
etcetera. A feature of these matrices, useful in what follows, is that $\bm \sigma_{3j}^ T= \bm \sigma_{3j}^{-1}$, $j=1, 2,3 $.

\subsection{Opening of lenses in the precritical case} \label{sec:precritical}

Let us start with the case $0<\tau<\tau_c$, when  \eqref{JTnew} simplifies to
$$
J_T=
\begin{cases}
\diag\left(e^{N(g_{1+}-g_{1-})}, e^{N(g_{2+}-g_{2-})}, 1 \right)+\bm M_{12},
&
z\in \Delta_1,
\\
\diag\left(e^{N(g_{1+}-g_{1-})},  1 , e^{N(g_{3+}-g_{3-})} \right)+\bm M_{13} ,
&
z \in \Delta_2,
\\
\bm I+ e^{N(g_{2+}-g_{1-})}\bm M_{12}, & z\in \Gamma_1\setminus \Delta_1, \\
\bm I+ e^{N(g_{3}-g_{1})}\bm M_{13}, & z\in \Gamma_2\cap (\mathcal U \cup \H_+) \setminus \Delta_2, \\
\bm I+ e^{N(g_{2}-g_{1})}\bm M_{12}+e^{N(g_{3}-g_{1})}\bm M_{13}, & z\in (\Gamma_2\cap \H_-)\setminus \mathcal U, \\
\bm I+ e^{N(g_{2}-g_{3})}\bm M_{32}, & z\in \gamma.
\end{cases}
$$

It is convenient to rewrite the jumps on $\Delta_1$ and $\Delta_2$ in terms of the functions $\Phi_j$ introduced in \eqref{def_phi_1}--\eqref{def_phi_3b}.
First, by \eqref{equality_xi_2}, $g_{1\pm}(z)=g_{2\mp}(z)$ for $z\in \Delta_1$, and this gives us the identities below:
\begin{equation}\label{compatibility_for_lenses_delta1}
\begin{aligned}
g_{1+}(z)-g_{1-}(z) 		& =\Phi_{1+}(z), 			&& z\in \Delta_1,\\
g_{2+}(z)-g_{2-}(z) 		& =\Phi_{1-}(z), 			&& z\in \Delta_1, \\
\Phi_{1+}(z)+\Phi_{1-}(z) 	& =0, 					&& z\in \Delta_1. 
\end{aligned}
\end{equation}

In consequence, using \eqref{factorizationLCR} and \eqref{compatibility_for_lenses_delta1}, we can rewrite the jump on $\Delta_1$ as 
\begin{align*}
\bm J_T & =
\begin{pmatrix}
e^{N\Phi_{1+}} 	& 1 					& 0 \\
0   			   	& e^{N\Phi_{1-}}		& 0 \\
0				& 0					& 1
\end{pmatrix} \\
& =
\begin{pmatrix}
1 				& 0 				& 0 \\
e^{N\Phi_{1-}}	& 1				& 0 \\
0				& 0				& 1
\end{pmatrix} 
\begin{pmatrix}
0			& 1 				& 0 \\
-1 		 	& 0				& 0 \\
0			& 0				& 1
\end{pmatrix}
\begin{pmatrix}
1 				& 0 				& 0 \\
e^{N\Phi_{1+}}	& 1				& 0 \\
0				& 0				& 1
\end{pmatrix} \\
&=(\bm I+e^{N\Phi_{1-}}\bm M_{21})\bm \sigma_{31}(\bm I+e^{N\Phi_{1+}}\bm M_{21}), 
\end{align*}
with $\bm \sigma_{31}$ defined in \eqref{defSigmas}. In the same vein, by \eqref{equality_xi_3}, \eqref{G3pusminusG1minus} and \eqref{identity_periods1}, 
\begin{equation}\label{compatibility_for_lenses_delta2}
\begin{aligned}
g_{1+}(z)-g_{1-}(z) 		& \equiv \Phi_{2+}(z) \mod (2\pi i), 	&&  z\in \Delta_2, \\
g_{3+}(z)-g_{3-}(z) 		& \equiv\Phi_{2-}(z) \mod (2\pi i), 	&&  z\in \Delta_2, \\
\Phi_{2+}(z)+\Phi_{2-}(z)	& \equiv 0 \mod (2\pi i), 			&& z\in \Delta_2, 
\end{aligned}
\end{equation}
and again \eqref{factorizationLCR} and now \eqref{compatibility_for_lenses_delta2} yield that $J_T$ on $\Delta_2$ has the form
\begin{equation*}
\bm J_T=(\bm I+e^{N\Phi_{2-}}\bm M_{31})\bm \sigma_{32}(\bm I+e^{N\Phi_{2+}}\bm M_{31}), 
\end{equation*}
where $\bm \sigma_{32}$ was defined in \eqref{defSigmas}.

\begin{figure}[t]
	\centering
	\begin{overpic}[scale=0.6]
		{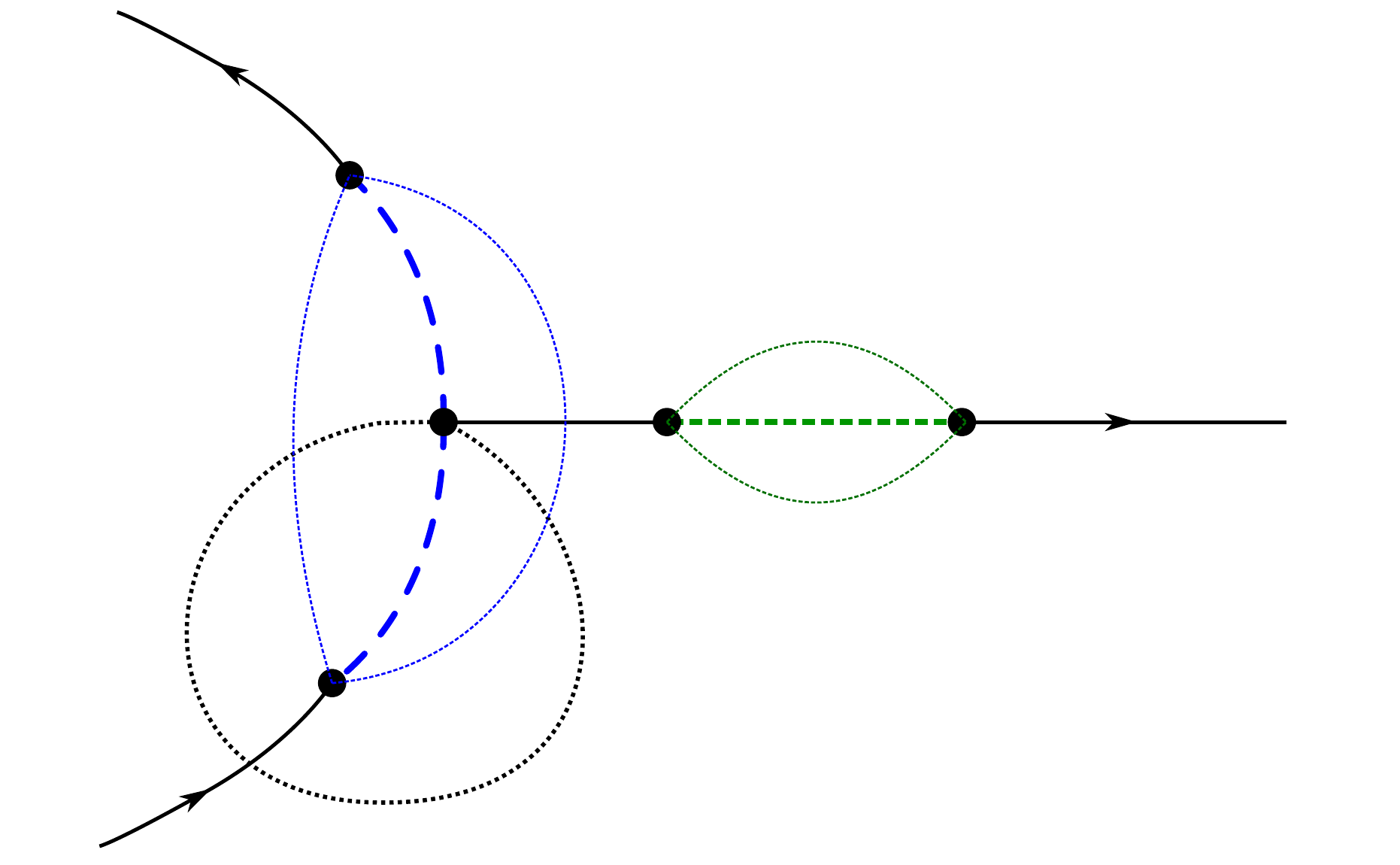}
		\put(141,105){$a_1$}
		\put(89,104){$a_*$}
		\put(137,55){$\gamma$}
		\put(110,35){$\mathcal U$}
		\put(225,105){$b_1$}
	\put(240,106){ $\Gamma_1=[a_*,+\infty)$}
		\put(62,39){$a_2$}
		\put(67,155){$b_2$}
		\put(78,58){\scriptsize{\textcolor{blue}{$\Delta_2$}}}
		\put(75,128){ \textcolor{blue}{$\mathcal S_2^+$}}
		\put(100,128){ \textcolor{blue}{$\mathcal S_2^-$}}
		\put(165,105){\scriptsize{\textcolor{green}{$\Delta_1$}}}
		\put(180,108){ \textcolor{green}{$\mathcal S_1^+$}}
		\put(180,90){ \textcolor{green}{$\mathcal S_1^-$}}
		\put(48,171){$\Gamma_2$}
	\end{overpic}
	\caption{Pictorial representation of the contour $\Gamma_S=\Gamma_X\cup\partial\mathcal S_1\cup \partial\mathcal S_2$ in the subcritical regime.}\label{figure_lenses_precritical}
\end{figure}

These factorizations of the jumps across $\Delta_1$ and $\Delta_2$ motivate to introduce new domains (or ``to open lenses'') $\mathcal S_j=\mathcal S_j^+\cup\mathcal S_j^-$ around $\Delta_j$, $j=1, 2$ (see Figure~\ref{figure_lenses_precritical}), and define
\begin{equation}\label{defS}
\bm S(z)\isdef \bm T(z)\times
\begin{cases}
\bm I, 						& z\in \C\setminus \mathcal S_1\cup\mathcal S_2, \\
\bm I \mp e^{N\Phi_{1}}\bm M_{21}, 	& z\in \mathcal S_1^\pm \\
\bm I \mp e^{N\Phi_{2}}\bm M_{31},	& z\in \mathcal S_2^\pm .
\end{cases}
\end{equation}

Observe that on $(a_*, a_1) $ by \eqref{identityG1new},
\begin{equation}\label{compatibility_for_lenses_delta3}
\Phi_{2+}(z)-\Phi_{2-}(z) =g_{1+}(z)-g_{1-}(z)	 = -2\pi i\alpha, 	\quad  z\in (a_*,a_1),
\end{equation}
so that, in particular, $e^{N\Phi_{2+}(z)}=e^{N\Phi_{2-}(z)}$ on $(a_*,a_1)$. Moreover, as a consequence of \eqref{value_phi_1} and \eqref{identityG1new},
$$
g_{2+}(z)-g_{1-}(z)		  = -\Phi_{1+}(z)-2\pi i \alpha,		\quad  z\in (a_*,a_1). 
$$

Hence, $\bm S$ satisfies the following RHP (see Figure~\ref{figure_lenses_precritical}):
\begin{enumerate}
	\item[$\bullet$] $\bm S:\C\setminus \Gamma_S\to \C^{3\times 3}$ is analytic, where $\Gamma_S=\Gamma_T\cup\partial\mathcal S_1\cup \partial\mathcal S_2$;
	
	\item[$\bullet$] $\bm S$ has continuous boundary values $\bm S_{\pm}$ on $\Gamma_S$, and  $\bm S_+(z)=\bm S_-(z)\bm J_S(z),\ z\in \Gamma_S$, with
	\begin{equation}\label{jumpSpre}
	\bm J_S=
	\begin{cases}
\bm 	I+e^{N\Phi_j}\bm M_{j+1,1}, 			& z\in \partial \mathcal S_j^{\pm}, \quad j=1,2, 	\\
\bm 	\sigma_{3,j},					& z\in \Delta_j,\quad j=1,2,				\\
\bm 	I+e^{-N\Phi_{1+}}(\bm M_{12}-e^{N\Phi_{2-}}\bm M_{32}),	& z\in (a_*, a_1)\cap \mathcal S_2,			\\
	\bm J_T,						& \mbox{elsewhere on } \Gamma_S.
	\end{cases}
	\end{equation}
	
	\item[$\bullet$] 
	$\bm 	S(z)=I+\Boh(z^{-1})$  as $ z\to\infty$.
	
	\item[$\bullet$] $\bm S(z)$ is bounded at all end points of the analytic arcs comprising $\Gamma_S$.
\end{enumerate}
Observe that the jumps on $\Delta_1$ and $\Delta_2$ now are independent of $N$. 

We claim that we can select the domains $\mathcal S_j$ in such a way that the rest of the jumps are exponentially close to $\bm I$. Indeed, by \eqref{inequality_xi_3},  $\re \Phi_{1+}(x)>0$ for $x\in [a_*,a_1)$. Furthermore, recall that $\mu_j$ from \eqref{definition_measures_mu} are positive measures, and we can conclude that
\begin{align*}
-d\Phi_{1+}(s)=(\xi_{1+}(s)-\xi_{2+}(s))ds\in i\R_+, & \quad s\in \Delta_1, \\
-d\Phi_{2+}(s)=(\xi_{1+}(s)-\xi_{3+}(s))ds\in i\R_+, & \quad s\in \Delta_2.
\end{align*}
Thus,
\begin{equation}\label{signonlenses}
\Phi_{j\pm}(z)\in i\R_{\mp},\quad z\in \Delta_j, \quad j=1, 2.
\end{equation}
This is enough to assure that the domains $\mathcal S_j$ can be taken such that 
\begin{equation}\label{correct_decay_lenses_2}
\re \Phi_j(z)<0,\quad z\in \overline{ \mathcal S_j^\pm} \setminus   \Delta_j ,\quad j=1,2;
\end{equation}
we assume that in what follows the inequalities \eqref{correct_decay_lenses_2} hold. 

An important consequence of \eqref{ineqG2G1}, \eqref{compatibility_for_lenses_delta1}, \eqref{signonlenses},  and \eqref{correct_decay_lenses_2}, as well as of our analysis at the end of Section~\ref{sec:firstransf}, is that away from $\Delta_1$ and $\Delta_2$, the jump matrix $\bm J_S$ is  exponentially close to $\bm I$. Moreover, this is true uniformly all the way up to $\Delta_j$ as long as we stay away from the end points $a_j$, $b_j$.

\subsection{Opening of lenses in the supercritical case}

By \eqref{initial_assumptions_gamma} and \eqref{JTnew}, for $ \tau_c<\tau<1/4$,
$$
\bm J_T=
\begin{cases}
\diag\left(e^{N(g_{1+}-g_{1-})}, e^{N(g_{2+}-g_{2-})}, 1 \right)+\bm M_{12},
&
z\in \Delta_1\setminus\gamma= (a_*+\varepsilon, b_1),
\\
\begin{multlined}[b]
\diag\left(e^{N(g_{1+}-g_{1-})}, e^{N(g_{2+}-g_{2-})}, 1 \right) \\[-10pt] +\bm M_{12} + e^{N(g_{2+}-g_{3-})} \bm M_{32}, 
\end{multlined}
&
z\in \Delta_1\cap\gamma=(a_*, a_*+\varepsilon),
\\
\bm I+ e^{N(g_{2 }-g_{1})}\bm M_{12}, & z\in \Gamma_1\setminus \Delta_1=(b_1,+\infty), \\
\diag\left(e^{N(g_{1+}-g_{1-})},  1 , e^{N(g_{3+}-g_{3-})} \right)+\bm M_{13} ,
&
z \in \Delta_2,
\\
\bm I+ e^{N(g_{3}-g_{1})}\bm M_{13}, & z\in \Gamma_2\cap (\mathcal U \cup \H_+) \setminus \Delta_2, \\
\bm I+ e^{N(g_{2}-g_{1})}\bm M_{12}+e^{N(g_{3}-g_{1})}\bm M_{13}, & z\in (\Gamma_2\cap \H_-)\setminus \mathcal U, \\
\diag\left(1, e^{N(g_{2+}-g_{2-})},    e^{N(g_{3+}-g_{3-})} \right)+\bm M_{32} ,
&
z \in \Delta_3 ,
\\
\bm I+ e^{N(g_{2}-g_{3})}\bm M_{32}, & z\in \gamma\setminus (\Delta_1\cup\Delta_3).
\end{cases}
$$
Again, we rewrite these jumps in terms of the functions $\Phi_j$, introduced in \eqref{def_phi_1}--\eqref{def_phi_3b}. On $\Delta_3$, by \eqref{identityG1anew}, \eqref{G2plusminusG3minus} and \eqref{G3plusminusG2minus} we now have
\begin{equation}\label{compatibility_for_lenses_delta3_supercritical}
\begin{aligned}
g_{2+}(z)-g_{2-}(z) 		& =\Phi_{3-}(z) + 2\pi i \alpha,		&&  z\in \Delta_3, \\
g_{3+}(z)-g_{3-}(z) 		& =\Phi_{3+}(z)+ 2\pi i \alpha,		&&  z\in \Delta_3, \\
\Phi_{3+}(z)+\Phi_{3-}(z)	& =2\pi i (1-2\alpha),		&& z\in \Delta_3.
\end{aligned}
\end{equation}
As for $\Delta_1$ and $\Delta_2$, along with the relations \eqref{compatibility_for_lenses_delta1} and \eqref{compatibility_for_lenses_delta2}, we get (using now \eqref{equality_xi_2} and \eqref{value_phi_2}),
\begin{equation}\label{compatibility_for_lenses_delta1_supercritical}
\begin{aligned}
g_{2+}(z)-g_{3-}(z)			& = \Phi_{2-}(z)-2\pi i,				&& z\in \Delta_1, 
\end{aligned}
\end{equation}
and
\begin{equation}\label{compatibility_for_lenses_delta2_supercritical}
\begin{aligned}
g_{1+}(z)-g_{1-}(z) 			& =\Phi_{2+}(z), 				&&  z\in \Delta_2, \\
g_{3+}(z)-g_{3-}(z) 			& =\Phi_{2-}(z), 				&&  z\in \Delta_2, \\
\Phi_{2+}(z)+\Phi_{2-}(z)	& =0, 							&& z\in \Delta_2, \\
\Phi_{3+}(z)-\Phi_{1-}(z)	& = 2\pi i (3\alpha - 1), 	&& z\in \Delta_2\cap \H_+, \\
\Phi_{3+}(z)-\Phi_{1-}(z)	& = 2\pi i \alpha, 				&& z\in \Delta_2\cap \H_-;
\end{aligned}
\end{equation}
furthermore,
\begin{equation}\label{compatibility_for_lenses_delta2_supercriticalBis}
\begin{aligned}
\Phi_{2+}(z)-\Phi_{2-}(z) 	& = -2\pi i(2-\alpha), 			&& z\in \gamma\cap \R, \\ 
\Phi_{2+}(z)-\Phi_{2-}(z) 	& = -2\pi i\alpha, 				&& z\in (a_*,a_1). \\
\end{aligned}
\end{equation}
All these identities are simple consequences of \eqref{value_phi_1}--\eqref{value_phi_3} and of the periods~\eqref{periods}.

Like in Section~\ref{sec:precritical}, the identities above imply that
\begin{equation*}
\bm J_T=
\begin{cases}
(\bm I+e^{N\Phi_{1-}}\bm M_{21})\bm \sigma_{31}(\bm I+e^{N\Phi_{1+}}\bm M_{21}), & z\in \Delta_1\setminus \gamma,  \\
(\bm I+e^{N\Phi_{1-}}\bm M_{21})\bm \sigma_{31}(\bm I+e^{N\Phi_{1+}}\bm M_{21})(I+e^{N\Phi_{2-}}\bm M_{32}) ,  
& z\in \Delta_1\cap \gamma,\\
(\bm I+e^{N\Phi_{2-}}\bm M_{31})\bm \sigma_{32}(\bm I+e^{N\Phi_{2+}}\bm M_{31}), & z\in \Delta_2, \\
(\bm I+e^{N\Phi_{3-}}\bm M_{23})\bm \sigma_{33}(\bm I+e^{N\Phi_{3+}}\bm M_{23}), & z\in \Delta_3, \\
\end{cases}
\end{equation*}
where $\bm \sigma_{33}$ was defined in \eqref{defSigmas}. 
Moreover, from the fact that the measures in \eqref{definition_measures_mu} are positive we conclude that
\begin{align*}
(\xi_{1+}(s)-\xi_{2+}(s))ds\in i\R_+, & \quad s\in \Delta_1, \\
(\xi_{1+}(s)-\xi_{3+}(s))ds\in i\R_+, & \quad s\in \Delta_2, \\
(\xi_{3+}(s)-\xi_{2+}(s))ds\in i\R_+, & \quad s\in \Delta_3.
\end{align*}
As a consequence,
$$
\Phi_{j\pm}(z)\in i\R_{\mp},\quad z\in \Delta_j, \quad j=1,2,3,
$$
which shows again that the inequalities
\begin{equation}\label{correct_decay_lenses_supercritical}
\re\Phi_j(z)<0,
\end{equation}
hold true in a neighborhood of $\interior\Delta_j$, $j=1,2,3$.

These considerations motivate to open the lens $\mathcal S_j = \mathcal S_j^+ \cup \mathcal S_j^-$ around $\Delta_j$ so that \eqref{correct_decay_lenses_supercritical} holds true on $\partial \mathcal S_j^\pm\setminus \{a_1,a_2,b_1,b_2\}$. The 
geometry of the lens is shown in Figure \ref{figure_lenses_supercritical}. We can impose an additional requirement, 
$$
\partial \mathcal S_2^- \cap S_1^- = \gamma\cap S_1^-.
$$

\begin{figure}[t]
	\centering
	\begin{overpic}[scale=.9]
		{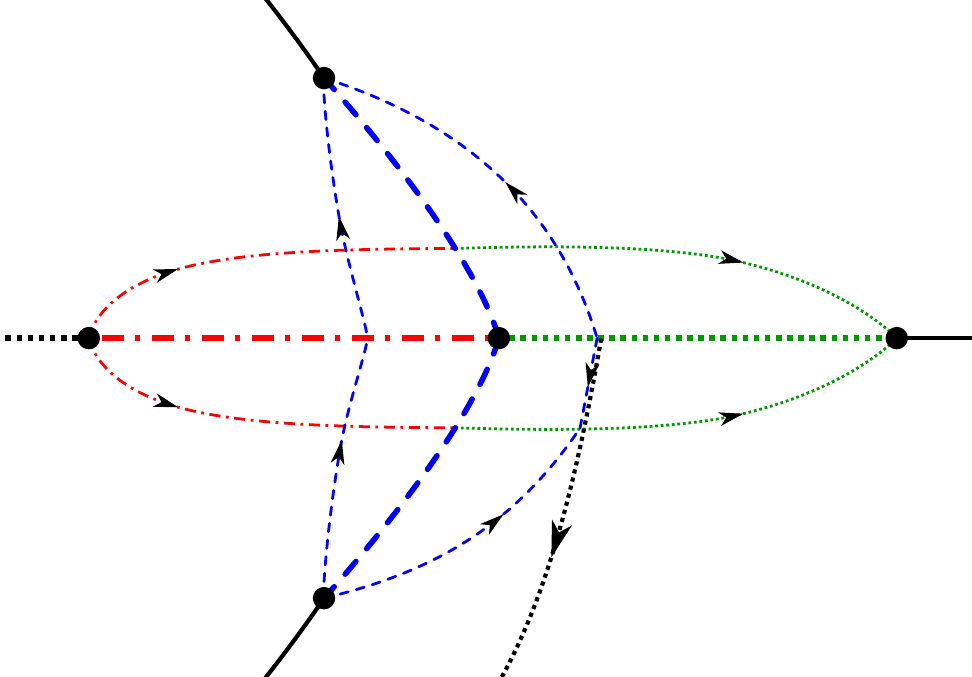}
		\put(170,95){{\textcolor{green}{$\scriptstyle\mathcal S_1^+$}}}
		\put(170,75){{\textcolor{green}{$\scriptstyle\mathcal S_1^-$}}}
		\put(115,45){{\textcolor{blue}{$\scriptstyle\mathcal S_2^-$}}}
		\put(90,45){{\textcolor{blue}{$\scriptstyle\mathcal S_2^+$}}}
		\put(118,120){{\textcolor{blue}{$\scriptstyle\mathcal S_2^-$}}}
		\put(90,120){{\textcolor{blue}{$\scriptstyle\mathcal S_2^+$}}}
		\put(50,95){{\textcolor{red}{$\scriptstyle\mathcal S_3^+$}}}
		\put(50,75){{\textcolor{red}{$\scriptstyle\mathcal S_3^-$}}}
		\put(143,20){$\gamma$}
	\end{overpic}
	\caption{The opening of lenses in the supercritical case.}\label{figure_lenses_supercritical}
\end{figure}
In other words, we define
\begin{equation}\label{defSsuper}
\bm S(z)=\bm T(z)\times
\begin{cases}
\bm I, 								& z\in \C\setminus \mathcal S_1\cup\mathcal S_2\cup \mathcal S_3, \\
\bm I\pm e^{N\Phi_{1}}\bm M_{21}, 					& z\in \mathcal S_1^\mp \setminus \mathcal S_2, \\
\bm I\pm e^{N\Phi_{2}}\bm M_{31},					& z\in \mathcal S_2^\mp \setminus (\mathcal S_1\cup\mathcal S_3), \\
\bm I\pm e^{N\Phi_{3}}\bm M_{23},					& z\in \mathcal S_3^\mp \setminus \mathcal S_2. \\
(\bm I+e^{N\Phi_2}\bm M_{31})(\bm I-e^{N\Phi_1}\bm M_{21}), 			& z\in \mathcal S_1^+\cap \mathcal S_2^-, \\
(\bm I-e^{N\Phi_2}\bm M_{31})(\bm I-e^{N\Phi_3}\bm M_{23}), 			& z\in \mathcal S_2^+\cap \mathcal S_3^+, \\
(\bm I-e^{N\Phi_2}\bm M_{31})(\bm I+e^{N\Phi_3}\bm M_{23}), 			& z\in \mathcal S_2^+\cap \mathcal S_3^-, \\
(\bm I+e^{N\Phi_2}\bm M_{31})(\bm I+e^{N\Phi_1}\bm M_{21}),			& z\in \mathcal S_1^-\cap \mathcal S_2^+. \\
\end{cases}
\end{equation}

With this choice, the jump matrix $\bm J_S$ for $\bm S$ becomes
\begin{equation} \label{defJS}
\bm J_S=
\begin{cases}
\bm \sigma_{3j}, 	&  z\in \Delta_j, \; j=1, 2, 3 \\
\bm I+e^{N\Phi_1}\bm M_{21}, &  z\in \partial \mathcal S_1 \\
\bm I+e^{N\Phi_2}\bm M_{31}, &  z\in \partial \mathcal S_2 \setminus (\gamma\cup \mathcal S_3)\\
\bm I+e^{N\Phi_3}\bm M_{23}, &  z\in \partial \mathcal S_3 \\
\bm I+e^{N\Phi_2}(\bm M_{31}+e^{N\Phi_3}\bm M_{21}), & z\in \partial \mathcal S_2 \cap \mathcal S_3^+  \\
\bm I+e^{N\Phi_2}(\bm M_{31}-e^{N\Phi_3}\bm M_{21}), & z\in \partial \mathcal S_2 \cap \mathcal S_3^- \\
\bm I+e^{N\Phi_2}(\bm M_{32}+(e^{N\Phi_1}-1)\bm M_{31}), & z\in \partial \mathcal S_2 \cap \gamma \\
\bm J_T, & \mbox{elsewhere}.
\end{cases} 
\end{equation}
Still, 	$	\bm S(z)=I+\Boh(z^{-1})$  as $ z\to\infty$, and $\bm S(z)$ is bounded at all end points of the analytic arcs comprising $\Gamma_S=\Gamma_T\cup\partial\mathcal S_1\cup \partial\mathcal S_2 \cup \partial\mathcal S_3$.

As before, we conclude that away from $\Delta_j$'s the jump matrix $\bm J_S$ is  exponentially close to $\bm I$, and   this is true uniformly all the way up to $\Delta_j$'s as long as we stay away from the end points $a_j$, $b_j$.

\subsection{Construction of the global parametrix} \label{sec:globalparam}

In order to deal with the jumps $\bm J_S=\bm \sigma_{3j}$ on $\Delta_j$, $j=1, 2, 3$, see \eqref{defJS},  we will use a model matrix (``global parametrix'') $\bm F$ solving the following RHP\footnote{ Recall that $\Delta_1$ and $\Delta_3$ have the orientation of $\mathbb R$, while $\Delta_2$ is oriented upwards, from $a_2$ to $b_2$.}:
\begin{itemize}
	\item[$\bullet$] $\bm F:\C\setminus \Gamma_F \isdef \Delta_1 \cup \Delta_2 \cup \Delta_3  \to \C^{3\times 3}$ is analytic;
	\item[$\bullet$] $\bm F_+(z)=\bm F_-(z)\bm J_F(z)$, $z\in \Gamma_F$, where $\bm J_F=\bm \sigma_{3,j}$ on $\Delta_j$;
	\item[$\bullet$] $\bm F(z)=\Boh((z-q)^{-1/4})$, as $z\to q\in \{a_1,a_2,b_1,b_2\}$.
	\item[$\bullet$] $\bm F(z)=\bm I+\Boh(z^{-1})$ as $z\to \infty$.
\end{itemize}

The uniqueness of the solution of this problem follows from standard arguments. We construct the global parametrix using the general approach with meromorphic differentials and abelian integrals on the Riemann surface $\mathcal R$, see e.g.~ \cite{kuijlaars_mo}. Namely, for $i=1, 2, 3$, let $\eta_i$ be the meromorphic differential on $\mathcal R$ uniquely defined by the following conditions:
\begin{itemize}
\item its only poles (all simple) are the branch points $a_1^{(1)}$, $a_2^{(1)}$, $b_1^{(1)}$, $b_2^{(1)}$ and the points at infinity $\infty^{(j)}$, $j\in \{1,2, 3 \}$, $j\neq i$; 
\item the residues at these poles are
\begin{align}
& \res(\eta_i,a_j^{(1)})=\res(\eta_i,b_j^{(1)})=-\frac{1}{2},  \quad j=1,2. \label{residues_eta_1}\\
& \res(\eta_i,\infty^{(j)})=1, \quad j\in \{1,2, 3 \}, \quad j\neq i.   \label{residues_eta_2}
\end{align}
\end{itemize}
Since the sum of residues is zero and the genus of $\mathcal R$ is also zero, conditions above determine each meromorphic differential $\eta_i$ uniquely. Moreover, in virtue of \eqref{residues_eta_1}--\eqref{residues_eta_2} we get that for any loop $\zeta\subset \mathcal R_j$, $j=1,2,3$, the differential $\eta$ satisfies
\begin{equation}\label{condition_integral_loop}
\int_\zeta \eta_i \equiv 0 \mod (2\pi i) . 
\end{equation}

We define the corresponding abelian integrals
$$
u_i(p)\isdef \int_{\infty^{(i)}}^p \eta_i,\quad p\in \mathcal R, \quad i=1, 2,3,
$$
specifying in each case the path of integration is as follows\footnote{ For the sheet structure of $\mathcal R$, see Figure~ \ref{figure_sheet_structure}.}:
\begin{enumerate}[(i)]
	\item For $i=1,2,3$, if $p \in \mathcal R_i$, then for $u_i(p)$ we integrate along any path entirely contained in $\mathcal R_i$;
	\item For $u_1(p)$, \begin{itemize}
	\item if $p\in \mathcal R_2$, then the path starting in $\mathcal R_1$, enters $\mathcal R_2$ through the cut $\Delta_{1-}^{(1)}$ and stays in $\mathcal R_2$;
	\item  if $p\in \mathcal R_3$, then the path starting in $\mathcal R_1$,  enters $\mathcal R_3$ through the cut $\Delta_{2-}^{(1)}$ and stays in $\mathcal R_3$;
	\end{itemize} 
\item For $u_2(p)$, \begin{itemize}
	\item if $p\in \mathcal R_1$, then the path starting in $\mathcal R_2$, enters $\mathcal R_1$ through the cut $\Delta_{1+}^{(2)}$ and stays in $\mathcal R_1$;
	\item  if $p\in \mathcal R_3$, then the path used to define $u_2(p)$ on  $\mathcal R_1$,  is continued, enters $\mathcal R_3$ from $\mathcal R_1$ through the cut $\Delta_{2-}^{(1)}$ and stays in $\mathcal R_3$;
\end{itemize} 
\item For $u_3(p)$, \begin{itemize}
	\item if $p\in \mathcal R_1$, then the path starting in $\mathcal R_3$, enters $\mathcal R_1$ through the cut $\Delta_{2+}^{(3)}$ and stays in $\mathcal R_1$;
	\item  if $p\in \mathcal R_2$, then the path  used to define $u_3(p)$ on  $\mathcal R_1$,  is continued, enters $\mathcal R_2$ from $\mathcal R_1$ the cut $\Delta_{1-}^{(1)}$ and stays in $\mathcal R_2$.
\end{itemize} 
\end{enumerate}
The selection of paths is determined by the structure of the jump matrix $\bm J_F$. Observe that in virtue of \eqref{condition_integral_loop}   functions $u_i$ are well defined modulo $2\pi i$. Furthermore, they satisfy, for instance, for  $p\in \Delta_{j}^{(1)}$  and $ j=1,2$,
\begin{align}
u_{1+}(p)-u_{1-}(p) 	& \equiv 2\pi i \res(\eta_1,b_j^{(1)}) \mod (2  \pi i )\nonumber \\
& \equiv \pi i  \mod (2  \pi i ), \quad i=1,2,3,  \label{jump_condition_u_function}
\end{align}
with analogous additive jump conditions for $u_2$ on $\Delta_1 \cap \Delta_3$ and $u_3$ on $\Delta_2\cap \Delta_3$. Moreover, from \eqref{residues_eta_1} we get that they admit the (multivalued) expansion
\begin{equation}\label{local_behavior_u}
u_i(z)=-\frac{1}{4}\log(z-q)+\Boh(1),\quad z\to q\in \{a_1,a_2,b_1,b_2\}.
\end{equation}

Now we define
$$
f_i(p)\isdef e^{u_i(p)},\quad p\in \mathcal R,
$$
and  
$$
f_{ij}\isdef \restr{f_i}{\mathcal R_j},\quad i,j=1,2,3.
$$
Notice that by construction, $f_{i1}$, $f_{i2}$ and $f_{i3}$ are holomorphic on $\C\setminus(\Delta_1\cup\Delta_2)$, $\C\setminus(\Delta_1\cup\Delta_3) $ and $\C\setminus (\Delta_2 \cup\Delta_3)$, 
respectively. Furthermore, in virtue of \eqref{jump_condition_u_function} they satisfy the relations
\begin{equation}\label{conditions_m_1}
\begin{split}
(f_{i1})_+(z) & =-(f_{i, j+1})_-(z),\quad z\in \Delta_j, \; j=1,2,   \\
(f_{i3})_+(z) & =(f_{i, 2})_-(z),\quad z\in \Delta_3, \\
\end{split}
\end{equation}
and from their definition it follows that  also  
\begin{equation}\label{conditions_m_2}
\begin{split}
(f_{i1})_-(z) & =(f_{i, j+1})_+(z),\quad z\in \Delta_j, \; j=1,2, \\
(f_{i3})_+(z) & =(f_{i, 3})_-(z),\quad z\in \Delta_1, \\
(f_{i2})_+(z) & =(f_{i, 2})_-(z),\quad z\in \Delta_2, \\
(f_{i1})_+(z) & =(f_{i, 1})_-(z),\quad z\in \Delta_3, \\
(f_{i3})_-(z) & =(f_{i, 2})_+(z),\quad z\in \Delta_3.
\end{split}
\end{equation}
Moreover, \eqref{local_behavior_u} gives us
\begin{equation*} 
f_{ij}(z)=(z-q)^{-1/4}(1+\boh(1)),\quad z\to q,
\end{equation*}
whenever $q\in \{a_1,a_2,b_1,b_2\}$ is a singularity of $f_{ij}$, $j=1,2,3$. Finally, from the definition of $u_i$'s we conclude that
\begin{equation*} 
f_{ii}(z)=1+\Boh(z^{-1}),\quad z\to\infty, \quad i=1,2,3,
\end{equation*}
while from \eqref{residues_eta_2} we get also that
\begin{equation}\label{conditions_m_5}
f_{ij}(z)=\frac{1}{z}+\Boh(z^{-2}),\quad z\to\infty, \quad i, j\in \{1,2,3 \}, \quad i\neq j.
\end{equation}

Summarizing, conditions \eqref{conditions_m_1}--\eqref{conditions_m_5}, show that 
$$
\bm F\isdef \left( f_{ij}\right)_{i,j=1}^3 
$$
is the unique solution of the RHP stated at the beginning of this section.

\begin{remark} \label{remarkonFinv}
Notice that the RH problem for $\bm F$ easily implies that $\det(\bm F(z))\equiv 1$ on $\C$, so that $\bm F$ is invertible everywhere  on the plane (which is used in the standard argument proving the uniqueness of the solution $\bm F$). Moreover, by \eqref{defSigmas}, 
$$
\sigma_{3,j}^{-1}=\sigma_{3,j}^T, \quad j=1, 2, 3,
$$
so that $ \left( \bm F^{-1}\right)^T$ satisfies the same RHP than $\bm F$. By uniqueness, we conclude that
\begin{equation}\label{invequaltransp}
	\bm F^{-1}(z) = \bm F^{T}(z), \quad z\in \C.
\end{equation}
\end{remark}

\subsection{Parametrices near branch points}\label{sec:localparam}

Our construction shows that the matrix-valued function $\bm F$ defined in the previous section is invertible on $\mathbb C \setminus \Gamma_S$, and $ \bm F^{-1}\bm S$ is analytic in $\mathbb C \setminus \Gamma_S$ and has jumps exponentially close to $\bm I$ everywhere on $ \Gamma_S$ except at the branch points $a_1,a_2,b_1,b_2$. In order to get this proximity to $\bm I$ uniform on all contours we have to replace $\bm F$ by a different model, a local parametrix. Its construction, nowadays standard, is done in terms of Airy functions.

We describe here only the construction at the end point $b_1$, since for the rest of branch points it is similar.

Consider a small fixed disk, $B_\delta=B_\delta(b_1) \isdef \{z\in \mathbb C:\, |z-b_1|< \delta \}$, of center at $b_1$ and radius $ \delta>0$ small enough so that $B_\delta$ does not contain any other branch point of $\mathcal R$ or $a_*$ (see Figure~\ref{fig:localanalysisB1}).  
\begin{figure}[t]
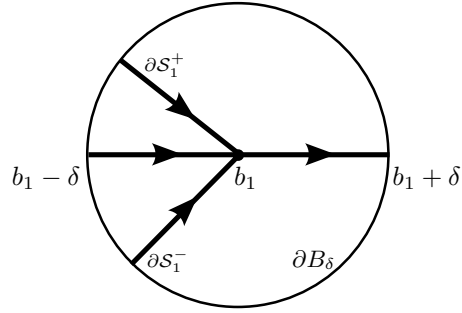

	\centering \begin{overpic}[scale=1.2]%
		{figure_19}%
		\put(56, 48){$b_1 $}
		\put(116, 48){$b_1  +\delta$}
		\put(-28, 48){$b_1  -\delta$}
		\put(22, 91){$\scriptstyle\partial \mathcal S_1^+$}
		\put(23,17){$\scriptstyle\partial \mathcal S_1^-$}
		\put(78,17){\small $\partial B_\delta $}
	\end{overpic}
	\caption{Local analysis at $b_1$.}
	\label{fig:localanalysisB1}
\end{figure}
We look for $\bm P(\cdot;b_1)$   holomorphic in $ B_\delta \setminus (\R\cup  \partial \mathcal S_1^\pm )$, such that $ \bm P_+(z;b_1)=\bm P_-(z;b_1) \bm J_{S}(z)$, where, by \eqref{value_phi_1} and \eqref{defJS},
\begin{align*}
\bm J_{S}(x)& = \begin{pmatrix}
0			& 1 				& 0 \\
-1 		 	& 0				& 0 \\
0			& 0				& 1
\end{pmatrix},	 \quad x\in B_\delta \cap \Delta_1=(b_1-\delta, b_1),
\\
\bm J_{S}(x)& =
\begin{pmatrix}
1  &  e^{- N\Phi_1 (x)} & 0 \\
0& 1  &0 \\
0& 0& 1
\end{pmatrix}, \quad  x \in (b_1, b_1+\delta),
\\
\bm J_{S}(z)& = \begin{pmatrix}
1   & 0&0 \\
e^{N\Phi_1(z)} & 1 & 0 \\
0& 0& 1
\end{pmatrix} ,	 \quad z\in B_\delta\cap \partial \mathcal S_1^\pm,
\end{align*}
and $\bm P(\cdot;b_1)$ is bounded as $z\to b_1$, $z\in   \C \setminus (\R\cup  \partial \mathcal S_1^\pm  )$.

With these conditions, by \eqref{compatibility_for_lenses_delta1}, matrix
$$
 \widehat{\bm P}(z;b_1) \isdef \bm P(z;b_1) \diag\left(e^{-N\Phi_1(z)/2} ,  e^{N\Phi_1(z)/2} , 1\right)
$$
has constant jumps, $ \widehat{\bm P}_+(z;b_1)=\widehat{\bm P}_-(z;b_1) \bm J_{\widehat{P}}(z)$, with 
\begin{align*}
\bm J_{\widehat{P}}(x)& = \begin{pmatrix}
0			& 1 				& 0 \\
-1 		 	& 0				& 0 \\
0			& 0				& 1
\end{pmatrix},	 \quad x\in B_\delta \cap \Delta_1=(b_1-\delta, b_1),
\\
\bm J_{\widehat{P}}(x)& =
\begin{pmatrix}
1  &  1 & 0 \\
0& 1  & 0\\
0& 0& 1
\end{pmatrix}, \quad  x \in (b_1, b_1+\delta),
\\
\bm J_{\widehat{P}}(z)& = \begin{pmatrix}
1   & 0&0 \\
1 & 1 & 0 \\
0& 0& 1
\end{pmatrix} ,	 \quad z\in B_\delta\cap \partial \mathcal S_1^\pm,
\end{align*}
and again $\widehat{\bm P}$ is bounded as $z\to b_1$, $z\in   \C \setminus (\R\cup  \partial \mathcal S_1 ^\pm )$.

Since we additionally need that, as $n\to \infty$,  
\begin{equation} \label{matchingcondition}
\bm P(z;b_1)=\left( \bm I + \mathcal O(1/N) \right) \bm F(z)  \quad z\in \partial B_\delta \setminus (\R \cup \partial \mathcal S_1^\pm),
\end{equation}
we follow a well-known scheme, and build $\widehat{\bm P}(\cdot;b_1)$ in the form
\begin{equation} \label{parametrixP}	
\widehat{\bm P}(z;b_1) = \bm E(z)
\bm    \Psi\left(N^{2/3}\varphi (z)\right),
\end{equation}
where  
\begin{equation} \label{eq:defE}
\bm   E(z) =   \bm F(z)\,    \begin{pmatrix}
\sqrt{\pi} & -\sqrt{\pi} & 0 \\
-i \sqrt{\pi} & -i \sqrt{\pi} & 0 \\
0 & 0 & 1
\end{pmatrix} \,
\begin{pmatrix}
n^{1/6}\varphi^{1/4}(z) &   0 & 0 \\
0 &   n^{-1/6}\varphi ^{-1/4}(z) & 0 \\
0 &   0 & 1
\end{pmatrix},
\end{equation}
and
\begin{align} \label{fqdef}
\varphi(z) =
\left[\frac{3}{2}\Phi_1(z)\right]^{2/3}
\end{align}
is a biholomorphic (conformal) map of a neighborhood of $b_1$ onto a neighborhood of the origin such that $\varphi(z)$ is real and positive
for $z>b_1$, see \eqref{inequality_xi_2}--\eqref{inequality_xi_5} and \eqref{def_phi_1}.

We may deform the contours $\mathcal S_1^\pm$ near $b_1$ in such a way that $\varphi$ maps
$\mathcal S_1^\pm \cap B_\delta $ onto the rays with angles $\frac{2\pi}{3}$ and $-\frac{2\pi}{3}$, respectively.

Matrix $\bm \Psi$ is built using a Riemann-Hilbert problem for the Airy functions, as described for instance in \cite[page 253]{kuijlaars_martinez-finkelshtein_wielonsky_bessel_paths}. We recall the main steps for later reference.

First consider a $2\times 2$ matrix $\bm K$ solving the following RHP.
\begin{itemize}
\item $\bm K:\C\setminus \Gamma_K\to \C^{2\times 2}$ is analytic, where $\Gamma_K=\R \cup [0,e^{i2\pi i/3}\infty)\cup [0,e^{-2\pi i /3}\infty)$;
\item $\bm K$ has continuous boundary values $\bm \Psi_\pm$ on $\Gamma_\Psi$, and $\bm \Psi_+(z)=\bm \Psi_-(z)\bm J_\Psi(z)$, $z\in \Gamma_\Psi$, with
\begin{equation}\label{jumpsK}
\bm J_K(z)=
\begin{cases}
\begin{pmatrix}
1 & 0 \\
1 & 1 
\end{pmatrix}, & z\in [0,e^{2\pi i/3}\infty)\cup [0,e^{-2\pi i /3}\infty), \\
\begin{pmatrix}
0 & 1 \\
-1 & 0
\end{pmatrix}, & z\in (-\infty,0), \\
\begin{pmatrix}
1 & 1 \\
0 & 1
\end{pmatrix}, & z\in (0,+\infty).
\end{cases}
\end{equation}

\item As $z\to \infty$ with $-\pi < \arg z < \pi$,
$$
\bm K(z)=(I+\Boh(z^{-1}))\diag(z^{-1/4},z^{1/4})\frac{1}{\sqrt{2}}
\begin{pmatrix}
1 & i \\
i & 1
\end{pmatrix}
\diag(e^{-\frac{2}{3}z^{3/2}},e^{\frac{2}{3}z^{3/2}});
$$
\item $\bm K$ remains bounded as $z\to \infty$ along $\C\setminus \Gamma_\Psi$.
\end{itemize}

We put
\begin{equation*}
y_0(s)=\Ai(s), \quad  y_1(s) = \omega \Ai(\omega s),
\quad y_2(s) = \omega^{ 2 } \Ai(\omega^{ 2 } s), \quad \omega=e^{2\pi i/3}\,,
\end{equation*}
where $\Ai$ is the usual Airy function. Then $\bm   K$ is obtained explicitly as
\begin{equation}\label{formulaK}
\bm     K(s) =
\begin{cases}
\begin{pmatrix}
y_0(s) &  -y_2(s)   \\
y_0'(s) &  -y_2'(s)     \end{pmatrix}, 
& \arg s \in (0, 2 \pi/3),
\\
\begin{pmatrix}
-y_1(s) &  -y_2(s)   \\
-y_1'(s) &  -y_2'(s)     \end{pmatrix}, 
& \arg s \in (2\pi/3,\pi),
\\
\begin{pmatrix}
-y_2(s) &   y_1(s)   \\
-y_2'(s) &  y_1'(s)
\end{pmatrix},
& \arg s \in (-\pi,-2\pi/3),
\\
\begin{pmatrix}
y_0(s) & y_1(s)   \\
y_0'(s) & y_1'(s)
\end{pmatrix},
& \arg s \in (-2\pi/3,0).
\end{cases}
\end{equation}
Then we take the $3\times 3$ matrix $\bm   \Psi$ as
\begin{equation}\label{formulaPsiK}
\bm   \Psi(s) = \left(\begin{MAT}{c.c}
\bm K (s) & \bm 0 \\.
\bm  0 & 1 \\
\end{MAT} \right).
\end{equation}

As we said, the construction of the rest of the local parametrices $\bm P(\cdot;q)$, $q\in \{ a_1, a_2, b_2\}$, follows the same scheme, nowadays standard.

\subsection{Final transformation}\label{sec:finaltransformation}

We define the matrix valued function $\bm R$ as
\begin{equation} \label{Rdef}
\bm R(z) \isdef \begin{cases}
\bm S(z) \bm P^{-1}(z;q), & \text{in the  neighborhoods $B_{\delta}(q)$ around $q\in \{ a_1, a_2 , b_1, b_2\}$,   }   \\
\bm S(z) \bm F^{-1}(z), & \text{elsewhere.}
\end{cases}
\end{equation}
The jump matrices of $\bm S$ and $\bm F$ coincide on $\Delta_1 \cup \Delta_2\cup \Delta_3$ 
and the jump matrices of $\bm S$ and $\bm P$ coincide inside the four disks $B_\delta(q)$, $q\in \{ a_1, a_2, b_1, b_2\}$.
It follows that $\bm R$ has an analytic continuation to the complex plane
minus the contours $\Gamma_R$ shown in  Figure~\ref{figure_final_precritical}.  
The matching conditions \eqref{matchingcondition} shows that $\bm P \bm F^{-1} = \bm I + \mathcal  O(1/N)$ as $N\to \infty$,  uniformly on $\bigcup_q \partial B_\delta(q)$, Thus
$$
\bm R_+(z)=\bm R_-(z) \left( \bm I + \mathcal O(1/N)\right), \quad \text{as $N\to \infty$}, \quad \text{uniformly on } \bigcup_{q \in \{ a_1, a_2 , b_1, b_2\}} \partial B_\delta(q),
$$
 while on the rest of the contours of $\Gamma_R$, for some $c>0$, 
 $$
 \bm R_+(z)=\bm R_-(z) \left( \bm I + \mathcal O(e^{-cN})\right), \quad \text{as $N\to \infty$}.
 $$
 Since $\bm R(z) \to \bm I$ as $z\to \infty$, standard arguments yield that 
 $$
 \bm R(z) = \bm I + \mathcal O(1/N)  \quad \text{as $N\to \infty$}, 
 $$
 uniformly in $z\in \mathbb C\setminus \Gamma_R$. 
 
\begin{figure}[t]
	\begin{center}
		\begin{minipage}[c]{0.6\textwidth}
			\begin{overpic}[scale=0.5,grid=false]{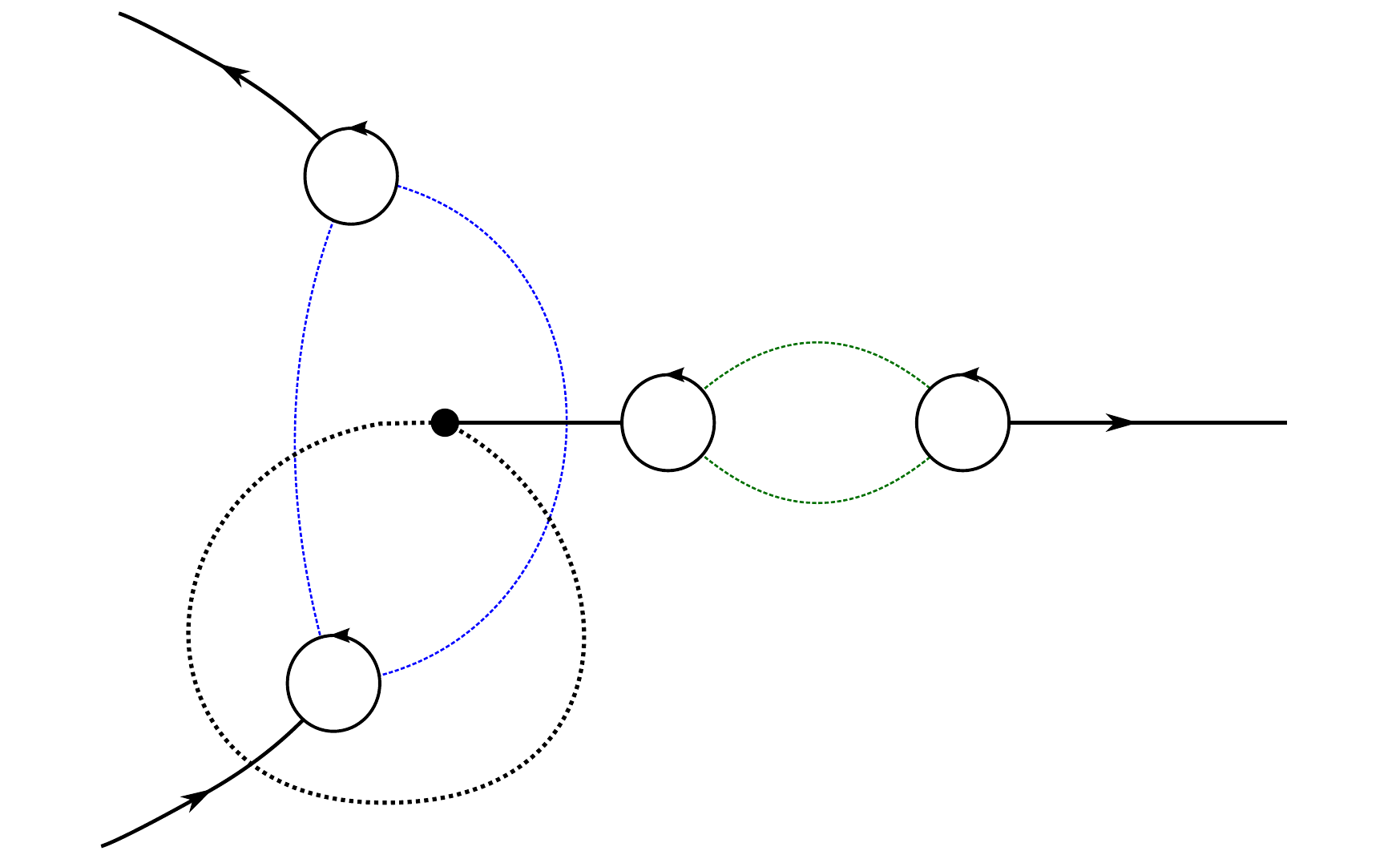}
				\put(118,78){\scriptsize $ a_1$}
				\put(170,78){\scriptsize $ b_1$}
				\put(60,122){\scriptsize $ b_2$}
				\put(58,30){\scriptsize $ a_2$}
				\put(78,84){\scriptsize $ a_*$}
			\end{overpic}
		\end{minipage}%
	\\
		\begin{minipage}[c]{0.6\textwidth}
			\begin{overpic}[scale=0.55,grid=false]{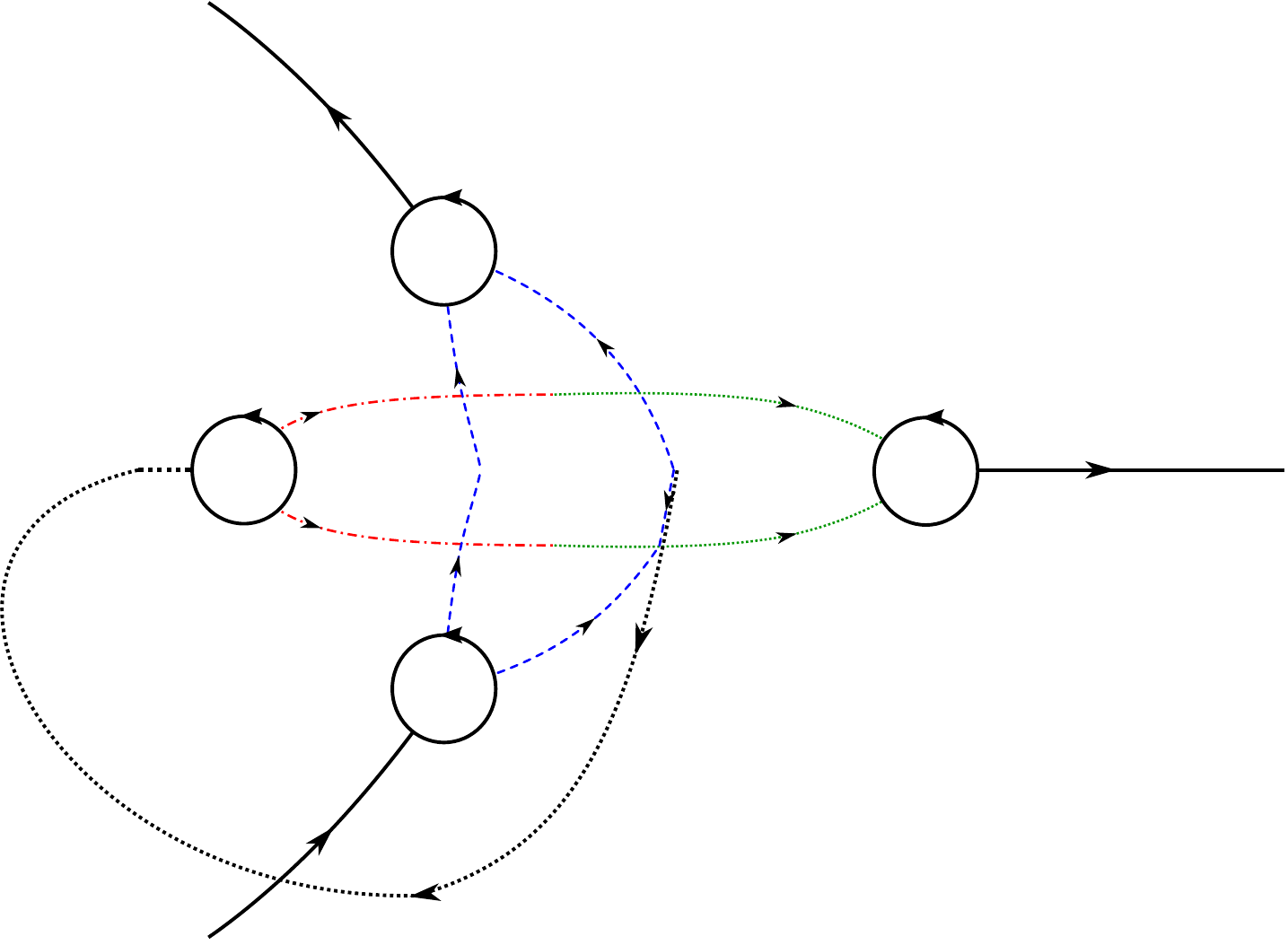}
				\put(40,82){\scriptsize $ a_1$}
				\put(160,82){\scriptsize $ b_1$}
				\put(77,122){\scriptsize $ b_2$}
				\put(75,44){\scriptsize $ a_2$}
			\end{overpic}
		\end{minipage}%
		\caption{Pictorial representation of the contours $\Gamma_R$ in the sub- (top) and super-critical (bottom) regimes.}\label{figure_final_precritical}
	\end{center}
\end{figure}

\section{Steepest descent analysis for general $\alpha$}\label{sec:generalparameter}

All transformation described in Section~\ref{sec:steepestdescent} were carried out under the assumption \eqref{assumption_rational}. Now we extend our conclusions to arbitrary sequences of $\alpha_N$ satisfying
$$
\alpha_N\to \alpha_0 \in (0,1/2)\setminus \{\alpha_c\}.
$$

We also assume that $N$ is chosen large enough so that $\alpha_N$ and $\alpha_0$ belong to the same connected component of $(0,1/2)\setminus \{\alpha_c\}$.

Starting with the first Riemann-Hilbert problem $\bm Y=\bm Y_{\alpha_N}$, we make the first transformation $\bm Y_{\alpha_N}\mapsto \bm X_{\alpha_N}$ as in \eqref{def:transformationX}. 

The second transformation $\bm X_{\alpha_N}\mapsto \bm T_{\alpha_N}$ is also as before, that is, as in \eqref{def:transformationT}, taking into account that all required quantities will depend on $\alpha_N$. That is, for the given $\alpha_N$, we define $\xi_k=\xi_{k,\alpha_N}$ as the solutions of the spectral curve \eqref{spectral_curve} and asymptotics \eqref{asymptotics_xi_functions} with $\alpha=\alpha_N$, then we define the corresponding $g$-functions $g_k=g_{k,\alpha_N}$ and $r$-constants $r_k=r_{k,\alpha_N}$ as in \eqref{defGfunctions} and \eqref{defRforGfunctions}, respectively. 

With such choices, all the identities used for the $g$-functions remain valid, taking into account that $\alpha=\alpha_N$ and also that the $\Phi$-functions in \eqref{def_phi_1}--\eqref{def_phi_3b} will be $\alpha_N$-dependent. 

At this stage, the jump matrix $\bm J_T$ remains as in \eqref{JTnew}, stressing once again that the $g$-functions now depend on $\alpha_N$. Thus, we can open lenses as in \eqref{defS} (in the subcritical case) or as in \eqref{defSsuper} (in the supercritical case).  At this stage, we also fix sufficiently small disks $B_\delta(a_k),B_\delta(b_k)$ for $a_k=a_k(\alpha_0)$ and $b_k=b_k(\alpha_0)$ and denote their union by $B$. The precise choice of $\delta$ is  determined later in the construction of the local parametrix, but we anticipate that it will be possible to choose $\delta$ to be depending on $\alpha_0$ but not on $\alpha_N$.

The shape of the lenses $\mathcal S_j^{\pm}$ does also depend on $\alpha_N$. However, continuity and convergence of all the $\alpha_N$-dependent quantities guarantee that we can choose the lenses so that 
\begin{itemize}
\item the endpoints $a_1=a_1(\alpha_N)$ (in the subcritical case),$a_2=a_2(\alpha_N), b_1=b_1(\alpha_N)$ and $b_2=b_2(\alpha_N)$ all belong to $B$,
\item the sets $\mathcal S_j^\pm\setminus B$ are independent of $\alpha_N$, 
\end{itemize}
and in such a way that the uniform version of the inequalities \eqref{correct_decay_lenses_2} (in the subcritical case) or \eqref{correct_decay_lenses_supercritical} (in the supercritical case) hold true on the lenses away from the endpoints, namely
\begin{equation}\label{expdecay_varying}
\re\Phi_{j,\alpha_0}(z)\leq -\rho<0 ,\quad \re \Phi_{j,\alpha_N}(z)\leq -\rho <0,  \quad z\in \partial\mathcal S_j^\pm\setminus B,
\end{equation}
for every $n,N$ sufficiently large and some fixed $\rho>0$ that does not depend on the parameters $\alpha_N$ but only on $\alpha_0$. We stress that the sets $\mathcal S_j^\pm\setminus B$ appearing above were chosen to be $\alpha_N$-independent.

After the opening of lenses, the obtained matrix $\bf S$ has jumps as in \eqref{jumpSpre} (in the subcritical case) or \eqref{defJS} (in the supercritical case). In virtue of \eqref{expdecay_varying}, at this stage we can be sure that ${\bm J}_S$ decays exponentially to the identity uniformly away from $B$ and from the sets $\Delta_k=\Delta_k(\alpha_N)$.

For the construction of the parametrix, we observe that ${\bm J}_S$ has constant jumps on $\Delta_k(\alpha_N)$. Thus, for any $\alpha_N$ the global matrix $\bm F$ we consider is the one constructed in Section \ref{sec:globalparam}, but for the choice $\alpha=\alpha_0$. Although $\bm J_F$ does not coincide with $\bm J_S$ close to the varying endpoints $a_k(\alpha_N)$ and $b_k(\alpha_N)$, these points are assumed to be on $B$ (by the construction of $B$), so this local inconsistency of $\bm J_F$ with $\bm J_S$ will anyway be solved by the local parametrix.

In each of the connected components of $B$, we need to construct the local parametrix $\bm P$ that will depend on $\alpha_N$. We focus on the construction connected component $B(b_1)$, recalling that $b_1=b_1(\alpha_0)$. The overall method is the same as the one explained in Section \ref{sec:localparam}, but carried out for $\alpha=\alpha_N$. The main differences are explained next.

The neighborhood $B(b_1)$ can be chosen to be a disc centered at $b_1=b_1(\alpha_0)$ but not at $b_1(\alpha_N)$. This imposes no extra difficulty.

The conformal map $\varphi=\varphi_{\alpha_N}$ in \eqref{fqdef} is now varying with $\alpha_N$. At any rate, it is easy to see that
$$
\varphi_{\alpha_N}\to \varphi_{\alpha_0}
$$
locally uniformly. The function $\Phi_1=\Phi_{1,\alpha_N}$ satisfies
$$
\Phi_{1,\alpha_N}(z)=z^3+\Boh(1),\quad z\to\infty
$$
as can be seen from \eqref{defRforGfunctions} and \eqref{value_phi_1}. 

From the expansion above, the following claim follows. For each $\alpha_N$, there exists an arc $\Gamma(\alpha_N)$ starting at the origin, extending to $\infty$ with angle $2\pi /3$, and such that $\varphi_{\alpha_N}$ maps $\partial \mathcal S^+\cap B(b_1)$ onto $\Gamma_N$, and $\Gamma(\alpha_0)$ reduces to a half line ray. 

Having the construction above in mind, we consider the RHP for $\bm K$, replacing in \eqref{jumpsK} the arcs $[0,e^{2\pi i/3}\infty)$ and $[0,e^{-2\pi i/3}\infty)$ by $\Gamma(\alpha_N)$ and its symmetric $\Gamma(\alpha_N)^*$ under complex conjugation. The solution $\bm K=\bm K_{\alpha_N}$ to this RHP can be constructed in a formula very much like \eqref{formulaK}, replacing the angular sectors by the sectors determined by $\Gamma(\alpha_N)$, its complex conjugate and the real axis. In particular, as it follows from their explicit formulas in terms of Airy functions, the convergence
$$
\bm K_{\alpha_N}\to \bm K_{\alpha_0}
$$
holds true uniformly in compacts of $\C\setminus (\R\cup [0,e^{2\pi i/3}\infty)\cup [0,e^{-2\pi i/3}\infty))$.

Once the matrix $K_{\alpha_N}$ is constructed, we then obtain $\bm \Phi_{\alpha_N}$ as in \eqref{formulaPsiK}, consider the parametrix $\widehat{\bm P}_{\alpha_N}$ as in \eqref{parametrixP}, and then obtain $\bm P_{\alpha_N}$ through \eqref{matchingcondition}, where we stress that $\bm F=\bm F_{\alpha_0}$ does not depend on $\alpha_N$.

Once the parametrices $\bm P_{\alpha_N}$ and $\bm F_{\alpha_0}$ are constructed, we define $R=R_{\alpha_N}$ in a similar way as in \eqref{Rdef}, taking now the form
\begin{equation} \label{Rdefvarying}
\bm R_{\alpha_N}(z) \isdef 
\begin{cases}
\bm S_{\alpha_N}(z) \bm P_{\alpha_N}^{-1}(z), & z\in B,   \\
\bm S_{\alpha_N}(z) \bm F_{\alpha_0}^{-1}(z), & \mbox{elsewhere.}
\end{cases}
\end{equation}
and the analysis follows exactly as in Section \ref{sec:finaltransformation}, in particular leading to 
$$
\bm R_{\alpha_N}(z)=I+\Boh(1/N) \quad \mbox{ as } N\to \infty,
$$
uniformly for $z\in \C\setminus \Gamma_R$, where we also call the reader's attention to the fact that $\Gamma_R$ does not depend on $\alpha_N$.

\section{Asymptotics} \label{sec:asymptotics}

Now we can unravel all transformations and find the desired asymptotic expressions.

\begin{proof}[Proof of Theorem \ref{thm:main}]
For $z \in \C\setminus \left( \mathcal S_1^\pm \cup \mathcal S_2^\pm \cup \mathcal U\right)$, away from the contours $\Gamma_R$ depicted on Figure~\ref{figure_final_precritical}, we have 
\begin{multline*}
\bm Y(z)=  \diag \left(e^{Nr_1}, e^{Nr_2}, e^{Nr_3}  \right)
 \left( \bm I + \mathcal O(1/N)  \right)
\bm F(z) 
\\
\times \diag \left(e^{- N(g_1(z)-\frac{2}{3}z^3)}, e^{- N(g_2(z)+\frac{1}{3}z^3)}, e^{- N(g_3(z)+\frac{1}{3}z^3)} \right).
\end{multline*}
Since the  polynomial $P_{n,m}$ coincides with the $(1,1)$ entry of the matrix $\bm Y$, we have
\begin{align}
P_{n,m}(z) &= \begin{pmatrix}
1 & 0 & 0
\end{pmatrix}\bm Y(z) \begin{pmatrix}
1 \\ 0 \\ 0
\end{pmatrix}  \nonumber \\
& =   e^{N(-g_1(z)+\frac{2}{3}z^3+r_1)}
\begin{pmatrix}
1 & 0 & 0
\end{pmatrix} \left( \bm I + \mathcal O(1/N)  \right)
\bm F(z)    \begin{pmatrix}
1 \\ 0 \\ 0
\end{pmatrix}
\nonumber \\
& =   f_{11}(z)e^{N(-g_1(z)+\frac{2}{3}z^3+r_1)}
 \left( 1 + \mathcal O(1/N)  \right), \label{asympPN1}
\end{align}
which gives us the expression of Theorem~\ref{thm:main} for $z \in \C\setminus \left( \mathcal S_1^\pm \cup \mathcal S_2^\pm \cup \mathcal U\right)$. Note that in the varying case $\alpha_N=n/N$, the formula above (and all the calculations below) are still the same with $g_{1}=g_{\alpha_N,1}$, and the global parametrix is constructed with $f_{1j}$'s for the limiting value $\alpha=\lim \alpha_N$.

Furthermore, for $z \in \mathcal U \setminus \left( \mathcal S_1^\pm \cup \mathcal S_2^\pm   \right)$,  
\begin{multline*}
\bm Y(z)=  \diag \left(e^{Nr_1}, e^{Nr_2}, e^{Nr_3}  \right)
\left( \bm I + \mathcal O(1/N)  \right)
\bm F(z) 
\\
\times \diag \left(e^{- N(g_1(z)-\frac{2}{3}z^3)}, e^{- N(g_2(z)+\frac{1}{3}z^3)}, e^{- N(g_3(z)+\frac{1}{3}z^3)} \right)
 \left( \bm I + \bm M_{32}\right),
\end{multline*}
and we clearly obtain the same asymptotic expression for $P_{n,m}$ as before. Because the lenses $\mathcal S_1^\pm$ and $\mathcal S_2^\pm$ can be taken arbitrarily close to $\Delta_1$ and $\Delta_2$, this is enough to conclude Theorem \ref{thm:main}.
\end{proof}

\begin{proof}[Proof of Theorem \ref{theorem_zero_counting_measure} for $P_{n,m}$]
Since $f_{11}$ does not vanish in $\C\setminus \left( \Delta_1\cup \Delta_2\right)$, we conclude from Theorem~\ref{thm:main} that the zeros of $P_{n,m}$ asymptotically belong to $\Delta_1\cup \Delta_2$. Moreover, since domains $\mathcal S^\pm_1$ and $\mathcal S^\pm_2$ can be taken arbitrarily small, we conclude from \eqref{asympPN1} that 
\begin{equation}\label{asymptLogPot}
\lim_{N\to \infty} \left|P_{n,m}(z)\right|^{1/N}=e^{ -\Re \left( g_{1}(z)-\frac{2}{3}z^3-r_1 \right)  }
\end{equation}
locally uniformly on any compact subset of $\C\setminus \left( \Delta_1\cup \Delta_2\right)$, where on the right-hand side above we use $g_{1}=g_{\alpha,1}= \lim_N g_{\alpha_N,1}$, the latter limit taken pointwise. Let us show that
\begin{equation}
\label{identityPotentials}
\Re \left( g_1(z)-\frac{2}{3}z^3-r_1 \right) = U^{\mu_1+\mu_2}(z), \quad  \C\setminus \left( \Delta_1\cup \Delta_2\right).
\end{equation}
First of all, it is easy to observe that by definition \eqref{defGfunctions} and asymptotics \eqref{defRforGfunctions},
$$
g'_1(z)-2z^2=  \xi_1(z)-2z^2= -\frac{1}{z}+ \mathcal O\left( 1/z^2\right), \quad z\to \infty. 
$$
On the other hand, by \eqref{equality_xi_2}--\eqref{equality_xi_3} and the definition of $\mu_j$'s in \eqref{definition_measures_mu}, 
$$
g'_{1+}(z)-g'_{1-}(z)=\xi_{1+}(z)-\xi_{1-}(z)=\begin{cases}
2\pi i \mu_1'(z), & \text{if } z\in \Delta_1,  \\ 2\pi i  \mu_2'(z), & \text{if } z\in \Delta_2,
\end{cases}
$$
from which we conclude that
$$
g'_1(z)-2z^2=   C^{\mu_1+\mu_2}(z), \quad  \C\setminus \left( \Delta_1\cup \Delta_2\right).
$$
Finally, \eqref{identityPotentials} follows from the asymptotic formula \eqref{defRforGfunctions}.

In particular, \eqref{asymptLogPot} can be written as
\begin{equation*} 
\lim_{N\to \infty} \left|P_{n,m}(z)\right|^{1/N}=e^{ -U^{\mu_1+\mu_2}(z) },  \quad  \C\setminus \left( \Delta_1\cup \Delta_2\right).
\end{equation*}
Since $\Delta_1 \cup \Delta_2$ have empty interior and connected complement, this proves (see e.g.~\cite[Theorem 4.1]{saff_totik_book}) the statement of Theorem~\ref{theorem_zero_counting_measure}  for the weak limit of $\nu(P_{n,m})$.

\end{proof}

Next we turn to the asymptotic analysis of the type I multiple orthogonal polynomials $A_{n,m}$ and $B_{n,m}$, satisfying the Definition~\ref{defTypeI}. Recall that 
\begin{equation}\label{ABfromY}
\begin{split}
A_{n,m}(z) &= \frac{1}{2\pi i}\begin{pmatrix}
0 & 1 & 0
\end{pmatrix}\bm{Y}^{-1}(z) \begin{pmatrix}
1 \\ 0 \\ 0
\end{pmatrix}, \\
B_{n,m}(z) &= \frac{1}{2\pi i}\begin{pmatrix}
0 & 0 & 1
\end{pmatrix}\bm{Y}^{-1}(z) \begin{pmatrix}
1 \\ 0 \\ 0
\end{pmatrix},
\end{split}
\end{equation}
check out the comments after \eqref{RHPY1}.

\begin{proof}[Proof of Theorem \ref{thm:main2}]
For $z \in \C\setminus \left( \mathcal S_1^\pm \cup \mathcal S_2^\pm \cup \mathcal S_2^\pm\cup  \mathcal U\right)$, away from the contours $\Gamma_R$ depicted on Figure~\ref{figure_final_precritical}, we have 
\begin{multline} 
\bm{Y}^{-1}(z)= \diag \left(e^{N(g_1(z)-\frac{2}{3}z^3)}, e^{ N(g_2(z)+\frac{1}{3}z^3)}, e^{ N(g_3(z)+\frac{1}{3}z^3)} \right)  \bm{F}^{T}(z) 
\\
\times \left( \bm I + \mathcal O(1/N)  \right) \diag \left(e^{-Nr_1}, e^{-Nr_2}, e^{-Nr_3}  \right), \label{asymptotics_inverse_Y}
\end{multline}
where we have used \eqref{invequaltransp}, so that
\begin{align}
2\pi i A_{n,m}(z) &= e^{ N(g_2(z)+\frac{1}{3}z^3-r_1)}
\begin{pmatrix}
0 & 1 & 0
\end{pmatrix} \bm{F}^{T}(z)  \left( \bm I + \mathcal O(1/N)  \right)
   \begin{pmatrix}
1 \\ 0 \\ 0
\end{pmatrix}
\nonumber
\\
& =    f_{12}(z)   e^{ N(g_2(z)+\frac{1}{3}z^3-r_1)}
\left( 1 + \mathcal O(1/N)  \right).
\label{asymptoticsAout}
\end{align}

On the other hand, for $z \in \mathcal U \setminus \left( \mathcal S_1^\pm \cup \mathcal S_2^\pm  \cup \mathcal S_3^\pm  \right)$, 
\begin{multline}\label{asymptotics_inverse_Y_2}
\bm{Y}^{-1}(z)= \left( \bm I - \bm M_{32}\right) \diag \left(e^{N(g_1(z)-\frac{2}{3}z^3)}, e^{ N(g_2(z)+\frac{1}{3}z^3)}, e^{ N(g_3(z)+\frac{1}{3}z^3)} \right)  \bm{F}^{T}(z) 
\\
\times \left( \bm I + \mathcal O(1/N)  \right) \diag \left(e^{-Nr_1}, e^{-Nr_2}, e^{-Nr_3}  \right),
\end{multline}
from where
\begin{align}\label{asymptoticsAin}
2\pi i A_{n,m}(z) &= e^{ N(g_2(z)+\frac{1}{3}z^3-r_1)}
\begin{pmatrix}
0 & 1 & 0
\end{pmatrix} \bm{F}^{T}(z)  \left( \bm I + \mathcal O(1/N)  \right)
\begin{pmatrix}
1 \\ 0 \\ 0
\end{pmatrix} \nonumber
\\
& =   f_{12}(z)   e^{ N(g_2(z)+\frac{1}{3}z^3-r_1)}
\left( 1 + \mathcal O(1/N)  \right),
\end{align}
which coincides with the formula in \eqref{asymptoticsAout}. Again, as $\mathcal S_1^\pm,\mathcal S_2^\pm,\mathcal S_3^\pm$ can be made arbitrarily close to $\Delta_1$, $\Delta_2$ and $\Delta_3$, we conclude Theorem \ref{thm:main2} for $z\in \C\setminus (\Delta_1\cup\Delta_2\cup\Delta_3)$.

The functions
$$
\frac{A_{n,m}(z) }{f_{11}(z) }e^{ -N(g_2(z)+\frac{1}{3}z^3-r_1)}
$$
are holomorphic on compact subsets of $\C\setminus (\Delta_1\cup \Delta_3)$, and by \eqref{asymptoticsAout}, \eqref{asymptoticsAin} and Montel's Theorem, they form a normal family. This proves that 
$$
\lim_{N\to \infty} \frac{A_{n,m}(z) }{f_{11}(z) }e^{ -N(g_2(z)+\frac{1}{3}z^3-r_1)}=1
$$
also on $\Delta_2\setminus \R$, concluding the proof of Theorem \ref{thm:main2}.

\end{proof}

\begin{proof}[Proof of Theorem \ref{theorem_zero_counting_measure} for $A_{n,m}$]
By construction $ f_{12}$ does not vanish in $\C\setminus \left( \Delta_1\cup \Delta_3\right)$, so by Theorem \ref{thm:main2} we conclude that all zeros of $A_{n,m}$ asymptotically belong to $\Delta_1\cup \Delta_3$, and  
\begin{equation*} 
\lim_{N\to \infty} \left|A_{n,m}(z)\right|^{1/N}=e^{ -\Re \left( -g_2(z)-\frac{1}{3}z^3+r_1 \right)  }
\end{equation*}
locally uniformly on any compact subset of $\C\setminus \left( \Delta_1\cup \Delta_3\right)$. 
But by definition \eqref{defGfunctions} and asymptotics \eqref{defRforGfunctions},
$$
-g'_2(z)-z^2=  -\xi_2(z)-z^2= -\frac{\alpha}{z}+ \mathcal O\left( 1/z^2\right), \quad z\to \infty. 
$$
On the other hand, by \eqref{equality_xi_2}--\eqref{equality_xi_3} and the definition of $\mu_j$'s in \eqref{definition_measures_mu}, 
$$
-g'_{2+}(z)+g'_{2-}(z)=\xi_{2-}(z)-\xi_{2+}(z)=\begin{cases}
2\pi i\,  \mu_1'(z), & \text{if } z\in \Delta_1,  \\ 2\pi i \, \mu_3'(z), & \text{if } z\in \Delta_3 \text{ and } \tau >\tau_c,
\end{cases}
$$
from which  
$$
-g'_2(z)-z^2=   C^{\mu_1+\mu_3}(z), \quad  \C\setminus \left( \Delta_1\cup \Delta_3\right).
$$
Thus, by the asymptotic formula \eqref{defRforGfunctions},
\begin{equation*}
\Re \left( -g_2(z)-\frac{1}{3}z^3+r_1 \right) = r_1-r_2+U^{\mu_1+\mu_3}(z), \quad  \C\setminus \left( \Delta_1\cup \Delta_3\right),
\end{equation*}
and
\begin{equation*}
\lim_{N\to \infty} \log\left|A_{n,m}(z) \right|^{1/N}=r_2-r_1 -U^{\mu_1+\mu_3}(z),  \quad  \C\setminus \left( \Delta_1\cup \Delta_3\right).
\end{equation*}
Since $\Delta_1 \cup \Delta_3$ has empty interior and connected complement, this proves the statement of Theorem~\ref{theorem_zero_counting_measure}  for the weak limit of $\nu(A_{n,m})$.

\end{proof}

We now turn our attention to the asymptotic results for $B_{n,m}$, which require more care. 

\begin{proof}[Proof of Theorem \ref{thm:main3}]

For $z \in \C\setminus \left( \mathcal S_1^\pm \cup \mathcal S_2^\pm \cup \mathcal S_3^\pm\cup  \mathcal U\right)$, away from the contours $\Gamma_R$ depicted on Figure~\ref{figure_final_precritical}, we use  \eqref{ABfromY} and \eqref{asymptotics_inverse_Y} to arrive at
\begin{align}
2\pi i B_{n,m}(z) &= e^{ N(g_3(z)+\frac{1}{3}z^3-r_1)}
\begin{pmatrix}
0 & 0 & 1
\end{pmatrix} \bm{F}^{T}(z)  \left( \bm I + \mathcal O(1/N)  \right)
\begin{pmatrix}
1 \\ 0 \\ 0
\end{pmatrix}
\nonumber
\\
& =   f_{13}(z)   e^{ N(g_3(z)+\frac{1}{3}z^3-r_1)}
\left( 1 + \mathcal O(1/N)  \right). \label{asymptoticsBout}
\end{align}

Recall that according to the construction at the beginning of Section \ref{prelim}, the contour $\gamma=\partial \mathcal U$ can be chosen arbitrarily within $\Omega_+$, so by \eqref{boundary_omegaplus} we can take it as close to $E_\alpha\cap \H_-$ as we want. Also, the lenses $\mathcal S_j^\pm$ can be chosen arbitrarily close to $\Delta_j$. Thus, the asymptotic formula \eqref{asymptoticsBout} is actually valid uniformly on compacts of $\C\setminus (\Delta_1\cup\Delta_2\cup \Delta_3\cup \Omega_-)$.

In a similar way, for $z \in \mathcal U \setminus \left( \mathcal S_1^\pm \cup \mathcal S_2^\pm  \cup \mathcal S_3^\pm  \right)$ we use \eqref{asymptotics_inverse_Y_2} to get
\begin{align*}
2\pi i B_{n,m}(z) &=  
\begin{pmatrix}
0 & -1 & 1
\end{pmatrix}  
\diag \left(e^{N(g_1(z)-\frac{2}{3}z^3)}, e^{ N(g_2(z)+\frac{1}{3}z^3)}, e^{ N(g_3(z)+\frac{1}{3}z^3)} \right) \\ & 
\times \bm{F}^{-1}(z) 
 \left( \bm I + \mathcal O(1/N)  \right) \diag \left(e^{-Nr_1}, e^{-Nr_2}, e^{-Nr_3}  \right)
\begin{pmatrix}
1 \\ 0 \\ 0
\end{pmatrix}
\\ & = \begin{pmatrix}
0 & -e^{ N(g_2(z)+\frac{1}{3}z^3-r_1)} &  e^{ N(g_3(z)+\frac{1}{3}z^3-r_1)} 
\end{pmatrix}   \bm{F}^{T}(z)    \begin{pmatrix}
1+ \mathcal O(1/N) \\  \mathcal O(1/N) \\  \mathcal O(1/N)
\end{pmatrix},
\end{align*}
and thus,
\begin{equation}\label{asymptoticsBinU}
\begin{split}
2\pi i B_{n,m}(z) = &     f_{13}(z)   e^{ N(g_3(z)+\frac{1}{3}z^3-r_1)}  
\left( 1 + \mathcal O(1/N)  \right)\\
& - f_{12}(z)   e^{ N(g_2(z)+\frac{1}{3}z^3-r_1)}  
\left( 1 + \mathcal O(1/N)  \right),
\end{split}
\end{equation}
uniformly on compacts of $\mathcal U\setminus \Delta_2$

Obviously, which one  of these two terms is dominant depends on whether we are in the domain $\Omega_\alpha$, see Definition~ \ref{defE}, or not. Indeed, we can summarize \eqref{asymptoticsBout} and \eqref{asymptoticsBinU} by
\begin{equation}\label{asymptoticsBsummary}
2\pi i B_{n,m}(z) =  \begin{cases}
 f_{13}(z)   e^{ N(g_3(z)+\frac{1}{3}z^3-r_1)}
\left( 1 + \mathcal O(1/N)  \right) , & z \in   \C\setminus \left( \mathcal S_j^\pm \cup  \overline{\Omega_\alpha}  \right) ,  \\
- f_{12}(z)   e^{ N(g_2(z)+\frac{1}{3}z^3-r_1)}  
\left( 1 + \mathcal O(1/N)  \right), & z \in \Omega_\alpha \setminus \Delta_2=\Omega_-.
\end{cases}
\end{equation}
Due to analiticity of $g_3$ and $f_{13}$ across $\Delta_1$ (see \eqref{defGfunctions} and \eqref{conditions_m_2}), and of $g_2$ and $f_{12}$ across $\Delta_2$ (see \eqref{equality_xi_3} and \eqref{conditions_m_2}), we can extend the first of these formulas to compacts of $\C\setminus (\Omega_\alpha\cup \Delta_2)$, and the second of these formulas to the whole domain $\Omega_\alpha$, which proves the assertion of Theorem~\ref{thm:main3}.

\end{proof}

\begin{proof}[Proof of Theorem~\ref{theorem_zero_counting_measureB}]
We turn now to Theorem~\ref{theorem_zero_counting_measureB}.  

With the definition of $H$ in Proposition~\ref{PropdefmuB}, by  \eqref{asymptoticsBsummary} and Theorem \ref{thm:main3} we conclude that
\begin{align*} 
\lim_{N\to \infty}   \log  \left|B_{n,m}(z)\right|^{1/N}  & = H(z) \nonumber \\ 
&  = -U^{\mu_B}(z) +\Re \left( r_3- r_1 \right), \quad z\in \C\setminus E_\alpha,
\end{align*}
and this is enough to conclude Theorem \ref{theorem_zero_counting_measureB}.
\end{proof}

\section{Numerical experiments and empirical observations} \label{sec:numerics}

Figures \ref{figure_zeros_N=30}--\ref{figure_zeros_traj_AB_2} are results of numerical experiments that we explain and further explore next.

The supports of the measures $\mu_1$, $\mu_2$, $\mu_3$ and $\mu_B$ coincide with critical trajectories of the canonical quadratic differential $\varpi$ in \eqref{defQqd}. The numerical evaluation of these trajectories is explored and explained in our previous work \cite[Appendix~C]{martinez_silva_critical_measures}.

To compute the zeros of the polynomials $A_{n,m},B_{n,m}$ and $P_{n,m}$ we proceed as follows. Given the moments \eqref{explicitcoeff}, we create the matrix of mixed moments
$$
H_{n,m}=
\begin{pmatrix}
\frak f_1^{(0)} & \cdots & \frak f_1^{(n-1)} & \frak f_2^{(0)} & \cdots & \frak f_2^{(m-1)} \\ 
\frak f_1^{(1)} & \cdots & \frak f_1^{(n)} & \frak f_2^{(1)} & \cdots & \frak f_2^{(m)} \\
& \vdots & & & \vdots &   \\
\frak f_1^{(m+n-1)} & \cdots & \frak f_1^{(m+2n-2)} & \frak f_2^{(m+n-1)} & \cdots & \frak f_2^{(2m+n-2)} 
\end{pmatrix}
$$

We put the coefficients of $A_{n,m}$ and $B_{n,m}$ into column vectors $\vec A_{n,m}$ and $\vec B_{n,m}$ in increasing order. Then $\vec A_{n,m}$ and $\vec B_{n,m}$ solve the linear system \cite[Section~23.1]{ismail_book}
\begin{equation}\label{linear_system_coefficients}
H_{n,m} 
\begin{pmatrix}
\vec A_{n,m} \\ \vec B_{n,m}
\end{pmatrix}
=
\begin{pmatrix}
0 \\ \vdots \\ 0 \\ 1
\end{pmatrix}
\end{equation}

Similarly, the coefficients of $P_{n,m}$ also solve a linear system, but with coefficients matrix given by the transpose of $H_{n,m}$.

Because the moments \eqref{explicitcoeff} are given explicitly, the linear system above can be solved numerically with high precision using any standard numerical solver. For our figures, we used the linear solver provided by Mathematica, working with 400 digits precision. 

In Figures~\ref{figure_zeros_1-5}--\ref{figure_zeros_3-7} we display the zeros of $P_{n,m}$, $A_{n,m}$ and $B_{n,m}$ for the values $\alpha=1/5<\alpha_c$, $\alpha =3/10\in (\alpha_c,\alpha_2)$ and $\alpha =3/8>\alpha_2$, and various choices of $n$ and $m$. 

The same approach carries over to a more general situation: given an integer $K>2$, consider the type II monic multiple orthogonal polynomial $Q_{n,m}$ of degree $n+m$ defined through the orthogonality conditions
\begin{align*}
& \int_{\gamma_{\ell_1} \cup \gamma^-_{\kappa_1}} Q_{n,m}(z) z^{j} e^{-z^K}dz=0,\quad j=0,\hdots, n-1,\\
& \int_{\gamma_{\ell_2} \cup \gamma^-_{\kappa_2}} Q_{n,m}(z) z^{j} e^{-z^K}dz=0,\quad j=0,\hdots, m-1,
\end{align*}
where $\gamma_\kappa$ is the oriented ray from $0$ to $\infty e^{2\pi i \kappa/K}$, $\kappa = 0,\hdots, K-1$, and $\gamma_\kappa^-$ is the result of reversing the orientation of $\gamma_\kappa$. 

Similarly, we define the type I MOPs $C_{n,m}$ and $D_{n,m}$ as polynomials of degrees at most $n-1$ and $m-1$, respectively, that satisfy
\begin{align*}
& \int_{\gamma_{\ell_1} \cup \gamma^-_{\kappa_1}} C_{n,m}(z) z^{j} e^{-z^K}dz+ \int_{\gamma_{\ell_2} \cup \gamma^-_{\kappa_2}} D_{n,m}(z) z^{j} e^{-z^K}dz=0,\quad j\leq n+m-2, \\
& \int_{\gamma_{\ell_1} \cup \gamma^-_{\kappa_1}} C_{n,m}(z) z^{j} e^{-z^K}dz+ \int_{\gamma_{\ell_2} \cup \gamma^-_{\kappa_2}} D_{n,m}(z) z^{j} e^{-z^K}dz=1,\quad j= n+m-1. \\
\end{align*}

In the formulas above, we assume $\ell_j\neq \kappa_j$, $j=1,2$, and $\{\ell_1,\kappa_1\}\neq \{\ell_2,\kappa_2\}$ to ensure non-triviality. From
$$
\int_0^\infty x^j e^{-x^K}dx=\frac{1}{K}\, \Gamma\left( \frac{j+1}{K} \right),
$$
where $\Gamma(\cdot)$ is the Gamma function, and we can actually compute
$$
\int_{\gamma_{\ell} \cup \gamma^-_{\kappa}} Q_{n,m}(z) z^{j} e^{-z^K}dz=\left(e^{\frac{2\pi i\ell(j+1)}{K}}-e^{\frac{2\pi i\kappa(j+1)}{K}}\right)\frac{1}{K}\Gamma\left( \frac{j+1}{K} \right).
$$

Using these moments and the linear system equivalent to \eqref{linear_system_coefficients}, in Figures \ref{figure_zeros_quintic_symmetric}--\ref{figure_zeros_seventh_asymmetric2} we plot the appropriately rescaled zeros of the polynomials $C_{n,m}$, $D_{n,m}$ and $Q_{n,m}$, for several values of $n$ and $m$, weights $e^{-z^{5}}$ and $e^{-z^{7}}$ and various choices of contours $\gamma_{l}\cup \gamma_k^{-}$. We devote the last paragraphs of this paper to some empirical discussion motivated by the results of these experiments. 

According to Theorem~\ref{theorem_zero_counting_measure}, the limiting density of zeros for $A_{n,m}$ and $P_{n,m}$ coincide on $\Delta_1$. But Figures~\ref{figure_zeros_1-5}--\ref{figure_zeros_3-7} allow us to conjecture that  a stronger statement is true: for any $n,m$, the zeros of $A_{n,m}$ and $P_{n,m}$ on $\Delta_1$ interlace. This same interlacing appears for the real zeros of $Q_{n,m}$ and $C_{n,m}$ displayed in Figures~\ref{figure_zeros_quintic_symmetric}--\ref{figure_zeros_seventh_symmetric}.

As one can see in Figures \ref{figure_zeros_1-5}--\ref{figure_zeros_3-5}, the zeros of $B_{n,m}$ are not on the real line. Nevertheless, they seem to display a ``quasi-interlacing'' pattern with the zeros of $P_{n,m}$ on the set where $\mu_B$ and $\mu_2$ coincide. A similar phenomenon can be observed in Figures \ref{figure_zeros_quintic_symmetric}--\ref{figure_zeros_seventh_asymmetric2}

The zeros of $P_{n,m}$ outside the real line seem to quickly approach on $\supp\mu_2$, even for very small values of $n$ and $m$. The same is true for the zeros of $B_{n,m}$ on $\supp\mu_B\cap \supp\mu_2$, but its zeros on $\supp\mu_B\cap \H_-$ seem to converge slower to their limiting support. Furthermore, the plots indicate that the measures $\mu_3$ and $\mu_B$ have very small mass near $a_1$ for $\alpha>\alpha_c$. 

The contours of orthogonality in Figures~\ref{figure_zeros_quintic_symmetric}--\ref{figure_zeros_seventh_symmetric} were chosen in such a way that the polynomial $P_{n,m}$ is real and has some real zeros. As one can observe in these figures, for such choice of contours the zeros display some transitions similar to the ones rigorously described in this paper for the weight $e^{-z^3}$. 

In Figures \ref{figure_zeros_quintic_symmetric}--\ref{figure_zeros_seventh_asymmetric2} indicate that for small values of $\alpha$ a portion of zeros of $P_{n,m}$ coincide, in the large degree limit, with the zeros of $C_{n,m}$, and the remaining zeros of $P_{n,m}$ coincide with the zeros of $B_{n,m}$. However, when $\alpha$ increases, zeros of $C_{n,m}$ and $D_{n,m}$ get closer to each other, and they seem to display some sort of repulsion. At this time, some of the zeros of $P_{n,m}$ do not follow zeros of $C_{n,m}$ or $D_{n,m}$ anymore.


\section*{Acknowledgements}

The first author was partially supported by the Spanish Government together with the European Regional Development Fund (ERDF) under 
grants MTM2014-53963-P and MTM2017-89941-P (from MINECO), by Junta de Andaluc\'{\i}a (the Excellence Grant P11-FQM-7276 and the research group FQM-229), and by Campus 
de Excelencia Internacional del Mar (CEIMAR) of the University of Almer\'{\i}a.

\newpage

\begin{figure}[p!]
\begin{subfigure}{.5\textwidth}
\centering
\begin{overpic}[scale=.45]{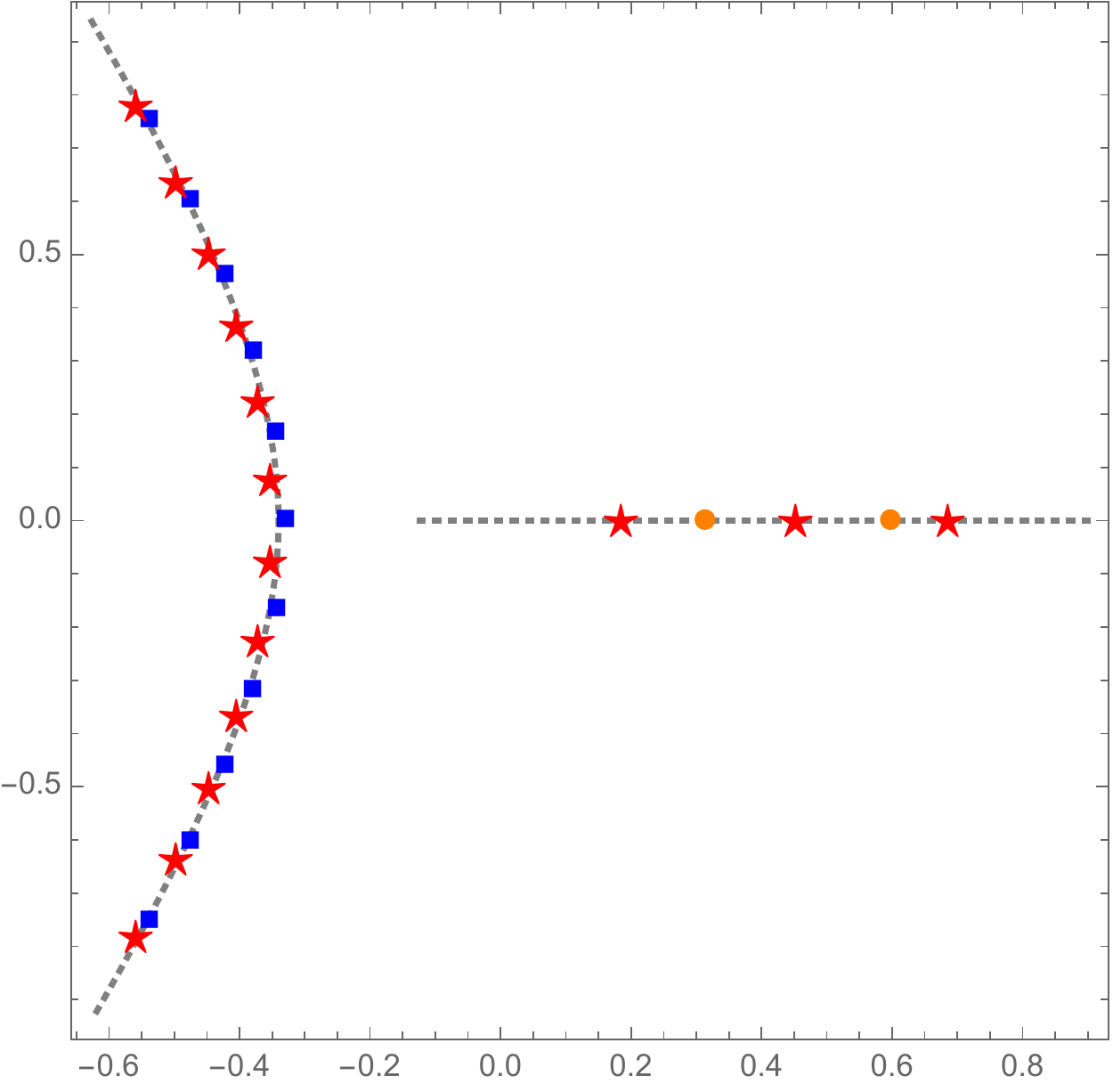}
\end{overpic}
\end{subfigure}%
\begin{subfigure}{.5\textwidth}
\centering
\begin{overpic}[scale=.45]{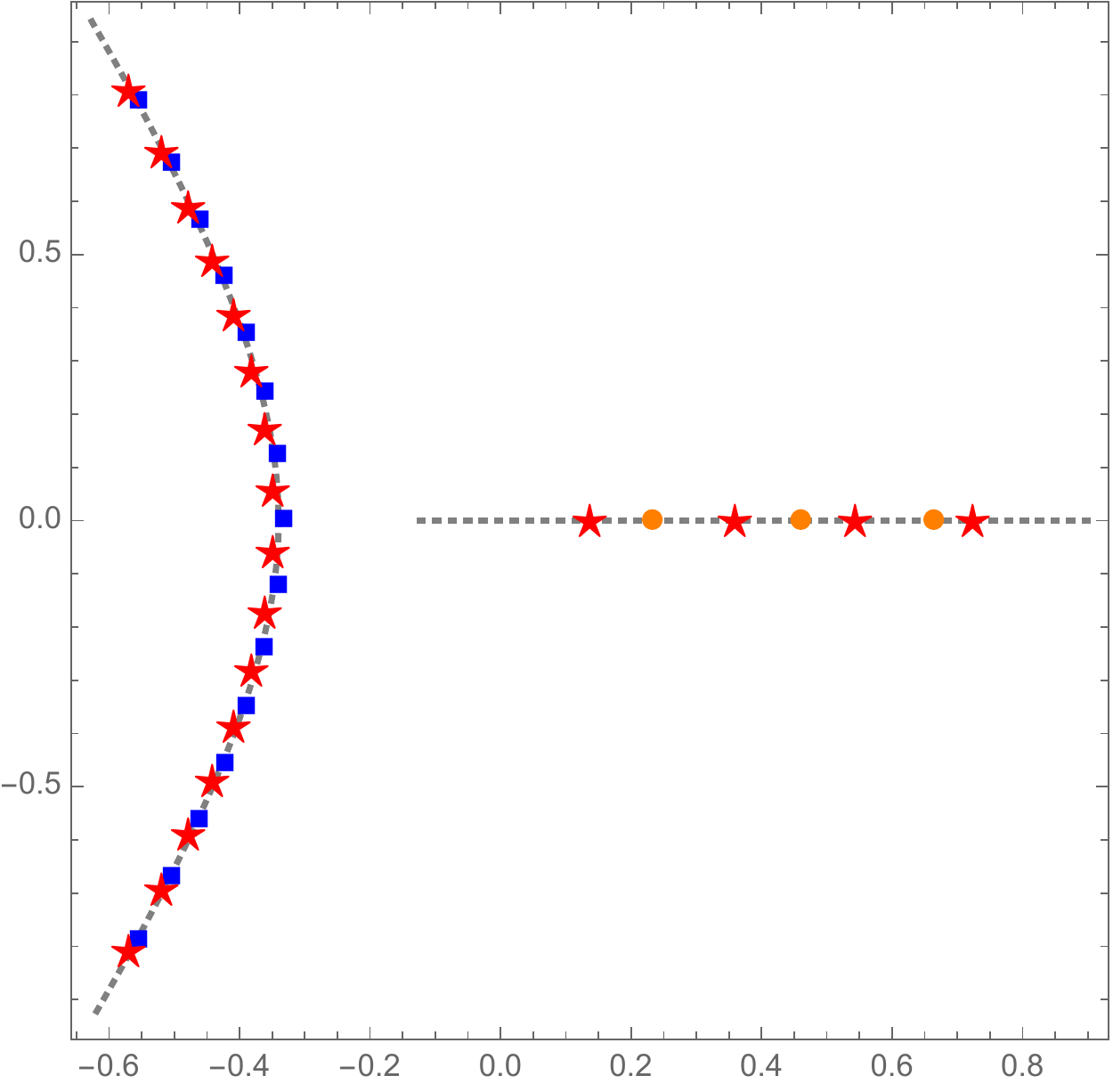}
\end{overpic}
\end{subfigure}\\
\begin{subfigure}{.5\textwidth}
\centering
\begin{overpic}[scale=.45]{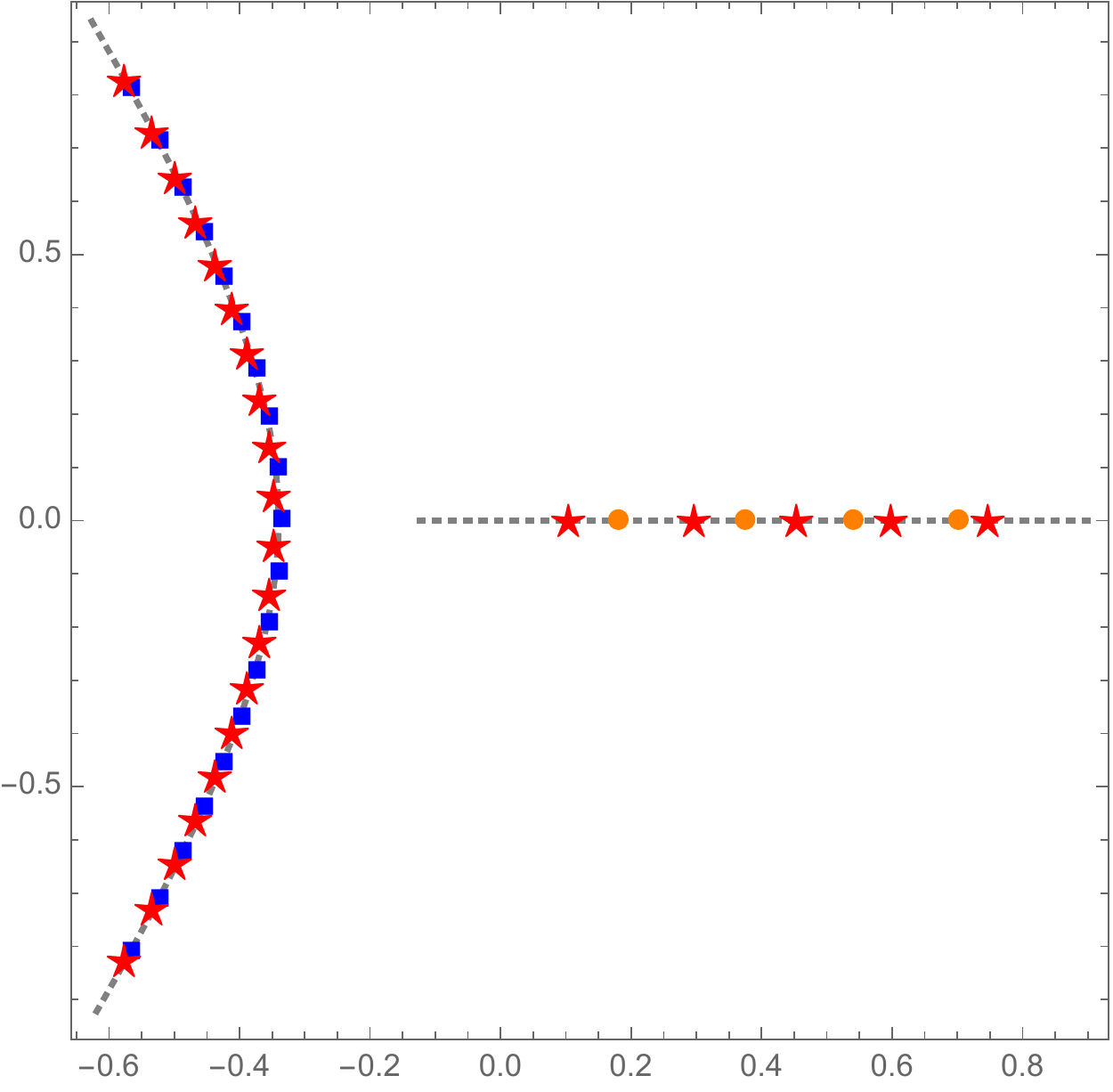}
\end{overpic}
\end{subfigure}%
\begin{subfigure}{.5\textwidth}
\centering
\begin{overpic}[scale=.45]{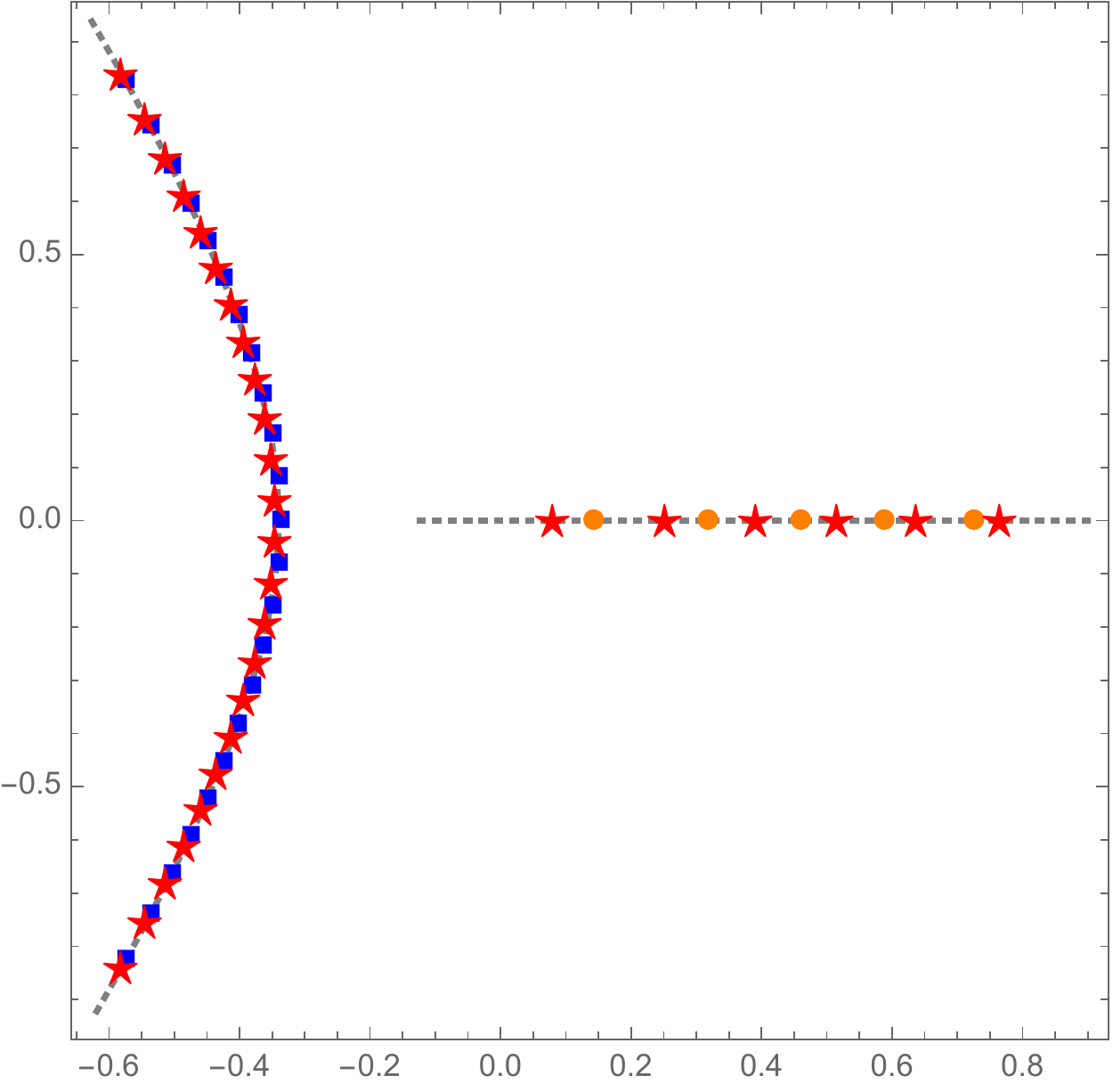}
\end{overpic}
\end{subfigure}\\
\begin{subfigure}{.5\textwidth}
\centering
\begin{overpic}[scale=.45]{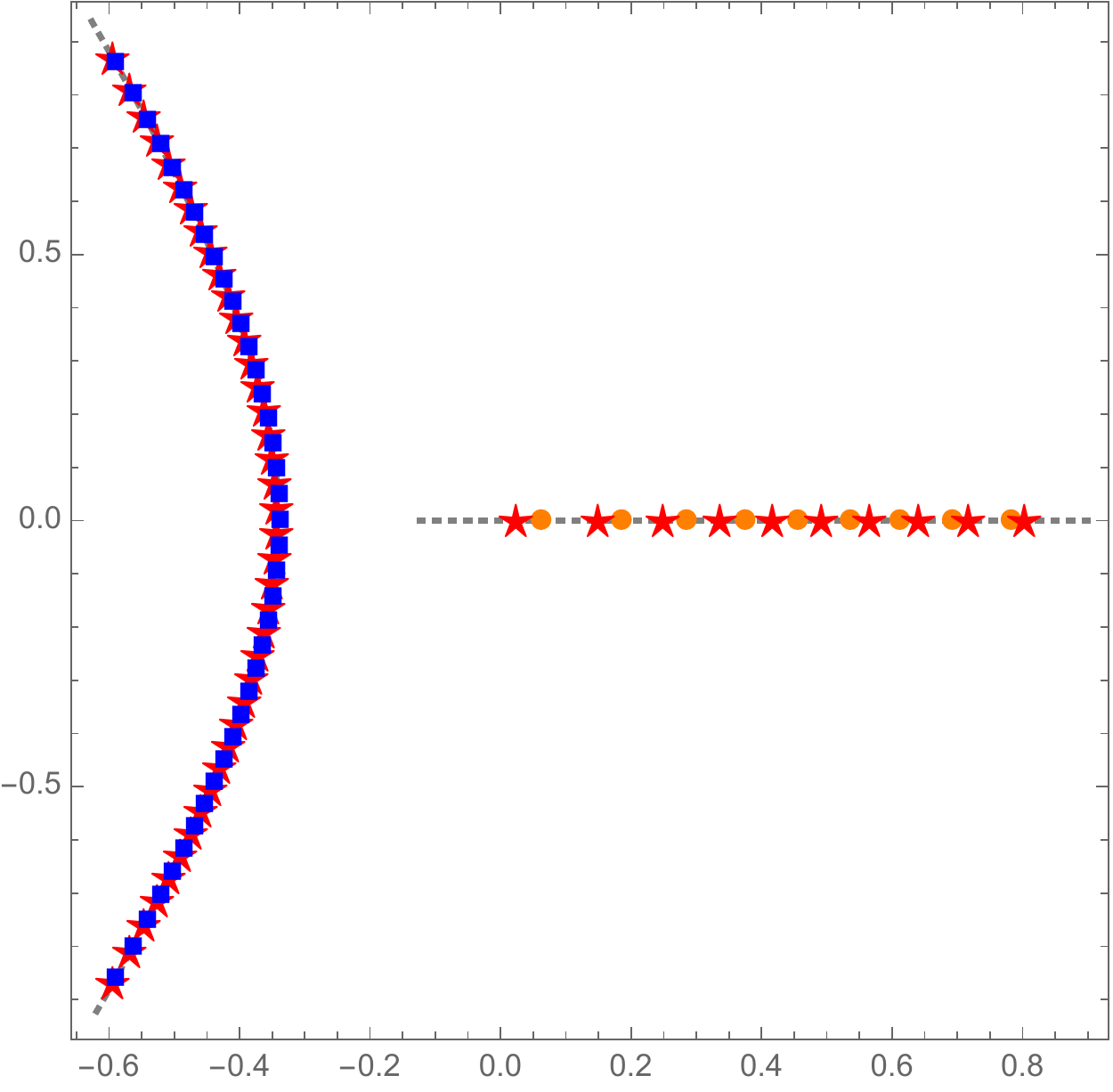}
\end{overpic}
\end{subfigure}%
\begin{subfigure}{.5\textwidth}
\centering
\begin{overpic}[scale=.45]{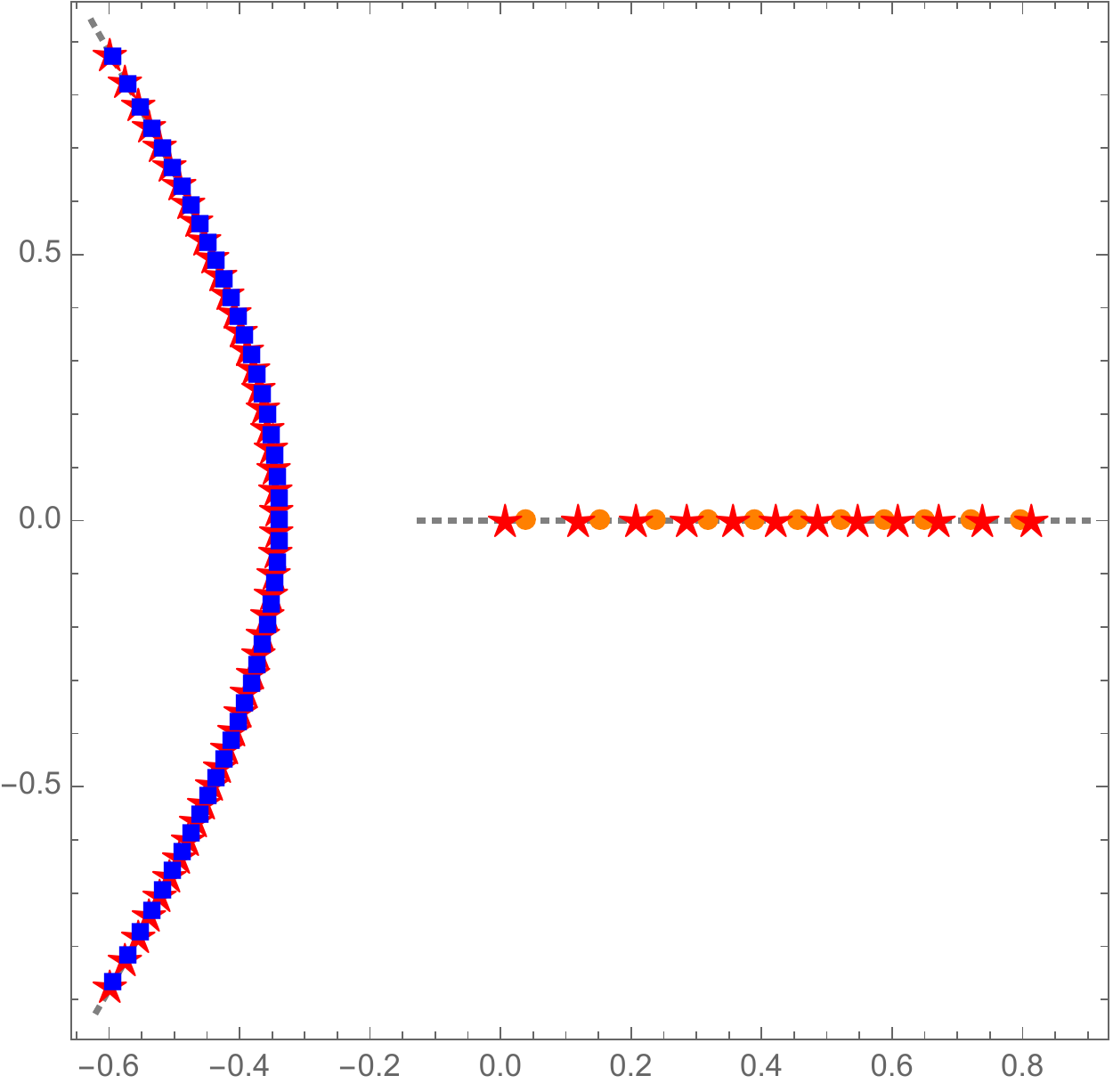}
\end{overpic}
\end{subfigure}
\caption{Zeros of $P_{n,m}$ (stars), $A_{n,m}$ (dots) and $B_{n,m}$ (squares), all corresponding to the same value $\alpha=1/5<\alpha_c$. The dashed lines are the supports of $\mu_1$ and $\mu_2=\mu_B$. From left to right, top to bottom: $(n,m)=(3,12),(4,16),(5,20),(6,24),(10,24),(12,48)$.}\label{figure_zeros_1-5}
\end{figure}

\begin{figure}[p!]
\begin{subfigure}{.5\textwidth}
\centering
\begin{overpic}[scale=.45]{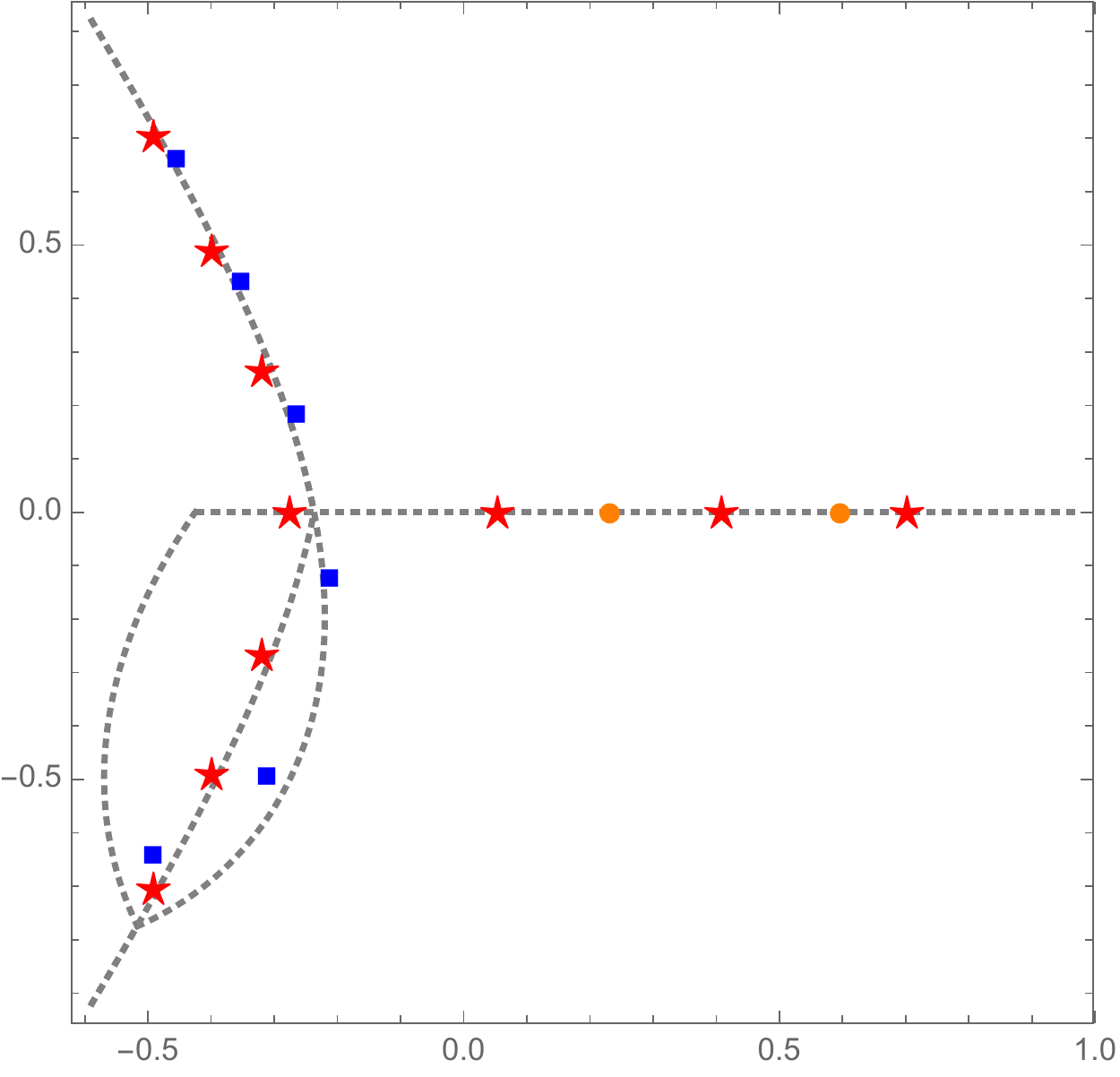}
\end{overpic}
\end{subfigure}%
\begin{subfigure}{.5\textwidth}
\centering
\begin{overpic}[scale=.45]{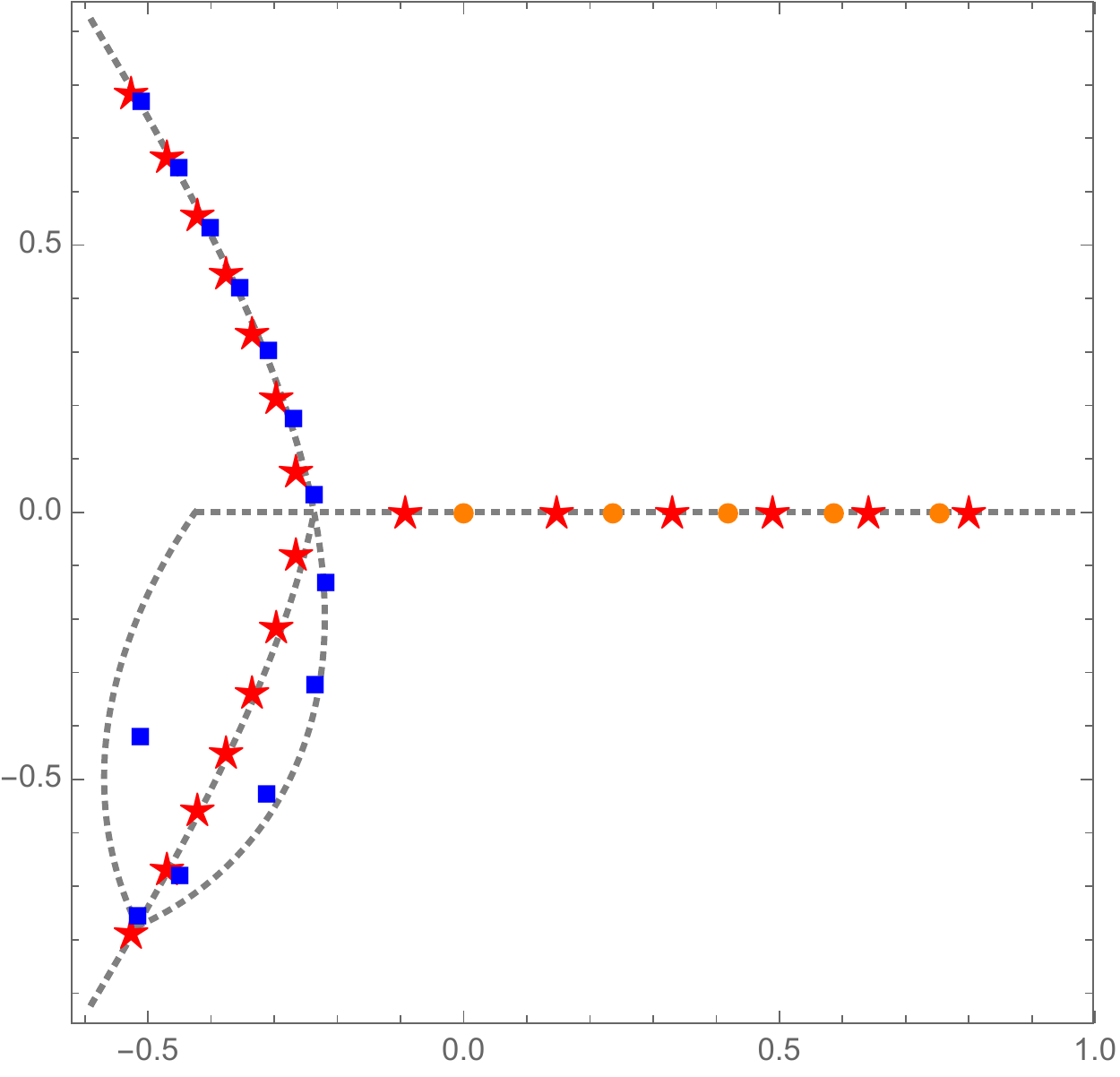}
\end{overpic}
\end{subfigure}\\
\begin{subfigure}{.5\textwidth}
\centering
\begin{overpic}[scale=.45]{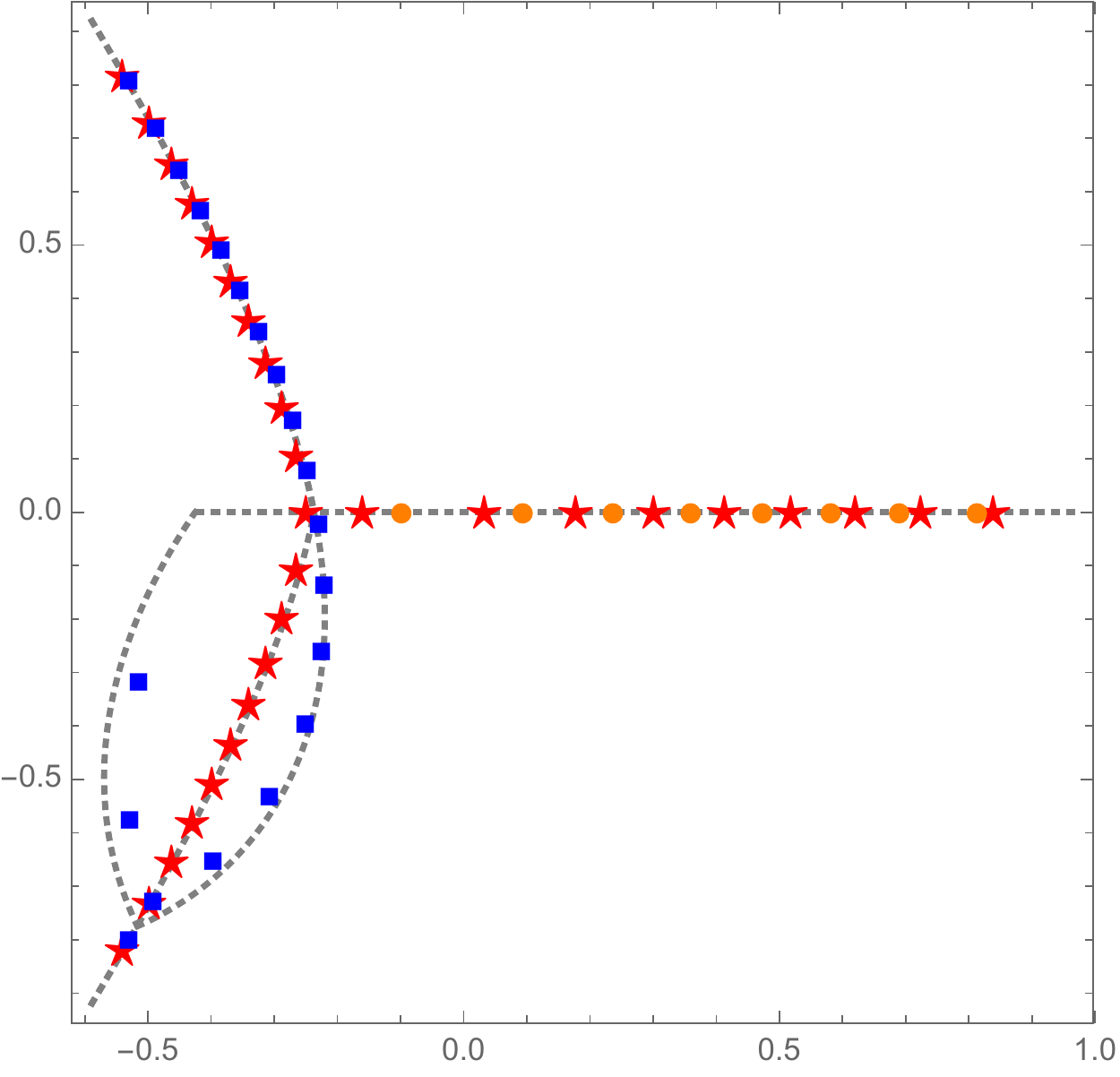}
\end{overpic}
\end{subfigure}%
\begin{subfigure}{.5\textwidth}
\centering
\begin{overpic}[scale=.45]{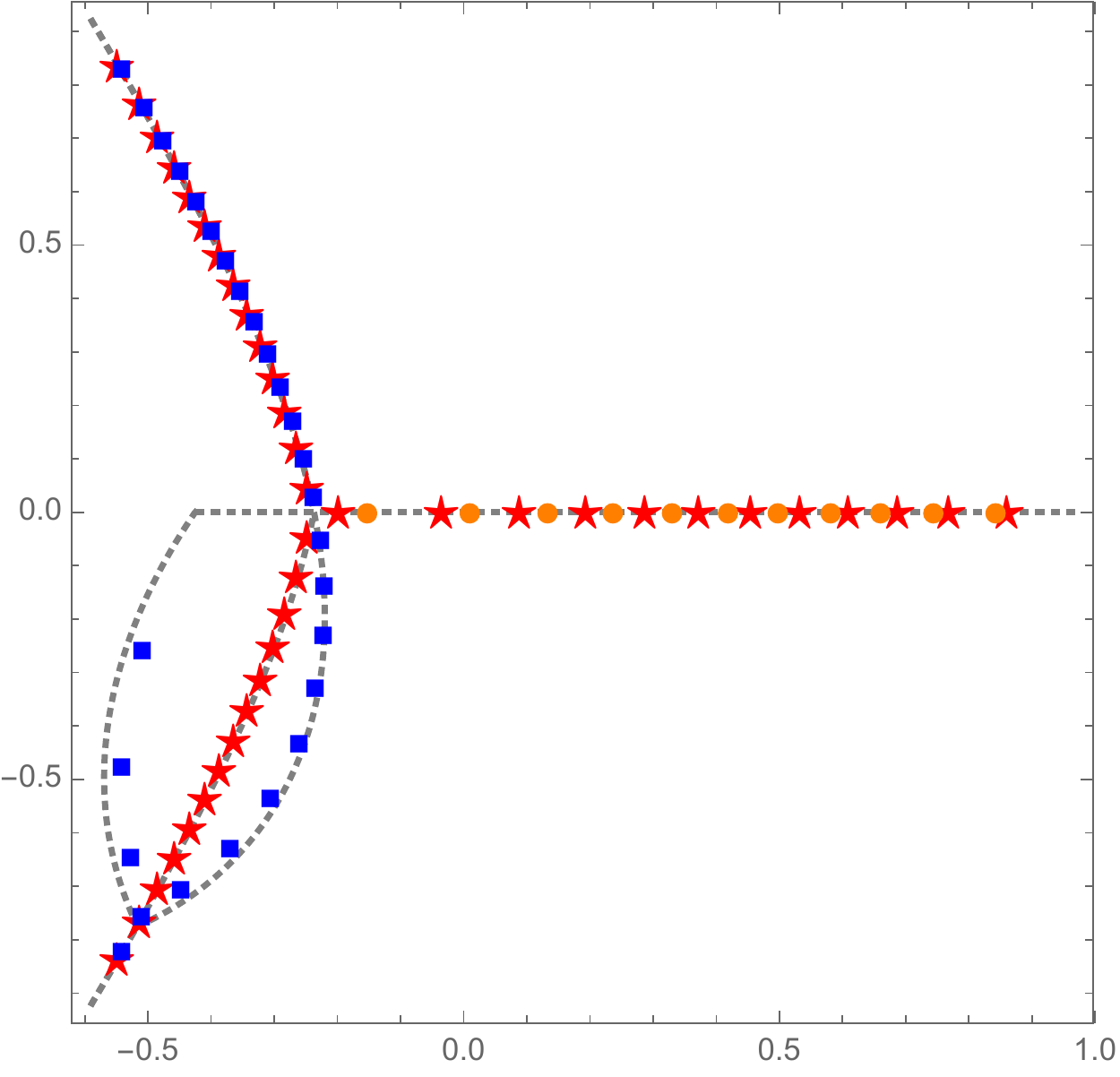}
\end{overpic}
\end{subfigure}\\
\begin{subfigure}{.5\textwidth}
\centering
\begin{overpic}[scale=.45]{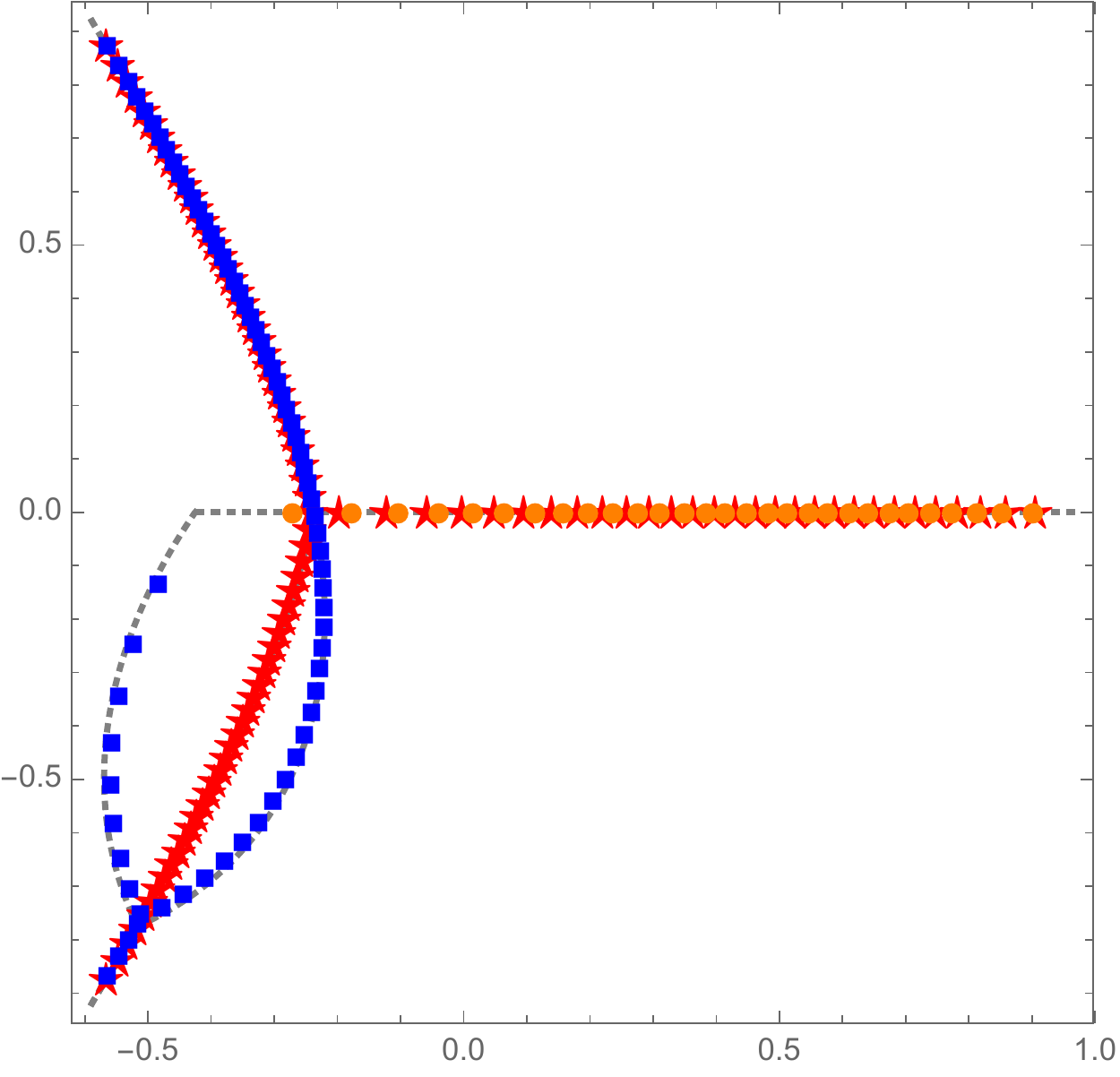}
\end{overpic}
\end{subfigure}%
\begin{subfigure}{.5\textwidth}
\centering
\begin{overpic}[scale=.45]{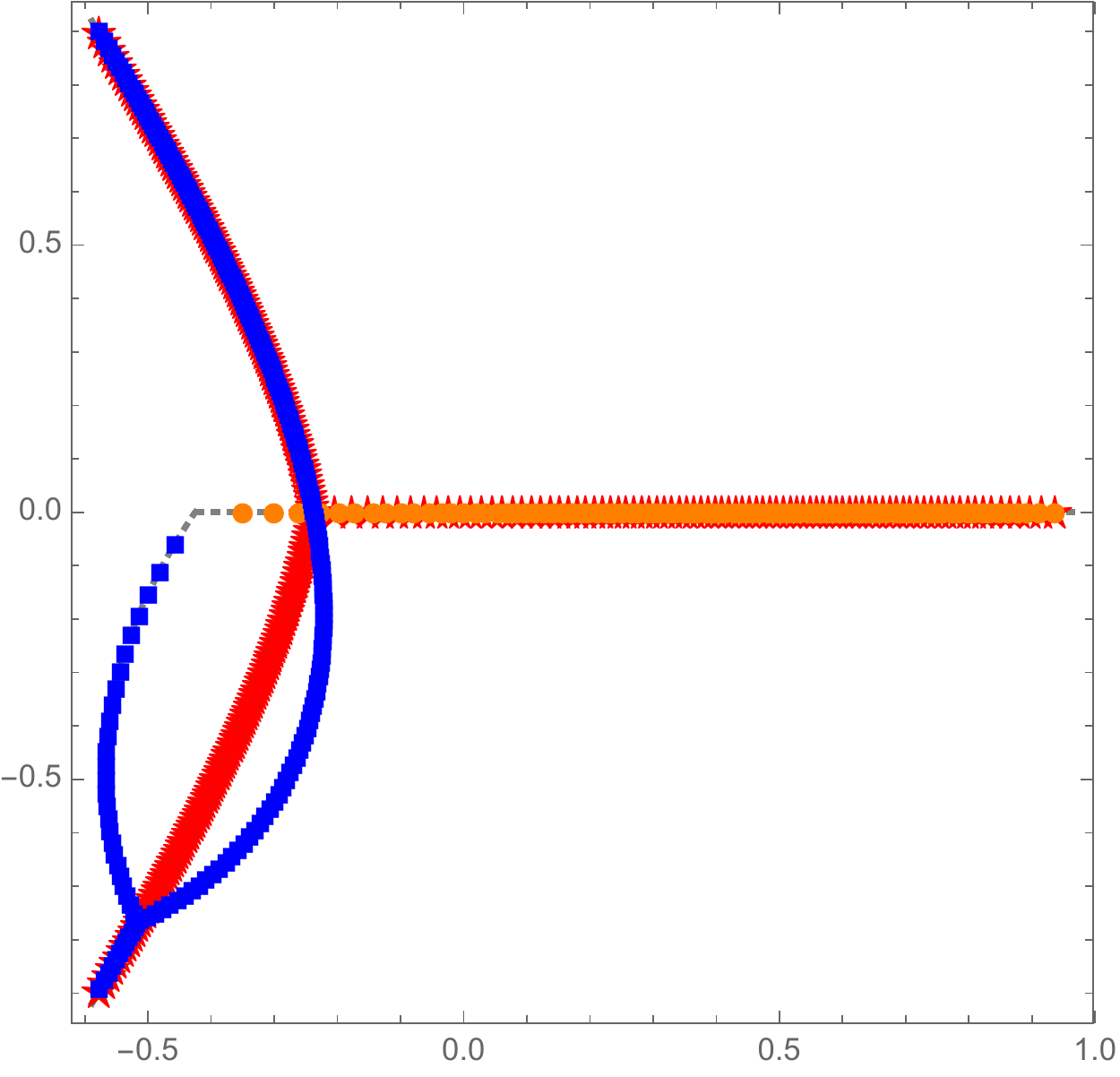}
\end{overpic}
\end{subfigure}
\caption{Zeros of $P_{n,m}$ (stars), $A_{n,m}$ (dots) and $B_{n,m}$ (squares), all corresponding to the same value $\alpha=3/10\in (\alpha_c,\alpha_2)$. The dashed lines are the supports of $\mu_1,\mu_2,\mu_3$ and $\mu_B$. From left to right, top to bottom: $(n,m)=(3,7),(6,14),(9,21),(12,28),(30,70),(90,210)$.}\label{figure_zeros_3-7}
\end{figure}

\begin{figure}[p!]
\begin{subfigure}{.5\textwidth}
\centering
\begin{overpic}[scale=.45]{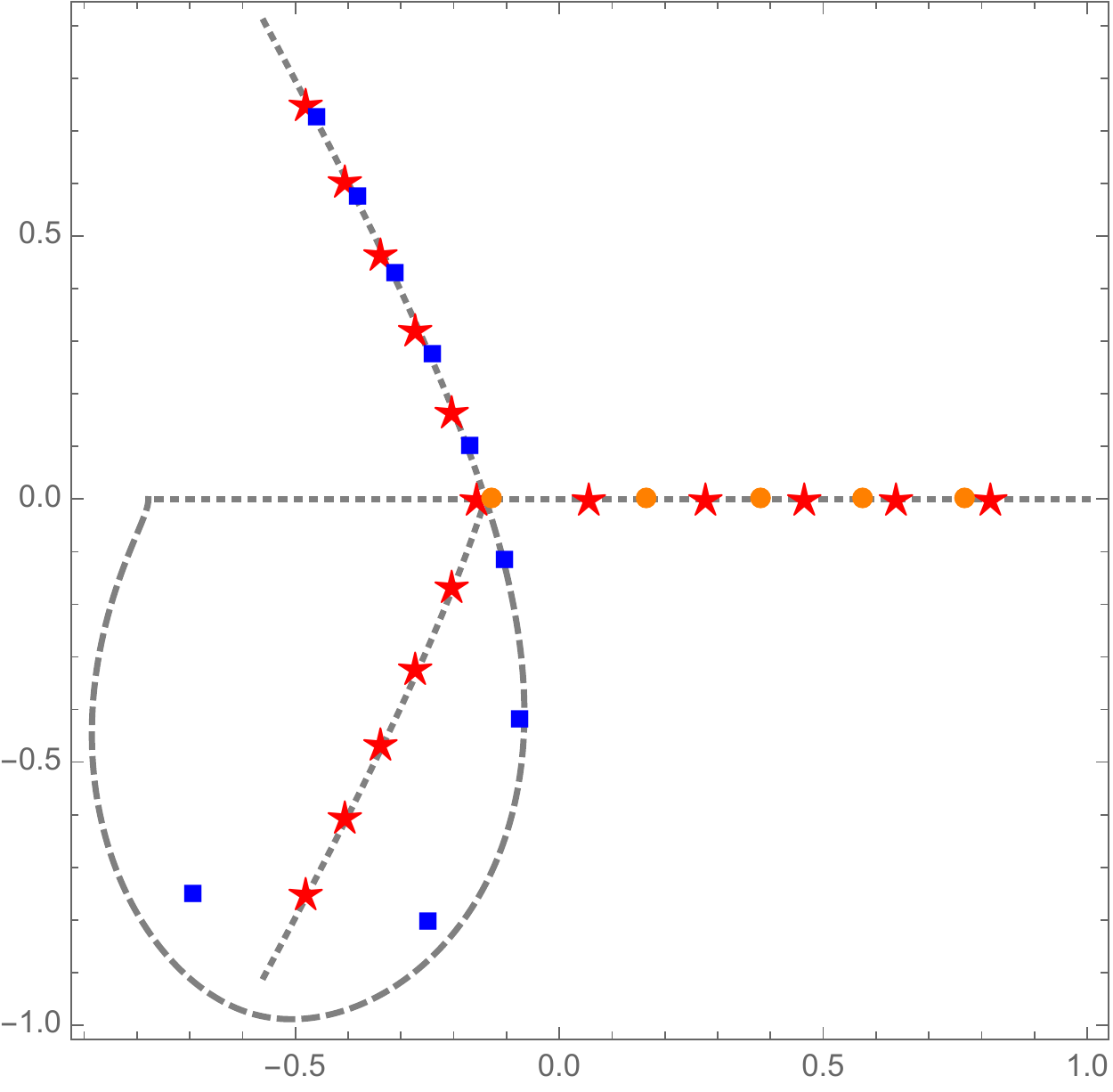}
\end{overpic}
\end{subfigure}%
\begin{subfigure}{.5\textwidth}
\centering
\begin{overpic}[scale=.45]{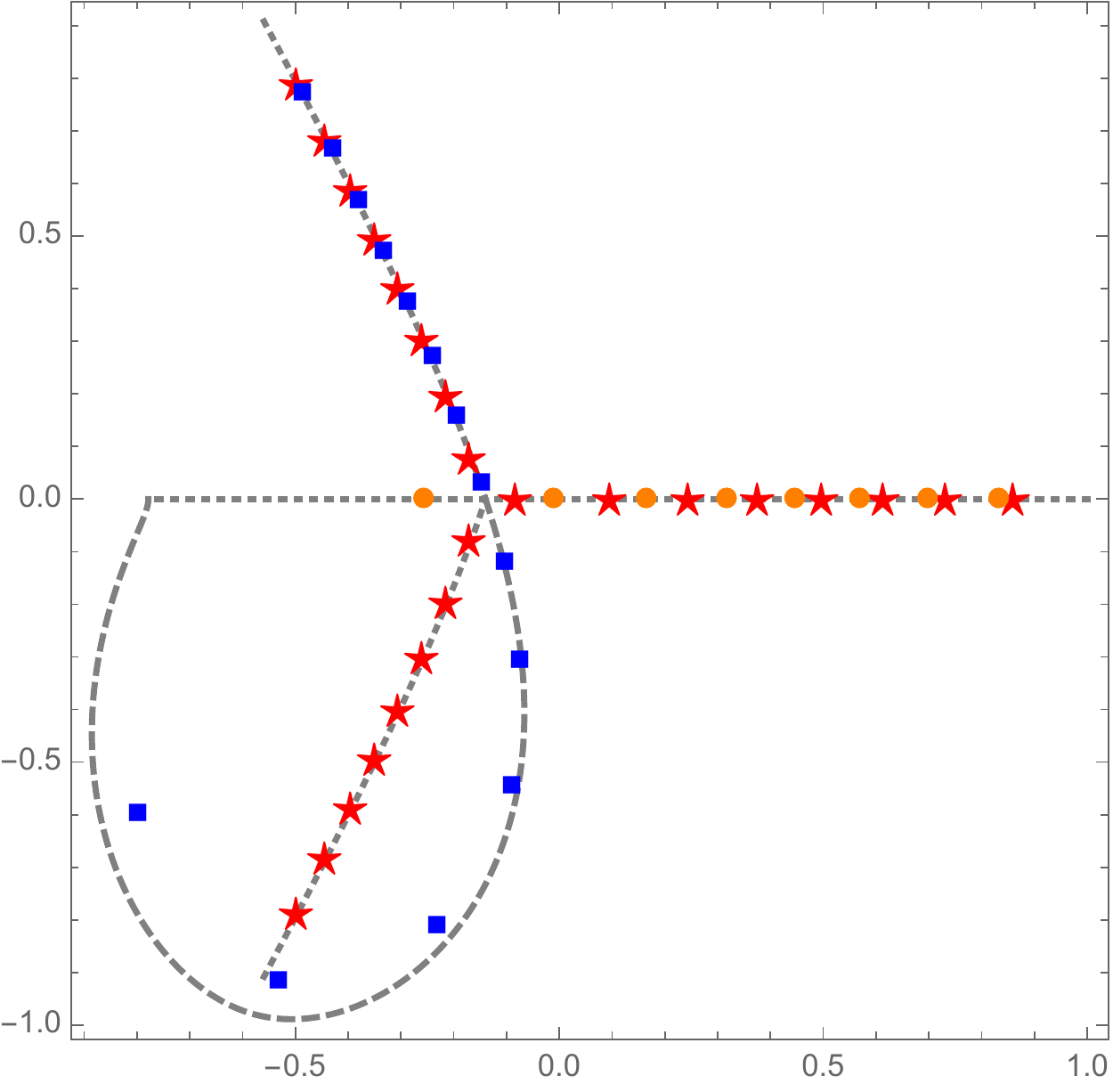}
\end{overpic}
\end{subfigure}\\
\begin{subfigure}{.5\textwidth}
\centering
\begin{overpic}[scale=.45]{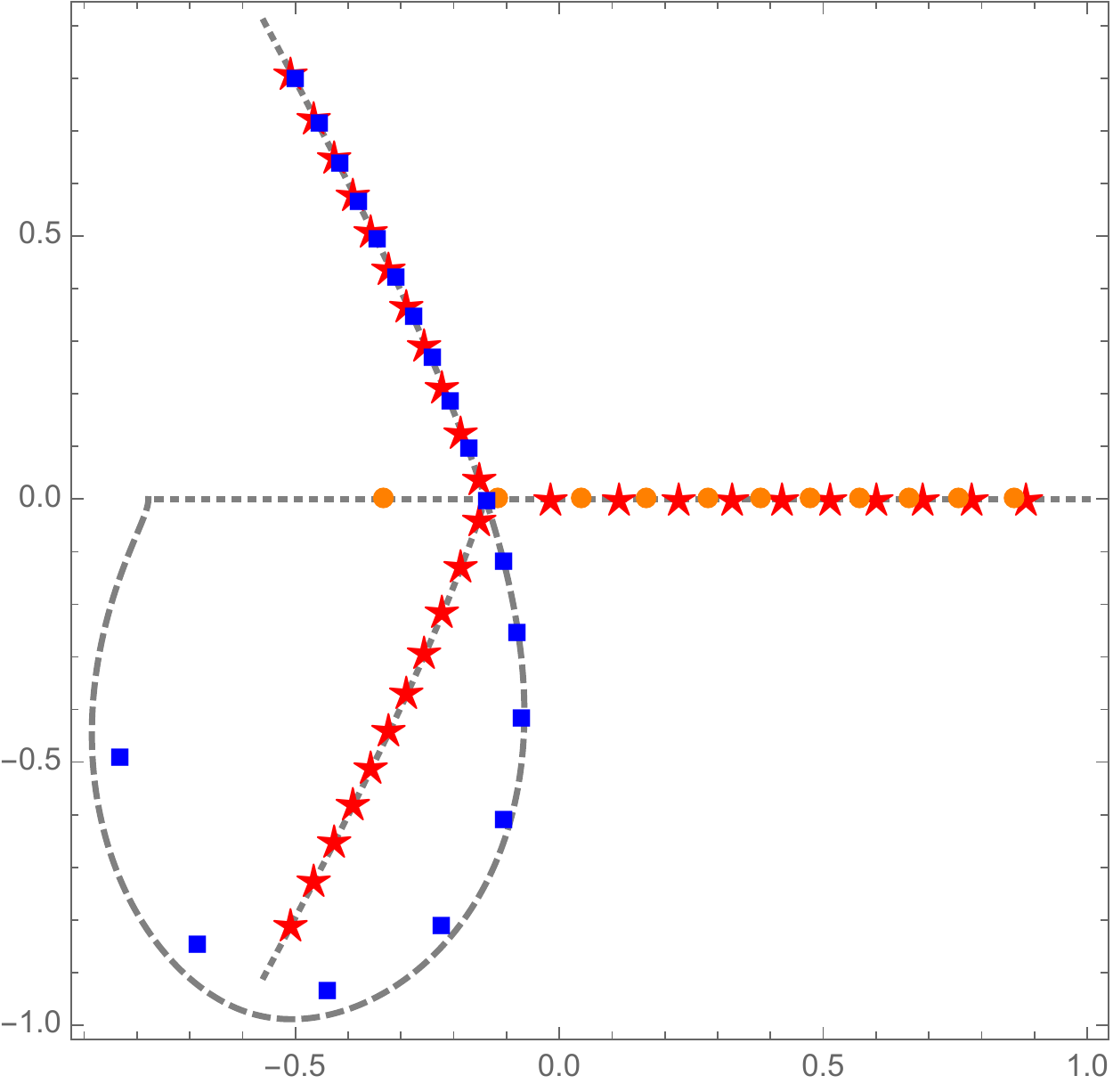}
\end{overpic}
\end{subfigure}%
\begin{subfigure}{.5\textwidth}
\centering
\begin{overpic}[scale=.45]{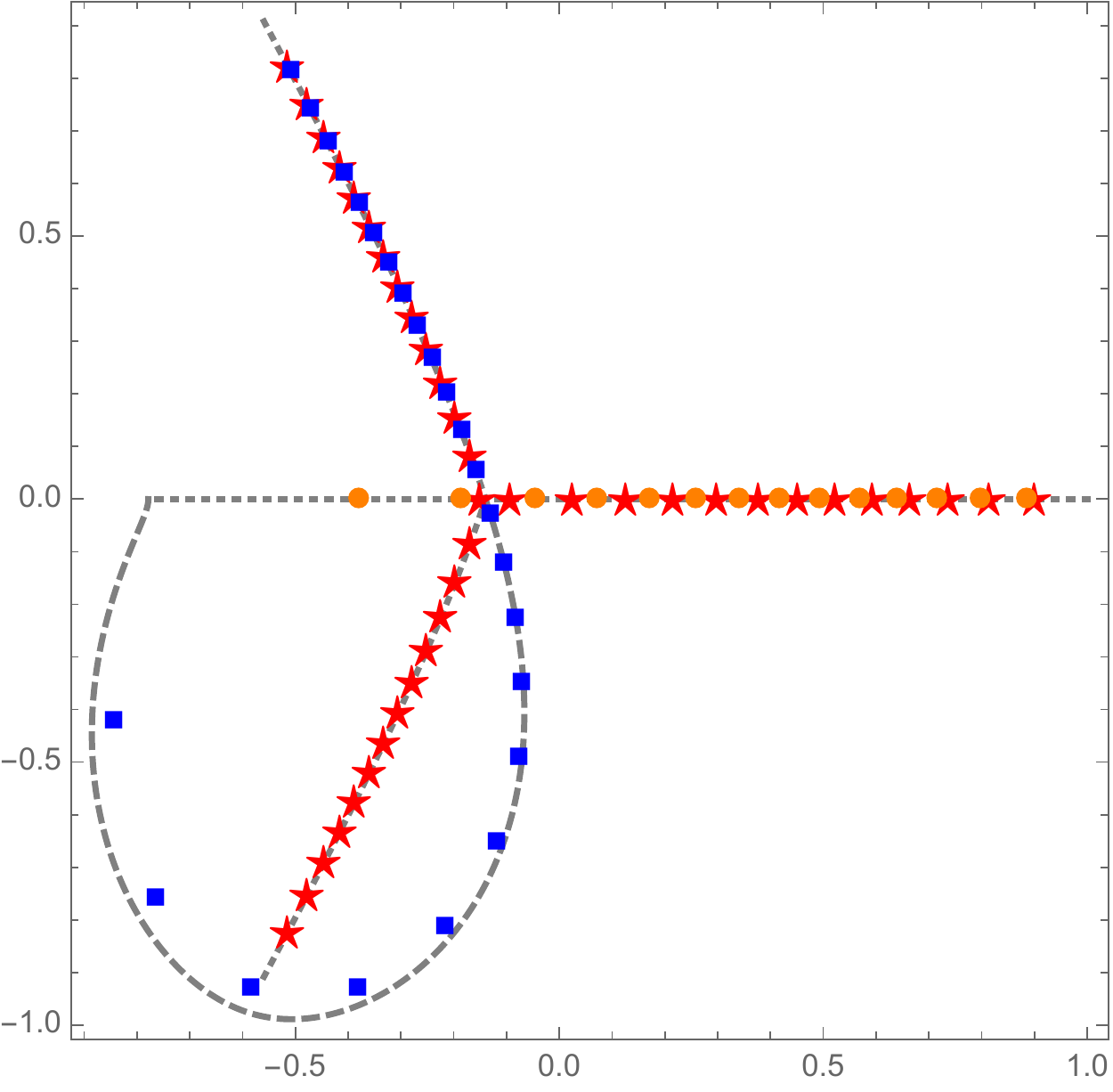}
\end{overpic}
\end{subfigure}\\
\begin{subfigure}{.5\textwidth}
\centering
\begin{overpic}[scale=.45]{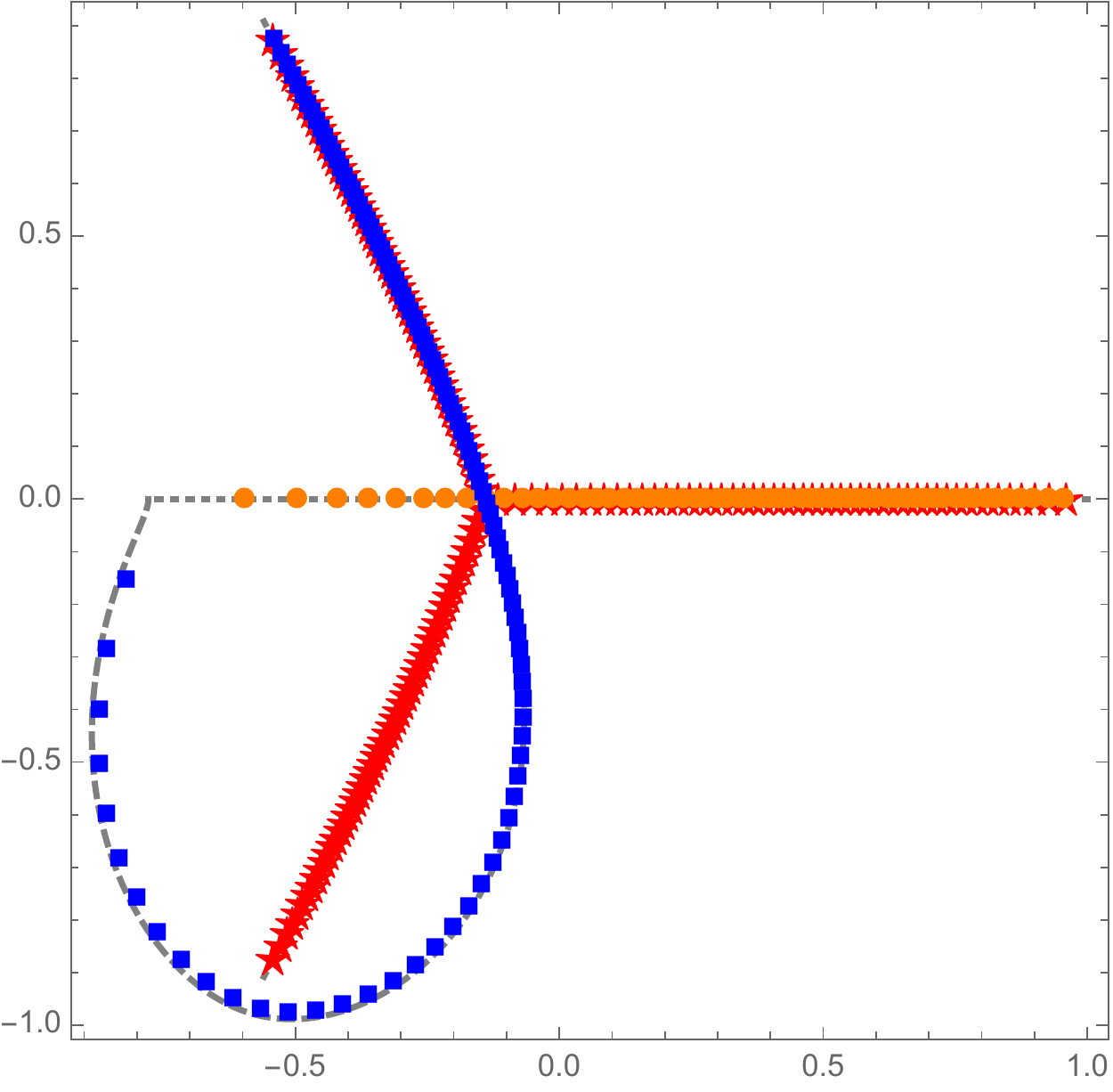}
\end{overpic}
\end{subfigure}%
\begin{subfigure}{.5\textwidth}
\centering
\begin{overpic}[scale=.45]{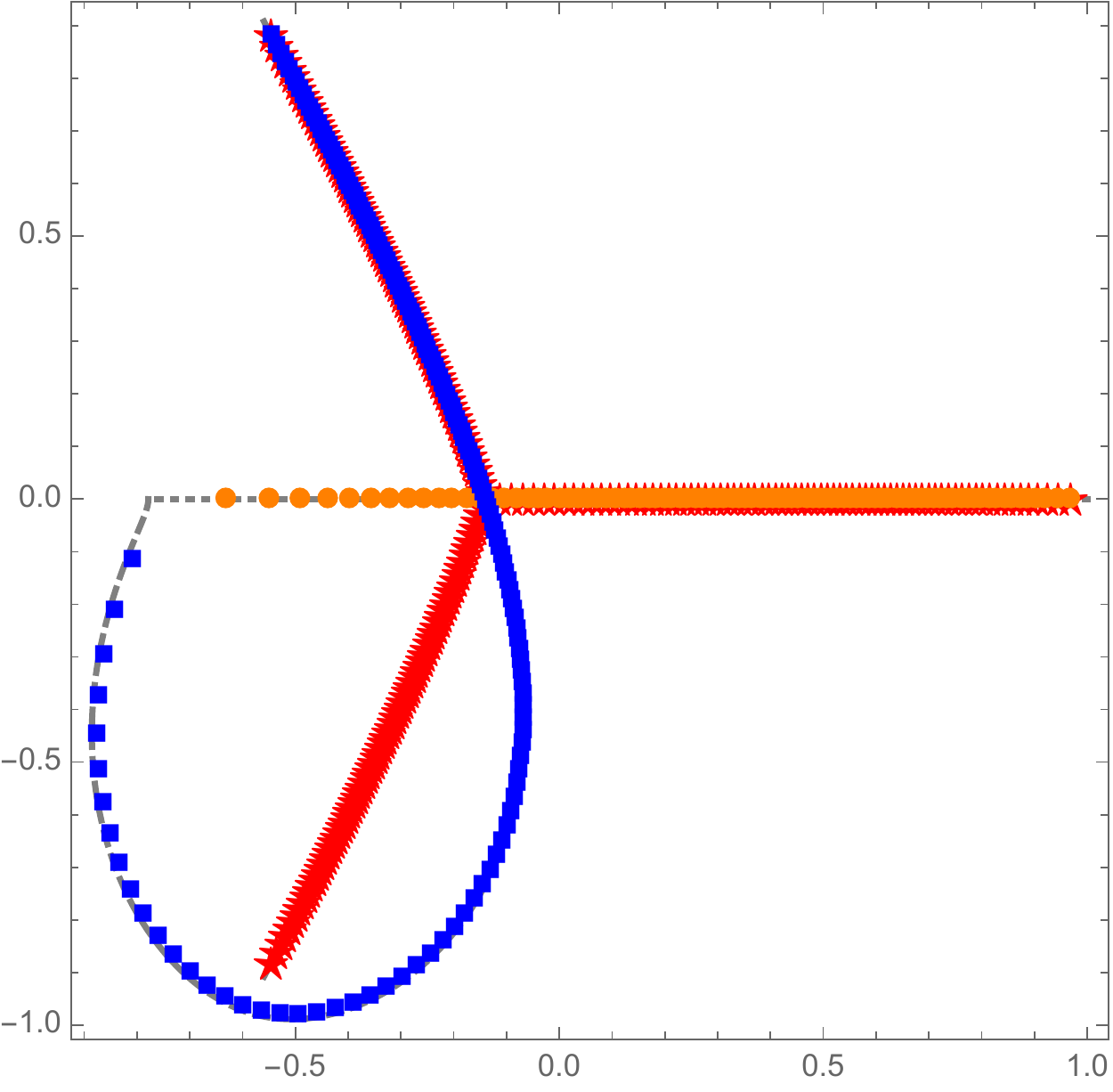}
\end{overpic}
\end{subfigure}
\caption{Zeros of $P_{n,m}$ (stars), $A_{n,m}$ (dots) and $B_{n,m}$ (squares), all corresponding to the same value $\alpha=3/8>\alpha_2$. The dashed lines are the supports of $\mu_1,\mu_2,\mu_3$ and $\mu_B$. From left to right, top to bottom: $(n,m)=(6,10),(9,15),(12,20),(15,25),(60,100),(90,150)$.}\label{figure_zeros_3-5}
\end{figure}

\begin{figure}[p!]
\begin{subfigure}{.5\textwidth}
\centering
\begin{overpic}[scale=.45]{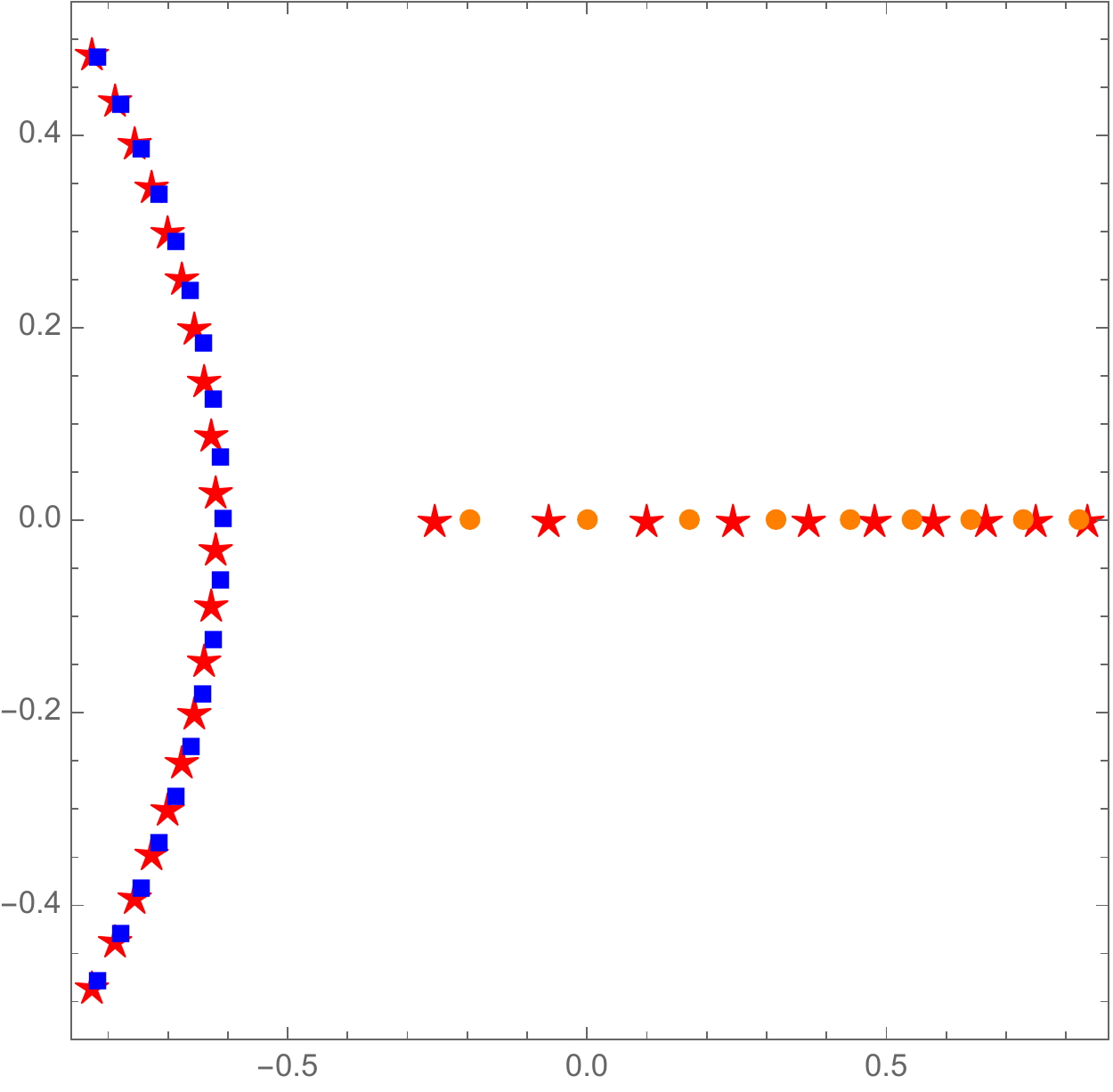}
\end{overpic}
\end{subfigure}%
\begin{subfigure}{.5\textwidth}
\centering
\begin{overpic}[scale=.45]{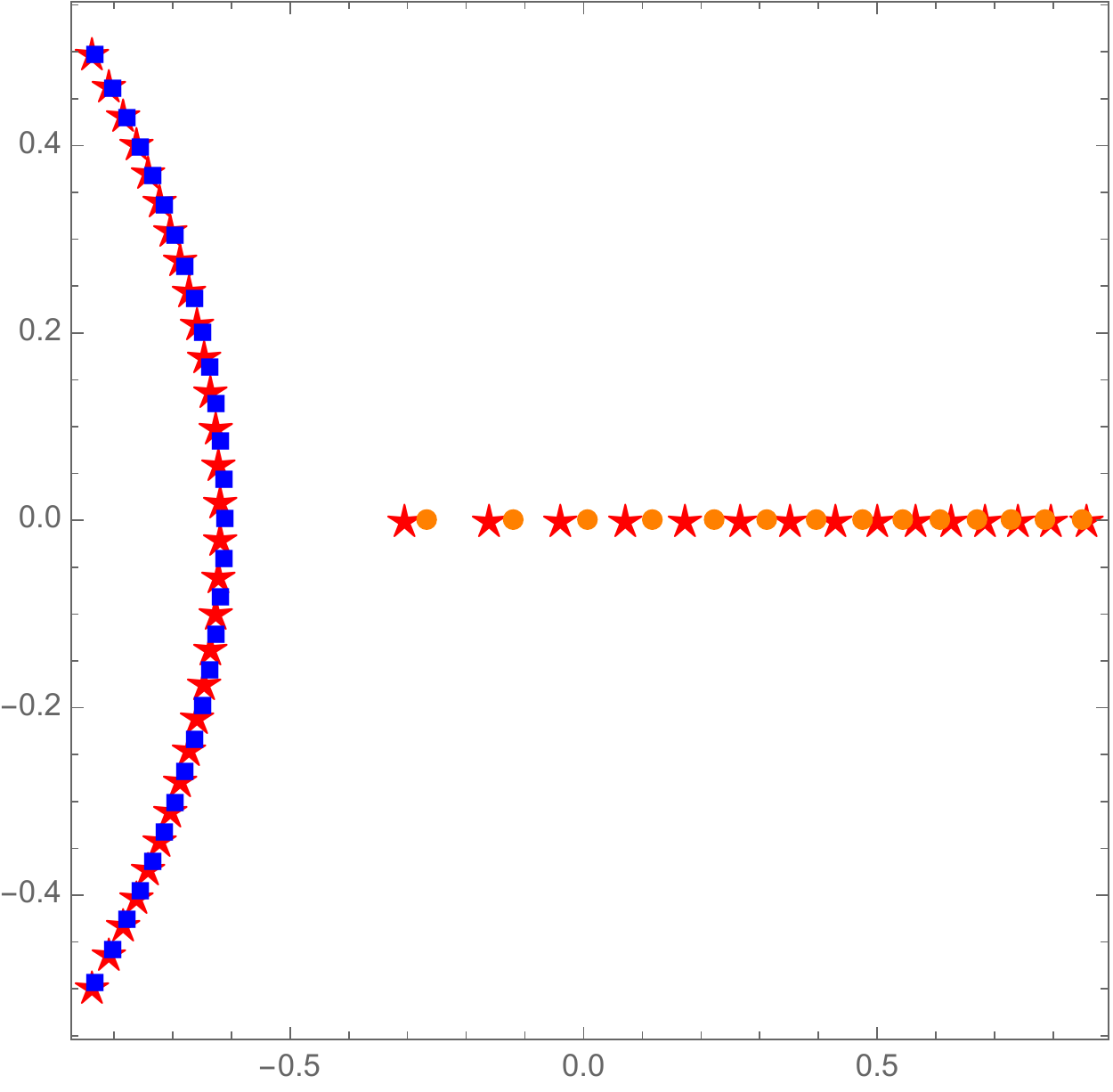}
\end{overpic}
\end{subfigure}\\
\begin{subfigure}{.5\textwidth}
\centering
\begin{overpic}[scale=.45]{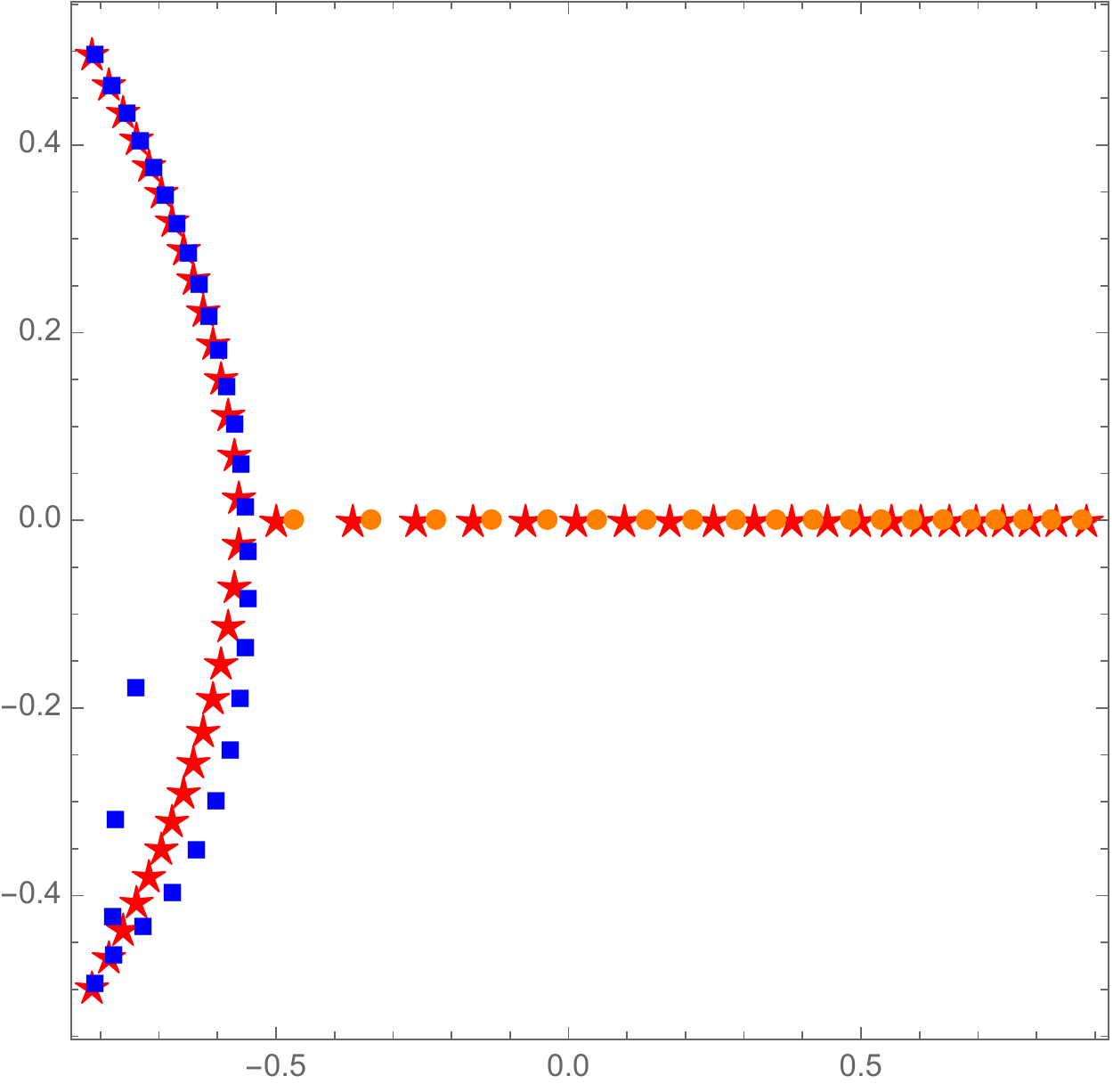}
\end{overpic}
\end{subfigure}%
\begin{subfigure}{.5\textwidth}
\centering
\begin{overpic}[scale=.45]{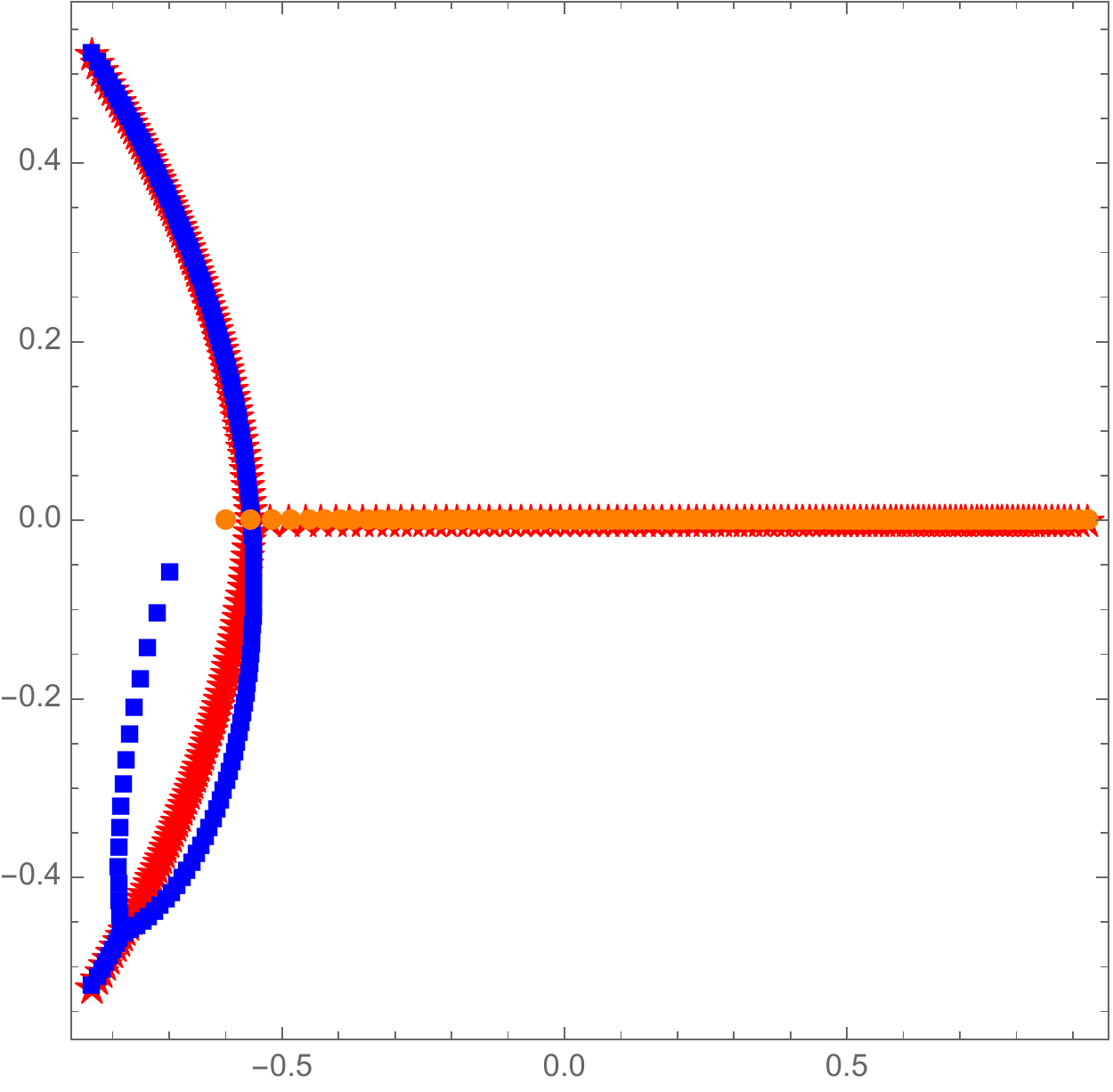}
\end{overpic}
\end{subfigure}\\
\begin{subfigure}{.5\textwidth}
\centering
\begin{overpic}[scale=.45]{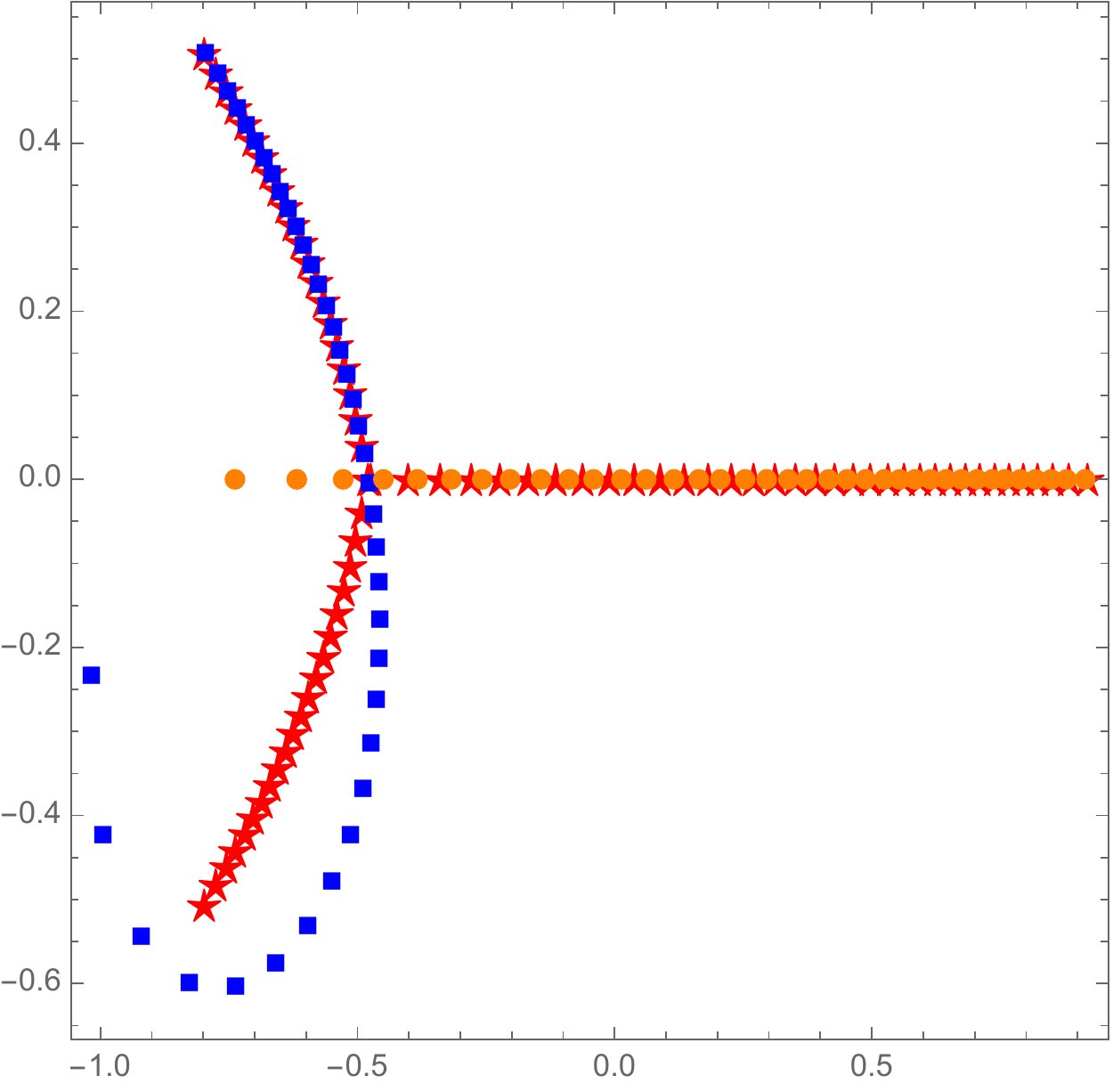}
\end{overpic}
\end{subfigure}%
\begin{subfigure}{.5\textwidth}
\centering
\begin{overpic}[scale=.45]{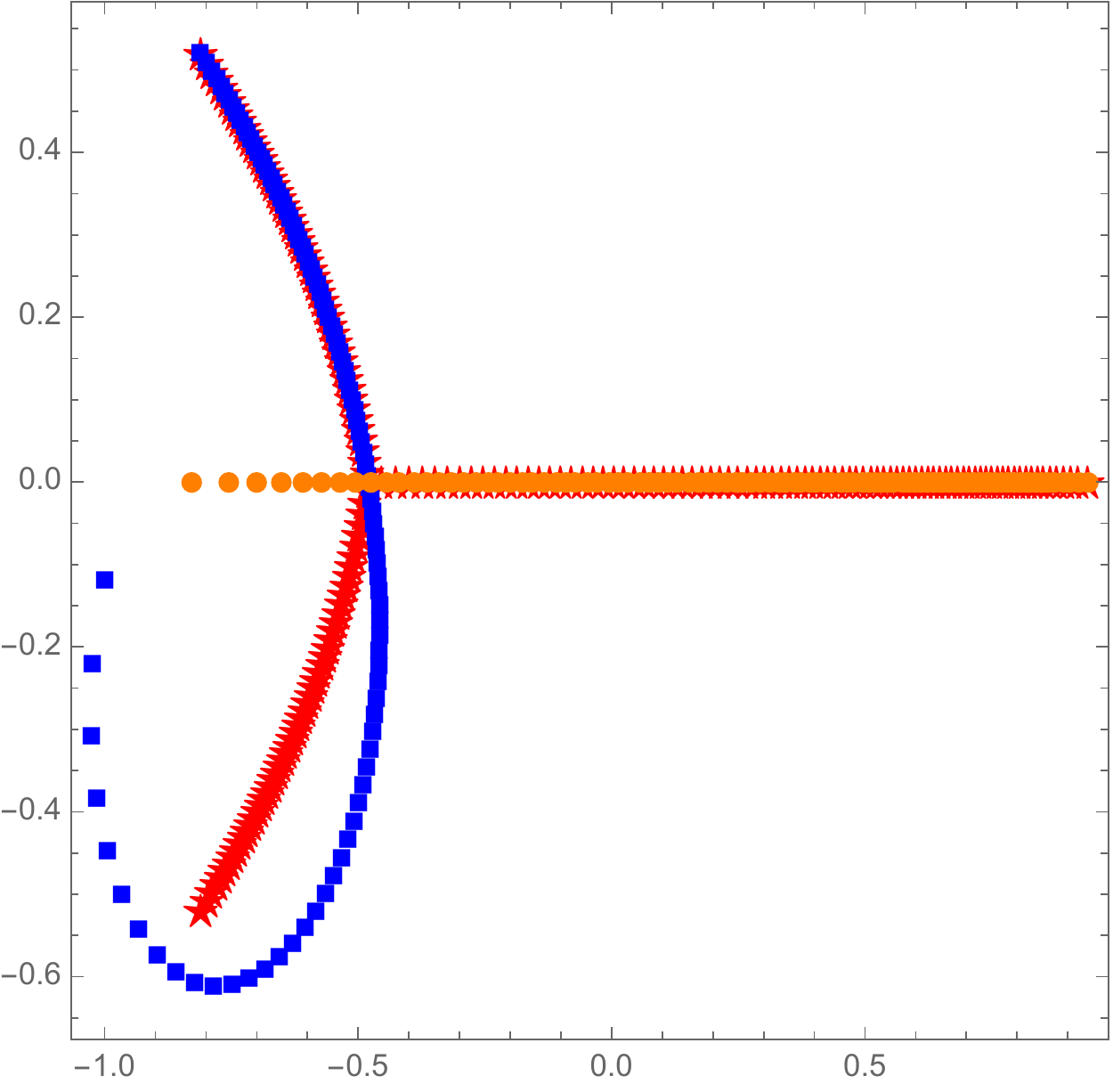}
\end{overpic}
\end{subfigure}
\caption{Rescaled zeros of $Q_{n,m}$ (stars), $C_{n,m}$ (dots) and $D_{n,m}$ (squares) when the weight is $e^{-z^5}$. 
In all the cases the orthogonality contour for $n$ is the union of rays $(e^{-\frac{4\pi i}{5}}\infty,0]\cup[0,+\infty)$, and for $m$ is the union $(e^{-\frac{4\pi i}{5}}\infty,0]\cup[0,e^{-\frac{4\pi i}{5}}\infty)$.
From left to right, top to bottom: $(n,m)=(10,20)$ and $(15,30)$ ($\alpha=1/3$); $(n,m)=(21,30)$ and $(105,150)$ ($\alpha=7/17$) ; $(n,m)=(38,40)$ and $(95,100)$ ($\alpha=19/39$).}\label{figure_zeros_quintic_symmetric}
\end{figure}

\begin{figure}[p!]
\begin{subfigure}{.5\textwidth}
\centering
\begin{overpic}[scale=.45]{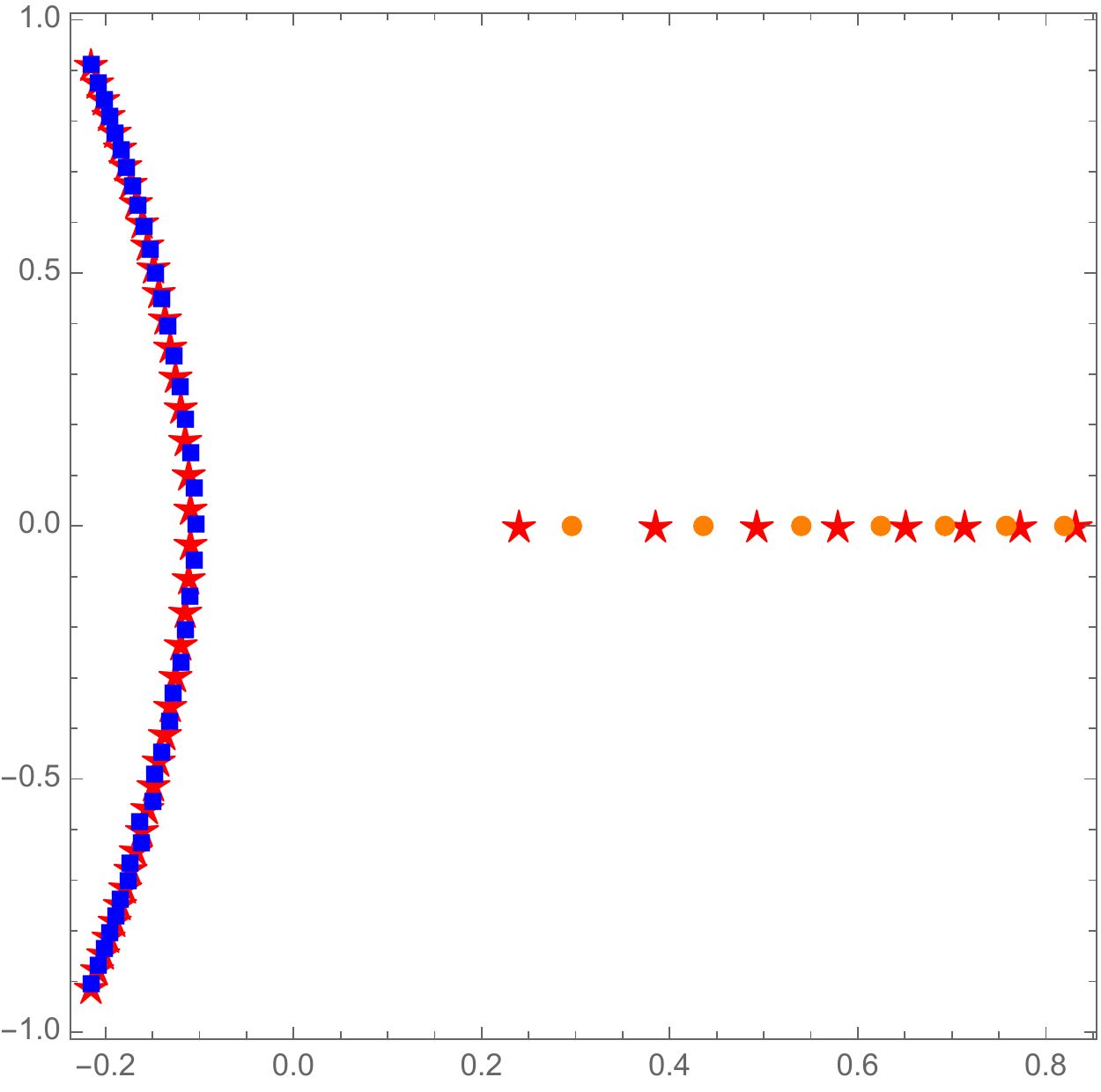}
\end{overpic}
\end{subfigure}%
\begin{subfigure}{.5\textwidth}
\centering
\begin{overpic}[scale=.45]{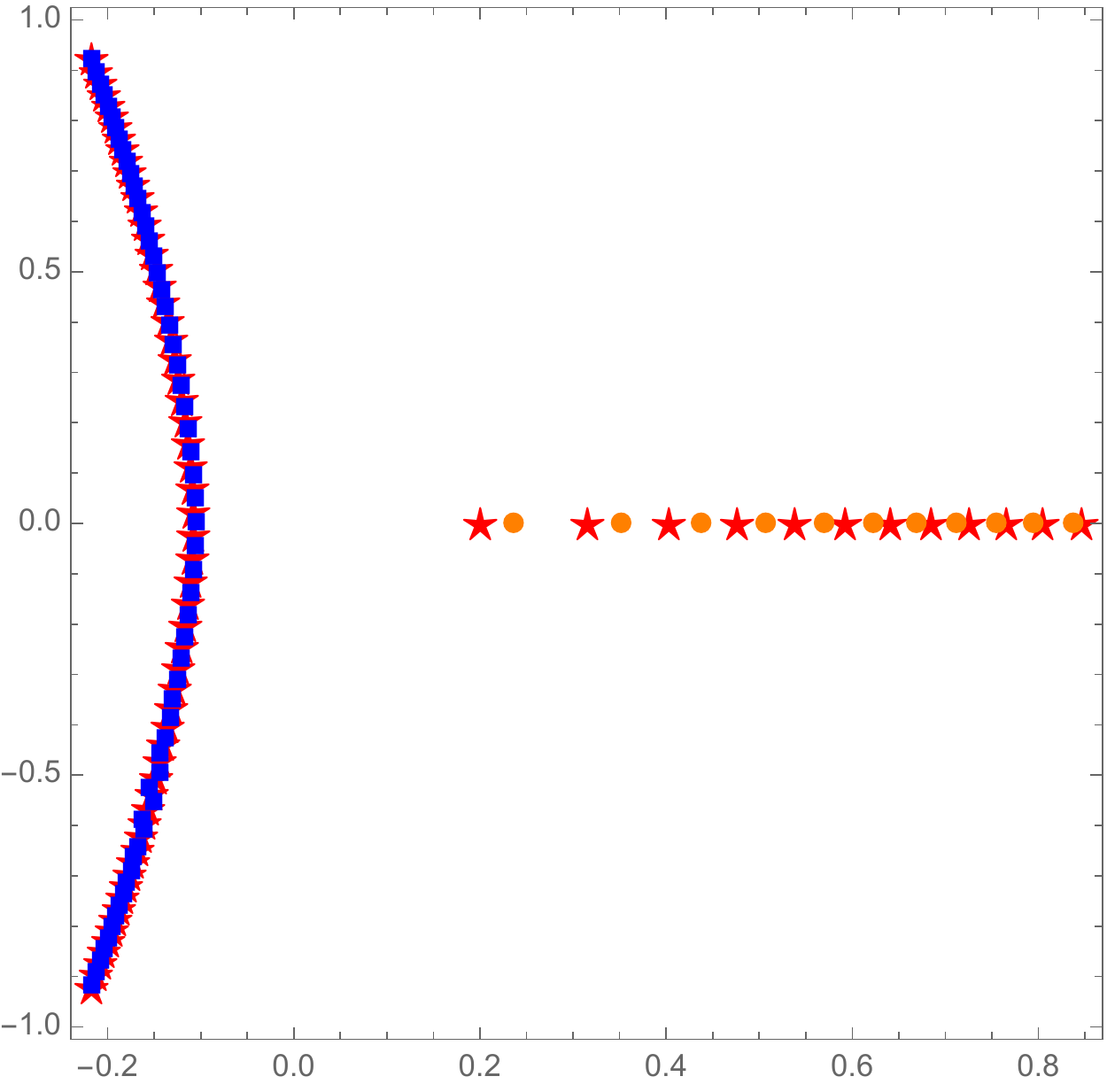}
\end{overpic}
\end{subfigure}\\
\begin{subfigure}{.5\textwidth}
\centering
\begin{overpic}[scale=.45]{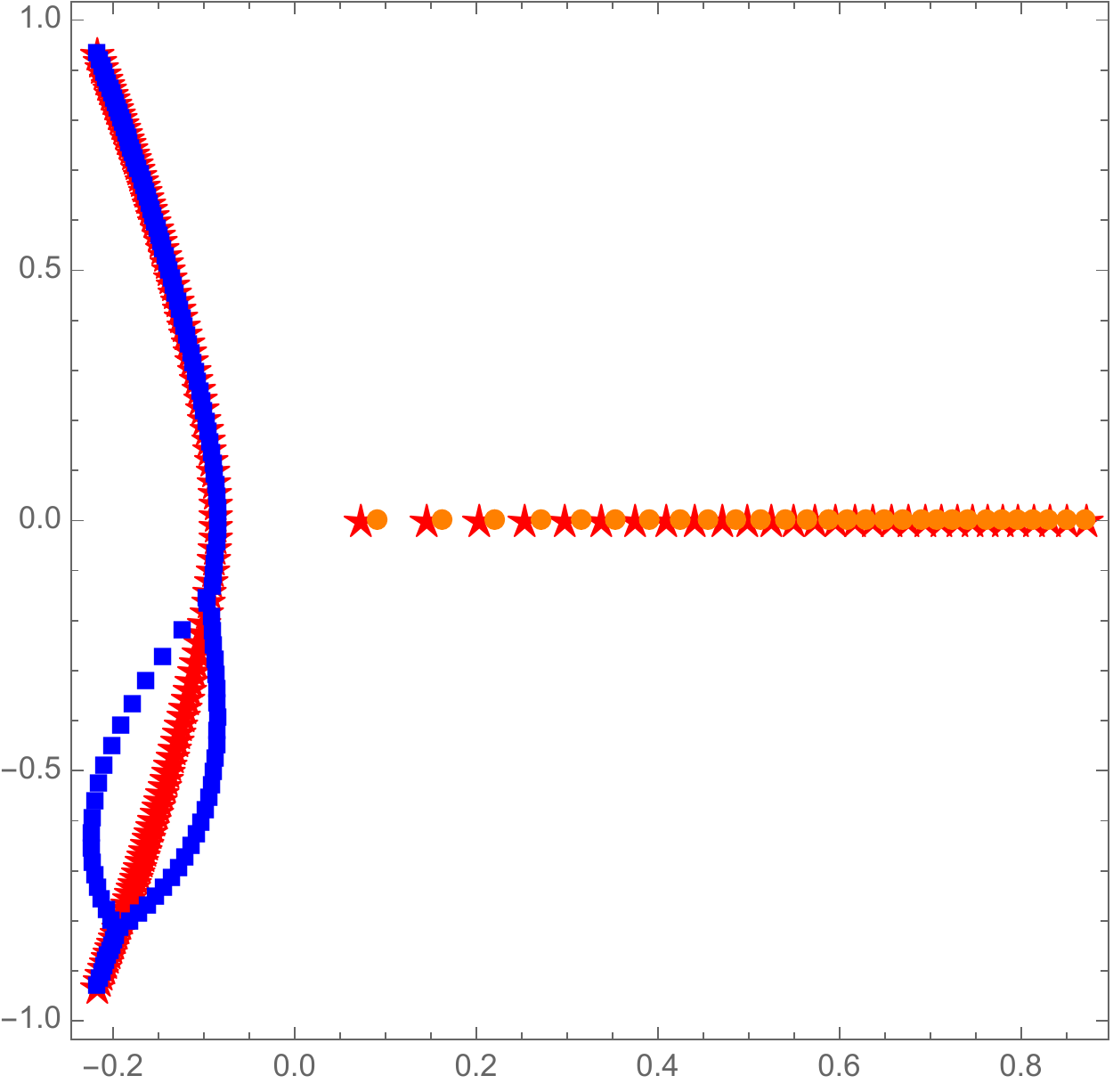}
\end{overpic}
\end{subfigure}%
\begin{subfigure}{.5\textwidth}
\centering
\begin{overpic}[scale=.45]{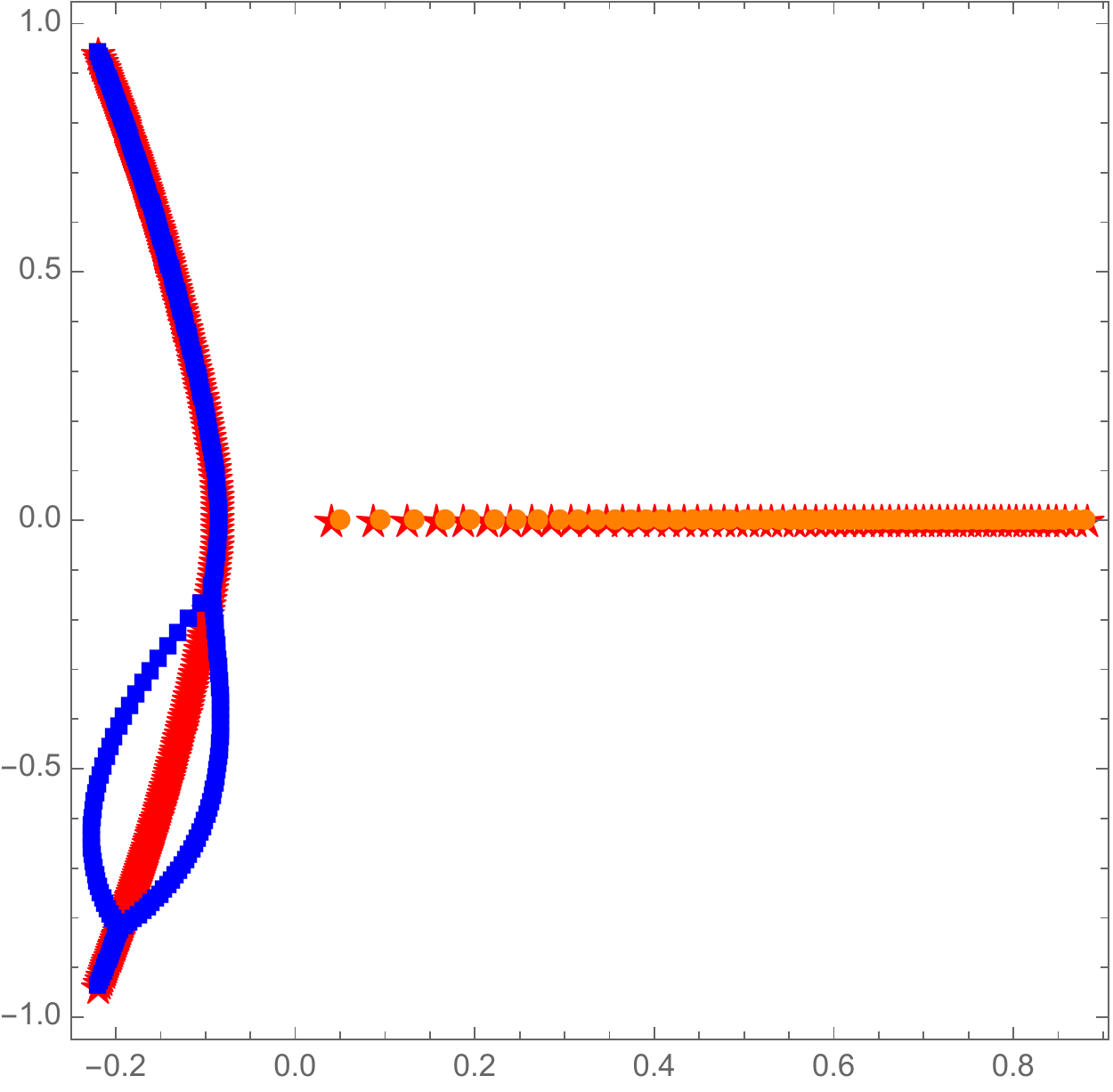}
\end{overpic}
\end{subfigure}\\
\begin{subfigure}{.5\textwidth}
\centering
\begin{overpic}[scale=.45]{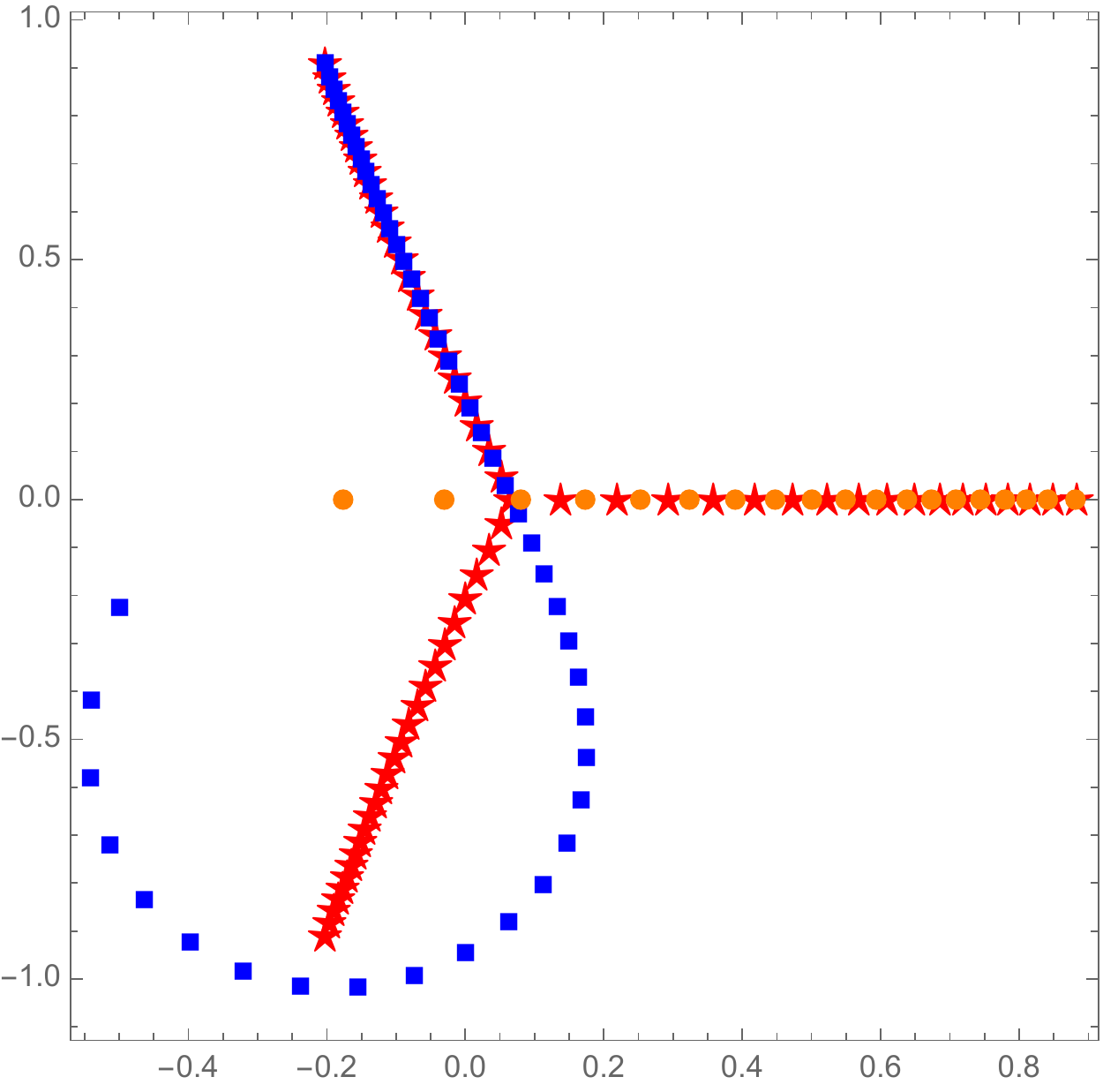}
\end{overpic}
\end{subfigure}%
\begin{subfigure}{.5\textwidth}
\centering
\begin{overpic}[scale=.45]{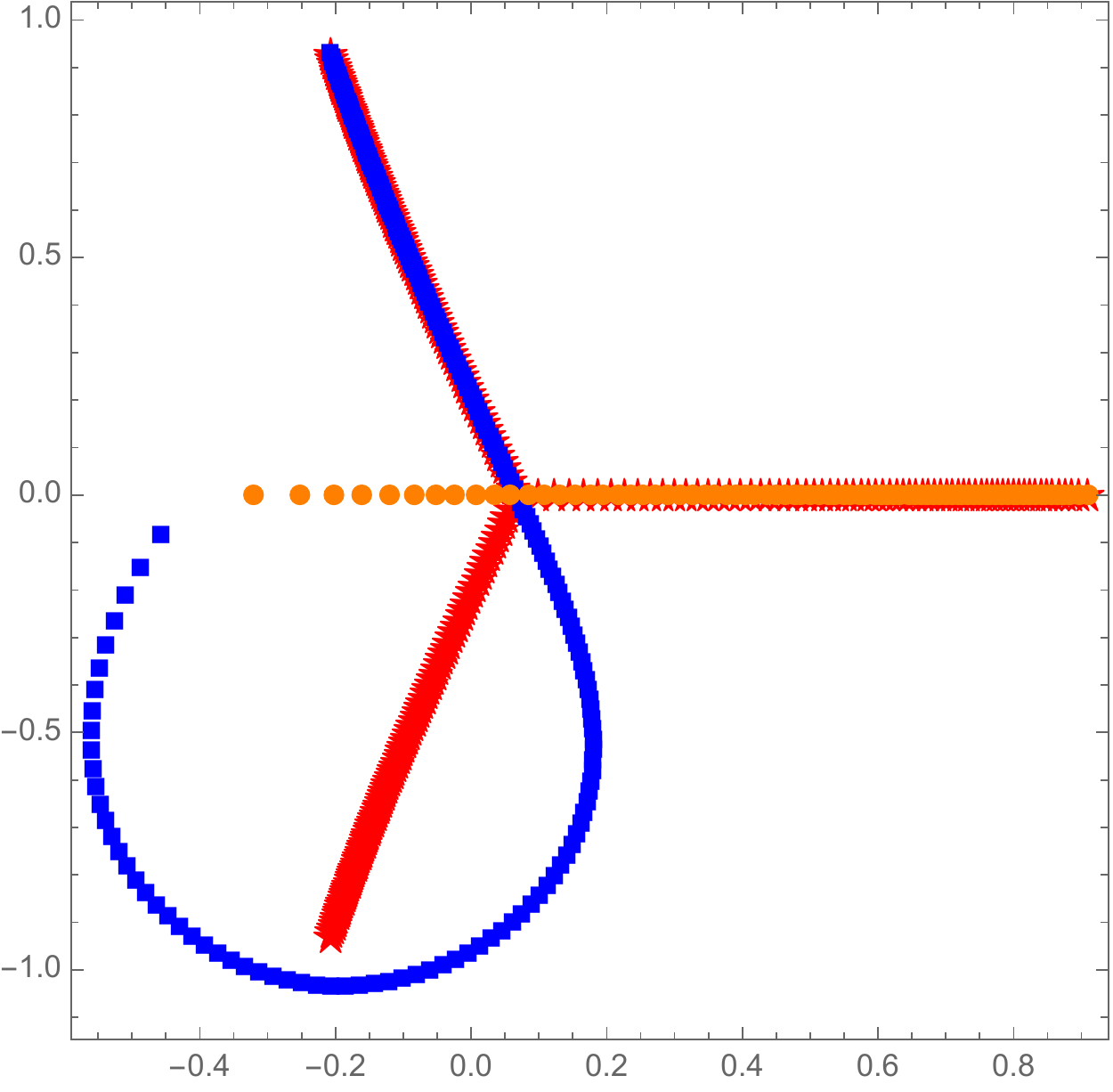}
\end{overpic}
\end{subfigure}
\caption{Rescaled zeros of $Q_{n,m}$ (stars), $C_{n,m}$ (dots) and $D_{n,m}$ (squares) when the weight is $e^{-z^5}$.
In all the cases the orthogonality contour for $n$ is the union of rays $(e^{-\frac{4\pi i}{7}}\infty,0]\cup[0,+\infty)$, and for $m$ is the union $(e^{-\frac{4\pi i}{7}}\infty,0]\cup[0,e^{-\frac{4\pi i}{7}}\infty)$.
From left to right, top to bottom: $(n,m)=(8,40)$ and $(12,60)$ ($\alpha=1/6$); $(n,m)=(30,130)$ and $(60,260)$ ($\alpha=3/16$) ; $(n,m)=(20,50)$ and $(80,200)$ ($\alpha=1/4$).}\label{figure_zeros_seventh_symmetric}
\end{figure}

\begin{figure}[p!]
\begin{subfigure}{.5\textwidth}
\centering
\begin{overpic}[scale=.45]{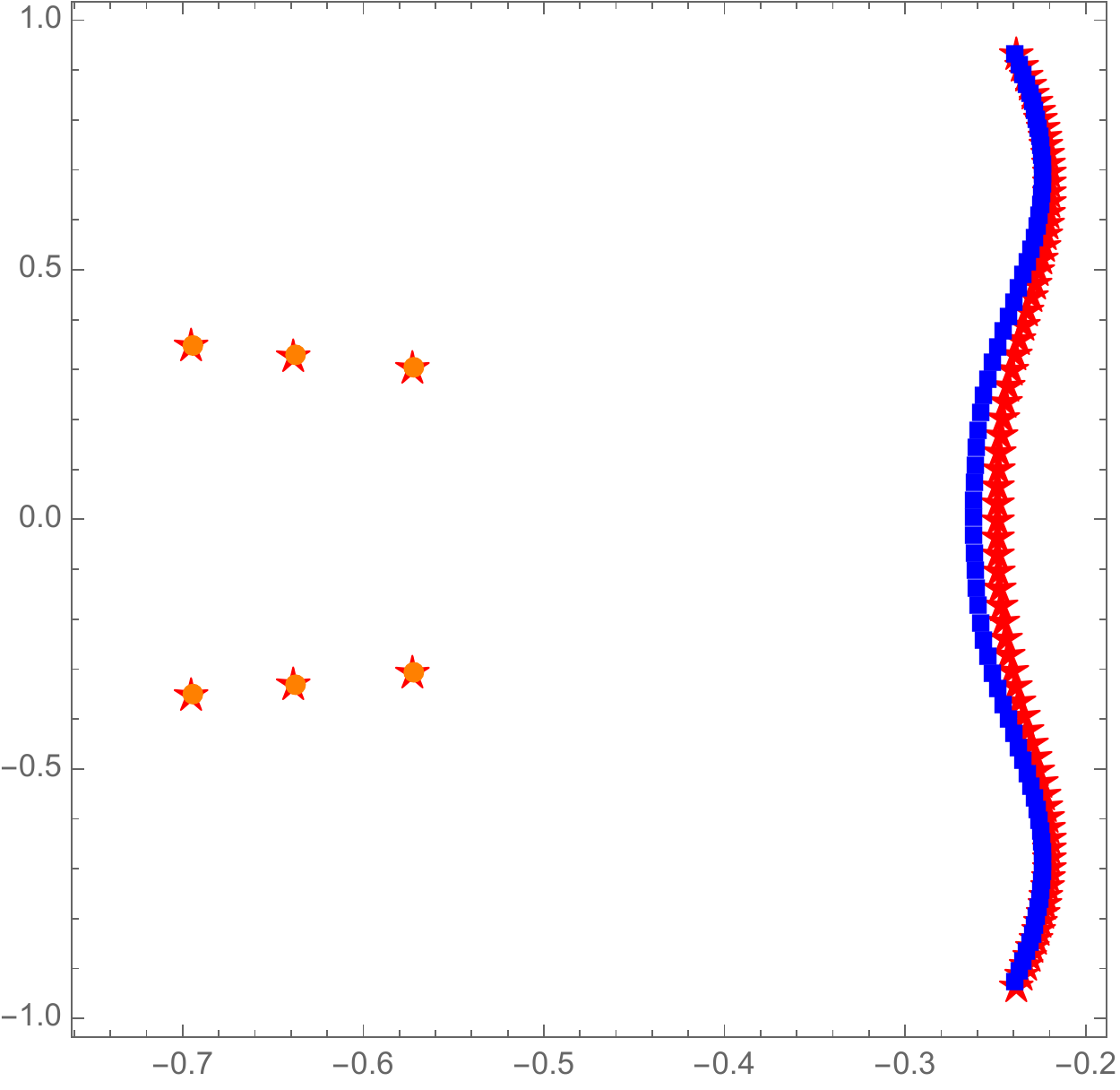}
\end{overpic}
\end{subfigure}%
\begin{subfigure}{.5\textwidth}
\centering
\begin{overpic}[scale=.45]{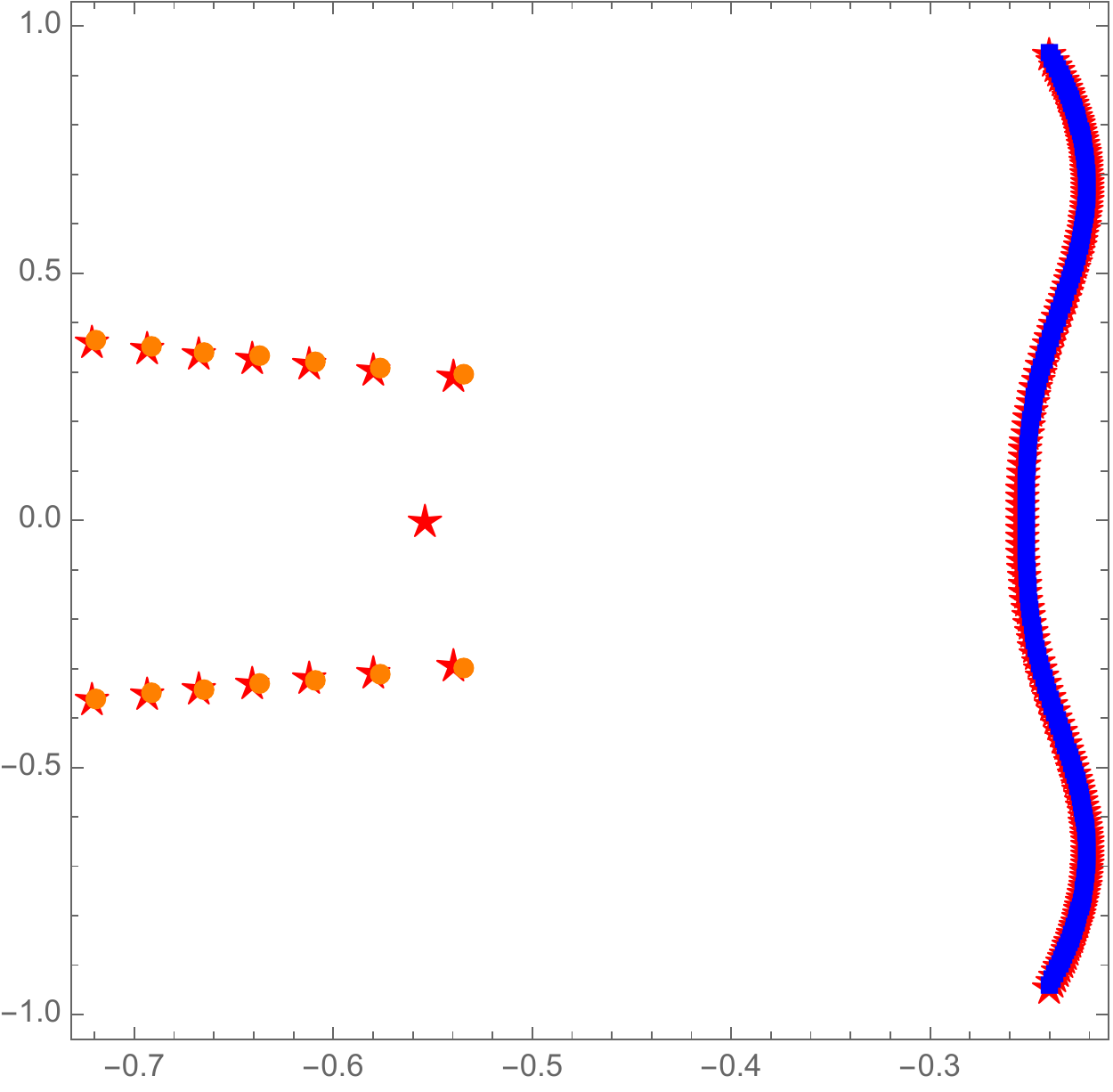}
\end{overpic}
\end{subfigure}\\
\begin{subfigure}{.5\textwidth}
\centering
\begin{overpic}[scale=.45]{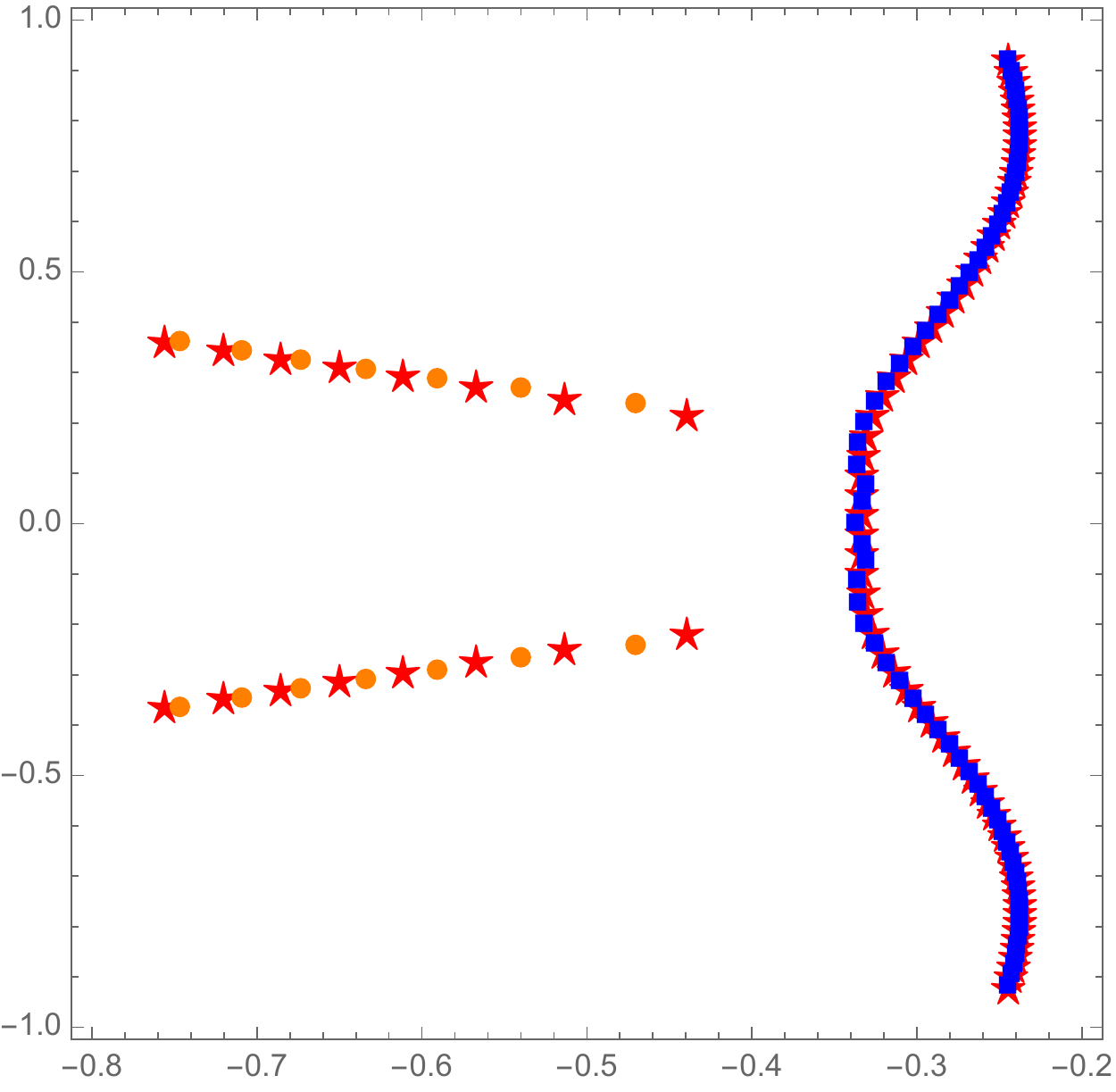}
\end{overpic}
\end{subfigure}%
\begin{subfigure}{.5\textwidth}
\centering
\begin{overpic}[scale=.45]{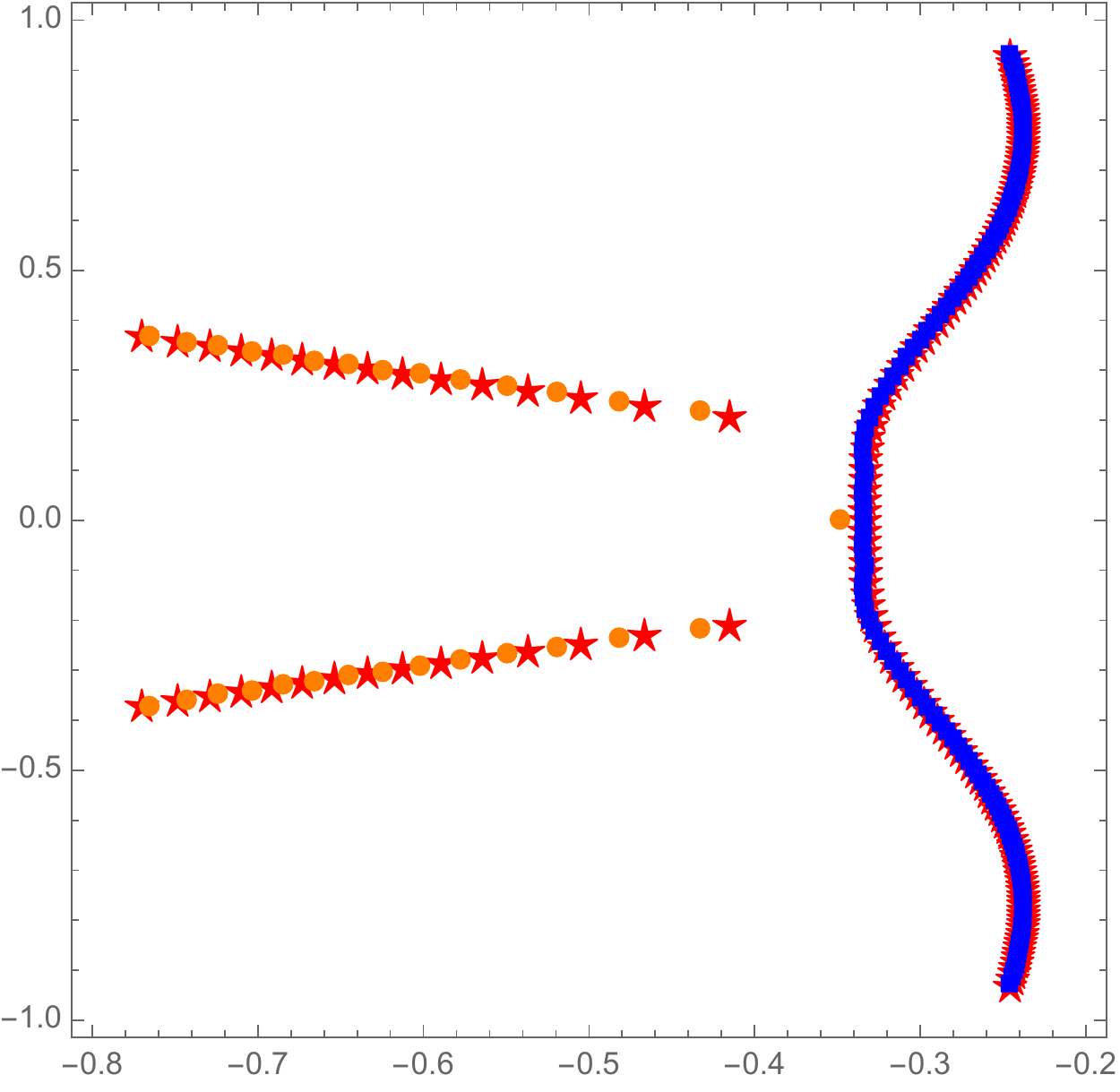}
\end{overpic}
\end{subfigure}\\
\begin{subfigure}{.5\textwidth}
\centering
\begin{overpic}[scale=.45]{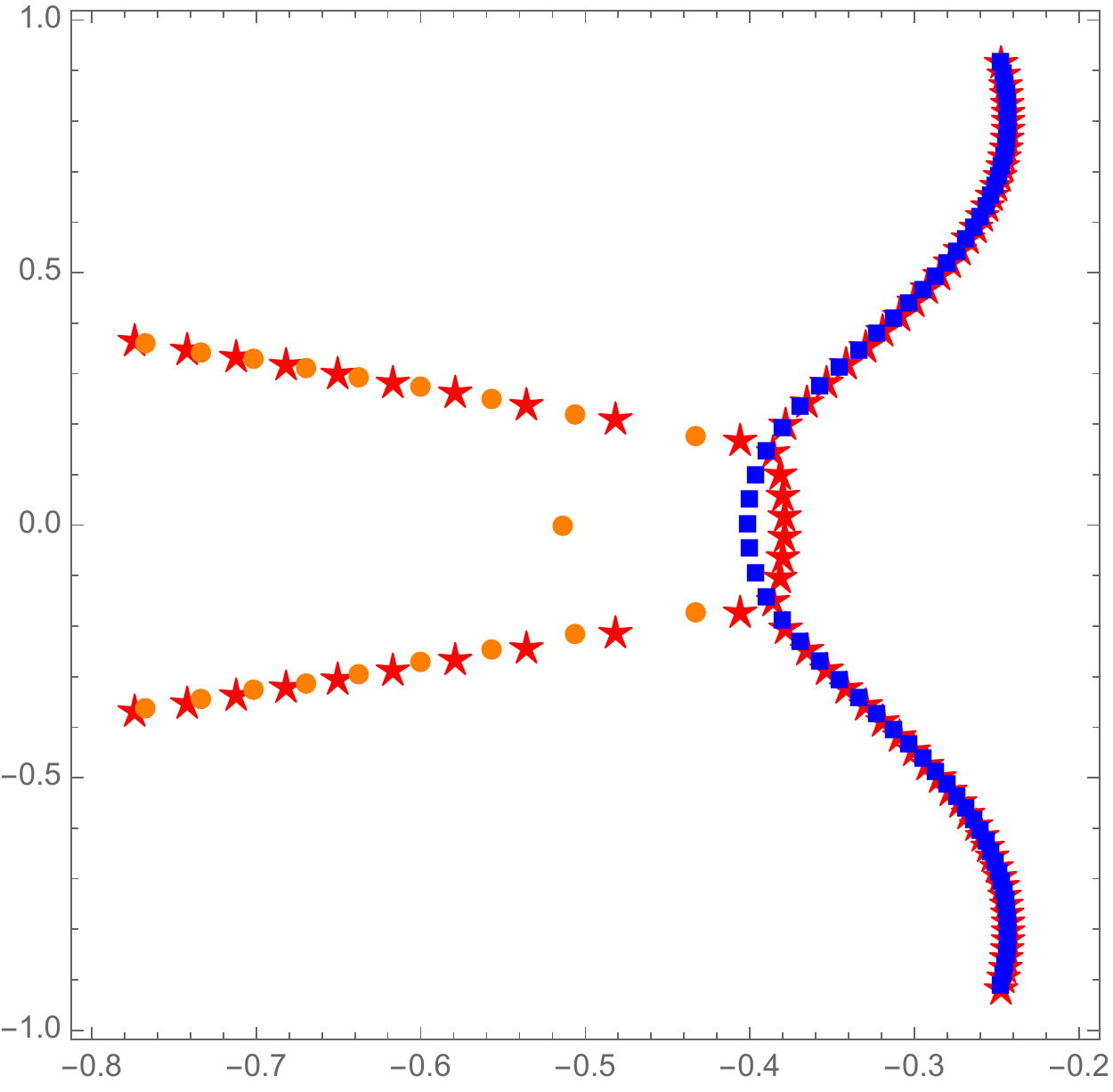}
\end{overpic}
\end{subfigure}%
\begin{subfigure}{.5\textwidth}
\centering
\begin{overpic}[scale=.45]{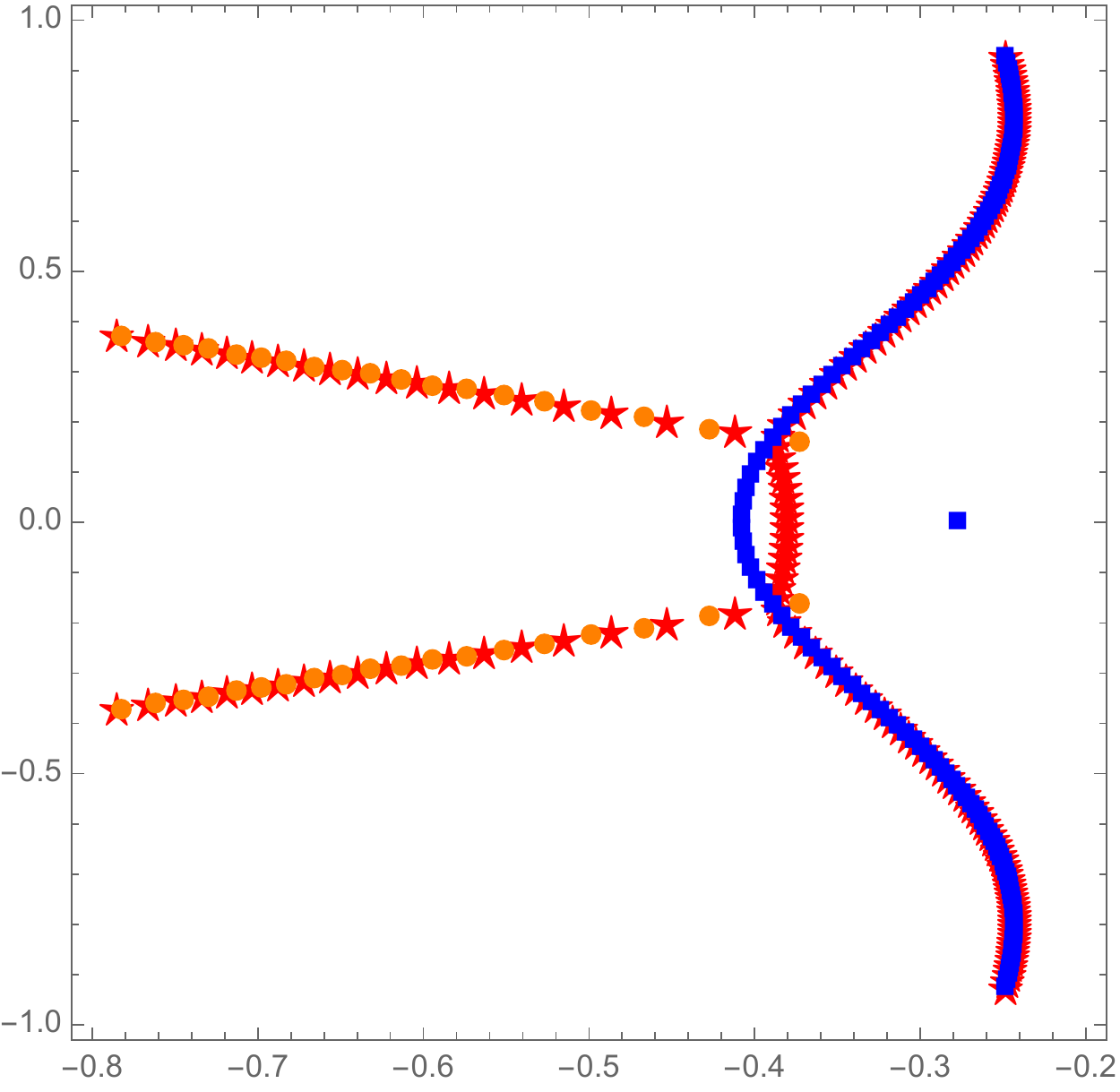}
\end{overpic}
\end{subfigure}
\caption{Rescaled zeros of $Q_{n,m}$ (stars), $C_{n,m}$ (dots) and $D_{n,m}$ (squares) when the weight is $e^{-z^7}$.
In all the cases the orthogonality contour for $n$ is the union of rays $(e^{-\frac{6\pi i}{7}}\infty,0]\cup[0,e^{\frac{6\pi i}{7}}\infty)$, and for $m$ is the union $(e^{-\frac{4\pi i}{7}}\infty,0]\cup[0,e^{\frac{4\pi i}{7}}\infty)$.
From left to right, top to bottom: $(n,m)=(7,77)$ and $(15,165)$ ($\alpha=1/12$); $(n,m)=(16,72)$ and $(30,135)$ ($\alpha=2/11$) ; $(n,m)=(20,70)$ and $(40,140)$ ($\alpha=2/9$).}\label{figure_zeros_seventh_asymmetric1}
\end{figure}

\begin{figure}[p!]
\begin{subfigure}{.5\textwidth}
\centering
\begin{overpic}[scale=.45]{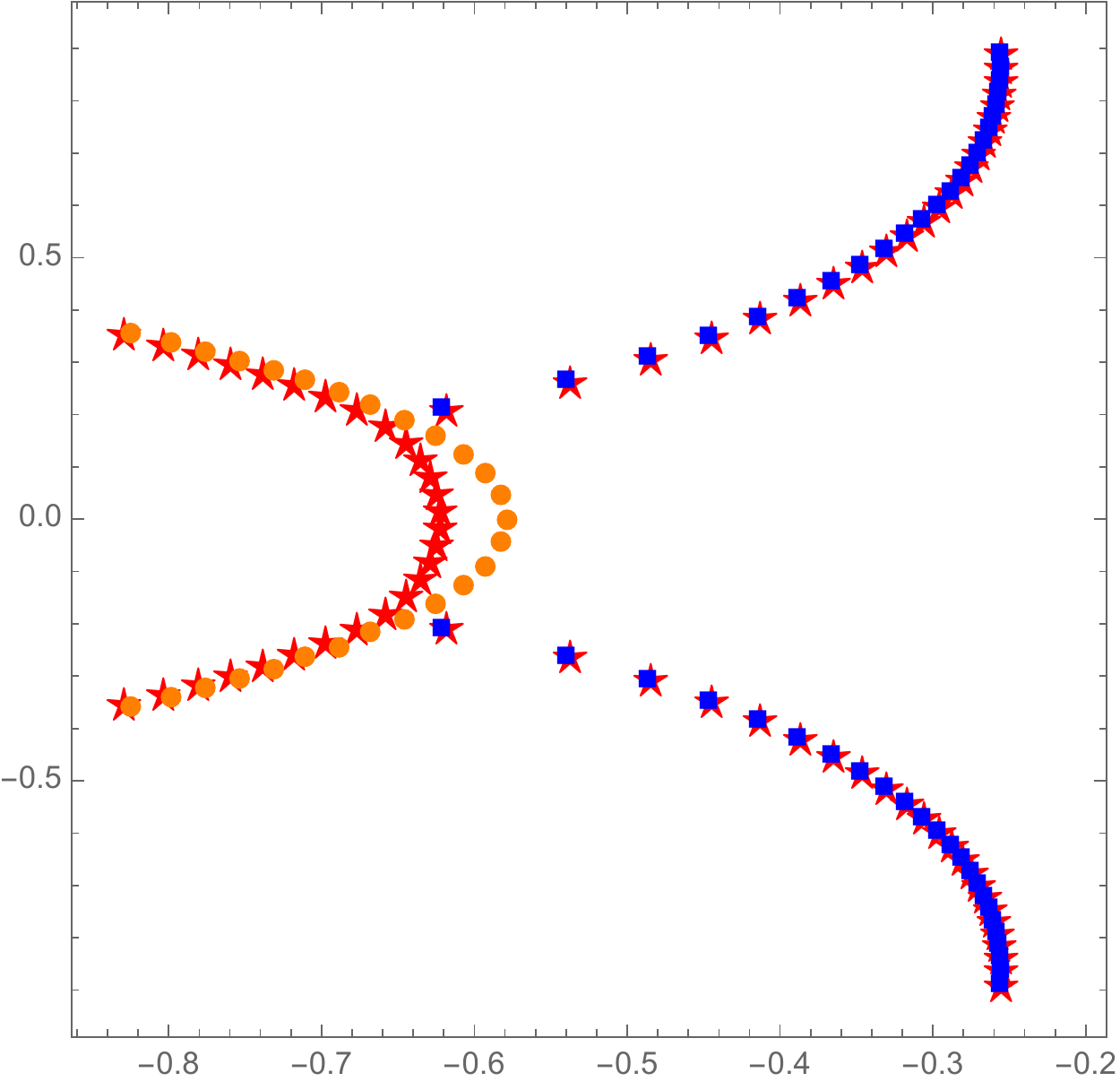}
\end{overpic}
\end{subfigure}%
\begin{subfigure}{.5\textwidth}
\centering
\begin{overpic}[scale=.45]{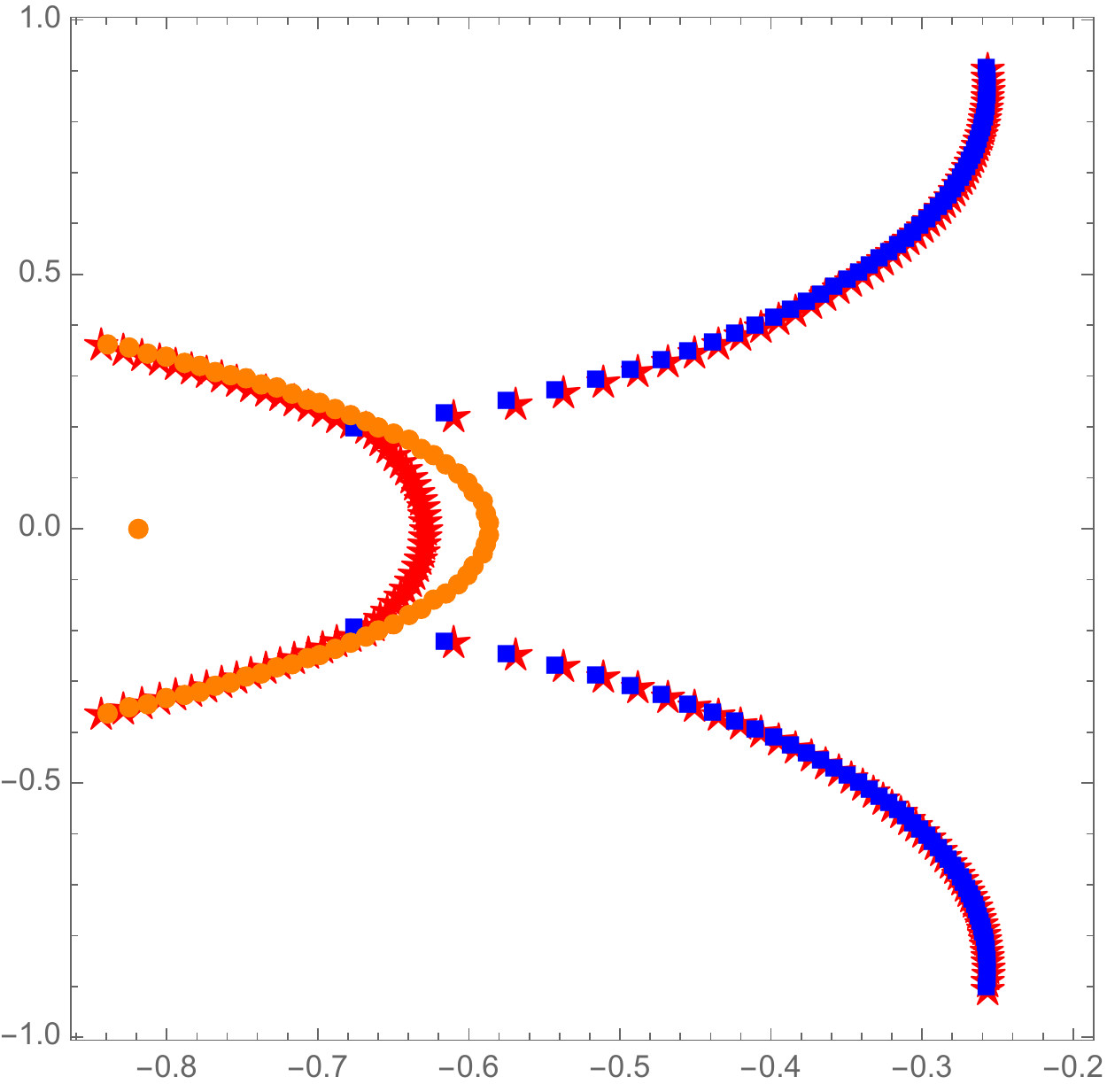}
\end{overpic}
\end{subfigure}\\
\begin{subfigure}{.5\textwidth}
\centering
\begin{overpic}[scale=.45]{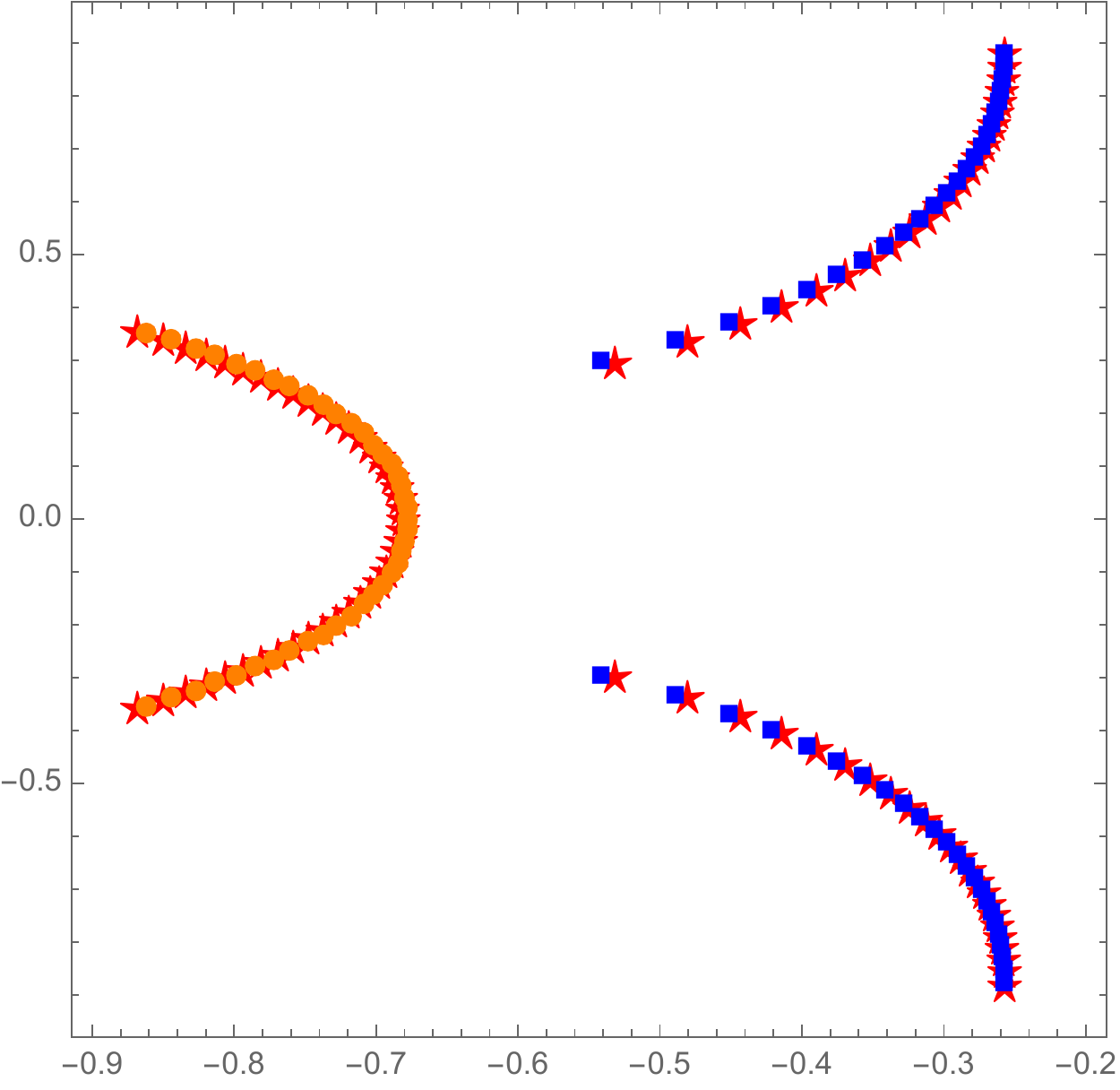}
\end{overpic}
\end{subfigure}%
\begin{subfigure}{.5\textwidth}
\centering
\begin{overpic}[scale=.45]{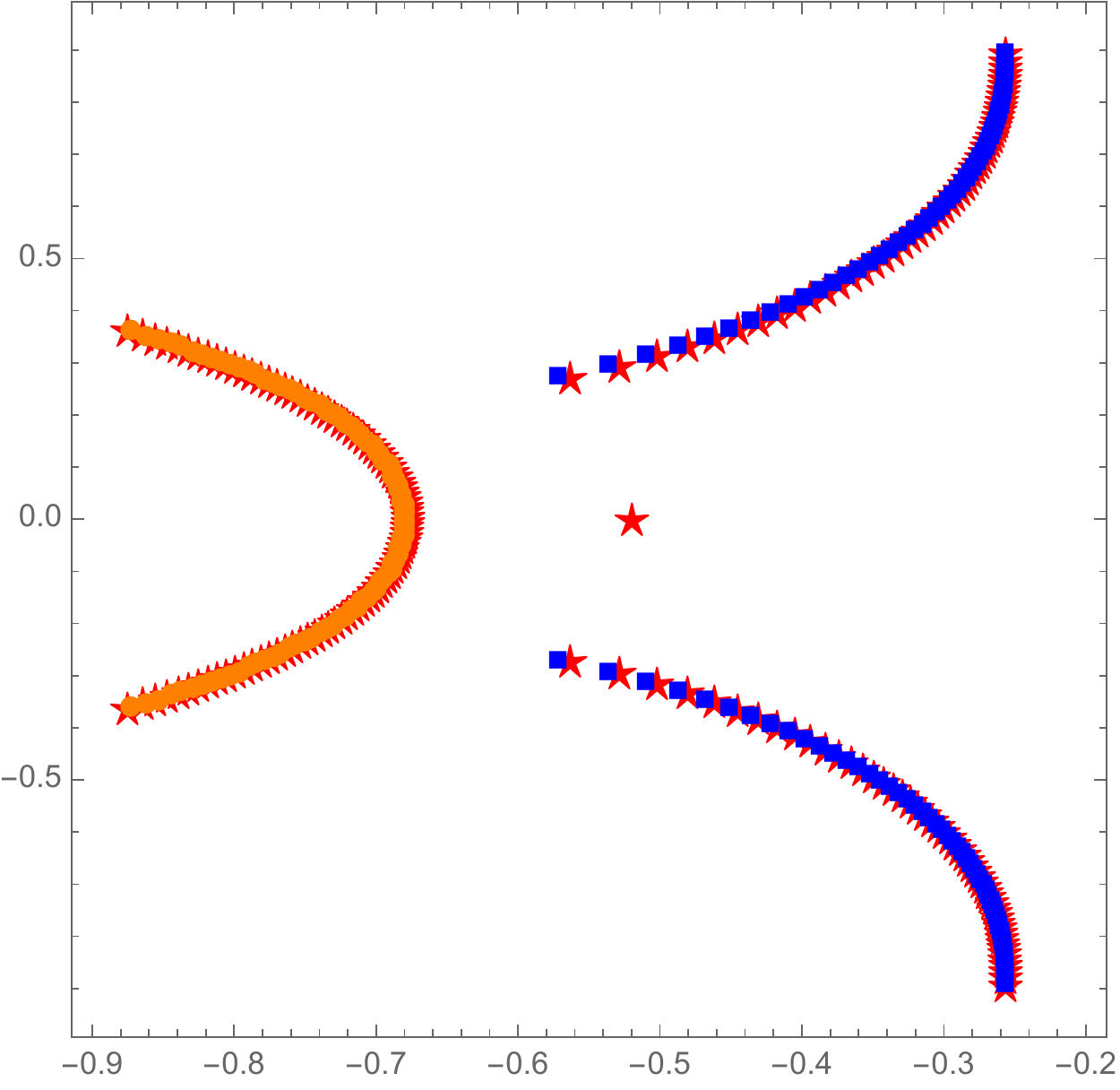}
\end{overpic}
\end{subfigure}
\caption{Rescaled zeros of $Q_{n,m}$ (stars), $C_{n,m}$ (dots) and $D_{n,m}$ (squares) when the weight is $e^{-z^7}$.
In all the cases the orthogonality contour for $n$ is the union of rays $(e^{-\frac{6\pi i}{7}}\infty,0]\cup[0,e^{\frac{6\pi i}{7}}\infty)$, and for $m$ is the union $(e^{-\frac{4\pi i}{7}}\infty,0]\cup[0,e^{\frac{4\pi i}{7}}\infty)$.
From left to right, top to bottom: $(n,m)=(28,49)$ and $(60,105)$ ($\alpha=4/11$); $(n,m)=(42,49)$ and $(90,105)$ ($\alpha=6/13$).}\label{figure_zeros_seventh_asymmetric2}
\end{figure}

\clearpage


%

\end{document}